\documentclass[11pt]{article}

\usepackage[utf8]{inputenc}
\usepackage{array}
\usepackage{amsmath}
\usepackage{amsfonts}
\usepackage{amssymb}
\usepackage{amsthm}
\usepackage{extarrows}
\usepackage{hyperref}
\usepackage{mathrsfs}
\usepackage{tikz}
\usepackage{tikz-cd}

\theoremstyle{plain}
\newtheorem{thm}{Theorem}[section]
\newtheorem*{thm*}{Theorem}
\newtheorem{prop}{Proposition}[section]
\newtheorem{lem}{Lemma}[section]
\newtheorem{cor}{Corollary}[section]
\newtheorem*{problem}{Problem}

\theoremstyle{definition}

\theoremstyle{remark}
\newtheorem*{rem}{Remark}
\newtheorem*{example}{Example}

\normalsize
\setlength\lineskip{1pt}
\setlength\parindent{1.2\parindent}
\setlength\normallineskip{1pt}

\setcounter{topnumber}{4}

\setcounter{bottomnumber}{1}

\setcounter{totalnumber}{5}

\textwidth  .72\paperwidth
\textheight .76\paperheight
\addtolength\textheight{\topskip}

\voffset -1in
\topmargin   .05\paperheight
\headheight  .02\paperheight
\headsep     .03\paperheight
\footskip    .07\paperheight

\marginparsep 9pt
\marginparpush 6pt

\hoffset -1in
\oddsidemargin .14\paperwidth
\evensidemargin .14\paperwidth
\marginparwidth .11\paperwidth

\setlength\arraycolsep{2pt}
\setlength\tabcolsep{6pt}
\setlength\arrayrulewidth{.4pt}
\setlength\doublerulesep{2pt}
\setlength\tabbingsep{\labelsep}
\setlength\fboxsep{3pt}
\setlength\fboxrule{.4pt}

\numberwithin{equation}{section}

\newcommand{\Addresses}{{
  \bigskip
  \footnotesize

	(F.~Haiden) \textsc{Harvard University Department of Mathematics, Science Center, One Oxford Street, Cambridge, MA 02138, USA}\par\nopagebreak
	\textit{E-mail:} \texttt{haiden@math.harvard.edu}
	\medskip
	
	(L.~Katzarkov) \textsc{Fakult\"at f\"ur Mathematik, Universit\"at Wien, 
	Oskar-Morgenstern-Platz 1, 1090 Wien, Austria}\par\nopagebreak
	\textit{E-mail:} \texttt{lkatzarkov@gmail.com}
	\medskip
	
	(M.~Kontsevich) \textsc{Institut des Hautes \'Etudes Scientifiques,
	35 route de Chartres, 91440 Bures-sur-Yvette, France}\par\nopagebreak
	\textit{E-mail:} \texttt{maxim@ihes.fr}
}}

\begin{document}

\title{Flat surfaces and stability structures}
\author{F. Haiden, L. Katzarkov, M. Kontsevich}
\date{}

\maketitle

\begin{abstract}
We identify spaces of half-translation surfaces, equivalently complex curves with quadratic differential, with spaces of stability structures on Fukaya-type categories of punctured surfaces.
This is achieved by new methods involving the complete classification of objects in these categories, which are defined in an elementary way. 
We also introduce a number of tools to deal with surfaces of infinite area, where structures similar to those in cluster algebra appear.
\end{abstract}

\tableofcontents


\section{Introduction}

\subsection{Spaces of stability structures and flat surfaces}

The space of stability structures, $\mathrm{Stab}(\mathcal C)$, of a triangulated category $\mathcal C$ was introduced in seminal work of Bridgeland~\cite{bridgeland07} based on a proposal of M.R.~Douglas on D-branes in super-conformal field theories. 
The data of a stability structure, $\sigma$, is a distinguished class of objects in $\mathcal C$, the \textit{semistable} ones, each with a \textit{phase} $\phi\in\mathbb R$, and an additive map $Z:K_0(\mathcal C)\to\mathbb C$, the \textit{central charge}.
These have to satisfy a number of axioms, see Subsection~\ref{subsec_bridgeland}.
Although the definition seems somewhat strange at first from a mathematical point of view, it leads to the remarkable fact, proven by Bridgeland, that $\mathrm{Stab}(\mathcal C)$ naturally has the structure of a complex manifold, possibly infinite-dimensional.
Furthermore, $\mathrm{Stab}(\mathcal C)$ comes with an action of $\mathrm{Aut}(\mathcal C)$ and the universal cover of $GL^+(2,\mathbb R)$.
Thanks to the efforts of a number of people, the structure of $\mathrm{Stab}(\mathcal C)$ is understood in many particular cases, see e.g. \cite{MR3121850, MR2376815, MR2549952, MR2313537, MR2335991, bs}. 

Moduli spaces of very similar nature appear in the theory of complex curves with quadratic differential, also known as \textit{half-translation surfaces} or simply \textit{flat surfaces}, as they carry a flat Riemannian metric with conical singularities.
Each of these moduli spaces, $\mathcal M(S)$, like $\mathrm{Stab}(\mathcal C)$, comes with a wall-and-chamber structure and an action of $GL^+(2,\mathbb R)$.
An explanation for this similarity was proposed by the third named author and Soibelman in \cite{ks}: 
The spaces $\mathcal M(S)$ should be realized as moduli spaces of stability structures for a suitably defined Fukaya category of the surface $S$.
The main achievement of the present paper is to make precise and verify this claim, leading to the following result (Theorem~\ref{ModuliMapThm} in the main text).
\begin{thm*} 
Let $S$ be a marked surface of finite type, $\mathcal M(S)$ the space of marked flat structures on $S$, and $\mathcal F(S)$ the Fukaya category of $S$.
The there is a natural map
\begin{equation}
\mathcal M(S)\to \mathrm{Stab}(\mathcal F(S))
\end{equation}
which is bianalytic onto its image, which is a union of connected components.
\end{thm*}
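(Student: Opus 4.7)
The strategy is to define the map by reading off a stability structure directly from a flat structure, verify the Bridgeland axioms using a classification of indecomposables in $\mathcal F(S)$ by arcs and closed curves, and then separately analyze injectivity, local analyticity, openness, and closedness of the image.

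First I would construct the map $\Phi\colon\mathcal M(S)\to\mathrm{Stab}(\mathcal F(S))$. Given a half-translation structure $q$ on $S$, let $\omega=\sqrt q$ be the (multi-valued) abelian differential. Define the central charge on classes of arcs/curves $\gamma$ by the period integral
\begin{equation}
Z_q(\gamma)\;=\;\int_\gamma \omega,
\end{equation}
which descends to $K_0(\mathcal F(S))$ because the classes of relevant arcs generate it. For the distinguished semistables I would take the objects represented by \emph{straight} arcs and simple closed geodesics in the flat metric, with phase $\phi\in\mathbb R$ read off from the angle the geodesic makes with the horizontal (lifted to $\mathbb R$ via a grading datum on $S$). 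The ordering of arcs with common endpoints at a conical singularity supplies the filtrations between semistables of adjacent phase.

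The key technical step is verifying the stability axioms, above all the existence of Harder--Narasimhan filtrations. For this I would invoke the classification of indecomposable objects in $\mathcal F(S)$ (promised in the abstract): every indecomposable corresponds to a homotopy class of arc or closed curve, possibly decorated with a local system. Given such a class, straighten its representative with respect to the flat metric; the result is a concatenation of saddle connections and closed geodesics whose successive phases are monotonically arranged. This concatenation, read via the arc gluing operation in $\mathcal F(S)$, supplies exact triangles whose semistable cones realize the HN filtration. Uniqueness follows because distinct straight representatives in the same homotopy class would bound a flat immersed polygon, forced to be degenerate. The support property would be handled by a comparison of $|Z|$ with a length-like norm on $K_0$ built from a reference triangulation.

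Next I would address the analytic properties. Injectivity of $\Phi$ is essentially the statement that $q$ is determined by its periods: the set of saddle connection homology classes together with their $Z$-values recovers $\omega$ up to sign, and the horizontal direction recovers the choice of $\sqrt q$. Local bianalyticity then follows by comparing the local period coordinates on $\mathcal M(S)$ with Bridgeland's local homeomorphism $\mathrm{Stab}(\mathcal F(S))\to\mathrm{Hom}(K_0,\mathbb C)$ given by $\sigma\mapsto Z_\sigma$; both have the same relative tangent space, namely period coordinates, so $\Phi$ is locally a diffeomorphism and in fact holomorphic.

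Finally I would show the image is a union of connected components, i.e.\ open and closed. Openness is immediate from local bianalyticity together with Bridgeland's deformation theorem. Closedness is where I expect the main difficulty: one must show that a limit in $\mathrm{Stab}(\mathcal F(S))$ of stability structures of the form $\Phi(q_n)$ is again of the form $\Phi(q_\infty)$, rather than a new stability structure with no flat-geometric origin. The plan is to control degenerations by extracting from the $Z_{q_n}$ a limiting length metric on arcs, apply a compactness argument for flat structures with bounded area on a fixed topological surface (using the finite-type hypothesis to keep the number of saddle connections controlled), and check that no saddle connection collapses in a way that would produce a non-flat limiting stability condition. This last point---ruling out ``exotic'' boundary stability structures---is the genuine obstacle and is likely where the bulk of the argument will lie.
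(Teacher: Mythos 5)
Your overall strategy matches the paper's: read off the central charge from periods, take straight arcs and geodesic loops as semistables, verify the axioms via the classification of indecomposables, then attack injectivity, openness, and closedness. But there are three concrete gaps worth flagging.

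First, your injectivity claim is too optimistic as stated. Periods alone do \emph{not} determine the flat structure --- the period map $\mathcal M(X)\to\mathrm{Hom}(\Gamma,\mathbb C)$ is only a local homeomorphism, and you cannot recover $q$ from $Z_q$ together with the abstract knowledge of which $K_0$-classes carry semistables. The paper extracts strictly more from the stability structure: for each boundary component $B$ (i.e., each conical point $s$) it builds a functor $I_B\colon\mathcal F(X)\to D(\mathrm{Mod}\,\mathbb K)$, and the closed set $C_{I_B,\sigma}$ (phases and log-masses of semistables with nonvanishing image under $I_B$) recovers the complement of $\mathrm{Dom}(\exp_s)$ in logarithmic coordinates, hence the Voronoi cell around $s$, and hence the flat metric. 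Without some such device your injectivity argument does not close.

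Second, the Harder--Narasimhan step: it is not true that a geodesic representative of an arc is a concatenation of saddle connections ``whose successive phases are monotonically arranged.'' At a joint the exterior angle is $\ge\pi$ but the phase can go either way. The correct mechanism (used in the paper) is to \emph{group} consecutive saddle connections of equal phase into a twisted complex $B_i$, and observe that whenever the phase changes at a joint the angle condition $\phi>\pi$ forces the differential of the outer twisted complex to be phase-increasing; that is what makes the tower over the $B_i$ an HN filtration. One also has to verify the HN and Hom-vanishing axioms separately for arcs and for band objects, which the paper does by passing to the universal cover (disk case) and annular cover (closed geodesic case), then descending. Your sketch elides this reduction.

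Third, closedness: you correctly identify this as the hard part, and the general shape of your plan (control degenerations, extract a limit, rule out collapse) is right, but you would face trouble invoking a ``bounded area'' compactness result since the surfaces here generally have infinite total area. What the paper actually controls are $\beta(F_i)$ (max boundary-edge length of the core, bounded via the mass $m_{\sigma_i}(E)$ of a fixed object) and $\rho(F_i)$ (Hausdorff distance from singularities to the core), the latter via the same $C_{I,\sigma}$ sets and a support argument, which then feeds into Lemma~\ref{lem_DelaunayBound} to bound Delaunay edge lengths, make the combinatorial type of the Delaunay subdivision stabilize along a subsequence, and assemble the limit flat surface by gluing the limiting polygons. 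Without bounding $\rho$ the Delaunay control fails, so this step is genuinely missing rather than merely unwritten.
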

For now it remains an open question whether the image is in fact all of $\mathrm{Stab}(\mathcal F(S))$.
The method of proof is new and relies on a detailed understanding of the category $\mathcal F(S)$.

\subsection{Fukaya categories of surfaces}

In Section~\ref{sec_category_def} we give a self-contained elementary definition of the Fukaya $A_\infty$-category $\mathcal F(S)$ of a surface, and provide arguments to show that it agrees with other definitions found in the literature.
Broadly speaking, these are either based on Floer intersection theory (see \cite{FOOO}, \cite{seidel08}, and many others), sheaves on ribbon graphs \cite{stz, dyckerhoff_kapranov, nadler}, or more direct approaches \cite{abouzaid08}.
Our own approach to define $\mathcal F(S)$ falls into the latter category and is based on explicit $A_\infty$-structures and contractibility of the classifying space of arc systems on a marked surface.

The most significant result about $\mathcal F(S)$ which we prove is a complete classification of objects (Theorem~\ref{thm_classification} in the main text).
\begin{thm*}
Isomorphism classes of objects in $\mathcal F(S)$ are classified by isotopy classes of certain immersed curves on $S$ together with local system of vector spaces on them.
In particular, these categories are of tame representation type.
\end{thm*}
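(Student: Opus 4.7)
The plan is to reduce the classification to a combinatorial problem by fixing an arc system, apply a classification of indecomposables over the resulting gentle $A_\infty$-algebra, and then translate the combinatorics back into geometry on $S$.

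First I would pick an arc system $A$ on $S$ and use the elementary construction of $\mathcal{F}(S)$ from Section~\ref{sec_category_def}: the full $A_\infty$-subcategory on the objects dual to arcs of $A$ is a graded gentle $A_\infty$-algebra $\Lambda_A$, and (by the generating property of arc systems) every object of $\mathcal{F}(S)$ is quasi-isomorphic to a one-sided twisted complex, i.e.\ a Maurer--Cartan element for $\Lambda_A$. Classification of objects of $\mathcal{F}(S)$ up to isomorphism thus reduces to classification of indecomposable twisted complexes over $\Lambda_A$.

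Second I would carry out the classification of indecomposables over $\Lambda_A$, which is an $A_\infty$-generalization of the Butler--Ringel theorem for string algebras (and of the refinement by Bekkert--Merklen to derived categories of gentle algebras). The outcome is two families of indecomposables: \emph{string objects}, parametrized by reduced (grading-compatible) words in the arrows of the quiver of $\Lambda_A$, and \emph{band objects}, parametrized by a cyclic reduced word together with an indecomposable Jordan block, i.e.\ a finite-dimensional indecomposable local system on $S^1$. Tameness of the category is a formal consequence of this discrete-plus-one-parameter list.

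Third, I would interpret strings and bands geometrically. Dually to $A$, the surface $S$ is cut into polygons; a string traces a path through adjacent polygons and so defines an immersed arc with endpoints at marked points on $\partial S$, while a band traces a cyclic such path and defines an immersed closed loop on $S$. The gentle (``no immediate backtrack'') condition and the grading condition together cut out precisely the class of immersed curves appearing in the statement, and the local system attached to a band object becomes a local system on the loop.

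Fourth, I would show the correspondence descends to isotopy classes and is independent of $A$. By the contractibility of the space of arc systems (and its generation by elementary flips), it suffices to check that a single flip $A\leadsto A'$ induces an equivalence between the gentle $A_\infty$-algebras $\Lambda_A$ and $\Lambda_{A'}$ whose effect on strings and bands is to replace a word by the word obtained by locally rerouting the associated curve across the flipped arc; this corresponds to an isotopy on $S$. Finally, injectivity of the resulting map can be checked by choosing an arc system transverse to a given immersed curve and reading off the string or band directly from the intersection pattern, thereby reconstructing the object from the isotopy class and local system.

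The principal obstacle is the last two steps: proving flip invariance at the $A_\infty$-level (the gauge transformation between $\Lambda_A$ and $\Lambda_{A'}$ is nontrivial because of the higher products) and showing that the translation between strings/bands and isotopy classes of immersed curves is bijective when curves are allowed to cross arcs of $A$ many times. The algebraic classification over a gentle quiver is well understood, but pushing it through to the $A_\infty$-setting and to genuine isotopy classes of \emph{immersed} curves (where self-intersections make the bijection delicate) is where the new geometric work is concentrated.
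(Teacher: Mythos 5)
Your proposal is in the right spirit --- reduce to a combinatorial classification over an endomorphism algebra of arcs, identify indecomposables with immersed curves and local systems --- but it differs from the paper's route in several substantive ways, and misses the single observation that makes the whole thing tractable.

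The key simplification you overlook is \emph{formality}. The paper does not attempt an ``$A_\infty$-generalization of Butler--Ringel,'' and it does not prove flip-invariance at the $A_\infty$-level. Instead, Subsection~\ref{subsec_quivers} shows that whenever $S$ has a boundary arc, one can choose a \emph{formal} full arc system $A$: any disk cut out by $A$ contains a boundary arc not in $A$, so the higher $\mu^k$ vanish identically and the generator's endomorphism algebra is an honest graded quiver algebra with quadratic monomial relations. There is no gauge-transformation problem between $\Lambda_A$ and $\Lambda_{A'}$ to solve, because one works throughout with a single fixed formal system; the independence of $A$ is handled once and for all at the level of Morita equivalence of categories (Subsection~\ref{subsec_morita_invariance}, via contractibility of the arc complex), not by tracking the classification through flips. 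You correctly flag the higher products as ``the principal obstacle,'' but in the paper's proof they are simply not there.

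For the combinatorial core, the paper does not invoke Butler--Ringel or Bekkert--Merklen; it builds a self-contained theory of \emph{nets} (Subsection~\ref{subsec_nets}), which axiomatizes pairs of filtrations on a vector space with identified subquotients, in the Nazarova--Roiter matrix-problem tradition. This is morally the same reduction your string/band picture encodes, but it is proved from scratch by an explicit inductive splitting argument and a uniqueness theorem (Theorems~\ref{nets_thm_exist}, \ref{nets_thm_unique}) rather than imported, which matters because the graded, two-sided, and $\mathbb Z$-graded-local-system features are not off-the-shelf. The bridge between twisted complexes and net representations is the theory of \emph{minimal} twisted complexes (Subsection~\ref{subsec_minimal_complexes}): over a nilpotently augmented category, every twisted complex splits as minimal $\oplus$ contractible, and homotopy equivalences between minimal complexes are isomorphisms. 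This is what gives a clean injectivity statement --- rather than your proposed ad hoc reading-off of strings from a transverse arc system, the uniqueness clause of Theorem~\ref{nets_thm_unique} pins down the curve and local system.

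Two more missing cases in your sketch: the well-definedness of the map from curves to objects (Subsection~\ref{subsec_complexes_from_curves}) is established by passing to the universal cover (for open curves) or the annular cover (for closed curves) and reducing to the disk/annulus, not by flip invariance; and the case where $M$ has circle components is handled separately by localization (adding corners to produce $S'$ with boundary arcs, classifying in $\mathcal F(S')$, and then descending). Without formality and without the minimal-complex/net machinery, your plan would stall exactly where you predict --- so the proposal identifies the right difficulties but not the paper's device for dissolving them.
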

Another way to say this is that all objects are ``geometric''.
This is not just a direct consequence of the definition, since $\mathcal F(S)$ is required to be closed under taking cones.
We reduce the problem to the classification of certain pairs of filtrations on vector spaces with subquotients identified.

An important tool, e.g. for proving the equivalence with other definitions and computing K-theory, are formal generators of $\mathcal F(S)$ with endomorphism algebras given by graded quivers with quadratic monomial relations.
These exist only for some $S$, but the other cases are obtained by localization of $\mathcal F(S)$.
They are graded versions of the \textit{gentle algebras} introduced by Assem--Skowro\'nski~\cite{assem_skowronski}.

\subsection{Finite vs infinite area, finite length property}

The flat surfaces of interest in ergodic theory are the ones with finite area, corresponding to quadratic differentials with arbitrary order zeros and simple poles.
For a generic flat surface with infinite area, one the other hand, the structure of the horizontal foliation is very explicitly described by certain generalized ribbon graphs (see Section~\ref{sec_cluster}), and the horizontal measure is not ergodic. 
In fact, the wall-and-chamber structure on the space of flat surfaces of infinite area is described by mutations similar to those in the theory of cluster algebras.

Another central tool in the study of flat surfaces of infinite area, apart from the horizontal foliation, is the \textit{core}, $\mathrm{Core}(S)$, which is defined to be the convex hull of the conical points.
It is a compact subset of $S$ with geodesic boundary, and in fact $S$ is completely determined by its core.
Changes to the structure of the core are related to wall-crossings of the first kind, (central charges becoming colinear over $\mathbb R$) while mutations are associated with wall-crossings of the second kind (central charges becoming real).

In terms of the associated stability structures, this dichotomy manifests itself in the finite length property of the t-structure, i.e. whether every descending chain of objects is eventually constant. 
Any stability structure on a category $\mathcal C$ determines a family of t-structures. 
If the surface has infinite area, the generic t-structure in this family is finite length with a finite number of isomorphism classes of simple objects.
If the surface has finite area on the other hand, the t-structures are not finite length.
Closely related is the question of the distribution of phases of stable objects, in particular density, which was investigated by Dimitrov and the authors in \cite{dhkk}.

\subsection{Meromorphic connections on $T^*C$ and Nevanlinna's \textit{Kernpolygon}}

For a meromorphic quadratic differential, $\varphi$, on a compact Riemann surface, $C$, there is a corresponding connection $D$ on $T^*C$ so that the scalar multiples of $\pm\sqrt{\varphi}$ are its flat sections.
This connection has a simple pole at the points where $\varphi$ has a zero or pole of any order.
Note also that the monodromy of $D$ is necessarily contained in $\{\pm 1\}$.
Conversely, given a meromorphic connection $D$ on $T^*C$ with monodromy $\{\pm 1\}$ and simple poles, we get a quadratic differential, up to scalar multiple.

If the connection $D$ has higher order poles then the associated quadratic differentials are not meromorphic but locally of the form
\begin{equation}
\exp(f(z))g(z)dz^2
\end{equation}
where $f$ and $g$ are meromorphic and $f$ has a pole of order one greater than $D$.
We call this an \textit{exponential singularity} of a quadratic differential.
In terms of flat geometry, this singularity gives rise to $n$ infinite-angle singularities of the metric, where $n$ is the order of the pole of $f$. 
Such a singularity is modeled on the metric completion of the universal cover of the punctured Euclidean plane, and can be regarded as a limit of the conical singularity when the cone angle goes to infinity.
We show the following:
\begin{thm*}
Let $S$ be a half-translation surface of finite type with conical singularities, possibly infinite-angle ones, then $S$ comes from a compact Riemann surface with quadratic differential with exponential singularities.
\end{thm*}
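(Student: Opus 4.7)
Plan: I would produce the pair $(C, \phi)$ by completing $S^\circ := S \setminus \mathrm{sing}(S)$ to a compact Riemann surface and then verifying that the canonical quadratic differential on $S^\circ$ extends with the required singularities.

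First, the Riemann surface structure on $S^\circ$ is canonical: the flat $\{\pm 1\}$-holonomy metric provides local coordinates $z$ with holomorphic transitions $z \mapsto \pm z + c$, and the local tensor $(dz)^{\otimes 2}$ glues to a globally defined holomorphic quadratic differential $\phi_\circ$ on $S^\circ$. At each finite-angle conical singularity $q$ of cone angle $\pi k$, the standard $k$-fold branched-covering model extends both the complex structure and $\phi_\circ$ across $q$, with $\phi_\circ$ acquiring a zero or pole of order $k - 2$.

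To handle the remaining ends of $S^\circ$, I would use the local model at infinite-angle singularities. Near such a point $p$, the universal cover of the punctured Euclidean plane with coordinate $w$ (so that $z = e^w$ is the flat coordinate) embeds as an open subset of $S^\circ$, and in this chart $\phi_\circ = e^{2w}(dw)^2$. Using the compactness of $\mathrm{Core}(S)$ and the finite-type hypothesis, I would argue that every topological end of $S^\circ$ is annular, so that $S^\circ$ embeds into a compact Riemann surface $C$ by filling in one point per end. At an added point corresponding to an infinite-angle end, I introduce the local coordinate $\zeta = 1/w$ and compute
\[
\phi_\circ = e^{2w}(dw)^2 = e^{2/\zeta}\,\zeta^{-4}\,(d\zeta)^2,
\]
which is of the exponential form $\exp(f(\zeta)) g(\zeta)(d\zeta)^2$ with $f(\zeta) = 2/\zeta$ having a pole of order $1$ and $g(\zeta) = \zeta^{-4}$ meromorphic. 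More generally, if $n$ infinite-angle singularities of $S$ collapse into a single end of $S^\circ$, the corresponding $f$ at the added point of $C$ should have a pole of order $n$, matching the count from the introduction. Finally, one verifies that $S$ is recovered as the metric completion of $(C \setminus \mathrm{sing}(\phi), |\phi|)$ by comparing local models.

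The main obstacle is the compactification step: verifying that all topological ends of $S^\circ$ are annular so that single-point extension to a compact Riemann surface is available, and bookkeeping how several infinite-angle singularities can coalesce into one end yielding a single exponential singularity of higher pole order. For an isolated infinite-angle singularity with no competing geometry this follows directly from the local model, but the general case requires a careful global analysis using finite type and the compactness of $\mathrm{Core}(S)$ to prevent pathological accumulation and to pin down the correct $f$ and $g$ where these structures meet.
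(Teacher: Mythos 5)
Your plan essentially recapitulates the \emph{forward} direction (the paper's Proposition~\ref{prop_expflatgeom}, that $\exp(f)g\,dz^2$ produces infinite-angle points), and there is a genuine gap precisely at the step you call the ``main obstacle,'' which is not a matter of bookkeeping but the entire analytic content of the theorem.

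The coordinate $w$ you introduce (with $z=e^w$ the flat coordinate, so $\phi_\circ=e^{2w}\,(dw)^2$) lives on the wrong domain. By the proof of Proposition~\ref{prop_expflatgeom}, a small punctured metric disk $D^*_r(a)$ about a single infinite-angle point $a$ is \emph{simply connected}, and the flat coordinate gives a universal covering $D^*_r(a)\to D^*_r(0)$; hence $w$ is a biholomorphism of $D^*_r(a)$ onto a half-plane. But $D^*_r(a)$ is a simply connected \emph{proper} subset of the annular end of $S^\circ$; it cannot contain any punctured neighborhood of the ideal point you wish to fill in, and in the annular end it sits as a wedge-shaped region accumulating on the ideal end only along certain directions. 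Consequently $\zeta=1/w$ is a coordinate on a topological disk, not on a punctured disk around the added point of $C$, and the computation $\phi_\circ=e^{2/\zeta}\zeta^{-4}(d\zeta)^2$ — though a correct identity in the $\zeta$-chart — tells you nothing about the form of $\phi$ in a holomorphic coordinate near that point. Even for the model $e^{1/z}\,dz^2$ on a punctured disk with a single infinite-angle point ($k=1$), the added point is at $z=0$ and the local coordinate is (up to scaling) $z$ itself, whereas your $\zeta=1/\log\!\bigl(\int e^{1/(2z)}\,dz\bigr)$ is a very different and non-meromorphic function of $z$. So the isolated case does \emph{not} follow from the local model, contrary to the last paragraph of your plan. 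You also do not address why each end of $S^\circ$ is conformally a punctured disk rather than an annulus of finite modulus, which is not automatic from infinite area.

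The paper's proof fills exactly this gap. It cuts off a piece $X'$ containing a cycle of $k$ infinite-angle points, approximates $X'$ by flat surfaces $X_n$ whose singularities have finite cone angle $n\pi$ (so the already-proved meromorphic theorem applies), obtains uniformizations $f_n:X_n\to\mathbb D^*$ with meromorphic $\varphi_n$ on $\mathbb D$, extracts a normal-family limit $f$ which by Hurwitz's theorem is a biholomorphism of $X'_{\mathrm{sm}}$ onto $\mathbb D^*$ (this settles parabolicity), and then controls $\lim\varphi_n$ by showing that the logarithmic derivatives $h_n=(\partial g_n/\partial z)/g_n$ have a bounded number of poles which must coalesce to a pole of order $k+1$ at the origin, forcing $\varphi=e^{p(z)}q(z)\,dz^2$ with $p$ having a pole of order $k$. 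That limiting uniformization, not a change of variable in the local flat chart, is what produces the holomorphic coordinate in which $\phi$ takes exponential form.
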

See Theorem~\ref{thm_flatcx} in the main text.
Thus allowing exponential singularities is quite natural from a geometric point of view, and turns out to be necessary to get stability structures on \textit{partially} wrapped Fukaya categories of surfaces.

The previous theorem is similar in nature and has some overlap with an old result of Nevanlinna~\cite{nevanlinna} on the class of functions which have Schwarzian derivative a polynomial.
Indeed, ideas of his are found in our proof.
The analysis of the topological structure of the branched covering defined by these functions leads him to a certain combinatorial structure, the \textit{Kernpolygon}, which translates into the t-structure of the stability structure for a quadratic differential $\exp(P(z))dz^2$.

\subsection{Comparison with the work of Bridgeland--Smith}

A connection between moduli spaces of quadratic differentials and stability structures is also made in pioneering work of Bridgeland--Smith~\cite{bs}. 
They do not use the Fukaya category of the surface directly, but  a certain CY3 category which is defined in terms of quivers with potential attached to ideal triangulations.
These categories are connected to cluster theory via their quivers, and were shown to be subcategories of Fukaya categories of certain non-compact 6-folds by Smith~\cite{smith13}.
Being CY3, they have generalized Donaldson--Thomas invariants in the sense of \cite{ks}.
The DT invariants and their wall-crossing had previously appeared in work of Gaiotto--Moore--Neitzke~\cite{gmn}, who also introduce the ``WKB triangulation'', a central tool in \cite{bs}.
Indeed, one of the motivations of Bridgeland--Smith is to develop a mathematical theory for \cite{gmn}.

The methods of \cite{bs} require one to restrict to meromorphic quadratic differentials without higher order zeros, and, at a generic point in the moduli space, at least one higher order pole.
Some quadratic differentials with simple poles do appear in the limit.
It is expected that their results extend in some form to holomorphic quadratic differentials with simple zeros, but the stability structures are not described by quivers with potential and would require different methods to construct.
In this paper we do indeed construct stability structures from these quadratic differentials, though not on CY3 categories.

It is somewhat puzzling that the same geometry should give rise to stability structures on, at least superficially, very different categories.
The relation between the two categories (the wrapped Fukaya category of the surface and the CY3 category) is still to be clarified.
It seems to be simpler in the case when all poles have order $\ge 3$.
One can show that there is a non-commutative divisor in the sense of Seidel~\cite{seidel14} with total space the wrapped category and central fiber the CY3 category.
We suspect that in the case of double poles an additional deformation is necessary.

More recently, an extension of the results of Bridgeland--Smith to certain polynomial quadratic differentials on $\mathbb C$ was carried out in \cite{ikeda,bqs}.
There, CY$n$ categories appear for $n\ge 2$.

\subsection{Acknowledgments}
 
The idea that quadratic differentials with exponential-type singularities can be identified with stability structures was suggested to the authors by Alexander Polishchuk.
We also thank Mohammed Abouzaid, Denis Auroux, Tom Bridgeland, Igor Burban, Tobias Dyckerhoff, Alexander Efimov, Alexander Goncharov, Dominic Joyce, Mikhail Kapranov, Gabriel Kerr, Andrew Neitzke, Tony Pantev, Yan Soibelman, Pawel Sosna, and Zachary Sylvan for discussions relating to this paper.
The authors were partially supported by the FRG grant DMS-0854977, the research grants DMS-0854977, DMS-0901330, DMS-1201321, DMS-1201475, DMS-1265230, OISE-1242272 PASI from the National Science Foundation, the FWF grant P24572-N25, and by an ERC GEMIS grant.
We are grateful to the IHES for providing perfect working conditions.
The authors were supported by two Simons Foundation grants.


\section{Moduli spaces of flat surfaces}

In this section we define the moduli spaces $\mathcal M(X)$ of marked flat structures.
Subsection~\ref{subsec_grading} discusses some topological preliminaries which are mostly relevant later when we define Fukaya categories of surfaces, but also play a small role in the definition of $\mathcal M(X)$.
Next, in Subsection~\ref{subsec_flatsurf} we define flat surfaces with conical singularities and their moduli spaces and state a theorem about their local structure.
This theorem is proven in the case of finite area in Subsection~\ref{subsec_saddle}, using triangulations by saddle connections.
Subsection~\ref{subsec_hsd} deals with the case of infinite area, where the main tool is the horizontal strip decomposition.
In Subsection~\ref{subsec_complexanalytic} we show that our class of flat surfaces may be described in terms of quadratic differentials on compact Riemann surfaces. 
Finally, in Subsection~\ref{subsec_voronoi} we review the dual Voronoi and Delaunay partitions.

\subsection{Grading of surfaces and curves}
\label{subsec_grading}

The notion of grading discussed in this subsection is essentially a special case of the one for symplectic manifolds with almost complex structure and their Lagrangian submanifolds, due to the third named author and Seidel~\cite{kontsevich95,seidel_grading}.
In the case of surfaces, no symplectic geometry is involved.

We use the notation $\Pi_1(X,x,y)$ for the set of homotopy classes of paths from $x$ to $y$ in a space $X$.
Let $X$ be a smooth oriented surface.
A \textbf{grading}, $\eta$, on $X$ is a foliation, i.e. section of the projectivized tangent bundle, $\mathbb{P}(TX)$.
The pair $(X,\eta)$ is a \textbf{graded surface}.
A morphism of graded surfaces $(X,\eta) \to (Y,\theta)$ is a pair $(f,\tilde{f})$ where $f:X\to Y$ is an orientation preserving local diffeomorphism, and $\tilde{f}$ is a homotopy from $f^*\theta$ to $\eta$, up to homotopy, i.e. $\tilde{f}\in\Pi_1(\Gamma(X,\mathbb{P}(TX)),f^*\theta,\eta)$.
Composition is given by
\begin{equation}
(f,\tilde{f})\circ(g,\tilde{g})=(f\circ g, (g^*\tilde{f})\cdot\tilde{g})
\end{equation}
where $\alpha\cdot\beta$ denotes concatenation of paths.
Every graded surface $X$ has a \textbf{shift} automorphism given by the pair $(\mathrm{id}_X,\sigma)$ where $\sigma$ restricts, for every $x\in S$, to the generator of $\pi_1(\mathbb{P}(T_xX))$ given by the orientation of $X$, i.e. the path which rotates a line counterclockwise by an angle of $\pi$. 
Anticipating the connection with triangulated categories, we write this automorphism as $[1]$ and its integer powers as $[n]$.

For an oriented surface $X$ observe that the set 
\begin{equation}
\mathcal G:=\pi_0(\Gamma(X,\mathbb P(TX)))
\end{equation}
is a torsor over $H^1(X,\mathbb Z)=[X,S^1]$.
The set $\mathcal G$ classifies gradings on $X$ up to graded diffeomorphism isotopic to the identity.
The set of gradings of $X$ up to isomorphism is thus a quotient of $\mathcal G$.
\begin{example}
Consider the cylinder $X=\mathbb{R}\times S^1$.
Any non-contractible simple closed curve in $X$ is isotopic to $S=\{0\}\times S^1$.
If we fix an orientation on $S$ then the degree $d$ of any grading $\eta$, restricted to $S$, is a well-defined integer, and $|d|\in\mathbb Z_{\ge 0}$ is a complete invariant of the graded surface.
\end{example}

Next, we consider immersed curves in graded surfaces.
A \textbf{graded curve} in a graded surface $(X,\eta)$ is a triple $(I,c,\tilde{c})$ where $I$ is a 1-manifold, possibly disconnected and with boundary, $c:I\to S$ is an immersion, and $\tilde{c}\in\Pi_1(\Gamma(I,c^*\mathbb{P}(TX)),c^*\eta,\dot{c})$.
To spell this out, $\tilde{c}$ is given by a homotopy class of paths in $\mathbb{P}(T_{c(t)}X)$ from the subspace given by the grading to the tangent space of the curve, varying continuously with $t\in I$. 
The pushforward of a graded curve as above by a graded morphism $(f,\tilde{f})$ is given by $(I,f\circ c,(c^*\tilde{f})\cdot\tilde{c})$.

A point of transverse intersection of a pair $(I_1,c_1,\tilde{c}_1)$, $(I_2,c_2,\tilde{c}_2)$ of graded curves determines an integer as follows.
Suppose $t_i\in I_i$ with
\begin{equation}
c_1(t_1)=c_2(t_2)=p\in X,\qquad \dot{c}_1(t_1)\neq \dot{c}_2(t_2)\in\mathbb{P}(T_pX).
\end{equation}
We have the following homotopy classes of paths in $\mathbb{P}(T_pX)$:
1) $\tilde{c}_1(t_1)$ from $\eta(p)$ to $\dot{c}_1(t_1)$,
2) $\tilde{c}_2(t_2)$ from $\eta(p)$ to $\dot{c}_2(t_2)$,
3) $\kappa$ from $\dot{c}_1(t_1)$ to $\dot{c}_2(t_2)$ given by counterclockwise rotation in $T_pX$ by an angle $<\pi$.
Define the \textbf{intersection index} of $c_1,c_2$ at $p$
\begin{equation}
i_p(c_1,c_2)=\tilde{c}_1(t_1)\cdot\kappa\cdot\tilde{c}_2(t_2)^{-1}\quad\in\pi_1(\mathbb{P}(T_pX))\cong\mathbb{Z}.
\end{equation}
More precisely, this notation is correct only if $c_1$,$c_2$ pass through $p$ exactly once, in particular if $c_i$ are in general position, otherwise the index may depend on the $t_i$ as well.
We note that
\begin{equation}
i_p(c_1,c_2)+i_p(c_2,c_1)=1
\end{equation}
and
\begin{equation}
i_p(c_1[m],c_2[n])=i_p(c_1,c_2)+m-n.
\end{equation}
Another observation which will be used repeatedly, is that if graded curves $c_1$ and $c_2$ intersect in $p$ with $i_p(c_1,c_2)=1$, then we may perform a kind of smoothing near $p$ which produces again a grade curve (see Figure \ref{fig_smoothing}).
\begin{figure}[h]
\centering
\includegraphics[scale=1]{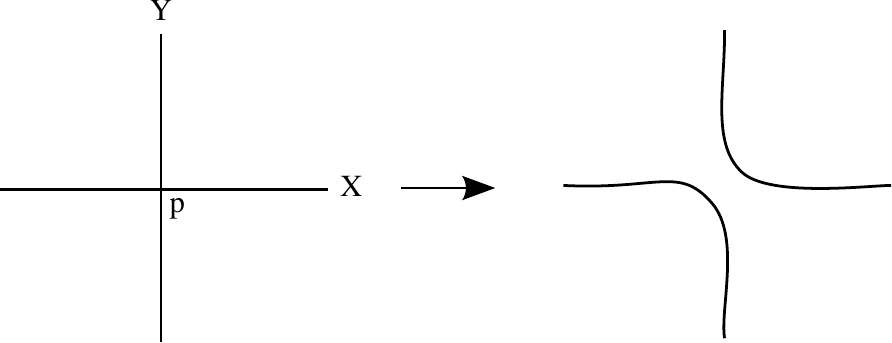}
\caption{Smoothing an intersection with $i_p(c_1,c_2)=1$.}
\label{fig_smoothing}
\end{figure}

A graded surface $(X,\eta)$ has a canonical double cover $\tau$ of $X$ with fiber over $p\in X$ the set of orientations of $\eta(p)$.
We will consider $\tau$ as a locally constant sheaf and write 
\begin{equation}
\mathbb Z_{\tau}:=\mathbb Z\otimes_{\mathbb Z/2}\tau
\end{equation}
and similarly for $\mathbb R_{\tau}$ and $\mathbb C_{\tau}$.
Singular (co)homology with coefficients in any of these local systems is defined, as is deRham cohomology with coefficients in $\mathbb R_{\tau}$ or $\mathbb C_{\tau}$.
Integration provides a pairing
\begin{equation}\label{H1pairing}
H_1(X,\partial X;\mathbb Z_{\tau})\otimes_{\mathbb Z}H^1_{\mathrm{dR}}(X,\partial X;\mathbb C_{\tau})\to\mathbb C
\end{equation}
where $H^*_{\mathrm{dR}}(M,\partial M;E)$ is the cohomology of forms with coefficients in $E$ which vanish when pulled back to $\partial M$.

\subsection{Flat surfaces with conical singularities}
\label{subsec_flatsurf}

The following types of structures on an orientable surface $X$ are in one-to-one correspondence:
\begin{enumerate}
\item Flat Riemannian metrics together with a covariantly constant foliation $\eta$ and orientation.
\item Complex structures together with a non-vanishing holomorphic quadratic differential (section of the square of the holomorphic cotangent bundle).
\item Pairs of pointwise linearly independent commuting vector fields, defined up to simultaneous change of sign, i.e. a section of the quotient of the frame bundle of $TX$ by $\mathbb{Z}/2$.
\end{enumerate}
On $\mathbb R^2$ the standard versions of these structures are: (1) the standard euclidean metric with the foliation by horizontal lines, (2) the standard complex structure $\mathbb R^2=\mathbb C$ and the quadratic differential $dz^2$, (3) the pair of vector fields $(\partial_x,\partial_y)$.
A general surface with one of the structures above is locally isomorphic to standard $\mathbb R^2$ by charts obtained from: (1) the exponential map of the metric, (2) the complex path integral of a square-root of the quadratic differential, (3) the flows of the vector fields. 
The transition functions for such charts are of the form
\begin{equation}
v \mapsto \pm v+c.
\end{equation}
Indeed, these are the symmetries of $\mathbb R^2$ with any of the standard structures above.
A \textbf{smooth flat surface} is a surface with any of the above structures.
The covariantly constant foliation makes it a graded surface and is called the \textit{horizontal foliation}.
Another more precise but longer term used for this kind of structure is \textit{half-translation surface}, emphasizing the fact that the holonomy of the metric must be a subgroup of $\{\pm 1\}$.

The metric completion of a smooth flat surface may have fairly complicated singularities in general \cite{bowman_valdez}.
Here, we will only consider singularities which are conical with cone angle a positive integer multiple of $\pi$ or infinite.
They may be described as follows.
For $n\in\mathbb Z$ let $C_n$ be the space
\begin{equation}
C_n:=(\mathbb Z/n)\times\mathbb R\times\mathbb R_{\ge 0}/\sim, \qquad (k,x,0)\sim (k+1,-x,0) \text{ for } x\le 0
\end{equation}
and
\begin{equation}
C_\infty:=\mathbb Z\times\mathbb R\times\mathbb R_{\ge 0}/\sim, \qquad (k,x,0)\sim (k+1,-x,0) \text{ for } x\le 0.
\end{equation}
Let $C$ be one of the above spaces and $0\in C$ the distinguished point $(0,0,0)\sim(k,0,0)$.
The standard flat structure on $\mathbb R^2$ gives $C\setminus 0$ the structure of a smooth flat surface and $C$ is its metric completion.
The metric is singular unless $C=C_2$.
The singularity is called a \textbf{conical singularity} with cone angle $n\pi$ if $C=C_n$ and an \textbf{infinite angle singularity} if $C=C_\infty$.

It is technically convenient to work not with the above spaces directly, but instead their real blow-up at the origin, denoted $\widehat{C}$.
This construction produces a surface with boundary in which the cone point $0\in C$ is replaced by the set of geodesics starting at that point, which is topologically $S^1$ in the case of $C_n$ and $\mathbb R$ in the case of $C_\infty$.
The complex structure on $C\setminus 0$ does not extend to $\widehat{C}$, but the horizontal foliation does. 
Also, locally, any square-root of the quadratic differential extends as a closed complex-valued 1-form to $\widehat{C}$ which vanishes on vectors tangent to the boundary.
Indeed, if we identify $\widehat{C_\infty}$ with $\mathbb R_{\ge 0}\times\mathbb R$ the 1-form is given by the expression $e^{iy}(dx+ixdy)$.

Define the structure of a \textbf{real blow-up of a flat surface} on a surface with boundary $Y$ to be the structure of a smooth flat surface on $Y\setminus\partial Y$ such that each component of $\partial Y$ has a neighborhood diffeomorphic to a neighborhood of the boundary of some $\widehat{C_n}$ or $\widehat{C_\infty}$ by a diffeomorphism preserving the flat structure on the interior, and satisfying the following completeness condition:
The space $X$ obtained by identifying points lying in the same component of $\partial Y$ is the metric completion of $Y\setminus \partial Y$. 
A metric space $X$ as above, together with the foliation on the smooth part and marked points corresponding to components of $\partial Y$ modeled on $\widehat{C_2}$, is by definition a \textbf{flat surface}.
We may recover $Y=:\widehat{X}$ from $X$ by performing the real blow-up construction above at all conical singularities and marked points.
We write $X_\mathrm{sm}$ and $X_\mathrm{sg}$ for the sets of smooth and singular points on $X$ respectively.
By definition, we also include the marked points in $X_\mathrm{sg}$.

Fixing a graded surface $X$ we may consider the \textbf{moduli space of marked flat surfaces} $\mathcal M(X)$.
An element of $\mathcal M(X)$ is represented by a real blow-up of a flat surface $Y$ with underlying oriented surface $X$ together with a morphism of graded surfaces $f:X\to Y$ with underlying map the identity on $X$.
Two such pairs $(Y,f)$, $(Z,g)$ are equivalent if there is a diffeomorphism $h:Y\to Z$ preserving the flat structure so that $h\circ f$ and $g$ are isotopic as maps of graded surfaces.
We give flat structures (before identification) the topology of pointwise convergence on the real blow-up, and $\mathcal M(X)$ the quotient topology.

If $X$ is the real blow-up of a flat surface then the square-roots of the quadratic differential on $X\setminus\partial X$ may be viewed as a closed 1-form $\alpha$ with values in $\mathbb C_{\tau}$, and as such it extends to the boundary.
In particular we get a class $[\alpha]\in H^1_{\mathrm{dR}}(X,\partial X;\mathbb C_{\tau})$ so using the pairing \eqref{H1pairing} we get a homomorphism $H_1(X,\partial X;\mathbb Z_{\tau})\to\mathbb C$ which represents the \textit{periods} of the flat surface.
We say that a graded surface $X$ is of \textbf{finite type} if $X$ and $\partial X$ have a finite number of connected components and $H_1(X,\partial X;\mathbb Z_{\tau})$ is finitely generated.

\begin{thm}
\label{THM_period_map}
Let $X$ be a graded surface of finite type.
The map
\begin{equation}
\Pi:\mathcal M(X) \to \mathrm{Hom}(H_1(X,\partial X;\mathbb Z_{\tau}),\mathbb C).
\end{equation}
assigning to each flat surface its periods, is a local homeomorphism.
\end{thm}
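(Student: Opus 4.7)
The plan is to construct, around each point $[Y,f]\in\mathcal M(X)$, an explicit coordinate chart in which the period map becomes an open inclusion. Since $X$ is of finite type, $H_1(X,\partial X;\mathbb Z_\tau)$ is finitely generated of some rank $N$, and the goal is to exhibit $N$ concrete $\mathbb Z_\tau$-cycles on $Y$ whose periods can be varied independently by small deformations of the flat structure. The natural source of such cycles is a geometric decomposition of $Y$ whose edges are saddle connections or horizontal separatrices, and the two cases of finite and infinite area are handled by different such decompositions.

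First, in the finite area case, I would follow the classical Veech--Masur approach and choose a triangulation $T$ of $Y$ whose vertices lie in $Y_{\mathrm{sg}}$ and whose edges are saddle connections; existence of such a triangulation uses compactness of $Y$ and the finiteness of its singular set. The edges of $T$, lifted to the double cover $\tau$, span $H_1(Y,\partial Y;\mathbb Z_\tau)$ subject only to the three-term relations imposed by each triangle, so periods of edges give coordinates on the affine subspace cut out by these relations. Small perturbations of the flat structure preserve $T$ combinatorially because no edge length shrinks to zero, so the period map reads off the chart directly, and the inverse is built by gluing Euclidean triangles with prescribed edge vectors and transporting the marking by the straight-line homotopy on each triangle. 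This is the content to be filled in by Subsection~\ref{subsec_saddle}.

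Second, in the infinite area case, the replacement for the triangulation is the horizontal strip decomposition of Subsection~\ref{subsec_hsd}: cutting $Y$ along its horizontal separatrices produces a finite disjoint union of maximal horizontal strips, half-planes, and standard regions near infinite-angle singularities, the finiteness following from the finite-type hypothesis. The widths, heights, and gluing offsets of these pieces correspond to the periods of a spanning set of cycles in $H_1(Y,\partial Y;\mathbb Z_\tau)$, and any nearby choice of such data is realized by regluing, once the combinatorial data of the decomposition is fixed.

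The main obstacle I expect is the infinite area case, where one must control the flat geometry near infinite-angle singularities and verify that the strip decomposition is genuinely stable under small perturbations, so that $\Pi$ is not only a continuous bijection on the chart but actually a homeomorphism. Passing to the real blow-up $\widehat Y$, on which the $\mathbb C_\tau$-valued 1-form $\alpha$ extends continuously to the boundary, should give enough rigidity for this; once the decomposition is shown to be locally constant, continuity of $\Pi^{-1}$ reduces to the explicit gluing construction, and bijectivity to the uniqueness up to isotopy of the decomposition. In both cases the outcome is that $\Pi$ factors locally as a homeomorphism onto an open set of $\mathrm{Hom}(H_1(X,\partial X;\mathbb Z_\tau),\mathbb C)$.
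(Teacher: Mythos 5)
Your outline follows essentially the same two-pronged strategy as the paper: a triangulation by saddle connections with re-gluing of Euclidean triangles in the finite-area case, and a horizontal strip decomposition in the infinite-area case, with the period map reading off the combinatorial gluing data of the decomposition. The finite-area portion matches the paper's argument closely, including the observation that the triangulation persists under small perturbations and that the inverse is built by deforming triangles with prescribed edge vectors.

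The one real gap is in the infinite-area case. The horizontal strip decomposition you invoke exists only when the horizontal foliation of the given flat structure has no leaves that are saddle connections or closed geodesics. At a flat structure $a_0$ that does have a horizontal saddle connection, the separatrices fail to cut the surface into standard strips, and your chart is simply not defined at such an $a_0$; so your argument establishes the local homeomorphism property only on the generic locus. The paper closes this gap by noting that $\Gamma:=H_1(X,\partial X;\mathbb Z_\tau)\neq 0$, so the one-parameter subgroup $\mathbb R\subset\widetilde{GL^+(2,\mathbb R)}$ acts on $\mathcal M(X)$ by rotating the horizontal direction, and every orbit meets the generic locus because only countably many slopes occur as slopes of saddle connections. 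Since $\Pi$ is equivariant with respect to this action and the corresponding rotation on $\mathrm{Hom}(\Gamma,\mathbb C)$, being a local homeomorphism at one point of the orbit gives it at $a_0$ as well. You should add this rotation-and-equivariance step to make the infinite-area argument complete.
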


The strategy of the proof will depend on whether the area of the flat surface is finite or infinite and will be completed in the following two subsections.

\subsubsection*{Grading of curves on flat surfaces}

Let $X$ be a surface with flat structure defined in terms of a complex structure and non-vanishing holomorphic quadratic differential $\varphi\in\Gamma(X,(T^*X)^{\otimes 2})$.
In this context, grading of curves and intersection index can be defined somewhat more concretely.
Recall that the holomorphic and real tangent bundles may be identified, and the horizontal foliation is the unique section of $\mathbb P(TX)$ on which $\varphi$ is real and positive.
Moreover, we have an identification of $\mathbb P(TX)$ with the bundle with constant fiber $\mathbb C^*/\mathbb R_{>0}\cong U(1)$ under which this section corresponds to $1\in U(1)$.

Given an immersed curve $\alpha:I\to X$ a grading is defined by a function $\phi:I\to\mathbb R$ such that
\begin{equation}
\varphi(\dot{\alpha}(t),\dot{\alpha}(t))\in\mathbb R_{>0}e^{2\pi i\phi(t)}.
\end{equation}
The corresponding path $\alpha^*\Omega\rightsquigarrow \dot{\alpha}$ is given by
\begin{equation}
s\mapsto e^{2\pi i\phi(t)s},\qquad s\in[0,1]
\end{equation}
under the identification as above.
Note that if $\alpha$ is geodesic if and only if $\phi$ is locally constant.
Suppose we have curves $\alpha_1,\alpha_2$ with grading given by real-valued functions $\phi_1,\phi_2$ intersecting transversely in $p\in S$.
Then the intersection index is
\begin{equation}
i_p(\alpha_1,\alpha_2)=\left\lceil\phi_1(p)-\phi_2(p)\right\rceil.
\end{equation}

\subsection{Saddle connections}
\label{subsec_saddle}

Geodesics, in particular those of finite length, are fundamental in the study of flat surfaces.
The smooth part, $X_{\mathrm{sm}}$, of a flat surface $X$ is a Riemannian surface, and by our completeness requirement any geodesics on $X_{\mathrm{sm}}$ can either be extended indefinitely or converges towards a point in the complement $X_{\mathrm{sg}}$ of $X_{\mathrm{sm}}$.
A \textbf{saddle connection} is a geodesics converging towards points in $X_{\mathrm{sg}}$ (``saddles'') in both directions. 
The endpoints are not required to be distinct.

A \textbf{geodesic arc system} on a flat surface is a collection of saddle connections which intersect at most in the endpoints.

\begin{prop}
Let $X$ be a flat surface of finite type, then the size of any geodesic arc system is bounded above by a constant depending only on the topology of the real blow-up $Y=\widehat{X}$.
\end{prop}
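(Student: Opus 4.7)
The strategy is to group the arcs $\gamma_1,\ldots,\gamma_n$ into equivalence classes under isotopy in $Y=\widehat{X}$ relative to $\partial Y$ (with endpoints allowed to slide along $\partial Y$), and to bound separately the number of such classes and the number of arcs per class, each by a constant depending only on the topology of $Y$.

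For the number of classes, I would invoke the classical bound on the arc complex of a finite-type surface with boundary: any family of pairwise disjoint, pairwise non-isotopic essential simple arcs with endpoints on $\partial Y$ has size at most some $N_1(Y)$ depending only on the topology of $Y$, since a maximal such family cuts $Y$ into polygons and an Euler-characteristic count gives a linear bound in $-\chi(Y)$ and the number of boundary components. Thus $\Gamma$ hits at most $N_1(Y)$ isotopy classes.

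For the bound per class, suppose $\alpha_1,\ldots,\alpha_k$ are the arcs of $\Gamma$ in one isotopy class. Ordered appropriately, consecutive pairs $(\alpha_i,\alpha_{i+1})$ cobound a rectangle $R_i\subset Y$ with two sides on the arcs and two on $\partial Y$, and the $R_i$ have pairwise disjoint interiors. Collapsing each boundary circle of $Y$ to its corresponding singular point of $X$ sends $R_i$ to a bigon $B_i\subset X$ with two geodesic sides and two corners at singular points. Gauss--Bonnet on $B_i$ gives
\begin{equation}
\phi_i + \phi_i' = \sum_{c\in\mathrm{Int}(B_i)}(2\pi-\theta_c),
\end{equation}
where $\phi_i,\phi_i'$ are the interior angles at the two corners and the sum runs over cone points inside. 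Since $\alpha_i$ and $\alpha_{i+1}$ are distinct geodesics they cannot be tangent at the corners, so $\phi_i+\phi_i'>0$, which forces the right-hand side to be positive and therefore $B_i$ to contain at least one cone point of angle strictly less than $2\pi$ (a simple-pole type singularity). As the $B_i$ are disjoint, these singular points are distinct, giving $k-1\le|X_\mathrm{sg}|$, which is itself a topological invariant of $Y$ (the number of its boundary components).

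Combining yields $n\le N_1(Y)\cdot(|X_\mathrm{sg}|+1)$, a constant depending only on the topology of $Y$. The hard part will be the first step: the standard arc-complex bound is usually stated for compact surfaces, and its extension to the situation where $Y$ has non-compact boundary components homeomorphic to $\mathbb R$ (arising from infinite-angle singularities) requires a reduction argument based on the finite-type hypothesis. A minor subtlety is that certain degenerate configurations in the second step (loops at a single singularity, monogons in place of bigons, or arcs sharing both endpoints with a neighbor) must be checked individually, but all fall under the same Gauss--Bonnet template.
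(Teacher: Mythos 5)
Your proposal is a genuinely different route from the paper's, though the two share the crucial observation that a geodesic bigon must contain an interior cone point of angle $\pi$. The paper makes a single Euler-characteristic count on the polygon decomposition cut out by the arc system: saddle connections give $E$ classes in $H_1(X,X_\mathrm{sg};\mathbb Z)$ with relations coming from the $F$ polygons; bigons (count $D$) are bounded by the number of compact boundary components, edge–face incidences give $F-D\le\tfrac{2}{3}E$, and the finite rank $B$ of $H_1(X,X_\mathrm{sg};\mathbb Z)$ gives $E-F\le B$, whence $E\le 3(D+B)$. Your proposal instead splits the count into (a) a bound $N_1(Y)$ on the number of isotopy classes realized, via the arc complex, and (b) a Gauss--Bonnet bound of $|X_\mathrm{sg}|+1$ on the number of parallel geodesic representatives of a single class. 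Part (b) is correct as written: for a geodesic bigon with interior angles $\phi,\phi'>0$ at its corners, Gauss--Bonnet gives $\phi+\phi'=\sum_c(2\pi-\theta_c)>0$, so some interior singularity has $\theta_c=\pi$, and disjointness of the bigons makes these witnesses distinct. The paper's approach is shorter and avoids the arc-complex machinery entirely; yours isolates the ``parallel geodesics'' phenomenon more explicitly, which is conceptually illuminating.

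The one real gap is exactly the one you flag in step (a): the bound on the number of isotopy classes of pairwise disjoint essential simple arcs is standard for compact surfaces with circle boundary, but $Y=\widehat{X}$ has boundary components diffeomorphic to $\mathbb R$ whenever $X$ has infinite-angle singularities, and you do not actually give the reduction. This is fillable (e.g.\ by a one-point compactification of each $\mathbb R$-boundary into a circle with a distinguished puncture that arcs must avoid, or by a direct Euler-characteristic count using that finite type guarantees $Y$ is homotopy equivalent to a finite CW complex), but as stated the proof is incomplete at this point. You should also verify explicitly that saddle connections cannot be isotopic into $\partial Y$ (a monogon with no interior singularity would violate Gauss--Bonnet), so that the arcs are genuinely essential and the arc-complex bound applies; and that arcs within a single isotopy class really do admit a linear order making consecutive pairs cobound rectangles with pairwise disjoint interiors. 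These are the ``degenerate configurations'' you allude to, and they do need to be checked, but none of them obstruct the argument.
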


\begin{proof}
Let $\mathcal A$ be a finite geodesic arc system on $X$ and $E=|\mathcal A|$ its cardinality.
Each saddle connections in $\mathcal A$ together with an arbitrary choice of orientation gives a non-zero class in $H_1(X,X_{\mathrm{sg}},\mathbb Z)$.
A complete set of relations between these classes is given by the polygons cut out by $\mathcal A$.
Let $F$ be the number of these polygons.
Any bi-gon cut out by $\mathcal A$ must contain a conical point with cone angle $\pi$ in its interior, so their number, $D$, is bounded by the number of compact boundary components of $Y$.
Assigning boundary edges to polygons we find that
\begin{equation}
F-D \leq \frac{2}{3} E.
\end{equation}
By assumption, $B=\mathrm{rk}H_1(X,X_{\mathrm{sg}},\mathbb Z)$ is finite, and since $E-F\leq B$ we get
\begin{equation}
E \leq 3(D+B)
\end{equation}
which completes the proof.
\end{proof}

Let $\mathcal A$ be a maximal geodesic arc system on $X$.
By the previous proposition it is finite.
Let us study its local geometry.
If $p\in X_{\mathrm{sg}}$ and $D$ is the corresponding boundary component of $\widehat X$, then the set of maximal geodesics starting at $p$ is identified with $D$.
A finite subset $E\subset D$ corresponds to (ends of) saddle connections in $\mathcal A$ which converge to $p$.
For components of $D\setminus E$ there are two possibilities: Either the maximal geodesics in these directions intersect a saddle connection in $\mathcal A$, or not. 
They cannot converge to a singularity by maximality of $\mathcal A$.
In the first case there is a triangle cut out by $\mathcal A$ with $p$ as one of its vertices.
In the second case there is an conical sector with cone angle $\ge\pi$, possibly infinite, again by maximality. 

\begin{prop}
Suppose $\mathcal A$ is a maximal geodesic arc system on a flat surfaces $X$.
Let $K$ be the union of $X_{\mathrm{sg}}$, the saddle connections in $\mathcal A$, and the triangles cut out by $\mathcal A$. 
Then $K$ is the convex hull of $X_{\mathrm{sg}}$, thus independent of $\mathcal A$.
\end{prop}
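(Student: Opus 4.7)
The plan is to verify the two inclusions separately. The easier direction $K\subseteq\mathrm{Conv}(X_{\mathrm{sg}})$ I would dispatch first: every point of $X_{\mathrm{sg}}$ lies in the hull by definition; each saddle connection in $\mathcal A$ is a geodesic segment between two points of $X_{\mathrm{sg}}$ and so lies in any convex set containing $X_{\mathrm{sg}}$; and each triangle cut out by $\mathcal A$, being a Euclidean triangle, is swept out by geodesic segments between points of its (already included) edges. Closedness of $K$ is clear since it is a finite union of closed pieces.

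For the substantive direction I would show that $K$ is itself convex, arguing by contradiction. Suppose $\gamma:[0,L]\to X$ is a geodesic with $\gamma(0),\gamma(L)\in K$ but $\gamma$ not entirely contained in $K$. Extract a maximal open subinterval $(a,b)\subseteq[0,L]$ with $\gamma((a,b))\subseteq X\setminus K$; then $\gamma(a),\gamma(b)\in\partial K$ and $\gamma((a,b))$ lies in a single connected component $U$ of $X\setminus K$. The crucial input, supplied by the local analysis of $\mathcal A$ at singular points carried out in the paragraph preceding the proposition, is that at every singular point $v\in\partial U$ the interior angle of $\bar U$ at $v$ is $\ge\pi$; otherwise the sector at $v$ would be cut off by a triangle of $\mathcal A$ and would already lie in $K$.

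If $\gamma(a)$ and $\gamma(b)$ are both singular the argument terminates at once: $\gamma|_{[a,b]}$ is then a saddle connection whose interior is disjoint from $\mathcal A$, and adjoining it to $\mathcal A$ contradicts maximality. In the general case I would close up $\gamma|_{[a,b]}$ by a piecewise-geodesic path $\delta$ along $\partial U$ from $\gamma(a)$ to $\gamma(b)$, obtaining a flat topological $n$-gon $P\subseteq\bar U$. Its interior angles $\beta_i$ at the singular vertices traversed by $\delta$ are $\ge\pi$ by the previous paragraph, while the angles $\alpha_a,\alpha_b$ at the corners $\gamma(a),\gamma(b)$ are strictly positive, because $\gamma$ enters $U$ in a direction distinct from that of $\delta$ (transversally to the boundary saddle connection when the endpoint is non-singular, and into a non-boundary sector when it is singular). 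The flat Gauss--Bonnet identity
\begin{equation*}
\alpha_a+\alpha_b+\sum_{i=1}^{n-2}\beta_i=(n-2)\pi,
\end{equation*}
combined with $\beta_i\ge\pi$ and $\alpha_a,\alpha_b>0$, yields the contradiction $0<\alpha_a+\alpha_b\le 0$.

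The main obstacle I anticipate is the construction of the disk $P$ itself, which requires $\gamma|_{[a,b]}$ to be homotopic rel endpoints in $\bar U$ to a path in $\partial U$. When $U$ is simply connected this is automatic, but in general one has to pass to the universal cover $\tilde U$ and lift the segment. Since $\tilde U$ contains no interior singular points it develops isometrically into $\mathbb R^2$; tracing the image of $\partial\tilde U$ and exploiting that away from its vertices the boundary is a union of saddle-connection pieces allows one to locate a closing-up path on $\partial\tilde U$, after which the angle-sum argument above is applied upstairs.
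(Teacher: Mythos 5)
Your overall strategy---establishing the easy inclusion $K\subseteq\mathrm{Conv}(X_{\mathrm{sg}})$ and then proving that $K$ is itself convex---matches the paper's, which argues that $K$ is convex and minimal among convex sets containing $X_{\mathrm{sg}}$. You correctly identify the crucial input supplied by the preceding local analysis (interior angle $\ge\pi$ at singular boundary vertices of each component $U$ of $X\setminus K$), and the case in which both $\gamma(a)$ and $\gamma(b)$ are singular points is dispatched cleanly via maximality of $\mathcal A$, a step the paper leaves implicit. The Gauss--Bonnet angle-sum contradiction you use for the remaining cases is a reasonable and standard way to flesh out the paper's terse assertion that ``any geodesic which leaves $K$ stays in $X\setminus K$,'' and the arithmetic ($\alpha_a+\alpha_b+\sum\beta_i=(n-2)\pi$ with $\beta_i\ge\pi$ forcing $\alpha_a+\alpha_b\le 0$) is correct.

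The genuine gap is in the construction of the flat polygon $P$, which you yourself flag but do not close. Passing to the universal cover $\tilde U$ and lifting $\gamma|_{[a,b]}$ only produces a closing-up path on $\partial\tilde U$ if the lifted endpoints $\tilde\gamma(a)$, $\tilde\gamma(b)$ land on the \emph{same} component of $\partial\tilde U$. In general $\partial\tilde U$ has infinitely many components (each a line), and a priori the lifted geodesic could run from one to another---for instance if $U$ itself has disconnected boundary, in which case $\tilde U$ looks locally like a Euclidean strip and there is no bounding disk to apply the angle sum to. You write ``tracing the image of $\partial\tilde U$ \ldots allows one to locate a closing-up path,'' but this is an assertion, not an argument; and it is precisely this point at which the paper's proof also glosses over a subtlety, so it deserves a precise statement (e.g., a separate lemma that $U$ has connected boundary, or that any geodesic chord of $\bar U$ has endpoints on the same lifted boundary line). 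Also a small imprecision: the developing map of $\tilde U$ into $\mathbb R^2$ is a local isometry but need not be injective, so ``develops isometrically into $\mathbb R^2$'' overstates what is used; fortunately the Gauss--Bonnet argument is intrinsic and does not need injectivity of the developing map, so this does not affect the substance.
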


\begin{proof}
From the local geometry above we see that any geodesic which leaves $K$ stays in $X\setminus K$ for all subsequent times, so $K$ is convex. 
It contains $X_{\mathrm{sg}}$ by definition.
If $K'$ is convex and contains $X_{\mathrm{sg}}$, then it must also contain all saddle connections, in particular those in $\mathcal A$, and thus all triangles cut out by $\mathcal A$, so $K\subseteq K'$.
\end{proof}

We will refer to the convex hull of $X_{\mathrm{sg}}$ as the \textit{core} of the flat surface $X$, denoted $\mathrm{Core}(X)$.
The previous proposition shows that, for $X$ of finite type, it is covered by a finite number of singular points, saddle connections, and geodesic triangles, thus compact.

\begin{prop}
Let $X$ be a flat surface with the property that each component contains at least one singular point, $K$ the core of $X$.
Then $K$ is a deformation retract of $X$.
\end{prop}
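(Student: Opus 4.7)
The plan is to produce an explicit deformation retract $X \to K$ by analyzing each connected component of $X \setminus K$ separately, showing that each is a flat ``funnel'' which retracts geodesically onto its boundary in $K$, and then gluing with the identity on $K$.

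First, fix a maximal geodesic arc system $\mathcal A$ as in the preceding proposition, so that $K = X_{\mathrm{sg}} \cup \bigcup_{a \in \mathcal A} a \cup \bigcup_T T$ where $T$ ranges over the triangles cut out by $\mathcal A$. Let $U$ be a connected component of $X \setminus K$. The local analysis at singular points from the previous proof shows that $\partial U \cap X \subseteq \partial K$ consists of ``external'' saddle connections in $\mathcal A$ together with singular points, and at each such singular point $p \in \partial U$ the set of directions pointing into $U$ forms an angular sector of width at least $\pi$ (infinite if $p$ is an infinite-angle singularity).

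The structural claim I would establish is that $\overline{U}$ has the shape of a flat half-infinite funnel retracting onto $\partial U \cap K$. Concretely, one shoots the unique perpendicular unit-speed geodesic ray into $U$ from each smooth point of $\partial U \cap K$, and unit-speed rays in every inward direction from each singular point of $\partial U \cap K$. I would show these rays are pairwise disjoint, never return to $\partial K$, and together exhaust $U$. This identifies $\overline{U}$ with a quotient of $(\partial U \cap K) \times [0, \infty)$ in which the circle of inward directions over each singular boundary point is collapsed to that point at $t = 0$. The deformation retract on $\overline{U}$ is then the linear shrinking in the $[0, \infty)$ factor; this is the identity on $\partial U \cap K$, so the component-wise retracts glue with the identity on $K$ to a continuous retract $X \to K$.

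The main obstacle is the foliation claim. Non-return and non-collision of rays should follow from convexity of $K$: a ray returning to $\partial K$, or a collision of two rays at a point $q \in U$, would produce a geodesic segment in $X$ with both endpoints in $K$ but interior in $U = X \setminus K$, contradicting the fact (established in the previous proposition) that once a geodesic leaves $K$ it never returns. Exhaustiveness is more subtle: I would argue first that $U$ is simply connected (any non-contractible loop in $U$ would homotope to a closed geodesic parallel to saddle connections in $\partial K$, which would then be forced into the convex hull $K$ by a flat cylinder argument), and then develop $U$ isometrically into $\mathbb{C}$ and identify its image with a planar region over which the perpendicular-plus-fan foliation is complete. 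Some additional combinatorial care is needed at singular boundary points to match the fan of inward directions with the structure of $U$ near its boundary, particularly in the infinite-angle case.
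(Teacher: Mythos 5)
Your retraction is geometrically the same one the paper uses, but the paper packages it far more economically: it simply moves $p$ along the unique shortest geodesic to the nearest point of $K$, with existence of a nearest point coming from the assumption (every point has finite distance to the core) and uniqueness from convexity of $K$. Your ``perpendicular rays from smooth boundary points'' and ``fans from singular boundary points'' are exactly these nearest-point geodesics, so the funnel foliation you set up is a more elaborate description of the same map, and the disjointness/non-return facts you invoke via convexity are likewise just uniqueness of the nearest point in disguise.

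There is, however, a genuine error in your exhaustiveness step. The claim that a component $U$ of $X\setminus K$ is simply connected is false: as the paper notes elsewhere (in the proof of the complex-analytic characterization), each such component is topologically a cylinder, parametrized by tangent lines to $K$ together with a positive distance --- which is exactly the ``funnel'' picture you yourself describe, and a funnel is an annulus, not a disk. Your proposed homotopy of a non-contractible loop to a closed geodesic which then gets ``forced into $K$'' does not work, and the subsequent development of $U$ into $\mathbb C$ is not available. The fix is that exhaustiveness is actually the easy part, not the subtle part: every $p\in X$ has finite distance to $K$ (this is precisely what the hypothesis guarantees), a shortest geodesic from $p$ to $K$ meets $\partial K$ either perpendicularly at a smooth point or at a singular point (else first variation shortens it), and that geodesic is one of your rays. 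So $p$ lies on a ray, without any appeal to simple-connectivity or to a developing map. With that replacement the proposal becomes a correct, if verbose, unwinding of the paper's one-line argument.
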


\begin{proof}
The assumption ensures that each $p\in X$ has finite distance to $\mathrm{Core}(X)$.
A deformation retraction can be defined by moving $p\in X$ along a geodesic path to the unique point in $\mathrm{Core}(X)$ which is closest to $p$.
\end{proof}

Let $X$ be a connected flat surface with finite area and assume that $X$ is not smooth, which excludes only the case of the torus.
A maximal geodesic arc system on $X$ gives a triangulation with some triangles possibly degenerate.
By definition, a triangle is degenerate if two of its edges coincide, so that they meet in a conical singularity with cone angle $\pi$.
Using triangulations by saddle connections, we can now give a proof of Theorem~\ref{THM_period_map} in the case when the flat surface has finite area.
This result is classical, see e.g. Veech~\cite{veech93}, but we include a proof here for completeness.
The proof in the case of infinite area (but still finite type) will be given in the next section.

\begin{proof}[Proof of Theorem \ref{THM_period_map}, finite area case]
Fix a graded surface $X$ of finite type admitting flat structures with finite total area.
This means that $X$ is compact as a surface with boundary.
We also assume that there is at least one singularity, excluding the case of the torus which is easily handled directly.
Let $\Gamma=H_1(X,\partial X;\mathbb Z_{\tau})$ and $\Pi:\mathcal M(X)\to\mathrm{Hom}(\Gamma,\mathbb C)$ be the period map.
We want to show that $\Pi$ is a local homeomorphism.

Let $a_0\in\mathcal M(X)$ and equip $X$ with this flat structure.
Pick a maximal collection $\mathcal A$ of geodesic arcs on $X$.
These arcs give a triangulation of $X$, since the flat metric has finite area by assumption.
Fix a grading on each arc in $\mathcal A$, then each $\alpha\in\mathcal A$ gives a class $[\alpha]\in\Gamma$.
A complete set of relations between these classes comes from the list of triangles.
The number $\Pi(a_0)([\alpha])\in\mathbb C$ records the length and slope of an arc $\alpha\in\mathcal A$.

Since the number of triangles is finite, there is a contractible neighborhood $U$ of $\Pi(a_0)\in\mathrm{Hom}(\Gamma,\mathbb C)$ with the following property:
For any $b\in U$ and any three classes $\alpha_1,\alpha_2,\alpha_3$ in $\Gamma$ corresponding to the edges of a triangle of $\mathcal A$, the vectors $\Pi(b)([\alpha_i])$ form the edges of a non-degenerate triangle in $\mathbb C$.
This ensures that for each $b\in U$ we get an actual flat structure on $X$ essentially by direct construction.
Namely, we cut $X$ into its geodesic triangles, deform each of the triangles according to the values $\Pi(b)([\alpha])$, $\alpha\in\mathcal A$, then glue the triangles back together in the same combinatorial manner.
In this way, we have defined a continuous section $\Xi$ of $\Pi$ over $U$.

To complete the proof, it remains to be shown that $\Xi(U)$ is an open neighborhood of $a_0$.
Note that there is a neighborhood $V$ of $a_0\in\mathcal M(X)$ such that all saddle connections in $\mathcal A$ persist throughout $W$, i.e. do not break along a singularity.
Making $W$ sufficiently small, we can assume that $W\subset\Pi^{-1}(U)$.
Then the same collection of arcs $\mathcal A$, up to homotopy, again form a maximal collection of geodesic arcs for any flat surface in $W$.
This shows that $W\subset\Xi(U)$, so $\Xi(U)$ is a neighborhood of $a_0$, in fact open by the same arguments.
\end{proof}

\subsection{Horizontal strip decompositions}
\label{subsec_hsd}

Given a complex number $z\in\mathbb C$ with $\mathrm{Im}(z)>0$ the corresponding \textit{horizontal strip of finite height} is the flat surface with boundary
\begin{equation}
\left\{w\in\mathbb C\mid 0\leq\mathrm{Im}(w)\leq\mathrm{Im}(z) \right\}
\end{equation}
with marked points $\{0,z\}$ on the boundary.
The \textit{horizontal strip of infinite height} is just the closure of the upper half-plane with the origin as marked point.
It turns out that a generic surface of finite type with infinite area is obtained, in a canonical way, by gluing these types of pieces along their boundaries.

\begin{prop}
Let $X$ be flat surface of finite type with infinite area.
Assume that $X$ is connected and has at least one conical point.
Then, after possibly rotating the horizontal direction, the leaves converging towards a conical point cut the surface into horizontal strips as above.
\end{prop}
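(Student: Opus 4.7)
The plan is to choose a generic rotation of the horizontal direction so that the only horizontal trajectories involving a singular point are separatrices escaping to infinity, and then to develop each connected component of the complement of these separatrices, identifying it with a horizontal strip in $\mathbb{C}$.

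First, I would establish that only countably many choices of horizontal direction are problematic. The set of saddle connections on $X$ is countable, because each lies in the compact core $\mathrm{Core}(X)$ by convexity, and upon passing to the universal cover developed in $\mathbb{C}$, saddle connections correspond to segments between a discrete subset of points. Horizontal closed geodesics live in flat cylinders bounded by saddle connections and so also contribute only countably many directions. After a generic rotation, no horizontal saddle connection and no horizontal closed trajectory exists.

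Next, I would classify horizontal leaves under this choice. A separatrix emanating from a singular point $p$ cannot terminate at another singular point (otherwise we would have a horizontal saddle connection), nor close up on itself, so it must escape every compact subset of $X_\mathrm{sm}$. A generic (non-separatrix) horizontal leaf $\ell$ must similarly escape to infinity in both directions, for otherwise a Poincar\'e--Bendixson-type argument for oriented flat foliations would force $\ell$ to accumulate on a closed horizontal trajectory or on a saddle connection, both of which are excluded. This dynamical step is where I expect the main difficulty to lie, since it relies on carefully using the finite-type and infinite-area hypotheses together with the compactness of the core to rule out recurrent behavior.

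Finally, let $\Sigma$ denote the union of horizontal separatrices and let $U$ be a connected component of $X \setminus \Sigma$. Then $U \subset X_\mathrm{sm}$ is open and foliated by complete bi-infinite horizontal leaves, with no singular points in its interior. On the universal cover $\widetilde{U}$, the quadratic differential $\varphi$ admits a global square root $\omega$, defining a developing map $D \colon \widetilde{U} \to \mathbb{C}$ that is a local isometry carrying horizontal leaves to horizontal lines. Since the image is open and each horizontal leaf maps to a complete horizontal line of $\mathbb{C}$, $D(\widetilde{U})$ is a horizontal strip $\{a < \mathrm{Im}(w) < b\}$ with $a \in [-\infty,0]$ and $b \in [0,+\infty]$ after normalization. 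The deck group of $\widetilde{U} \to U$ preserves the flat structure and the horizontal direction, so acts by horizontal translations; any nontrivial such translation would descend to a closed horizontal leaf in $U$, contradicting the choice of direction. Thus $\widetilde{U} = U$ and $U$ is itself a horizontal strip. Taking closures in $X$ and incorporating the bounding separatrices together with their singular endpoints as marked points on the boundary yields the asserted decomposition of $X$ into horizontal strips of finite or infinite height.
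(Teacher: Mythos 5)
Your overall strategy matches the paper's: rotate to a direction with no horizontal saddle connections or closed geodesics, show that separatrices escape the core and that generic leaves escape in both directions, and identify the components of $X\setminus\Sigma$ with strips. The dynamical step you flag as the main difficulty is indeed what the paper handles by citing Strebel; the precise mechanism there is that a leaf remaining in the compact core would have a closure whose boundary is a union of horizontal saddle connections, which are excluded. Your phrasing ("accumulate on a closed horizontal trajectory or on a saddle connection") is slightly off, since the accumulation set is in general a minimal component bounded by saddle connections, but it leads to the same contradiction.

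Where a genuine gap appears is in your final step. You argue that the deck group of $\widetilde{U}\to U$ is trivial, hence $U=\widetilde{U}$, and then conclude that $U$ ``is itself a horizontal strip.'' Triviality of the deck group only gives simple connectedness of $U$; it does not make the developing map $D\colon U\to\mathbb{C}$ injective. A local isometry from a simply connected surface onto a strip in $\mathbb{C}$ need not be a bijection, and you have not excluded the possibility that two distinct leaves of $U$ develop onto the same horizontal line. To close this one needs, in addition, that the leaf space of $U$ is Hausdorff, so that $\mathrm{Im}(D)$ separates leaves; that follows from the properness of the leaves together with the full support of the transverse measure, but it is an extra argument you do not supply. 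The paper's route via the exponential maps at the conical points of $\partial\bar{U}$ builds the isometry with a standard strip from the boundary inward and so sidesteps this global injectivity question, which is why it reads cleanly with less work.
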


\begin{proof}
Assume that no horizontal leaf is a saddle connection.
This can always be achieved by rotating the horizontal direction, as there are at most countably many slopes of saddle connections.
In fact, the set of slopes of saddle connections is closed under the present assumptions, but this is not needed.
Note that no leaf can be a closed geodesic either, as a cylinder foliated by closed geodesics is bounded by one or more saddle connections.
Arguments of Strebel~\cite{strebel} show that there are then only two types of leaves. 
\textit{Generic leaves} which are closed and intersect the core of $X$ in a compact interval, and \textit{Critical leaves} which converge towards a conical point in one direction and eventually leave the core in the other.
Indeed, the boundary of the closure of a leaf remaining entirely in the core would be a union of leaves which are saddle connections, which is impossible.
In particular, the closure of a critical leaf adds only a single conical point.
The components of the union of all generic leaves are open parts of horizontal strips.
The exponential maps at the various conical points give identifications of the closures of these open components with the standard horizontal strips of finite or infinite height. 
\end{proof}

Let us say a bit more about the horizontal strip decompositions appearing in the previous proposition.
A conical point with finite cone angle lies on finitely many horizontal strips, while a conical point with infinite cone angle lies on infinitely many, all but finitely many of which will have infinite height.
Each horizontal strip of finite height contains a unique saddle connection.
These are exactly the saddle connections $\alpha_1,\ldots,\alpha_m$, which do not intersect any of the critical leaves.
We will refer to them as \textbf{simple saddle connections}.
Following the leaves and collapsing the horizontal strips of infinite height we construct a deformation retraction of the surface to the union of the $\alpha_i$ and the conical points.
This shows that $\alpha_i$ give a basis of $H_1(X,\partial X;\mathbb Z_\tau)$, after choosing gradings.

The graph formed by the $\alpha_i$ is identified with part of the Hausdorff version of the leaf space, $L$, of the horizontal foliation.
We may realize $L$ inside $X$ by adding the positive imaginary axis in each horizontal strip of infinite height.
Combinatorially, $L$ is a bit of a generalization of a ribbon graph with some vertices connected to infinitely many half edges (those corresponding to infinite-angle singularities).
More precisely, the set of half edges meeting a given vertex is a torsor over some cyclic group.
This will be discussed in more detail in Subsection~\ref{subsec_sgraphs}.

\begin{proof}[Proof of Theorem \ref{THM_period_map}, infinite area case]
Let $a_0\in\mathcal M(X)$ be a flat structure and assume first that there are no leaves which are saddle connections.
Be the considerations above, the critical leaves cut $X$ into horizontal strips.
Let $\alpha_1,\ldots,\alpha_m$ be the saddle connections corresponding to the horizontal strips of finite height.
Also choose the grading on each $\alpha_i$ so that $\mathrm{Im}\,\Pi(a_0)(\alpha_i)>0$.
This is possible since $\alpha_i$ is not horizontal and shifting the grading flips the sign of $\Pi(a_0)(\alpha_i)$.
We get a basis $[\alpha_1],\ldots,[\alpha_m]$ of $\Gamma=H_1(X,\partial X;\mathbb Z_\tau)$.
Consider the open subset $U\subset\mathrm{Hom}(\Gamma,\mathbb C)$ of maps sending each $[\alpha_i]$ to the upper half plane.
For each $b\in U$ we construct a flat surface $\Xi(b)$ with the same combinatorial type of horizontal strip decomposition as $a_0$, but different slopes and lengths of the $\alpha_i$ given by $b$.
The map $\Xi$ defines a section of $\mathcal M(X)\to\mathrm{Hom}(\Gamma,\mathbb C)$ over $U$.
To see that $\Xi(U)$ is open, note that any flat surface sufficiently close to one in $\Xi(U)$ has the same horizontal strip decomposition.

If $a_0$ has horizontal saddle connections, we argue as follows.
By assumption $\Gamma\neq 0$, so $\mathbb R\subset \widetilde{GL^+(2,\mathbb R)}$ acts on $\mathcal M(X)$ by rotation of the horizontal direction and each orbit contains a flat structure with only critical and generic leaves considered before.
As $\Pi$ is equivariant with respect to the action it must be a local homeomorphism near $a_0$ as well.
\end{proof}

\subsection{Complex-analytic point of view}
\label{subsec_complexanalytic}

\subsubsection*{Meromorphic quadratic differentials}

Let $C$ be a compact Riemann surface and $\varphi$ a non-zero meromorphic quadratic differential on $C$, i.e. a meromorphic section of the square of the canonical bundle.
If $D\subset C$ is the set of zeros and poles of $\varphi$, then $X_{\mathrm{sm}}=C\setminus D$ has the structure of a smooth flat surface, possibly incomplete.
The flat geometry near a zero or pole is analyzed by Strebel~\cite{strebel} by finding a local holomorphic coordinate in which the quadratic differential has a particularly simple form.
Let us recall the results.

\textit{Zeros and simple poles}. 
There is a local coordinate $z$ such that $\varphi=z^ndz^2$, $n=-1,1,2,3,\ldots$.
The metric has a conical singularity at $z=0$ with cone angle $(n+2)\pi$.
In particular, if $\varphi$ has no higher order poles, then the corresponding flat surface has finite total area.

\textit{Poles of order $n\ge 2$}.
If $n$ is odd then there is a local coordinate $z$ with $\varphi=z^{-n}dz^2$.
However, if $n$ is even then $\varphi$ admits a square root near $z=0$, and the residue $a\in\mathbb{C}$ of $\sqrt{\varphi}$ is an invariant up to sign.
It turns out to be the only local invariant, so that there is a coordinate $z$ with 
\begin{equation}
\sqrt{\varphi}=\left(z^{-n/2}+\frac{a}{z}\right)dz
\end{equation}
when $n\ge 4$ and $\sqrt{\varphi}=\frac{a}{z}dz$ when $n=2$.
In any case, we note that the flat metric near a higher order pole has infinite area and is complete, i.e. higher order poles do not lead to any additional singularities of the metric.

\begin{thm}
Let $C$ be a compact Riemann surface with non-zero meromorphic quadratic differential $\varphi$ with set of zeros and poles $D$. 
Then $|\varphi|$ gives $C\setminus D$ the structure of the smooth part of a flat surface of finite type with conical points which have cone angle an integer multiple of $\pi$.
Conversely, and flat surface of finite type without infinite-angle conical points is obtained in such a way.
\end{thm}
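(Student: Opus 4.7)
The plan is to prove the two directions separately. The forward direction is essentially a repackaging of Strebel's local normal forms recalled in the paragraphs just above. Given a compact Riemann surface $C$ with nonzero meromorphic quadratic differential $\varphi$ and pole/zero set $D$, integrating local branches of $\sqrt{\varphi}$ on $C\setminus D$ yields an atlas with transition functions $v\mapsto \pm v+c$, giving $C\setminus D$ the structure of a smooth flat surface. Strebel's normal forms then imply that the metric completion adds a conical point of cone angle $(n+2)\pi$ at each zero of order $n\ge 1$, a conical point of cone angle $\pi$ at each simple pole, and nothing at a pole of order $\ge 2$ (there the metric is already complete with infinite area). Since $C$ is compact and $D$ finite, finite type and the absence of infinite-angle singularities are automatic.

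For the converse, start with a flat surface $X$ of finite type whose conical singularities all have finite cone angle. The flat charts have holomorphic transitions, so $X_\mathrm{sm}$ inherits a complex structure, and the local expressions $dv^2$ (the sign ambiguity being absorbed by the square) patch to a nonvanishing holomorphic quadratic differential $\varphi$ on $X_\mathrm{sm}$. Near a conical singularity $p$ of cone angle $n\pi$ (including the marked points, where $n=2$), choose a local flat coordinate $z$ on the cone and set $w=z^{2/n}$; this is a homeomorphism from a neighborhood of $p$ to a disk. Declaring $w$ holomorphic extends the complex structure across $p$, and the computation
\begin{equation*}
dz^2=(n/2)^2\, w^{n-2}\, dw^2
\end{equation*}
shows that $\varphi$ extends meromorphically with a simple pole if $n=1$, regularly (and nonvanishing) if $n=2$, and with a zero of order $n-2$ if $n\ge 3$.

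The central remaining task is to compactify the infinite-area ends. Since $X$ has finite type, $X\setminus\mathrm{Core}(X)$ has only finitely many connected components, each a neighborhood of an infinite-area end. For each such end I would rotate the horizontal direction generically (possible because the set of slopes of saddle connections is countable) and apply the horizontal-strip decomposition from Subsection~\ref{subsec_hsd}. A neighborhood of the end is then a finite union of horizontal half-strips of infinite height, glued along critical leaves emanating from finitely many conical points on the boundary of $\mathrm{Core}(X)$. Each open half-strip is conformally $\{\mathrm{Im}\, w>0\}$ with $\varphi=dw^2$, and gluing finitely many of them according to the combinatorics produces a Riemann surface with boundary whose end is conformally a once-punctured disk $\{0<|\zeta|<1\}$. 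Matching $dw^2$ across the gluings with Strebel's normal forms then forces $\varphi$ to extend as a meromorphic quadratic differential with a pole of order $k\ge 2$ at $\zeta=0$, where $k$ is determined by the combinatorial type of the end (with an additional residue invariant when $k=2$, which is read off from the period of $\sqrt{\varphi}$ around the end).

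The main obstacle is this last step: verifying that a purely metric infinite-area end of finite topological complexity is conformally isomorphic to a Strebel model end. The strategy is to construct the isomorphism explicitly from the horizontal-strip decomposition, using the fact that the number, heights, and gluing pattern of the half-strips forming the end are finite discrete data that rigidify both the conformal type of the puncture and the pole order of $\varphi$. Once this is achieved for each end, adding the finitely many punctures thus produced to the Riemann surface constructed in the previous step yields a compact Riemann surface $C$ whose meromorphic quadratic differential recovers $X$.
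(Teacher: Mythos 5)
Your forward direction and the extension of the complex structure and $\varphi$ across finite-angle conical points (via $w=z^{2/n}$) essentially reproduce what the paper either cites or leaves implicit; those parts are fine. The substantive part of the proof is the compactification of the infinite-area ends, and here you take a genuinely different route from the paper. The paper works directly with the metric geometry of $\mathrm{Core}(X)$: it notes each component of $X\setminus\mathrm{Core}(X)$ is parametrized by pairs (tangent line to $K$, distance), hence a topological cylinder; it then enlarges $K$ so that each boundary component is either a closed geodesic (identified with the cylinder from a second-order pole) or a cycle of $2n$ geodesic edges meeting at exterior angle $3\pi/2$, and it parallel transports these edges outward to normalize their lengths (all equal when $n$ is odd, a controlled pattern $l_1,l_2,l_3,\ldots,l_3$ when $n$ is even). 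This rigidifies the end up to isometry and matches it to Strebel's explicit local model for a pole of order $n+2$, with the residue in the even case read off from the holonomy of $\sqrt\varphi$ around the cylinder. By contrast, you invoke the horizontal-strip decomposition, rotating the horizontal direction generically, and try to realize each end as a finite gluing of half-planes with $\varphi=dw^2$.

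The gap you flag yourself is a real one, and it is exactly where the paper's argument does genuine work. Your claim that the finite gluing of half-planes ``produces a Riemann surface \ldots whose end is conformally a once-punctured disk'' and that ``matching $dw^2$ across the gluings with Strebel's normal forms forces $\varphi$ to extend meromorphically'' is the conclusion, not an argument. Two things need to be shown and are not: (i) that the annular end has infinite conformal modulus on the puncture side (so that it is $\mathbb{D}^*$ and not an essential annulus), and (ii) that among the Strebel models $z^{-p}dz^2$ (plus a residue parameter when $p$ is even) there is exactly one conformally matching your gluing, with $p$ determined by the combinatorics. Neither of these is obvious from finiteness of the strip data alone, and the paper's alternative---normalizing edge lengths by parallel transport so the boundary data coincide with those of an explicit model, and for even $n$ computing the period of $\sqrt\varphi$ around the cylinder to pin down the residue---is precisely how it avoids needing a separate uniformization-type argument. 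There is also a minor wrinkle: the horizontal rotation is a global operation, and after it you must still argue that no closed horizontal leaf or horizontal saddle connection survives on the whole surface, not end by end; the paper sidesteps this entirely since it never uses the horizontal foliation in this proof.

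So: correct forward direction, correct treatment of finite cone angles, a legitimately different idea for the ends, but an unclosed gap at exactly the step that carries the weight of the theorem.
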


\begin{proof}
It remains to show the second part, so let $X$ be a flat surface of finite type without infinite-angle conical points.
First of all, we have a complex structure and non-vanishing holomorphic quadratic differential on the smooth part $X_{\mathrm{sm}}$.
We know that near a conical point the flat structure comes from a zero or first order pole of a quadratic differential, so the complex structure extends to all of $S$ and $\varphi$ extends meromorphically.
If $X$ has finite area, then it is compact and so $C=X$, $D=X_{\mathrm{sg}}$, and we are done.

For the case of infinite area we need to find a compact $K\subset X$ such that the components of the complement $X\setminus K$ are isometric to some punctured neighborhoods of higher order poles.
As a first step, take $K$ to be the core of $X$.
The boundary of $K$ in $X$ is a sequence of saddle connections meeting in exterior angles $\phi_p\ge\pi$, by convexity of $K$.
The condition on the global monodromy of the metric (existence of horizontal foliation) implies that
\begin{equation}
\sum_p(\phi_p-\pi)=n\pi
\end{equation}
where $p$ runs over the corners of a fixed component of the boundary of $K$, and $n\in\mathbb Z_{\ge 0}$ depends on that component.

Points in the complement of $K$ are parametrized by pairs $(l,d)$ of a line $l$ tangent to $K$ and a positive distance $d\in\mathbb R_{>0}$ from $K$.
In particular, each component of $X\setminus K$ is topologically a cylinder.

We enlarge $K$ so that its boundary is a sequence of closed geodesics and straight edges (finite length geodesics) meeting in exterior angles $3\pi/2$, not necessarily at conical points. 
Consider some component $B$ of the boundary of $K$.
If $B$ is a closed geodesic, then the corresponding component of $X\setminus K$ is identified with a flat cylinder which comes from a second order pole of a meromorphic quadratic differential.
Otherwise, $B$ is a union of an even number of edges with lengths $a_1,\ldots,a_{2n}$ in counterclockwise order.
If we parallel transport some edge outwards from $K$ by a distance $d\ge 0$ then the lengths of the adjacent edges increase by $d$. 
A simple proof by induction on $n$ shows that when $n$ is odd we can parallel transport the edges outwards so that they all have the same length $l$, which can be an arbitrary real number bigger than some constant depending on the original $K$.
Hence, in this case the local geometry is completely determined by $n$, and so must coincide with the one coming from a pole of order $n+2$.

When $n$ is even we can arrange the sequence of lengths to be of the form $l_1,l_2,l_3,l_3,\ldots,l_3$ after parallel transport of the edges, and add any positive real number to all elements at once.
Without loss of generality, the first edge (of length $a$) is horizontal, then the integral of the square root of the quadratic differential around the cylinder is just $a=\pm(l_1-l_2+i(l_1-l_3))$, so the geometry must be locally isometric to the one coming from the square of the abelian differential $(z^{-n-2}+a/z)dz$.
\end{proof}

\subsubsection*{Exponential-type singularities}

An \textbf{exponential-type singularity} is a transcendental singularity of a quadratic differential of the form
\begin{equation}
e^{f(z)}g(z)dz^2,\qquad f,g\text{ meromorphic}
\end{equation}
in some local coordinate $z$, where $f$ has a pole of order $n$ at $z=0$.
By change of coordinates we may assume that $f(z)=z^{-n}$.
The following proposition describes the local flat geometry near such a singularity.

\begin{prop} 
\label{prop_expflatgeom}
Let
\begin{equation}
\varphi=\exp(z^{-n})z^mg(z)dz^2
\end{equation}
with $n\in\mathbb{Z}_{>0}$, $m\in\mathbb{Z}$, $g$ holomorphic, non-vanishing on a neighborhood $U$ of $0\in\mathbb{C}$.
Set
\begin{equation}
D=\{0<|z|\leq r\}\subset U
\end{equation}
then the completion of $D$ with respect to $|\varphi|$ has $n$ additional points, all of which are $\infty$-angle singularities.
\end{prop}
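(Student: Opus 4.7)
The plan is to cut $D$ by the $2n$ critical rays $\{\cos(n\arg z)=0\}$ into $n$ growth sectors (where $\cos(n\arg z)>0$, so $|\sqrt\varphi|=|z|^{m/2}|g|^{1/2}e^{\mathrm{Re}(z^{-n})/2}$ blows up super-exponentially as $z\to 0$) and $n$ decay sectors (where $|\sqrt\varphi|$ decays super-exponentially). I will show that growth sectors contribute no new point to the metric completion, that each decay sector contributes exactly one, and that each such new point is an $\infty$-angle singularity.

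Fix one sector and change coordinate by $u=z^{-n}$ on a suitable branch. The sector becomes a region $\{\pm\mathrm{Re}(u)>0,\ |u|>r^{-n}\}$, and a direct calculation rewrites $\varphi=e^u\tilde g(u)\,du^2$ with $\tilde g$ holomorphic, non-vanishing near $u=\infty$, and $\tilde g(u)\sim (g(0)/n^2)u^{-\alpha}$ for $\alpha=2+(m+2)/n$. On the simply-connected sector the flat coordinate $w=\int\sqrt\varphi$ is well defined, and iterated integration by parts in $e^{u/2}$ yields
\begin{equation*}
w(z)=w_0+C\,z^{m/2+n+1}\,e^{z^{-n}/2}\bigl(1+O(z^n)\bigr),\qquad z\to 0,
\end{equation*}
for a constant $C=(2/n)\sqrt{g(0)}\neq 0$ and an integration constant $w_0$ (finite on the decay side; on the growth side we may take $w_0=0$ but $|w|\to\infty$).

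In a growth sector $|e^{z^{-n}/2}|\to\infty$ super-exponentially, hence $|w|\to\infty$ and no Cauchy sequence in the flat metric accumulates at $z=0$ through such a sector. In a decay sector the interior (where $\cos(n\arg z)\le-\eta<0$) has $|e^{z^{-n}/2}|\to 0$ super-exponentially, so $w(z)\to w_0$; a Cauchy sequence tending to $z=0$ has bounded $|w|$, which rules out paths hugging the critical rays (where $|w-w_0|$ either grows, for $m<-2n-2$, or winds without converging, for $m=-2n-2$) and forces convergence to $w_0$. Distinct decay sectors give distinct new points because any path in $D$ joining representative sequences from two of them has length bounded below by a positive constant (e.g.\ the flat diameter of $\{|z|=r\}$). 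This produces exactly $n$ new points in total.

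It remains to verify that each such $w_0$ is an $\infty$-angle singularity. Parametrize a decay sector by $z=\rho e^{i(\theta_0+\delta)}$ with axis $\theta_0$ and $|\delta|<\pi/(2n)$; the asymptotic gives
\begin{equation*}
|w-w_0|\sim |C|\,\rho^{m/2+n+1}\,e^{-\rho^{-n}\cos(n\delta)/2},\qquad \arg(w-w_0)\sim \tfrac{1}{2}\rho^{-n}\sin(n\delta)+\mathrm{const}.
\end{equation*}
As $\rho\to 0$ the dominant term $\tfrac{1}{2}\rho^{-n}\sin(n\delta)$ makes $\arg(w-w_0)$ sweep an interval of length $\sim\rho^{-n}\to\infty$, while $|w-w_0|\to 0$; lifted to the universal cover of $\mathbb{C}^*$, the decay sector thereby maps bijectively onto a punctured neighbourhood of the distinguished point, which is precisely the local model for $C_\infty$. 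The main obstacle is this last step: one must verify that the error $O(z^n)$ and the lower-order angular contributions do not destroy the winding behaviour or introduce unintended identifications, which requires careful tracking of the asymptotic expansion.
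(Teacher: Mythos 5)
Your strategy --- cut $D$ by the critical rays into decay and growth sectors and study the flat coordinate $w=\int\sqrt\varphi$ --- is recognisably the same as the paper's, which passes to $w=e^{\pi i/n}/z$ so that $\varphi=e^{-w^n}h(w)\,dw^2$, works on $n$ sectors $V_k$ surrounding the decay directions, and in the $\infty$-angle step likewise uses the primitive $f$ with $(df)^2=\varphi$. Your asymptotic $w-w_0\sim C\,z^{m/2+n+1}e^{z^{-n}/2}$ checks out. But the two places you acknowledge waving your hands are exactly the places where the paper has to do real work, so this is not a complete proof.

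First, partitioning \emph{exactly} along $\cos(n\arg z)=0$ leaves the near-critical behaviour unresolved: a Cauchy sequence may drift near, or oscillate across, a critical ray, where $|e^{z^{-n}/2}|$ is neither uniformly small nor uniformly large, and your appeal to bounded $|w|$ does not treat all $m$ and does not forbid such oscillation. The paper avoids the problem by taking the decay sectors strictly wider than the decay region itself: $V_k=\{\,|\arg w-2\pi k/n|<\pi\varepsilon/n\,\}$ with $\varepsilon\in(\tfrac12,1)$, which overlaps the critical rays, so on the complement one has $\cos(n\arg w)\le\cos(\pi\varepsilon)<0$, hence $|\varphi|\ge C|dw|^2$, and every divergent Cauchy sequence is eventually trapped in a single $V_k$. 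Completeness inside $V_k$ --- including the parts that cross the critical rays --- is then handled by a separate lemma for the metric $(e^{-\mathrm{Re}(u)}h(u))^2|du|^2$ on a sector of opening angle $>\pi$, via level sets of $\eta(u)=(\max\{1,|\mathrm{Im}\,u|\})^{\lambda}e^{-\mathrm{Re}\,u}$; that is the ingredient your sketch substitutes with the asymptotic of $w$ and does not fully supply. Your uniqueness claim (distinct sectors give distinct points) also rests on a length lower bound across growth sectors that you assert but do not prove.

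Second, for the $\infty$-angle identification you yourself flag the gap: controlling the $O(z^n)$ and lower-order angular terms to turn the observed winding of $\arg(w-w_0)$ into an actual isometry with the model $C_\infty$. The paper's argument bypasses the error analysis entirely with a soft covering-space step: one shows $|f(z_2)-f(z_1)|\le d(z_1,z_2)$ so that $f$ extends continuously with $f(a)=0$; proves that $f:D_r^*(a)\to D_r^*(0)$ is a covering by an injectivity-radius argument using completeness of $\bar B$; and observes from the lemma's proof that $a$ has a fundamental system of punctured simply-connected neighbourhoods, so $D_r^*(a)$ is simply connected and $f$ is the \emph{universal} cover, which is exactly the definition of an $\infty$-angle point. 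That covering-space argument, rather than a delicate asymptotic expansion, is the idea your write-up is missing in the last step.
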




\begin{lem}
Let $\rho\ge 1$, $\pi/2<\varepsilon<\pi$, $A=\{re^{i\phi}\mid r\ge\rho,|\phi|\leq\varepsilon\}\subset\mathbb{C}$, $h(z)$ a smooth function on $A$ such that for some $C_1,C_2>0$,$\lambda\in\mathbb{R}$
\begin{equation}
C_1|z|^{\lambda}\leq h(z)\leq C_2|z|^{\lambda}
\end{equation}
Then the completion of $A$ with respect to the metric
\begin{equation}
(e^{-\mathrm{Re}(z)}h(z))^2|dz|^2
\end{equation}
has a single additional point.
\end{lem}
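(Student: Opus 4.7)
The plan is to identify one distinguished Cauchy sequence in $A$ whose limit gives the additional completion point $\star$, and then show every other Cauchy sequence is equivalent to it (or already converges inside $A$). The distinguished sequence is $z_n=n$ along the positive real axis: its tail has length at most $C_2\int_R^\infty e^{-t}t^\lambda\,dt$, which tends to $0$ as $R\to\infty$, so it is Cauchy with limit $\star$. Since our metric is bi-Lipschitz equivalent to the Euclidean metric on every compact subset of $A$ and $A$ is closed in $\mathbb{C}$, Cauchy sequences of bounded modulus converge inside $A$, and the remaining task is to treat Cauchy sequences $(w_n)$ with $|w_n|\to\infty$ and show they all converge to $\star$.

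For this I would pass to the coordinate $\xi=e^{-z}$, which is a local isometry from the model metric $e^{-2\mathrm{Re}(z)}|dz|^2$ to the Euclidean metric on $\mathbb{C}^*$, so the full metric reads $h\,|d\xi|^2$. The good end $\star$ corresponds to $\xi\to 0$: on the principal sheet one has $|z|\sim -\log|\xi|$ so $h\asymp(-\log|\xi|)^\lambda$, and the radial integral $\int_0^\epsilon(-\log r)^\lambda\,dr$ converges for every real $\lambda$. The bad boundary rays $\arg z=\pm\varepsilon$, along which $\mathrm{Re}(z)\to-\infty$, send $|\xi|$ to infinity and cause the density $e^{-\mathrm{Re}(z)}$ to blow up exponentially; a direct estimate shows any path visiting $\mathrm{Re}(z)=-N$ has length $\gtrsim e^N$, ruling out Cauchy sequences with $\mathrm{Re}(w_n)\to-\infty$. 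Combined with the lower bound $d(w_n,w_m)\ge c\,|e^{-\mathrm{Re}(w_n)}-e^{-\mathrm{Re}(w_m)}|$, obtained by projecting $|\gamma'|\ge|(\mathrm{Re}\,\gamma)'|$ along any path and using that $h$ is bounded below on the relevant region, this forces $\mathrm{Re}(w_n)$ to converge in $\mathbb{R}\cup\{+\infty\}$.

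The two remaining cases are $\mathrm{Re}(w_n)$ bounded with $|\mathrm{Im}(w_n)|\to\infty$ and $\mathrm{Re}(w_n)\to+\infty$ with $d(w_n,\star)\not\to 0$. Both reduce to controlling the length of loops around the puncture $\xi=0$ at radius $r$: such a loop lifts to a vertical translation by $2\pi i$ in $A$ and has length of order $2\pi r\,h$, with $h\asymp|z|^\lambda$ evaluated on the sheet containing the loop. In the bounded-real-part case, the Cauchy condition forces $(w_n)$ to jump between infinitely many sheets of the $e^{-z}$-covering, and any path between such sheets at bounded $\mathrm{Re}(z)$ includes a loop of length bounded below by a positive constant, contradicting Cauchy. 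In the case $\mathrm{Re}(w_n)\to+\infty$, optimising in $r$ sends the loop length to $0$, so all sheets coalesce at $\star$; together with the horizontal-path estimate $d(w_n,\star)\lesssim|\mathrm{Im}(w_n)|^\lambda e^{-\mathrm{Re}(w_n)}+\mathrm{Re}(w_n)^\lambda e^{-\mathrm{Re}(w_n)}$, this forces $d(w_n,\star)\to 0$. The most delicate step is the quantitative bookkeeping in this last case: one must choose the inner radius $r$ at which to cross between sheets as an optimal function of $\mathrm{Re}(w_n)$ and $|\mathrm{Im}(w_n)|$ so that, even when $|\mathrm{Im}(w_n)|$ grows rapidly relative to $\mathrm{Re}(w_n)$, the combined loop-plus-radial path to $\star$ has length tending to $0$.
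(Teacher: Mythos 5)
There is a genuine gap: your case analysis breaks down for $\lambda<0$, and several of the estimates you invoke are simply false in that regime. First, the claimed lower bound $d(w_n,w_m)\ge c\,|e^{-\mathrm{Re}(w_n)}-e^{-\mathrm{Re}(w_m)}|$ "using that $h$ is bounded below on the relevant region'' fails when $\lambda<0$: on the strip $|\mathrm{Re}(z)|\le M$ the function $h(z)\asymp|z|^{\lambda}$ tends to $0$ as $|\mathrm{Im}(z)|\to\infty$, so there is no positive lower bound on $h$ there. Second, and more seriously, the assertion that a path between distinct sheets at bounded $\mathrm{Re}(z)$ must contain a loop of length bounded below by a positive constant is wrong for $\lambda<0$. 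In the $\xi=e^{-z}$ picture a single period of a circle at radius $r$ has length $\asymp 2\pi r\,h$, and when the sheet under consideration sits at imaginary part $\approx 2\pi k$, one has $h\asymp|k|^{\lambda}\to 0$; so the loop length goes to zero. Concretely, for $\lambda<0$ the sequence $w_n=in$ (lying in $A$ for $n$ large) \emph{is} Cauchy: the horizontal ray from $w_n$ out to $\mathrm{Re}(z)=+\infty$ has length
\begin{equation}
\int_0^{\infty}e^{-s}h(s+in)\,ds\le C_2\,n^{\lambda}\int_0^{\infty}e^{-s}\,ds=C_2\,n^{\lambda}\longrightarrow 0,
\end{equation}
so $d(w_n,\star)\to 0$. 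Thus the bounded-real-part, unbounded-imaginary-part case does not "contradict Cauchy''; it produces Cauchy sequences which converge to $\star$, and your proof must show this rather than rule it out.

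For comparison: the paper sidesteps the sign of $\lambda$ entirely by introducing the single function $\eta(z)=(\max\{1,|\mathrm{Im}(z)|\})^{\lambda}e^{-\mathrm{Re}(z)}$, showing that the superlevel sets $\{\eta\ge\varepsilon\}$ are complete (the density is bounded below there, uniformly in $\lambda$), and that the sublevel sets $\{\eta\le\varepsilon\}$ have diameter tending to $0$ as $\varepsilon\to 0$, via the same horizontal/vertical path estimates you gesture at. This handles $\lambda<0$ and $\lambda\ge 0$ uniformly; note that in the paper's scheme the points $w_n=in$ satisfy $\eta(w_n)=n^{\lambda}e^0\to 0$ precisely when $\lambda<0$, which is exactly the case your analysis mislabels. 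Your $\xi=e^{-z}$ change of variables is an attractive reformulation (the metric becomes $h^2\,|d\xi|^2$ on a neighbourhood of a puncture), but to repair the argument you would need to replace the bounded-real-part dichotomy by a criterion that, like $\eta$, mixes $\mathrm{Re}(z)$ and $|\mathrm{Im}(z)|$ in a $\lambda$-dependent way.
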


\begin{proof}
1. Define $\eta(z)=(\max\{1,|\mathrm{Im}(z)|\})^{\lambda}e^{-\mathrm{Re}(z)}$,
then for sufficiently small $\varepsilon>0$, the boundary of $\{\eta\ge\varepsilon\}$ in $A$ is given by the curve
\begin{equation}
\mathrm{Re}(z)=\lambda\log\left(\max\{1,|\mathrm{Im}(z)|\}\right)-\log(\varepsilon).
\end{equation}
Moreover, for distinct values of $\varepsilon$, these curves have positive distance with respect to the Euclidean metric.

2. We claim that $e^{-\mathrm{Re}(z)}h(z)$ is bounded below on $\{\eta\ge\varepsilon\}$ for any $\varepsilon>0$.
By assumption,
\begin{equation}
e^{-\mathrm{Re}(z)}h(z)\ge C_1|z|^{\lambda}e^{-\mathrm{Re}(z)}.
\end{equation}
\underline{Case $\lambda\ge 0$}: For $|\mathrm{Im}(z)|\ge 1$ we have
\begin{align}
C_1|z|^{\lambda}e^{-\mathrm{Re}(z)} &\ge C_1|\mathrm{Im}(z)|^{\lambda}e^{-\mathrm{Re}(z)} \\
&= C_1\eta(z) \\
&\ge C_1\varepsilon
\end{align}
\underline{Case $\lambda<0$}:
\begin{equation}
C_1|z|^{\lambda}e^{-\mathrm{Re}(z)}\ge C_1\left(|\mathrm{Re}(z)|e^{-\mathrm{Re}(z)/\lambda}+|\mathrm{Im}(z)|e^{-\mathrm{Re}(z)/\lambda}\right)^{\lambda}
\end{equation}
The first term is bounded above, as $\mathrm{Re}(z)$ is bounded above by $-\log(\varepsilon)$, the second term is bounded above by $\varepsilon^{1/\lambda}$.

3. We claim that for every $\varepsilon>0$, the set $\{\eta\ge\varepsilon\}$ is complete with respect to the metric $g=\left(e^{-\mathrm{Re}(z)}h(z)\right)^2|dz|^2$, thus any Cauchy sequence $z_j\in A$ without limit must satisfy $\eta(z_j)\to 0$.
Namely, we have $g\ge Cg_{\mathrm{eucl}}$ on $\{\eta\ge\varepsilon/2\}$ be the previous step, and the boundary curves $\{\eta=\varepsilon\}$, $\{\eta=\varepsilon/2\}$ have some positive distance $\delta$, so
\begin{equation}
d(z_1,z_2)\ge C\min\left\lbrace|z_1-z_2|,2\delta\right\rbrace
\end{equation}
for $z_1,z_2\in\{\eta\ge\varepsilon\}$.
Hence, any sequence which is Cauchy with respect to $g$ is Cauchy for the standard metric.

4. We claim that $\mathrm{diam}\{\eta\leq\varepsilon\}\to 0$ as $\varepsilon\to 0$.
Assume $1>\varepsilon>0$ is sufficiently small so that $\{\eta\leq\varepsilon\}$ contains no $z$ with $\mathrm{Re}(z)\leq 0$, $|\mathrm{Im}(z)|\leq\rho$, then for $z\in\{\eta\leq\varepsilon\}$ the path $\alpha(t)=z+t$, $t\in[0,\infty)$ is contained in $\{\eta\leq\varepsilon\}$.
We compute the length of $\alpha$,
\begin{align}
l(\alpha) &= \int_0^{\infty}e^{-\mathrm{Re}(z)-t}h(z+t)dt \\
&\leq C_2\int_0^{\infty}e^{-\mathrm{Re}(z)-t}|z+t|^{\lambda}dt =:C_2I
\end{align}

\underline{Case $\lambda>0$}:
\begin{equation}
I\leq \int_0^{\infty}\left(|\mathrm{Re}(z)+t|e^{-(\mathrm{Re}(z)+t)/\lambda}+|\mathrm{Im}(z)|e^{-(\mathrm{Re}(z)+t)/\lambda}\right)^{\lambda}dt
\end{equation}
As $\eta(z)\leq\varepsilon$, $\mathrm{Re}(z)\ge-\log(\varepsilon)>0$, the first summand is bounded above by $\left(-\log(\varepsilon)+t\right)\varepsilon^{1/\lambda}e^{-t/\lambda}$, and the second by $\varepsilon^{1/\lambda}e^{-t/\lambda}$, so
\begin{align}
I &\leq\varepsilon\int_0^{\infty}(1-\log(\varepsilon)+t)^{\lambda}e^{-t}dt \\
  &\leq C\varepsilon e^{(1-\log(\varepsilon))/2} \\
  &=C\sqrt{e\varepsilon}
\end{align}
where $C=C(\lambda)$ is a constant.

\underline{Case $\lambda\leq 0$}:
\begin{align}
I &\leq\int_0^{\infty}e^{-\mathrm{Re}(z)-t}\left(\max\{1,|\mathrm{Im}(z)|\}\right)^{\lambda}dt \\
&=\eta(z)\leq\varepsilon
\end{align}
To prove the claim, it remains to show that for different values of $z_1,z_2$, the corresponding horizontal curves starting at $z_1,z_2$ have vanishing distance.
Let $\beta(t)=x+it$, $t\in[y_1,y_2]$, $x>\rho$, then
\begin{align}
l(\beta)&=\int_{y_1}^{y_2}e^{-x}h(x+it)dt \\
&\leq C_2e^{-x}\int_{y_1}^{y_2}|x+it|^{\lambda}dt\xrightarrow{x\to\infty}0
\end{align}
Which completes the proof of the lemma.
\end{proof}

\begin{proof}[Proof of Proposition~\ref{prop_expflatgeom}]
After a change of coordinates $w=e^{\pi i/n}/z$, $\varphi$ is of the form
\begin{equation}
\varphi=e^{-w^n}h(w)dw^2
\end{equation}
with $h$ meromorphic in a neighborhood of $\infty$.
Let $B=D^{-1}$ and fix $\varepsilon\in(\frac{1}{2},1)$. 
Define sectors
\begin{equation}
V_n=\left\{w\in B\mid \left|\arg(w)-\frac{2\pi k}{n}\right|<\frac{\pi\varepsilon}{n}\right\}
\end{equation}
and their complement
\begin{equation}
A=B\setminus\bigcup_{k=0}^{n-1}V_n.
\end{equation}
Then for $z=re^{i\phi}\in A$ we find $\cos(n\phi)\leq\cos(\pi\varepsilon)<0$, hence
\begin{equation}
|\varphi|=e^{-r^n\cos(n\phi)}|hdz^2|\ge C|dz|^2
\end{equation}
for some $C>0$.
Since the sets $V_k$ have positive distance with respect to the standard metric, this shows that every Cauchy sequence in $B$ without limit is eventually contained in one of the $V_k$.

On $V_k$ we perform a change of coordinates $u=w^n$, so
\begin{equation}
\varphi=e^{-2u}f(u)du^2
\end{equation}
and there are $C_1,C_2>0$ with
\begin{equation}
C_1|u|^{\lambda}\leq\sqrt{|f(u)|}\leq C_2|u|^{\lambda},\qquad \lambda=\frac{m-n+1}{2n}
\end{equation}
where $m=\mathrm{ord}_{\infty}h$. 
Applying the previous lemma, the first part of the theorem follows.

By the first part of the proof,
\begin{equation}
\bar{B}=:B\cup\{a_1,\ldots,a_n\}
\end{equation}
is a complete metric space.
Let $a=a_k$, the we must show that $a$ is an $\infty$-angle singularity of $\bar{B}$.

As $V_k$ is simply connected, we may choose a holomorphic $f:V_k\to\mathbb{C}$ with $(df)^2=\varphi$.
Choose $r>0$ such that $D_{2r}^*(0)\subset V_k$.
Then for any path $\alpha$ in $D_{2r}^*$ with endpoint $z_1,z_2\in D_r^*(0)$ we compute
\begin{equation}
\left|f(z_2)-f(z_1)\right|=\left|\int_{z_1}^{z_2}\sqrt{\varphi}\right|\leq\int_{\alpha}\sqrt{|\varphi|}
\end{equation}
hence
\begin{equation} \label{f_var_bound}
|f(z_2)-f(z_1)|\leq d(z_1,z_2)
\end{equation}
so that $f$ extends to a continuous map on $V_k\cup\{a\}$.

Without loss of generality, $f(a)=0$.
We claim that $f$ restricts to a covering $D_r^*(a)\to D_r^*(0)$.
By \eqref{f_var_bound}, the image of $D_r^*(a)$ under $f$ is contained in $D_r(0)$.
The radius of injectivity at $z\in D_r^*(a)$ is $r_1=d(z,a)$.
It cannot be larger, as the singular point $a$ would have to be contained in any disk of radius $>r_1$ centered at $z$, and it cannot be smaller by completeness of $\bar{B}$, and since any geodesic may be extended until it hits $a$ or leaves $D_{2r}(a)$.
We conclude that $D_{r_1}(z)$ is mapped isometrically to $D_{r_1}(f(z))$ by $f$, and thus $r_1=|f(z)|$.
This shows that $f$ restricts to a covering $D_r^*(a)\to D_r^*(0)$.

From the proof of the lemma, it follows that $a$ admits a fundamental system of neighborhoods $W$ with the property that $W\setminus\{a\}$ is simply-connected. 
Thus $D_r^*(a)$ must be simply connected, and $f:D_r^*(a)\to D_r^*(0)$ is a universal covering.
\end{proof}

Next we prove a kind of converse to the previous proposition, which is the main result of this subsection.

\begin{thm} \label{thm_flatcx}
Let $X$ be a flat surface of finite type.
Then there is a compact Riemann surface $C$ with quadratic differential $\varphi$ of exponential type giving $C\setminus D(\varphi)$ the structure of the smooth part of a flat surface isomorphic to $X_{\mathrm{sm}}$.
\end{thm}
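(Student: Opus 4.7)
The plan is to work end-by-end, using the core as the starting point. Since $X$ is of finite type, $\mathrm{Core}(X)$ is compact and $X\setminus\mathrm{Core}(X)$ consists of finitely many ends. Ends containing no infinite-angle singularity are already handled by the preceding theorem on meromorphic quadratic differentials: they correspond to poles of $\varphi$ of order $\ge 2$. So the new content is to treat an end $E$ containing $n\ge 1$ infinite-angle singularities, and to show that $E$ can be compactified by a single added point $\infty_E$ with a holomorphic chart $z$ near $\infty_E$ in which $\varphi=e^{z^{-n}}g(z)\,dz^2$ for some meromorphic $g$. Glueing all such charts with the already-extended complex structure on the rest of $X$ then produces the compact Riemann surface $C$ and the quadratic differential of exponential type.

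To analyze $E$ I first rotate the horizontal direction slightly so that no saddle connection is horizontal, and invoke the horizontal strip decomposition of Subsection~\ref{subsec_hsd}. Within $E$ only finitely many strips of finite height occur (they lie along the boundary with the core), while each infinite-angle singularity on $\partial E$ carries infinitely many strips of infinite height, cyclically ordered around it. The combined combinatorics of these strips mirrors exactly the qualitative decomposition one computes for the model $\varphi_{\mathrm{mod}}=e^{-w^n}h(w)\,dw^2$ in a neighborhood of $w=\infty$ in $\mathbb{C}$: $n$ infinite-angle singularities cyclically arranged at infinity along $\arg w=2\pi k/n$, each with infinitely many infinite-height strips, separated by finitely many finite-height strips bounding the region. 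This provides a combinatorial bijection between the strip decomposition of $E$ and that of the model.

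Given the bijection, I build a coordinate $w$ at infinity on $E$ as follows. Within each infinite-height strip, integration of a branch of $\sqrt\varphi$ along horizontal leaves produces a natural complex coordinate with the correct affine transition rules across shared boundary leaves. The combinatorial matching from Step~2 specifies how these strip coordinates glue, and the metric estimates of the preceding lemma show that the resulting chart converges to a well-defined coordinate $w$ on a neighborhood of a newly added point $\infty_E$, with $w(\infty_E)=\infty$. Setting $z=1/w$ yields a punctured-disk chart on $E\cup\{\infty_E\}$. By construction, in this chart $\sqrt\varphi$ decays like $e^{-w^n/2}$ times an algebraic factor, so $\varphi/dz^2$ is a holomorphic function on $\{0<|z|<r\}$ whose logarithm differs from $z^{-n}$ by a function that grows at most algebraically. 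A direct application of the Casorati--Weierstrass / removable-singularity type arguments, combined with the asymptotics coming from the technical lemma, forces this difference to be meromorphic, giving $\varphi=e^{z^{-n}}g(z)\,dz^2$.

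The main obstacle is the gluing step that produces $w$ globally from the per-strip coordinates and shows that the chart is in fact a biholomorphism onto a punctured disk. One must check that the cyclic gluing around $\infty_E$ closes up without monodromy, using the constraint that $\varphi$ is single-valued, and that the candidate coordinate extends continuously to $\infty_E$; the preceding lemma on completeness of the model metric supplies exactly the estimates needed for both issues. Once this is established, gluing the charts at each infinite-angle end with the extensions across finite-angle conical singularities (from the previous theorem) assembles the compact Riemann surface $C$ and the quadratic differential of exponential type inducing the flat structure on $X_{\mathrm{sm}}$.
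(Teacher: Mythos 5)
Your overall strategy — working end by end from the core and showing each end with $n$ infinite-angle singularities is a punctured neighborhood of an exponential singularity — is the right shape, and it is close to the paper's framing. But your method for actually producing the holomorphic chart is a genuinely different route from the paper's, and it contains a gap at its central step. The paper does not try to construct the compactifying coordinate $w$ directly: it approximates the end $X'$ by flat surfaces $X_m$ with only \emph{finite} cone angles $m\pi$ (by cutting $X'$ along rays $L_i,R_i$ emanating from each $s_i$ and regluing), applies the already-proven meromorphic classification to each $X_m$ to get biholomorphisms $f_m\colon X_m\to\mathbb D^*$ and meromorphic differentials $\varphi_m$, and then passes to a subsequential limit using normal families and Hurwitz. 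The logarithmic derivative $h_m=\partial_z\!\log g_m$ has exactly $k+1$ simple poles, and its limit has a single pole of order $k+1$ at the origin, which forces the exponential form $e^{p(z)}q(z)\,dz^2$. This normal-families detour is precisely what replaces your direct gluing construction.

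The concrete gap in your proposal is the step ``Within each infinite-height strip, integration of $\sqrt\varphi$ along horizontal leaves produces a natural complex coordinate\dots and the metric estimates of the preceding lemma show that the resulting chart converges to a well-defined coordinate $w$ on a neighborhood of a newly added point $\infty_E$.'' The per-strip coordinate $\zeta=\int\sqrt\varphi$ is the flat (half-translation) coordinate in which $\varphi=d\zeta^2$. Gluing these across the strips simply reassembles the flat surface $X$ you started with; it does not produce a coordinate that extends holomorphically over $\infty_E$. Indeed, near an infinite-angle singularity the natural flat coordinate is a universal cover of a punctured disk (as the paper's Proposition~\ref{prop_expflatgeom} itself shows), hence multi-valued; the single-valued coordinate $w$ you want is related to $\zeta$ by $d\zeta = e^{-w^n/2}\sqrt{h(w)}\,dw$, so extracting $w$ from $\zeta$ is exactly the nontrivial inversion that has to be justified. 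The lemma about the model metric tells you what the completion looks like for a \emph{given} differential of the form $e^{-w^n}h(w)\,dw^2$; it supplies no recipe for producing $w$ from the abstract flat data. Relatedly, the claimed ``combinatorial bijection'' between the strip decomposition of $E$ and that of a model $e^{-w^n}h(w)\,dw^2$ is not well-defined: the finite-height-strip combinatorics in $E$ depends on the surface and is not matched by any one choice of $h$, and in any case such a combinatorial matching alone cannot determine the conformal structure.

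Your final step also needs care: from ``$\sqrt\varphi$ decays like $e^{-w^n/2}$ times an algebraic factor'' you conclude that $\log(\varphi/dz^2)-z^{-n}$ grows at most algebraically and then invoke a removable-singularity argument. But a holomorphic function whose logarithm is $O(|z|^{-N})$ need not be meromorphic — e.g.\ any $e^{\lambda z^{-m}}$ with $m<n$ satisfies such a bound. What is actually needed is to show that, after peeling off $z^{-n}$ and finitely many lower-order pole terms, the remainder of the logarithm is $O(\log|z|)$, which is exactly what makes $g$ meromorphic; this requires the kind of quantitative control the paper extracts via the pole count of the logarithmic derivatives $h_m$. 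If you want to keep a direct-construction approach, you would need to (i) restrict attention to a sufficiently small punctured neighborhood of the singularity, (ii) carry out the asymptotic inversion $w^n\sim -2\log\zeta$ carefully, verifying that the cyclic gluing of the $n$ sectors around $\infty_E$ closes up, and (iii) replace the Casorati–Weierstrass appeal with a precise growth estimate on the logarithmic derivative of $\varphi/dz^2$. Without these, the argument as written does not establish the existence of the chart $z$ or the form $e^{z^{-n}}g(z)\,dz^2$.
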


\begin{proof}
Let $K:=\mathrm{Core}(X)$, which we may assume to contain more than one point.
Define a \textit{boundary walk} to be a piecewise geodesic path which follows the boundary of $K$ so that $S\setminus K$ lies to the right (and possibly also to the left), takes the rightmost possible direction at every singularity, and is maximal with these properties.
By rightmost direction we mean the following.
From each singularity there is a finite set $E$ of directions which lie in $\partial K$, and they are either cyclically or totally ordered.
So the direction from which we approach the singularity has some successor, by definition the rightmost direction, or is a maximal element, and the boundary walk ends.
Hence a boundary walk is either closed or starts and ends at infinite-angle singularities, and
there is a unique boundary walk starting/ending at each infinite-angle singularity.

Let $s_i\in X_{\mathrm{sg}}$, $i\in\mathbb Z/k$ be a cyclic sequence of infinite-angle singularities so that there is a boundary path from $s_i$ to $s_{i+1}$.
Choose a closed embedded curve $\alpha$ on $X_{\mathrm{sm}}$ which is a smoothing of the cyclic sequence of boundary paths and cuts $X$ into two pieces $X',X''$ so that $X'$ contains the $s_i$ and no other singularities.
The smooth part of $X'$ is topologically an annulus.
To complete the proof, it suffices to show that $X'$ corresponds to some punctured neighborhood of an exponential singularity.

Fix some direction $r_i$ from the singularity $s_i$.
For each integer $n>0$ consider the pair of geodesics $L_i,R_i$ which start at $s_i$ in the directions $r_i+n\pi/2$ and $r_i+n\pi/2$ respectively.
For $n$ sufficiently large, all geodesics $L_i,R_i$ can be extended indefinitely in $X'$.
We construct a flat surface $X_n$ as follows.
Cut $X'$ along $L_i,R_i$ and remove the (contractible) pieces left of each $L_i$ and right of each $R_i$.
Then glue each $L_i$ to $R_i$ to obtain $X_n$.
Note that $X_n$ has $k$ conical singularities, all with cone angle $n\pi$.
Let $U_n\subset X_n$ be the complement of the geodesic rays along which $X_n$ was glued, including the singularities.
If we also denote the corresponding subset of $X'$ by $U_n$, then 
\begin{equation}
U_n\subset U_{n+1}, \qquad \bigcup_n U_n=X'_\mathrm{sm}.
\end{equation}

By the previous results about flat surfaces without infinite-angle singularities,
there is a biholomorphic $f_n:X_n\to\mathbb D^*$ and meromorphic quadratic differential $\varphi_n$ on $\mathbb D$ with a pole at $0$ and $k$ zeros, such that $f_n$ becomes an isometry of flat surfaces.
Since $\mathbb D\subset \mathbb C$ is bounded, the $f_n$ form a normal family and, after passing to a subsequence, converge to a holomorphic $f:X'_\mathrm{sm}\to\mathbb C$.
For degree reasons, $f$ cannot be constant and consequently has image contained in $\mathbb D^*$.
By a standard application of Hurwitz's theorem, $f$ is injective as the limit of injective functions.
We wish to show that the image of $f$ is all of $\mathbb D^*$, and that the $\varphi_n$ converge to a quadratic differential with exponential singularity at the origin.

Let $g_n(z)dz^2=\varphi_n(z)$ and consider $h_n=(\partial g_n/\partial z)/g_n$, then $h$ is meromorphic on the unit disk with $k+1$ simple poles, one of which is at the origin.
Convergence of $f_n$ implies convergence of $h_n$, possibly after passing to a subsequence, to a meromorphic function $h$ with $k+1$ poles, counted with multiplicity.
As $S_1$ has $k$ infinite-angle singularities we must have a single pole of order $k+1$ at the origin, so that $\varphi_n$ converge to $\varphi$ of the form $e^{p(z)}q(z)dz$ with $p,q$ meromorphic and $p$ having a pole of order $k$ at the origin.
Completeness of $X$ requires the image of $f$ to be $\mathbb D^*$.
\end{proof}

\subsection{Voronoi and Delaunay partitions}
\label{subsec_voronoi}

In this subsection we review the Voronoi and Delaunay partitions of a flat surface, c.f. Masur--Smillie~\cite{ms}.
They will be used later to construct convergent sequences of flat surfaces.

Let $X$ be a flat surface, not necessarily of finite type, but so that every component contains a conical point.
For $p\in X$ let $\rho_p=d(p,X_{\mathrm{sg}})\ge 0$ be the distance to the closest conical point.
Further, define $\mu_p\ge 1$ to be the number of directions from $p$ in which the geodesic of length $\rho_p$ starting at $p$ ends at a conical point if $\rho_p>0$, and $\mu_p=1$ if $\rho_p=0$.
In other words, $\rho_p$ is the radius of the largest isometrically immersed disk centered at $p$, and $\mu_p$ is the number of points on its boundary mapped to $X_{\mathrm{sg}}$.
The Voronoi cells are the connected components of the level sets of $\mu_p$.
Voronoi $2$-cells, $1$-cells, and $0$-cells are components with $\mu_p=1$, $\mu_p=2$, and $\mu_p\ge 3$ respectively.
The $2$-cells are contractible and in bijection with $X_\mathrm{sg}$.
The $1$-cells are geodesically embedded open intervals, possibly of infinite length.
The union of the $0$-cells is a discrete subset of $X$.

The Delaunay polygonal subdivision is in a sense dual to the Voronoi partition.
Given $p\in X$ let $D$ the largest isometrically immersed disk centered at $p$, and let $C(p)\subset X$ be the image of the convex hull (in $D$) of the points on the boundary of $D$ which map to $X_\mathrm{sg}$.
Note that when $p$ is contained in a Voronoi 2-cell, 1-cell, or 0-cell, then $C(p)$ is a point, edge, or polygon respectively, and $C(p)$ only depends on the component of the Voronoi partition.
The sets $C(p)$ provide a polygonal subdivision of $\mathrm{Core}(X)$.
For a generic flat surface all polygons are triangles.
Although the Voronoi and Delaunay cell decompositions are combinatorially dual, a Delaunay edge need not always intersect its Voronoi dual.

For a flat surface $X$ of finite type consider the quantity
\begin{equation}
\rho(X):=\sup\{\rho(p)\mid p\in\mathrm{Core}(X)\}=d_H(X_\mathrm{sg},\mathrm{Core}(X))
\end{equation}
where $d_H$ denotes the Hausdorff distance of closed subsets.
We use $\rho(X)$ to get an upper bound on the lengths of Delaunay edges.

\begin{lem} \label{lem_DelaunayBound}
Let $X$ be a flat surface of finite type, and $C\ge 0$ the maximal length of any saddle connection on the boundary of $\mathrm{Core}(X)$.
Then the length of any Delaunay edge is bounded above by $\max(C,2\rho(X))$.
\end{lem}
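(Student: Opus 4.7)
The plan is to reduce to bounding the length of a longest Delaunay edge. Let $e^*$ be a Delaunay edge of maximum length; since $|e| \le |e^*|$ for every Delaunay edge $e$, it suffices to prove $|e^*| \le \max(C, 2\rho(X))$.

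If $e^* \subset \partial \mathrm{Core}(X)$, then $e^*$ is a boundary saddle connection and the bound $|e^*| \le C$ is immediate from the definition of $C$. So assume $e^*$ is interior, flanked by two Delaunay polygons $T_1, T_2 \subset \mathrm{Core}(X)$, each inscribed in an empty isometrically immersed disk of some radius $r_i \ge |e^*|/2$ centered at a Voronoi $0$-cell $p_i$. In particular $|e^*| \le 2r_i$, so it would be enough to show that one of $p_1, p_2$ lies in $\mathrm{Core}(X)$: then $r_i = \rho_{p_i} \le \rho(X)$ would yield $|e^*| \le 2\rho(X)$.

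To produce such a $p_i$, I would combine the Delaunay empty-circle property---which forces at most one of $T_1, T_2$ to be obtuse at $e^*$ (the angle opposite $e^*$ exceeding $\pi/2$)---with the maximality of $|e^*|$. Say $T_2$ is not obtuse at $e^*$. If $T_2$ had another obtuse angle, it would sit at an endpoint of $e^*$, and the edge of $T_2$ opposite that vertex would be the longest edge of $T_2$, of length at least $|e^*|$. Maximality of $|e^*|$ then gives an edge-length tie, which is excluded in generic position; hence $T_2$ has all angles at most $\pi/2$, and an inscribed triangle with this property contains its circumcenter, so $p_2 \in \overline{T_2} \subset \mathrm{Core}(X)$.

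The main obstacle is the non-generic situation: ties in Delaunay edge lengths, and Delaunay polygons with more than three vertices (arising when $\mu_p \ge 4$ at a Voronoi $0$-cell). Ties can be addressed by iterating the argument along a chain of maximal-length Delaunay edges, each step replacing $e^*$ by a new longest edge of a neighboring polygon; since the Delaunay tiling of $\mathrm{Core}(X)$ is finite, this process terminates, or alternatively one may perturb the flat structure slightly and pass to a limit, using that the claimed bound is closed. Higher-order Delaunay polygons are handled by the analogous characterization: such a polygon contains its circumcenter iff no arc between consecutive vertices exceeds $\pi$, a condition again forced by the empty-circle property combined with the maximality of $|e^*|$.
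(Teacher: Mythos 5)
Your proof is correct but takes a genuinely different route from the paper's. The paper picks an \emph{arbitrary} Delaunay edge $e$ together with \emph{any} point $p$ in the Voronoi $1$-cell with $C(p)=e$, and splits on whether $p\in\mathrm{Core}(X)$: if yes, $|e|\le 2\rho_p\le 2\rho(X)$; if no, the preimage of $\partial\mathrm{Core}(X)$ in the disk $D$ around $p$ is a chord separating $p$ from $e$, and a chord of $D$ that separates the center from $e$ must be at least as long as $e$, while being a piece of a boundary saddle connection hence of length $\le C$. You instead extremize: you take the \emph{longest} Delaunay edge $e^*$ and argue via cyclic-polygon geometry (the empty-circle property for the pair of flanking polygons plus maximality of $|e^*|$) that one of the two circumcenters must lie in the core, which feeds into the $2\rho(X)$ bound. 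This is a valid alternative, and it actually yields the slightly finer statement that \emph{every} interior Delaunay edge is $\le 2\rho(X)$, which the paper's case split does not immediately give. However, your worries about ties and genericity in the last paragraph are unfounded: in a cyclic polygon with $\ge 3$ vertices and no coincident vertices, if some arc between consecutive vertices exceeds $\pi$, the corresponding edge is the \emph{strictly} unique longest one (compare $2r\sin(\alpha/2)$ for the reflex arc $\alpha>\pi$ against $2r\sin(\beta/2)$ for any other arc $\beta<2\pi-\alpha$), so maximality of $e^*$ gives an outright contradiction, not a tie; the perturbation/limit argument and the chain-iteration are therefore unnecessary. The one place where a little more care is genuinely needed is verifying, in the flat-surface setting, that the empty-disk (Voronoi) condition really does forbid \emph{both} circumcenters $p_1,p_2$ from lying on the ``wrong'' side of $e^*$ when the flanking cells are higher polygons; this is true (develop the two immersed Voronoi disks along $e^*$ and observe that if both centers were swapped across $e^*$, moving along the Voronoi $1$-cell would push singular points into the interior of the maximal disk), but it is a claim to be argued, not just asserted. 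By comparison, the paper's argument is shorter and avoids extremization and cyclic-polygon casework entirely.
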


\begin{proof}
Let $e$ be a Delaunay edge and $p\in X$ with $C(p)=e$. 
The geodesic $e$ is a chord of the largest immersed disk, $D$, centered at $p$.
If $p\in\mathrm{Core}(X)$, then $\mathrm{length}(e)\le 2\rho_p\le 2\rho(X)$.
On the other hand, if $p\notin\mathrm{Core}(X)$, then the preimage of the boundary of $\mathrm{Core}(X)$ is a either one or two chords in $D$, one of which has length greater than or equal to $\mathrm{length}(e)$, and less than or equal to $C$.
\end{proof}


\section{The topological Fukaya category of a surface}
\label{sec_category_def}

In this section we give an elementary definition of the (topological) Fukaya categories of surfaces, and prove several results about them.
First, in Subsection~\ref{subsec_ainfty}, we collect some definitions concerning $A_\infty$-categories.
In the following subsection we introduce a class of marked surfaces $S$ which is convenient for the definition of $\mathcal F(S)$, which is given in Subsection~\ref{subsec_arc_category} in terms of explicit $A_\infty$-structures.
In Subsection~\ref{subsec_quivers} we describe $\mathcal F(S)$ in terms of graded quivers with quadratic monomial relations, a central tool in what follows.
This is possible only for some $S$, but the remaining categories are obtained by localization, as is discussed in Subsection~\ref{subsec_localization}.
In the final subsection, we show that $\mathcal F(S)$ is the category of global sections of a certain cosheaf of categories on a ribbon graph in $S$.

\subsection{Preliminaries on $A_\infty$-categories}
\label{subsec_ainfty}

We will need some basic definitions of the $A_\infty$ language, for more details see e.g. \cite{seidel08,ks09}.

An $A_\infty$\textbf{-category}, $\mathcal A$, consists of a set $\mathrm{Ob}(\mathcal A)$ of objects, for each pair $X,Y\in\mathrm{Ob}(\mathcal A)$ a $\mathbb Z$-graded vector space $\mathrm{Hom}(X,Y)$, and structure maps
\[
\mu^n:\mathrm{Hom}(X_{n-1},X_n)\otimes\cdots\otimes\mathrm{Hom}(X_0,X_1)\to\mathrm{Hom}(X_0,X_n)[2-n]
\]
for each $n\ge 1$, satisfying the $A_\infty$-relations
\begin{equation} \label{A_infty_rels}
\sum_{i+j+k=n}(-1)^{\Vert a_k\Vert+\ldots+\Vert a_1\Vert}\mu^{i+1+k}(a_n,\ldots,a_{n-i+1},\mu^j(a_{n-i},\ldots,a_{k+1}),a_k,\ldots,a_1)=0
\end{equation}
where $\Vert a\Vert=|a|-1$ is the reduced degree.

An $A_\infty$-category $\mathcal A$ is \textbf{strictly unital} if for every object $X$ there is a $1_X\in\mathrm{Hom}^0(X,X)$ such that
\begin{align}
&\mu^1(1_X)=0 \\
&\mu^2(a,1_X)=(-1)^{|a|}\mu^2(1_Y,a)=a,\quad a\in\mathrm{Hom}(X,Y) \\
&\mu^k(\ldots,1_X,\ldots)=0\text{ for }k\ge 3
\end{align}
Strictly unital $A_\infty$-categories with $\mu^k=0$ for $k\ge 3$ correspond to small dg-categories with
\begin{equation}
da=(-1)^{|a|}\mu^1(a),\qquad ab=(-1)^{|b|}\mu^2(a,b).
\end{equation}
Indeed, in this case the first three $A_\infty$-relations correspond to $d^2=0$, the Leibniz rule, and associativity of the product.

An $A_\infty$-functor $F:\mathcal A\to\mathcal B$ is given by a map $\mathrm{Ob}\mathcal A\to\mathrm{Ob}\mathcal B$ and multilinear maps
\begin{equation}
F^d:\mathrm{Hom}_{\mathcal A}(X_{d-1},X_d)\otimes \cdots\otimes\mathrm{Hom}_{\mathcal A}(X_0,X_1)\to\mathrm{Hom}_{\mathcal B}(FX_0,FX_d)[1-d]
\end{equation}
for every $d\ge 1$, $X_0,\ldots,X_d\in\mathrm{Ob}\mathcal A$, satisfying the relations
\begin{gather}
\sum_{r\ge 1}\sum_{s_1+\ldots+s_r=d}\mu^r_\mathcal B(F^{s_r}(a_d,\ldots,a_{d-s_r+1}),\ldots,F^{s_1}(a_{s_1},\ldots,a_1)) \\
=\sum_{m,n}(-1)^{\Vert a_n\Vert+\ldots+\Vert a_1\Vert}F^{d-m+1}(a_d,\ldots,a_{n+m+1},\mu^m_\mathcal A(a_{n+m},\ldots,a_{n+1}),a_n,\ldots,a_1).
\end{gather}
We will mostly consider strict $A_\infty$-functors, i.e. those with $F^d=0$ for $d>1$.

\subsubsection*{Twisted complexes}
\label{subsec_twisted_complexes}

We briefly recall the construction of the category of twisted complexes, $\mathrm{Tw}\mathcal A$, over an $A_\infty$-category $\mathcal A$.
Twisted complexes can be thought of as formal vector bundles with flat connection.

The first step is to form $\mathrm{add}\mathbb{Z}\mathcal A$ whose objects are formal sums of tensor products of the form
\begin{equation}
V=\bigoplus_{X\in\mathrm{Ob}\mathcal A}V_X\otimes X
\end{equation}
with $V_X$ finite-dimensional graded vector spaces, zero for all but finitely many $X$.
Morphism spaces are defined by
\begin{equation}
\mathrm{Hom}(V\otimes X,W\otimes Y)=\mathrm{Hom}(V,W)\otimes\mathrm{Hom}(X,Y)
\end{equation}
and additivity.
When extending the $\mu^k$ a Koszul sign gets introduced:
\begin{equation}
\mu^k(\phi_k\otimes a_k,\ldots,\phi_1\otimes a_1)=(-1)^{\sum_{i<j}|\phi_i|\|a_j\|}\phi_k\cdots\phi_1\otimes \mu^k(a_k,\ldots,a_1) 
\end{equation}
Note that $\mathrm{add}\mathbb{Z}\mathcal A$ has a natural shift functor and formal finite direct sums.
Strictly speaking, $\mathrm{Ob}(\mathrm{add}\mathbb{Z}\mathcal A)$ is not a set, but since the category of finite-dimensional vector spaces is essentially small, this is a non-issue.

An object in $\mathrm{Tw}\mathcal A$ is given by a pair $(V,\delta)$ with $V\in\mathrm{add}\mathbb{Z}\mathcal A$ and $\delta\in\mathrm{Hom}^1(V,V)$. 
The first condition is that there is a direct sum decomposition of $V$ (i.e. of $\bigoplus V_X$ as an $\mathrm{Ob}\mathcal A\times\mathbb Z$-graded vector space) so that $\delta$ is strictly upper triangular.
This ensures that $\mu^k(\delta,\ldots,\delta)=0$ for $k$ big, so that the second condition, which is
\begin{equation}
\sum_{k\ge 1}\mu^k(\delta,\ldots,\delta)=0
\end{equation} 
makes sense.
Morphism spaces are just
\begin{equation}
\mathrm{Hom}((V,\delta),(W,\epsilon))=\mathrm{Hom}(V,W)
\end{equation}
and the structure maps are 
\begin{equation}
\mu^k(a_k,\ldots,a_1)=\sum_{n_0,\ldots,n_k\ge 0}\mu^{k+n_0+\ldots+n_k}(\underbrace{\delta_k,\ldots,\delta_k}_{n_k\text{ times}},a_k,\ldots,a_1,\underbrace{\delta_0,\ldots,\delta_0}_{n_0\text{ times}})
\end{equation}
where $a_i\in\mathrm{Hom}((V_{i-1},\delta_{i-1}),(V_i,\delta_i))$.
Again, this sum is actually finite by our requirement on the $\delta_i$.

The main fact we need about $\mathrm{Tw}\mathcal A$ is that its homotopy category, $H^0(\mathrm{Tw}\mathcal A)$, is triangulated. 
When we write $K_0(\mathrm{Tw}\mathcal A)$, we always mean the $K_0$-group of this triangulated category.

\subsection{Surfaces with marked boundary}
\label{subsec_marked}

In this subsection we define a class of surfaces which will be convenient for our definition of the Fukaya category, as well as cut and paste constructions.
Topologically, they are related to the real blow-ups of flat surfaces by gluing several infinite ends.

We will work with surfaces with corners, $S$, and denote their $0$- and $1$-dimensional strata by $\partial_0 S$ and $\partial_1 S$ respectively. 
The boundary is $\partial S=\partial_0 S\sqcup\partial_1 S$ set-theoretically.
A \textbf{marked surface} is a surface $S$ with corners together with a subset $M\subset\partial S$ which is the closure of a collection of components of $\partial_1 S$ such that if $B$ is a connected component of $\partial S$ then the following holds: If $B$ is smooth, then it belongs entirely to $M$, and if $B$ contains corners, then every other component of $\partial_1 S\cap B$ belongs to $M$, i.e. $\partial_0 S$ is exactly the boundary of $M$ in $\partial S$.
In particular, if $S$ is compact than each component of $\partial S$ is either a circle which belongs entirely to $M$ or a sequence of intervals which alternately belong to $M$ and its complement.

An \textbf{arc} in a marked surface $S$ is an embedded closed interval intersecting $M$ transversely in the endpoints, and not isotopic to an embedded interval in $M$ (the isotopy keeping endpoints in $M$).
We generally identify isotopic arcs.
A \textbf{boundary arc} is an arc which is isotopic to the closure of a component of $\partial S\setminus M$. 
Arcs which are not boundary arcs are referred to as \textbf{internal arcs}.
An \textbf{arc system} in $(S,M)$ is a collection of pairwise disjoint and non-isotopic arcs.
The arc system is \textbf{full} if it includes all the boundary arcs and cuts the surface into contractible, relatively compact components (polygons).

A map of marked surfaces $f:(S_1,M_1)\to (S_2,M_2)$ is an orientation preserving immersion with $f(M_1)\subset M_2$ mapping boundary arcs of $S_1$ to disjoint non-isotopic arcs in $S_2$.
The condition insures that a full arc system in $S_2$ which includes all the images of boundary arcs under $f$ can be lifted to a full arc system on $S_1$.
It is however not closed under composition in general.

\subsubsection*{Combinatorial description: graded ribbon graphs}

Let $(S,M,\eta)$ be a graded marked surface and $A$ a full system of graded arcs on it.
Such a collection of arcs is dual to a graph $\Gamma$ on $S$ having an edge for each element of $A$ and an $n$-valent vertex for each $2n$-gon cut out by $A$ (see Figure~\ref{fig_ribbon_graph2}).
\begin{figure}[h]
\centering
\includegraphics[scale=1]{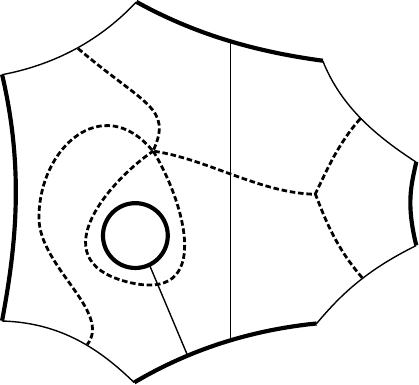}
\caption{Full system of arcs in a marked surface and dual ribbon graph (dashed).}
\label{fig_ribbon_graph2}
\end{figure}
By definition, a \textit{half-edge} is an edge of $\Gamma$ with orientation pointing towards a vertex of the graph (but not a boundary arc).
Since the graph is embedded in an oriented surface, there is a clockwise cyclic order on the set of half-edges pointing towards a given vertex (i.e. $\Gamma$ has the structure of a \textit{ribbon graph}).
Moreover, for any half-edge $h$ the grading determines an integer $d(h)$ as follows.
Let $\sigma(h)$ be the half edge following $h$ in the cyclic order.
Then $h,\sigma(h)$ correspond to a pair of arcs $\alpha,\beta$ in $A$.
Let $c:[0,1]\to M$ be the embedded curve which starts on $\alpha$, ends on $\beta$, follows the boundary with $S$ to the right, and bounds the polygon corresponding to the vertex that $h,\sigma(h)$ point to.
Also choose an arbitrary grading on $c$.
Then
\begin{equation}
d(h)=i_{c(0)}(\alpha,c)-i_{c(1)}(\beta,c)
\end{equation}
is independent of the choice of grading on $c$.
The numbers $d(h)$ have the property that if $v$ is a vertex of $\Gamma$, then
\begin{equation} \label{grg_grading_condition}
\sum_{h}d(h)=\mathrm{val}(v)-2
\end{equation}
where the sum is over all half-edges pointing to $v$ and $\mathrm{val}(v)$ is the valency of $v$.

Conversely, given a ribbon graph together with integers $d(h)$ for every half-edge $h$ satisfying \eqref{grg_grading_condition} one constructs a graded marked surface with arc system by gluing polygons with suitable grading foliation which is tangent to the arcs.
This is a convenient way to specify any compact graded marked surface by a finite amount of data.

\subsection{Minimal $A_\infty$-category of an arc system}
\label{subsec_arc_category}

Fix a field of scalars $\mathbb K$ throughout.
Let $S=(S,M,\eta)$ is a graded marked surface with system of graded arcs $A$.
We define a strictly unital $A_\infty$-category $\mathcal F_A(S)$ with $\mu^1=0$.
\begin{itemize}
\item \textbf{Objects}: The set of arcs in $A$.
\item \textbf{Morphisms}: A \textbf{boundary path} is a non-constant path in $M$ which follows the reverse orientation of the boundary (i.e. the surface lies to the right). 
Given arcs $X$ and $Y$, a basis of morphisms from $X$ to $Y$ is given by boundary paths, up to reparameterization, starting at an endpoint of $X$ and ending at an endpoint of $Y$, as well as the identity morphism if $X=Y$.
The degree of a boundary path $a$ from $p$ to $q$ joining arcs $X$ and $Y$ is by definition
\begin{equation}
|a|=i_p(X,a)-i_q(Y,a)
\end{equation}
for arbitrary grading of $a$.
\item \textbf{Composition}: Let $a,b$ be boundary paths defining morphisms from $X$ to $Y$ and $Y$ to $Z$ respectively. If $a$ and $b$ are composable then $(-1)^{|a|}\mu^2(b,a)=a\cdot b$, otherwise $\mu^2(b,a)=0$.
\item \textbf{Higher operations}: Consider a marked surface $(S',M')$ which is topologically a closed disk with $M'$ having $n\ge 3$ components. 
Let $a_1,\ldots,a_n$ be the distinct boundary paths (components of $M'$) ending in boundary arcs in clockwise order.
Given a map $f:(S',M')\to (S,M)$ sending all boundary arcs of $(S',M')$ to arcs in $A$ we get a sequence $f\circ a_1,\ldots,f\circ a_n$ of boundary paths in $(S,M)$, and call any such sequence a \textbf{disk sequence}.

We define higher $A_\infty$-operations so that if $a_1,\ldots,a_n$ is a disk sequence then
\begin{equation}
\mu^n(a_n,\ldots,a_1b)=(-1)^{|b|}b
\end{equation}
for basis morphisms $b$ with $a_1b\neq 0$, and
\begin{equation}
\mu^n(ba_n,\ldots,a_1)=b
\end{equation}
for paths $b$ with $ba_n\neq 0$, and $\mu^n$ vanishes on all sequences of paths not of the above forms. 
The lemma below ensures that this is really well-defined.
\end{itemize}

\begin{lem}
For a sequence of composable basis morphisms $a_n,\ldots,a_1$ there is at most one factorization $a_1=a_1'b$ with $a_1',a_2,\ldots,a_n$ a disk sequence.
If such a factorization exists with $b$ not an identity, then there is no factorization $a_n=ca_n'$ with $a_1,\ldots,a_{n-1},a_n'$ a disk sequence and $c$ not an identity.
The dual statement also holds. 
\end{lem}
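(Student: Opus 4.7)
The plan is to work in the universal cover $\tilde{S}$ of $S$, where (i) distinct lifts of any arc in $A$ are pairwise disjoint, (ii) each boundary path lifts to an embedded monotone sub-interval of $\partial\tilde{S}$, and (iii) any immersed disk $f:S'\to S$ lifts to an embedded disk $\tilde{D}\subset\tilde{S}$ since $S'$ is simply connected. After fixing a lift $\tilde{a}_1$ of $a_1$, composability determines lifts $\tilde{a}_2,\ldots,\tilde{a}_n$ uniquely, and consecutive boundary paths share an arc endpoint, so by induction $\tilde{a}_1,\ldots,\tilde{a}_n$ trace a connected segment of a single component of $\partial\tilde{S}$.

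For the first statement, suppose $(a_1',a_2,\ldots,a_n)$ is a disk sequence coming from $f:S'\to S$. I lift $f$ so that the images of the relevant boundary paths of $S'$ coincide with $\tilde{a}_2,\ldots,\tilde{a}_n$. The closing side of $\tilde{D}$ is a lift of $X_{n+1}$ having one endpoint at the terminal point $\tilde{r}_{n+1}$ of $\tilde{a}_n$; because distinct lifts of $X_{n+1}$ are disjoint, this condition singles it out as $\tilde{X}_{n+1}$, and its second endpoint $\tilde{s}\in\partial\tilde{S}$ is then determined. The start of $\tilde{a}_1'$ must equal $\tilde{s}$, and since $a_1'$ is the final portion of $a_1$ ending at the start of $a_2$, its lift is forced to be the sub-arc of $\tilde{a}_1$ from $\tilde{s}$ to the endpoint of $\tilde{a}_1$. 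As $\tilde{a}_1$ is embedded in $\partial\tilde{S}$, it passes through $\tilde{s}$ at most once, so the split point, and hence the factorization $a_1=a_1'b$, is unique.

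For the second statement, assume toward contradiction that both $a_1=a_1'b$ and $a_n=ca_n'$ hold with $b$ and $c$ not identities, realized by disks $\tilde{D}_1,\tilde{D}_2\subset\tilde{S}$. By the analysis above, the closing arc $\tilde{X}_{n+1}$ of $\tilde{D}_1$ runs in $\tilde{S}$ from $\tilde{r}_{n+1}$ to a point $\tilde{s}$ strictly interior to $\tilde{a}_1$, strict because $b\neq\mathrm{id}$. By the symmetric argument applied to $\tilde{D}_2$, the closing arc $\tilde{X}_1$ runs from the initial point $\tilde{p}_1$ of $\tilde{a}_1$ to a point $\tilde{u}$ strictly interior to $\tilde{a}_n$. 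Traversing $\partial\tilde{S}$ in the direction of the boundary paths, the four points appear in the linear order $\tilde{p}_1,\tilde{s},\tilde{u},\tilde{r}_{n+1}$, so the endpoints of $\tilde{X}_1$ and $\tilde{X}_{n+1}$ interleave on a single boundary component of the simply connected surface $\tilde{S}$. Two properly embedded arcs with such interleaving endpoints must intersect transversely in the interior, but $\tilde{X}_1$ and $\tilde{X}_{n+1}$ are lifts of pairwise disjoint elements of $A$ and are hence themselves disjoint, a contradiction. The dual statement follows by the evident left-right symmetry.

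The point where I expect to need the most care is establishing the correct linear order of the four endpoints and the ``interleaving forces crossing'' step; both rely on $\tilde{a}_1,\ldots,\tilde{a}_n$ lying on a single boundary component of $\partial\tilde{S}$ and on the simple connectivity of $\tilde{S}$, rather than on any finer properties of the original surface $S$.
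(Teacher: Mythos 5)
Your universal–cover approach is a reasonable instinct, but the argument breaks at the claim that ``consecutive boundary paths share an arc endpoint, so by induction $\tilde{a}_1,\ldots,\tilde{a}_n$ trace a connected segment of a single component of $\partial\tilde{S}$.'' In a disk sequence, $a_i$ ends at one endpoint of the arc $X_{i+1}$ and $a_{i+1}$ starts at the \emph{other} endpoint of $X_{i+1}$ (this is what it means to be the boundary of an immersed polygon: the sides alternate between boundary paths and arcs). So consecutive lifts $\tilde{a}_i$ and $\tilde{a}_{i+1}$ are joined not along $\partial\tilde{S}$ but by the arc $\tilde{X}_{i+1}$ passing through the interior, and in general they lie on \emph{different} boundary components of $\tilde{S}$. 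Note also that a simply connected surface with boundary need not have connected boundary: the universal cover of an annulus is a strip with two boundary lines, and the universal cover of a higher-genus or multi-boundary $S$ has infinitely many boundary components. Consequently the linear order $\tilde{p}_1,\tilde{s},\tilde{u},\tilde{r}_{n+1}$ on ``the'' boundary component, and the ``interleaving endpoints force a crossing'' step, are not available as stated; the core of your proof of the second statement does not go through. (A minor secondary point: a lift of an immersion $f:S'\to S$ to $\tilde{S}$ is again only an immersion, so ``lifts to an embedded disk'' needs justification, though this is not the main obstruction.)

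The paper sidesteps all of this by working with homotopy classes rather than the universal cover. Since $a_1'\cdot\alpha_1\cdots a_n\cdot\alpha_n$ bounds the immersed disk, this concatenation (with $\alpha_i$ the arc on which $a_i$ ends) is null-homotopic, so $a_1'$, and hence $b$, is determined up to homotopy rel endpoints by $a_2,\ldots,a_n$ and the $\alpha_i$ alone; a boundary path is determined by its homotopy class, so the factorization is unique. For the second statement the same homotopy computation pins down both pieces, and one finds they would have to terminate at distinct endpoints of the closing arc, a contradiction. If you want to pursue a covering-space argument for the second statement, the workable version is to compare the two lifted polygons $\tilde{D}_1,\tilde{D}_2$ directly: they share the sides $\tilde{X}_2,\tilde{a}_2,\ldots,\tilde{a}_{n-1},\tilde{X}_n$, while $\tilde{D}_1$ uses the terminal segment of $\tilde{a}_1$ and all of $\tilde{a}_n$, and $\tilde{D}_2$ uses all of $\tilde{a}_1$ and the initial segment of $\tilde{a}_n$; since neither contains the other, their remaining sides $\tilde{X}_{n+1}$ and $\tilde{X}_1$ must cross, contradicting disjointness of lifts of $A$. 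That version does not presuppose a single boundary component.
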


\begin{proof}
To see the first statement note that if we have such a factorization and $\alpha_i$ is the arc on which $a_i$ ends, then the concatenation $a_1'\cdot\alpha_1\cdots a_n\cdot\alpha_n$ is a null-homotopic loop. 
Hence, $b$ must be homotopic (relative endpoints) to the concatenation $a_1\cdot\alpha_1\cdots a_n\cdot\alpha_n$ and is thus uniquely determined as a morphism.

For the second statement, assume such a factorization exists.
Then $c=b$ be the same argument as before, but $b$ and $a_n$ end at a different endpoints of $\alpha_n$, contradicting $a_n=ca_n'$.
\end{proof}

\begin{prop}
With the structure defined above, $\mathcal F_A(S)$ is a strictly unital $A_\infty$-category.
\end{prop}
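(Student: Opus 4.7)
Since $\mu^1=0$, the $A_\infty$-relations for $n=1,2$ hold trivially, and the $n=3$ relation reduces to the associativity of $\mu^2$. Because $\mu^2$ is defined by concatenation of boundary paths, whenever $\mu^2(a_3,\mu^2(a_2,a_1))$ and $\mu^2(\mu^2(a_3,a_2),a_1)$ are both non-zero they both equal $\pm a_1\cdot a_2\cdot a_3$, and the reduced-degree prefactor $(-1)^{\|a_1\|}$ in the $A_\infty$-relation balances the explicit sign $(-1)^{|a|}$ in the definition of $\mu^2$.

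For $n\geq 4$ the plan is to verify
\begin{equation}
\sum_{\substack{i+j+k=n\\ j\geq 2}}(-1)^{\|a_k\|+\cdots+\|a_1\|}\mu^{i+1+k}(a_n,\ldots,\mu^j(a_{n-i},\ldots,a_{k+1}),\ldots,a_1)=0
\end{equation}
by a case analysis on the pair $(\mu^j,\mu^{i+1+k})$. A non-vanishing summand falls into one of three types: (i) the inner is $\mu^2$ (concatenating two adjacent inputs) and the outer is a disk operation, (ii) the inner is a disk operation producing a residual path $b$ and the outer is $\mu^2$ (concatenating $b$ with an adjacent input), or (iii) both are disk operations. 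The central input is the uniqueness lemma just proved: given a sequence of composable basis morphisms, at most one way to read off a disk sequence (with residual $b$) exists, and likewise at most one $\mu^2$-concatenation is compatible with such a reading. This rigidity lets me set up a sign-reversing involution on non-vanishing summands. Summands of types (i) and (ii) pair up by sliding a $\mu^2$-concatenation across a disk operation: both realize the same polygon in $S$ cut into a residual path $b$ plus a disk sequence, but with the residual attached on opposite sides of the merge point. Summands of type (iii) force the two underlying disks to share exactly one edge (otherwise the composed output would not be a basis morphism), and each such combined polygonal region is cut along its shared edge in exactly two ways, giving the two matching terms.

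The hardest step will be the sign bookkeeping. For each paired contribution I would expand the prefactor $(-1)^{\|a_k\|+\cdots+\|a_1\|}$ together with the explicit signs $(-1)^{|a|}$ and $(-1)^{|b|}$ appearing in the definitions of $\mu^2$ and $\mu^n$, and use the intersection-index formula $|a|=i_p(X,a)-i_q(Y,a)$ and the grading condition $\sum_h d(h)=\mathrm{val}(v)-2$ at each vertex of the dual ribbon graph to show that the reduced degrees of the boundary paths around a disk sum to the correct parity. This parity identity is the source of the universal cancellation. Finally, strict unitality is an easy separate check: $\mu^1(1_X)=0$ is automatic, $\mu^2(a,1_X)=(-1)^{|a|}\mu^2(1_Y,a)=a$ follows by treating the identity as the empty concatenation, and $\mu^n(\ldots,1_X,\ldots)=0$ for $n\geq 3$ holds because every element of a disk sequence is a non-constant boundary path, so the identity can never participate in a non-vanishing disk operation.
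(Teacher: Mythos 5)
Your overall plan — trivial $n=1,2$, associativity for $n=3$, pairwise cancellation for $n\ge 4$, the parity identity $\sum\|a_i\|=-2$ for disk sequences (equivalent to your ribbon-graph condition), strict unitality from the non-constancy of disk-sequence paths, and the uniqueness lemma as the rigidity input — all match the paper's proof. The gap is in the claimed involution structure. You assert that type (i) terms (inner $\mu^2$, outer disk) pair only with type (ii) terms (inner disk, outer $\mu^2$), and type (iii) terms (both disk) pair only with each other. That is not how the cancellation actually works. The paper exhibits cancelling pairs of the form
\begin{gather*}
\mu^i\bigl(a_i,\ldots,a_{j+1},\mu^k(a_jb_k,\ldots,b_1),a_{j-1},\ldots,a_1c\bigr), \\
\mu^{i+k-2}\bigl(a_i,\ldots,a_jb_k,\ldots,\mu^2(b_1,a_{j-1}),a_{j-2},\ldots,a_1c\bigr),
\end{gather*}
with $i,k\ge 3$, i.e.\ a type (iii) term matched against a type (i) term (and the mirror version matches (iii) with (ii)).

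Moreover your geometric picture for type (iii) is reversed. In the displayed pair the two polygons for $\mu^k$ and $\mu^i$ \emph{do} share a boundary arc (the one where $a_{j-1}$ ends and $a_j$ starts), and the partner is obtained by merging them into a single $(i+k-2)$-gon and inserting a $\mu^2$ fusing the two boundary paths $b_1$ and $a_{j-1}$ adjacent to the shared arc — a type (i) term, not another type (iii). The genuine (iii)-(iii) cancellations are the ones in the paper's final pair, where the inner operation's residual is of the form $a_1b$ with $b$ the residual of the outer operation; there the two polygons are linked through the boundary path $b$ rather than being cut along a common edge. As written, your involution misses all the mixed (iii)-(i) and (iii)-(ii) pairs, so the cancellation argument is incomplete.
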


\begin{proof}
Let $a_n,\ldots,a_1$ be a composable sequence of morphisms corresponding boundary paths.
The claim is that the $A_\infty$-equation \eqref{A_infty_rels} holds.
For $n=1,2,3$ there is nothing to prove, so we may assume $n\ge 4$.
The following types of non-zero terms cancel pairwise, ignoring signs for the moment:
\begin{gather*}
\mu^2(\mu^i(a_n,\ldots,a_1b),c) \\
\mu^i(a_n,\ldots,a_2,\mu^2(a_1b,c))
\end{gather*}
\begin{gather*}
\mu^2(a,\mu^i(bc_n,\ldots,c_1)) \\
\mu^i(\mu^2(a,bc_n),c_{n-1},\ldots,c_1)
\end{gather*}
\begin{gather*}
\mu^i(a_i,\ldots,a_{j+1},\mu^k(a_jb_k,\ldots,b_1),a_{j-1},\ldots,a_1c) \\
\mu^{i+k-2}(a_i,\ldots,a_jb_k,\ldots,\mu^2(b_1,a_{j-1}),a_{j-2},\ldots,a_1c)
\end{gather*}
\begin{gather*}
\mu^i(a_i,\ldots,a_{j+1},\mu^k(b_k,\ldots,b_1a_j),a_{j-1},\ldots,a_1c) \\
\mu^{i+k-2}(a_i,\ldots,a_{j+2},\mu^2(a_{j+1},b_k),\ldots,b_1a_j,a_{j-1},\ldots,a_1c)
\end{gather*}
as well as variants of the previous two with $c$ on the other side, and
\begin{gather*}
\mu^i(a_i,\ldots,a_2,\mu^j(a_1bc_j,c_{j-1},\ldots,c_1)) \\
\mu^j(\mu^i(a_i,\ldots,a_2,a_1bc_j),c_{j-1},\ldots,c_1))
\end{gather*}
All non-zero terms which can appear must indeed belong to one of the above pairs.
To check that the signs are in fact opposite, one uses that 
\begin{equation}
\sum \Vert a_i \Vert=-2
\end{equation}
for any disk sequence $a_1,\ldots,a_n$.
\end{proof}

There are two special cases for which the above description needs to be somewhat amended, as disks also contribute to $\mu^2$. 
The first is that of a square, in which the two boundary arcs are isotopic. 
They represent the same object $X$, which has $\mathrm{End}(X)=\mathbb K$.
The second case is when $S$ is a cylinder and $M=\partial S$.
Up to isotopy there is again only one arc, and the corresponding object $X$ has $\mathrm{End}(X)=\mathbb{K}[z^{\pm 1}]$, with the degree of $z$ depending on $\Omega$.

\begin{rem}
The $A_\infty$-category $\mathcal F_A(S)$ was considered by Bocklandt~\cite{bocklandt11}, at least in the case when $S$ has only marked boundary circles.
He shows that it computes the partially wrapped Fukaya category of $S$ as defined by Abouzaid--Seidel~\cite{abouzaid_seidel}.
We expect the proof to extend to the case of partially wrapped Fukaya categories of Auroux~\cite{auroux10}.
\end{rem}

Next, consider a map of graded marked surfaces $f:S_1\to S_2$ and arc systems $A_i$ in $S_i$ such that $f$ maps graded arcs in $A_1$ to graded arcs in $A_2$.
Then $f$ induces a strict $A_\infty$-functor $f_*$ from $\mathcal F_{A_1}(S_1)$ to $\mathcal F_{A_2}(S_2)$.
This just follows from the following fact: Suppose we have an immersed disk $D$ in $S_2$ whose boundary consists of arcs and boundary curves, then if $\partial D$ lifts to $S_1$, so does $D$.

\subsubsection*{The simplest example}

Consider the case when $S=(S,M,\eta)$ is a graded marked surface which is topologically a disk and $M$ has $n\ge 3$ components, and $A$ is the arc system containing exactly the boundary arcs, with some arbitrary grading.
The $A_\infty$-category $\mathcal F_A(S)$ has objects $E_k$, $k\in\mathbb Z/n$, and morphisms $a_k:E_k\to E_{k+1}$ of some degrees $|a_k|\in\mathbb Z$ with $\sum |a_k|=n-2$ which, together with the identity morphisms, form a basis of all morphisms.
The only non-zero $A_\infty$-terms come from strict unitality and
\begin{equation}
\mu^n(a_{k+n-1},\ldots,a_k)=1_{E_k}.
\end{equation}
See also \cite{nadler} for a discussion of these categories.

We make a simple observation which will be essential in what follows.
Namely, that the twisted complex
\begin{equation*}
E_1\xlongrightarrow{a_1}E_{2}[\| a_1\|]\longrightarrow\ldots\xlongrightarrow{a_{n-2}}E_{n-1}[\Vert a_1\Vert+\ldots+\Vert a_{n-2}\Vert]
\end{equation*}
is isomorphic to $E_n[-|a_n|]$, the inverse isomorphisms being given by $a_{n-1}$ and $a_n$.
In other words, $n-1$ of the boundary arcs already generate all of $\mathrm{Tw}(\mathcal F_A(S))$.

\subsubsection*{Morita invariance}
\label{subsec_morita_invariance}

Fix a graded marked surface $S$.
If $A\subset B$ are full arc systems, i.e. all arcs in $A$ also belong to $B$, then it is evident from the definition that $\mathcal F_A(S)$ is a full subcategory of $\mathcal F_B(S)$.

\begin{lem}
The inclusion functor $\mathcal F_A(S)\to \mathcal F_B(S)$ for full arc systems $A\subset B$ is a Morita equivalence.
\end{lem}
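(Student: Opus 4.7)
The plan is to verify the two conditions for Morita equivalence separately: (i) the inclusion $\iota\colon\mathcal F_A(S)\to\mathcal F_B(S)$ is fully faithful, and (ii) every arc in $B$ becomes a twisted complex of objects of $A$ once we pass to $\mathrm{Tw}\,\mathcal F_B(S)$. Part (i) is immediate from the construction in Subsection~\ref{subsec_arc_category}: the morphism space between two arcs is described entirely in terms of boundary paths in $(S,M)$ and is manifestly independent of which ambient arc system is chosen, and the same holds for all $\mu^n$. The real content is (ii), which amounts to showing that each $\beta\in B\setminus A$ lies in the triangulated closure of $\iota(\mathcal F_A(S))$ inside $H^0(\mathrm{Tw}\,\mathcal F_B(S))$.

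To establish (ii) I reduce to the polygon computation at the end of Subsection~\ref{subsec_arc_category}. Since arcs in $B$ are pairwise disjoint and $A$ is full, any $\beta\in B\setminus A$ sits inside a unique polygon $P$ cut out by $A$, and the arcs of $B\setminus A$ contained in $P$ polygonally refine $P$. The dual graph of this refinement is a finite tree, so it possesses a leaf as long as it is non-trivial; a leaf corresponds to an \emph{extremal} arc $\beta$ which, together with some boundary arcs $E_1,\ldots,E_k$ of $P$ (all already in $A$), cobounds a sub-polygon $P'\subset P$. The inclusion $P'\hookrightarrow(S,M)$ is a morphism of graded marked surfaces in the sense of Subsection~\ref{subsec_marked}, so it induces a strict $A_\infty$-functor $\mathcal F_{\{E_1,\ldots,E_k,\beta\}}(P')\to\mathcal F_B(S)$. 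The ``simplest example'' exhibits $\beta$, up to a shift, as the explicit convolution
\[
E_1\xrightarrow{a_1}E_2[\|a_1\|]\longrightarrow\cdots\xlongrightarrow{a_{k-1}}E_k[\|a_1\|+\cdots+\|a_{k-1}\|]
\]
inside $\mathrm{Tw}\,\mathcal F_{\{E_1,\ldots,E_k,\beta\}}(P')$, and functoriality of $\mathrm{Tw}$ transports this to an isomorphism in $\mathrm{Tw}\,\mathcal F_B(S)$ presenting $\beta$ as a twisted complex of objects of $A$.

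After resolving $\beta$ in this way I delete it from $B$ and repeat; the cardinality of $(B\setminus A)\cap P$ strictly drops, so the induction terminates, and treating each polygon of $A$ in turn handles all of $B\setminus A$. The hardest (mild) point is really just the induction bookkeeping, plus checking that $P'\hookrightarrow(S,M)$ genuinely satisfies the axioms of Subsection~\ref{subsec_marked} so that the polygon convolution maps to an actual isomorphism upstairs; this is automatic because the arcs of $B$ are pairwise disjoint and non-isotopic. Combining (i), which lifts to full faithfulness on twisted complexes, with the essential surjectivity supplied by (ii) then gives the desired equivalence $H^0(\mathrm{Tw}\,\mathcal F_A(S))\simeq H^0(\mathrm{Tw}\,\mathcal F_B(S))$.
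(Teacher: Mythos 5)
Your proof is correct and follows essentially the same route as the paper: both ultimately reduce to the ``simplest example'' polygon computation and then push forward the resulting twisted-complex isomorphism along the induced $A_\infty$-functor. The only difference is bookkeeping: the paper first reduces to the case $B=A\cup\{Y\}$ and then observes that any polygon cut out by $B$ with $Y$ on its boundary has all remaining boundary arcs in $A$, whereas you work inside a single polygon of $A$ and resolve the arcs of $B\setminus A$ one leaf of the dual tree at a time; both inductions terminate for the same reason.
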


\begin{proof}
We can assume that $B$ is obtained from $A$ by adding a single graded arc $Y$.
Let $D$ be one of the disks cut out by $B$ and with $Y$ on its boundary.
Considering $D$ as a graded marked surface with minimal full arc system, we have a morphism $f:D\to S$ inducing a functor of $A_\infty$-categories.
From the discussion above, we know that in $\mathcal F(D)$ the object corresponding $X$ is isomorphic to a twisted complex over the other boundary arcs, $X_1,\ldots X_n$, of $D$.
As $A$ is a full arc system, there is exactly one boundary arc $X$ of $D$ which gets mapped to $X$, hence $X_1,\ldots,X_n$ get mapped to arcs in $A$.
Applying the functor $f_*$ we thus find that $Y$ is isomorphic to a twisted complex over objects in $\mathcal F_A(S)$.
In particular, the inclusion functor $\mathcal F_A(S)\to \mathcal F_B(S)$ is a Morita equivalence.
\end{proof}

\begin{prop}
The Morita equivalence class of $\mathcal F_A(S)$ is independent of $A$.
The equivalences are canonical in the higher categorical sense (explained in the proof).
\end{prop}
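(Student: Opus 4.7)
The plan is to extend the preceding lemma, which provides canonical Morita equivalences for one full arc system included in another, along zigzags of inclusions connecting arbitrary full arc systems, and then to promote the resulting equivalences to canonical ones using contractibility of the space of full arc systems.

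First, I would organize the full arc systems on $S$ into a poset $\mathrm{Arc}(S)$ with morphisms $A \to B$ given by inclusions $A \subset B$. The preceding lemma says each such inclusion induces a strict Morita equivalence $\mathcal F_A(S) \hookrightarrow \mathcal F_B(S)$, and these compose strictly, so one obtains a diagram $\Phi \colon \mathrm{Arc}(S) \to \mathrm{Cat}_{A_\infty}$ whose arrows are Morita equivalences.

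Second, I would show that $\mathrm{Arc}(S)$ is connected. Given $A, A'$, I first refine each by adding chords inside its polygons to obtain triangulations $\tilde A \supset A$ and $\tilde A' \supset A'$. By the classical theorem on the flip graph of triangulations of a bordered marked surface, $\tilde A$ and $\tilde A'$ are connected by a finite sequence of flips. A generic flip $\tilde A_i \leftrightarrow \tilde A_{i+1}$ replaces one internal arc by another, and the intermediate system $\tilde A_i \cap \tilde A_{i+1}$ is again full: its faces are all triangles together with a single contractible quadrilateral, the union of the two triangles that merge when the flipped arc is removed. This yields the zigzag of inclusions
\[
\tilde A_i \;\supset\; \tilde A_i \cap \tilde A_{i+1} \;\subset\; \tilde A_{i+1}
\]
inside $\mathrm{Arc}(S)$. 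The degenerate case in which both sides of the flipped arc bound the same triangle (possible when $S$ has nontrivial topology) is handled by a small local subdivision of the offending face that keeps us within $\mathrm{Arc}(S)$ and reduces to the generic case.

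Third, for canonicity in the higher-categorical sense, I would promote $\Phi$ to a functor from the $\infty$-localization of $\mathrm{Arc}(S)$ at its morphisms into the $\infty$-category of $A_\infty$-categories localized at Morita equivalences. The essential input is contractibility of the nerve $|\mathrm{Arc}(S)|$, which is precisely the ``contractibility of the classifying space of arc systems on a marked surface'' flagged in the introduction and which follows from the Harer--Penner--Hatcher results on arc complexes of bordered surfaces. The grading data contribute no extra homotopy, since gradings on a fixed arc system form a torsor over an abelian group of locally constant integer-valued functions, hence a discrete contractible fiber once a reference grading is fixed. With $|\mathrm{Arc}(S)|$ contractible, the Morita equivalence $\mathcal F_A(S) \simeq \mathcal F_{A'}(S)$ read off from any zigzag is well-defined up to a contractible space of choices, which is the precise meaning of canonicity at every higher level. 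I expect this last topological input to be the principal obstacle; the remaining steps are combinatorial and formal.
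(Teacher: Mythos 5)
Your proof takes essentially the same route as the paper's: organize full arc systems into a poset, use the preceding lemma to get a functor into $A_\infty$-categories with Morita equivalences, and invoke contractibility of the classifying space of arc systems (Harer) for higher-categorical canonicity. The explicit flip-graph argument for connectedness in your second step is a reasonable warm-up but is already subsumed by the contractibility you invoke in the third step, and the paper dispenses with it.
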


\begin{proof}
Suppose $A$, $B$ are full systems of graded arc.
As established in the previous lemma we get a canonical Morita equivalence when $A\subset B$.
The same is also clearly true when $A$ and $B$ just differ by grading shift.

Consider the set $\mathcal A$ of full arc systems up to isotopy on $(S,M)$, partially ordered by inclusion.
If we view $\mathcal A$ as a category in the usual way, then the arguments above show that we get a functor $A\mapsto\mathcal F_A(S)$ from $\mathcal A$ to the category of strictly unital $A_\infty$ categories and Morita equivalences.
From contractability of the classifying space $|\mathcal A|$ of full arc systems or ribbon graphs (see \cite{harer85,harer86}), it now follows that the various categories $\mathcal F_A(S)$ are canonically Morita equivalent.
\end{proof}

Thus, $\mathrm{Tw}\mathcal F_A(S,M)$ is essentially independent of $A$ and we may drop it from the notation and just write $\mathcal F(S)$, the \textbf{(topological) Fukaya category of $S$}.

\subsection{Formal generators}
\label{subsec_quivers}

Recall that an $A_\infty$-algebra $A$ is \textit{formal} if it is quasi-isomorphic to its cohomology $H^*A$. 
A generator $G$ of an $A_\infty$-category is \textit{formal} if its endomorphism algebra $\mathrm{End}(G)$ is.
We will see in this subsection that $\mathcal F(S)$ has a formal generator whenever $S$ has at least one boundary arc (unmarked part of $\partial S$) in each connected component.
The generator is just a direct sum of arcs, and its endomorphism algebra is the path algebra of a quiver with quadratic monomial relations.

To set up some notation, a \textbf{graded quiver $Q$ with quadratic monomial relations} is given by 
\begin{itemize}
\item $Q_0$ -- set of vertices
\item $Q_1$ -- set of arrows
\item $\partial_0,\partial_1:Q_1\to Q_0$ -- source and target maps
\item $|.|:Q_1\to\mathbb{Z}$ -- grading of arrows
\item $R\subset Q_1\times_{Q_0}Q_1$ -- quadratic monomial relations, i.e. a collection of pairs of composable arrows
\end{itemize}
Given a ground field $\mathbb K$ we may form the path category $\mathbb KQ=\mathbb KQ/R$, which is a graded linear category with set of objects $Q_0$ and basis of morphisms given by paths of arrows not containing any of the relations.

Let $S$ be a graded marked surface, compact for simplicity.
Note first that if $A$ is any arc system on $S$, then the graded category obtained from $\mathcal F_A(S)$ by forgetting the higher $\mu^k$ is of the form $\mathbb KQ$ for some graded quiver with quadratic monomial relations.
Namely, take as arrows the boundary paths which start and end at arcs of $A$ but do not cross any other endpoints, and quadratic relations coming from composable arrows which do not correspond to composable paths.
Furthermore, the higher $A_\infty$ operations of $\mathcal F_A(S)$ vanish under the following condition on $A$: Any disk cut out by $A$ is bounded by a boundary arc of $S$ which does not belong to $A$.
We call such systems of arcs \textbf{formal}.
Thus, under this condition, the category $\mathcal F_A(S)$ is of the form $\mathbb KQ$ for some graded quiver with quadratic monomial relations $Q$.
Call a formal system of arcs \textbf{full} if it cuts $S$ into disks each of which has exactly a single boundary arc (of itself) not belonging to $A$.
For a full formal system of arcs $A$ the category $\mathrm{Tw}(\mathcal F_A(S))$ is quasi-equivalent to $\mathcal F(S)$.

\begin{lem}
Let $S$ be a compact and connected graded marked surface. 
If $S$ has at least one boundary arc, then it has a full formal system of arcs.
\end{lem}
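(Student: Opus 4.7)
The plan is to produce a full formal arc system on $S$ in two stages: first, cut $S$ open along a collection of internal arcs into a single polygonal disk $D$; then subdivide $D$ so that each resulting piece contains exactly one boundary arc of $S$.

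For the first stage, I invoke the standard topological fact that any compact connected surface with non-empty boundary admits a finite collection $A_1$ of pairwise disjoint, pairwise non-isotopic internal arcs with endpoints in $M$ whose complement in $S$ is a single polygonal disk. One can construct such an $A_1$ by taking any triangulation of $S$ by arcs and letting $A_1$ consist of the internal arcs dual to the cotree of some spanning tree in the dual graph of polygons, or equivalently by induction, repeatedly cutting along an arc that reduces the genus or merges two boundary components. After cutting, $\partial D$ is a cyclic sequence of sides consisting of the boundary arcs $\beta_1, \dots, \beta_b$ of $S$ (in some cyclic order), the marked segments inherited from $M$, and two sides for each element of $A_1$.

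For the second stage, I fix a marked point $p$ on a marked segment of $\partial D$ adjacent to $\beta_1$, and for each $i = 2, \dots, b$ draw an embedded arc $\gamma_i$ in the interior of $D$ from $p$ to a marked point on $\partial D$ lying strictly between $\beta_{i-1}$ and $\beta_i$. By the standard fan-triangulation-of-a-polygon construction these $b-1$ arcs can be chosen pairwise disjoint away from $p$, and together they cut $D$ into sub-disks $D_1, \dots, D_b$ with $\beta_i \subset \partial D_i$. Since the quotient map $D \to S$ identifying paired sides of $A_1$ is injective on the interior of $D$, each $\gamma_i$ descends to an embedded arc on $S$, with the $\gamma_i$ pairwise disjoint and none isotopic to a path in $M$ since each separates a nondegenerate piece $D_i$ from the remainder of $D$. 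Setting $A := A_1 \cup \{\gamma_2, \dots, \gamma_b\}$, the arcs of $A$ together with $\partial S$ then cut $S$ exactly into $D_1, \dots, D_b$, each containing exactly one boundary arc of $S$, so $A$ is the desired full formal system. The only real technical ingredient is the existence of $A_1$ in the first stage, which is classical; the degenerate cases $b = 1$ or $S$ already a disk are handled automatically by just one of the two stages.
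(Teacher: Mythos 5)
Your proof follows essentially the same two-stage route as the paper's: first cut $S$ along internal arcs into a single polygonal disk $D$, then subdivide $D$ so that each resulting piece contains exactly one boundary arc of $S$. The paper's proof only spells out the first stage (via contracting edges of the dual ribbon graph to a one-vertex graph, which is the same spanning-tree argument you cite) and leaves the second stage as an implicit ``this follows''; your explicit fan-subdivision supplies the omitted detail and is correct in substance.

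One definitional snag to repair: all of your arcs $\gamma_2,\dots,\gamma_b$ emanate from the same marked point $p$, so they share an endpoint and therefore fail the ``pairwise disjoint'' condition in the paper's definition of an arc system. The fix is trivial: take $b-1$ distinct marked points $p_2,\dots,p_b$ on the segment of $M$ adjacent to $\beta_1$, in a compatible cyclic order, and let $\gamma_i$ start at $p_i$; the $\gamma_i$ can then be drawn pairwise disjoint including endpoints, and disjointness from $A_1$ in $S$ is automatic since the endpoints of the $\gamma_i$ lie in the interiors of marked segments while the sides of $\partial D$ coming from $A_1$ are the unmarked sides. You should also briefly confirm that the arcs of $A_1\cup\{\gamma_2,\dots,\gamma_b\}$ are pairwise non-isotopic in $S$, a condition your write-up does not address; this holds because every piece of the resulting decomposition contains a boundary arc of $S$, so no two arcs of the system can cobound a degenerate strip.
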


\begin{proof}
This follows from the fact that we can find a system of arcs on $S$ which cuts it into a single disk, the boundary of which must include all boundary arcs of $S$.
This in turn is seen from the dual ribbon graph. 
If it has more than one vertex we can contract some edges to reduce the number of vertices to one eventually.
\end{proof}

We say a graded quiver with relations is of \textbf{type F1} if it arises as above from a full formal system of arcs.
It is not difficult to see that a graded quiver with quadratic monomial relations is of type F1 if and only if
\begin{enumerate}
\item There are no cycles $a_1,\ldots,a_n$, $n\ge 1$, with $a_ia_{i+1}=0$ for all $i\in\mathbb{Z}/n$.
\item Each vertex has at most two incoming and outgoing arrows.
\item Let $a,b\neq c$ be arrows. If $ab,ac$ are defined, then $ab=0$ or $ac=0$, but not both. If $ba,ca$ are defined, then $ba=0$ or $ca=0$, but not both.
\end{enumerate}

\subsubsection*{Resolution of the diagonal}

Let $Q$ be a graded quiver with quadratic monomial relations and $A=\mathbb KQ/R$ be its path algebra.
We want to investigate regularity properties of $A$.
Denote the constant path at the vertex $i$ by $e_i$.
There is a splitting of $A$
\begin{equation}
A=\bigoplus_{i\in Q_0}Ae_i,\qquad A=\bigoplus_{i\in Q_0}e_iA
\end{equation}
as a left (resp. right) module over $A$.

Define $A^{\mathrm{op}}\otimes A$-modules
\begin{equation}
M_n = \bigoplus_{\substack{\alpha_1,\ldots,\alpha_n\in Q_1 \\ (\alpha_i,\alpha_{i+1})\in R}}Ae_{\partial_0\alpha_1}\otimes e_{\partial_1\alpha_n}A,\qquad n\ge 1
\end{equation}
\begin{equation}
M_0 = \bigoplus_{i\in Q_0}Ae_i\otimes e_iA
\end{equation}
connected by maps $f_n:M_n\to M_{n-1}$ such that for $a\otimes b$ in the $(\alpha_1,\ldots,\alpha_n)$-component of $M_n$
\begin{equation}
f_n(a\otimes b)=a\alpha_1\otimes b+(-1)^na\otimes\alpha_n b
\end{equation}
and $f_0:M_0\to A$, $f_0(a\otimes b)=ab$.

\begin{prop}
The sequence of bimodules
\begin{equation}
\begin{tikzcd}
\ldots \arrow{r} & M_2 \arrow{r}{f_2} & M_1 \arrow{r}{f_1} & M_0 \arrow{r}{f_0} & A \arrow{r} & 0
\end{tikzcd}
\end{equation}
is exact.
\end{prop}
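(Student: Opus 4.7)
The plan is to prove exactness by decomposing $M_\bullet$ as a direct sum of subcomplexes indexed by formal paths in $Q$ and then analyzing each summand combinatorially. For a sequence of composable arrows $\pi = \gamma_1 \gamma_2 \cdots \gamma_L$ in $Q$ (with no requirement that $\pi$ be a basis element of $A$), let $N_n(\pi) \subseteq M_n$ be the span of those basis elements $(a, \alpha_1, \ldots, \alpha_n, b)$ whose formal concatenation of arrows is exactly $\pi$. The two terms in $f_n$ merely move one arrow across the partition $a\,|\,\alpha_1\cdots\alpha_n\,|\,b$, so the underlying formal concatenation is preserved and $N_\bullet(\pi)$ is a subcomplex with $M_\bullet = \bigoplus_\pi N_\bullet(\pi)$. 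The augmentation $M_0 \to A$ maps $N_0(\pi)$ into $\mathbb K \cdot [\pi] \subseteq A$, where $[\pi] = \pi$ if $\pi$ is a basis path of $A$ and $[\pi]=0$ otherwise. It therefore suffices to check exactness of each augmented subcomplex $N_\bullet(\pi) \to \mathbb K\cdot[\pi] \to 0$.

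For $n \ge 1$, basis elements of $N_n(\pi)$ correspond to admissible ``windows'' $[j, j+n-1]$ of $n$ consecutive arrow-positions in $\pi$; the conditions that $a, b$ be basis paths of $A$ and that $(\alpha_i, \alpha_{i+1}) \in R$ translate to the nesting
\[
\{j, j+1, \ldots, j+n-2\} \;\subseteq\; I(\pi) \;\subseteq\; \{j-1, j, \ldots, j+n-1\},
\]
where $I(\pi) \subseteq \{1, \ldots, L-1\}$ is the set of positions $p$ with $(\gamma_p, \gamma_{p+1}) \in R$. For $n = 0$ the basis elements are splittings $\pi = ab$ and the admissibility condition reduces to $I(\pi) \subseteq \{p\}$ for the split position $p$. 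If $I(\pi)$ is non-contiguous with $|I(\pi)| \ge 2$, the nesting has no solution at any $n$, so $N_\bullet(\pi) = 0$ and $[\pi] = 0$, making exactness trivial.

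The two substantive cases are: (i) $I(\pi) = \emptyset$, in which $\pi$ is a basis path, $N_n(\pi) = 0$ for $n \ge 2$, $\dim N_0(\pi) = L+1$ (indexed by splittings), $\dim N_1(\pi) = L$ (indexed by arrows), and the differential sends the single-arrow window at position $j$ to the signed difference of the splittings immediately to its right and left, realizing the augmented simplicial chain complex of an interval, which is exact; (ii) $I(\pi) = \{m, m+1, \ldots, m+k-1\}$ is a contiguous interval of length $k \ge 1$, so $[\pi] = 0$, and counting admissible windows shows that $N_\bullet(\pi)$ is concentrated in degrees $k-1, k, k+1$ with respective dimensions $1, 2, 1$. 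Writing $W_{j,n}$ for the basis element attached to the window $[j, j+n-1]$, the defining formula for $f_n$ yields
\[
f_{k+1}(W_{m,k+1}) = W_{m+1,k} + (-1)^{k+1} W_{m,k}, \qquad f_{k-1}(W_{m+1,k-1}) = 0,
\]
\[
f_k(W_{m,k}) = W_{m+1,k-1}, \qquad f_k(W_{m+1,k}) = (-1)^k W_{m+1,k-1},
\]
from which one reads off injectivity of $f_{k+1}$, surjectivity of $f_k$, and the equality $\ker f_k = \operatorname{im} f_{k+1}$.

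The main obstacle is just the bookkeeping in case (ii): correctly parameterizing the admissible windows, recognizing the three-term shape of dimensions $1, 2, 1$, and checking signs. Once each subcomplex is identified as either a simplicial interval or a short three-term exact complex, taking the direct sum over $\pi$ produces the desired exactness.
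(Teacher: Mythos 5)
Your proof is correct, and it takes a genuinely different route from the paper's. The paper first verifies $f_{n-1}\circ f_n=0$ and then shows $\ker(f_n)\subset\operatorname{im}(f_{n+1})$ by a length-reduction argument: it introduces $l(x)$, the maximal length of the left tensor factor appearing in $x$, observes that every element is congruent modulo $\operatorname{im}(f_{n+1})$ to one with $l=0$, and then shows directly that $f_n$ is injective on elements with $l=0$ by projecting to the $(\alpha_1,\ldots,\alpha_{n-1})$-component of $M_{n-1}$. Your argument instead splits the whole augmented complex into a direct sum of subcomplexes $N_\bullet(\pi)$ indexed by formal paths $\pi$ in $Q$, keyed by the set $I(\pi)$ of relation positions, and identifies each summand explicitly: when $I(\pi)=\emptyset$ the summand is the augmented simplicial chain complex of an interval; when $I(\pi)$ is a contiguous run of length $k\ge 1$ the summand collapses to a three-term complex with dimensions $1,2,1$ in degrees $k-1,k,k+1$ whose exactness you verify by a direct sign computation; and in all other cases the summand is zero. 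I checked the window parametrization, the dimension counts, the formulas for $f_{k\pm 1}$ and $f_k$, and the vanishing $f_{k-1}(W_{m+1,k-1})=0$ (including the $k=1$ degenerate case where $N_{k-1}=N_0$ consists of a single splitting), and they are all correct. Your approach trades the paper's filtration argument for more explicit combinatorial bookkeeping but in return exhibits the resolution as a sum of very small, transparent complexes. One minor remark worth being explicit about: the decomposition $M_\bullet=\bigoplus_\pi N_\bullet(\pi)$ is a decomposition of complexes of $\mathbb K$-vector spaces, not of $A^{\mathrm{op}}\otimes A$-bimodules (the bimodule action moves between different $\pi$), but this is harmless since exactness of a complex of bimodule maps is tested after forgetting the bimodule structure.
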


\begin{proof}
For $a\otimes b\in M_n$ in the $(\alpha_1,\ldots,\alpha_n)$-component we have
\begin{align*}
f_{n-1}&(f_n(a\otimes b))= \\
&a\alpha_1\alpha_2\otimes b+(-1)^{n-1}a\alpha_1\otimes\alpha_n b+(-1)^na\alpha_1\otimes\alpha_n b-a\otimes\alpha_{n-1}\alpha_n b \\
&=0
\end{align*}
hence $f_{n-1}\circ f_n=0$.

It remains to show that $\mathrm{Ker}(f_n)\subset\mathrm{Im}(f_{n+1})$.
An element $x\in M_n$ is uniquely written as
\begin{equation}
x=\sum_{\substack{\alpha_1,\ldots,\alpha_n\in Q_1 \\ (\alpha_i,\alpha_{i+1})\in R}} \sum_{\substack{\quad\alpha\text{ path},\neq 0\text{ in }A \\ \partial_1\alpha=\partial_0\alpha_1}} \alpha\otimes b_{\alpha_1,\ldots,\alpha_n,\alpha}
\end{equation} 
with $b_{\alpha_1,\ldots,\alpha_n,\alpha}$ almost all zero.
For $x\neq 0$ define
\begin{equation}
l(x)=\max\left\{l(\alpha)\mid b_{\alpha_1,\ldots,\alpha_n,\alpha}\neq 0\right\}
\end{equation}
where $l(\alpha)$ is the length (i.e. number of arrows) of the path $\alpha$.
Observe that by definition of $f_n$, for every $x\neq 0$ there is a $y\in M_n$ with
\begin{equation}
l(y)=0,\qquad x-y\in\mathrm{Im}(f_{n+1}).
\end{equation}
Suppose now $x\in M_n$ with $l(x)=0$, so we can write
\begin{equation}
x=\sum_{\substack{\alpha_1,\ldots,\alpha_n \\ (\alpha_i,\alpha_{i+1})\in R}}e_{\partial_0\alpha_1}\otimes b_{\alpha_1,\ldots,\alpha_n}.
\end{equation}
The projection of $f_n(x)$ to the $\alpha_1,\ldots,\alpha_{n-1}$-component is
\begin{equation}
\sum_{\substack{\alpha\in Q_1\\ (\alpha,\alpha_1)\in R}}\alpha\otimes b_{\alpha,\alpha_1,\ldots,\alpha_{n-1}}+(-1)^n\sum_{\substack{\alpha\in Q_1\\ (\alpha_{n-1},\alpha)\in R}}e_{\partial_0\alpha_1}\otimes\alpha b_{\alpha_1,\ldots,\alpha_{n-1},\alpha}.
\end{equation}
Therefore, $f_n(x)=0$ implies $b_{\alpha_1,\ldots,\alpha_n}=0$ for all $\alpha_1,\ldots,\alpha_n\in Q_1$, $(\alpha_i,\alpha_{i+1})\in R$, so $x=0$, which completes the proof.
\end{proof}

\begin{prop} \label{gqqr_reg}
Let $Q$ be a graded quiver with quadratic monomial relations, $A=\mathbb KQ/R$ its path algebra.
\begin{enumerate}
\item Suppose there are no cyclic paths $\alpha_1\cdots\alpha_n$ in $Q$ with $(\alpha_i,\alpha_{i+1})\in R$, $i\in\mathbb{Z}/n$, then $A$ is homologically smooth.
\item Suppose there are no cyclic paths $\alpha_1\cdots\alpha_n$ in $Q$ with $(\alpha_i,\alpha_{i+1})\notin R$, $i\in\mathbb{Z}/n$, then $A$ is proper.
\end{enumerate}
\end{prop}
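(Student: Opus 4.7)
The plan is to exploit the bimodule resolution $\cdots \to M_2 \to M_1 \to M_0 \to A \to 0$ constructed in the preceding proposition. Each $M_n$ is a finite direct sum of bimodules $Ae_i \otimes e_j A$, each of which is projective over $A^{\mathrm{op}}\otimes A$ as a direct summand of $A\otimes A$. Recall that homological smoothness of $A$ means that $A$ admits a bounded resolution by finitely generated projective bimodules, and that properness of $A$ (viewed as a dg-algebra concentrated in degree zero) amounts to $A$ being finite-dimensional over $\mathbb K$. Both conditions will reduce to showing that certain auxiliary graphs on the vertex set $Q_1$ are finite and acyclic.

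For part (1), introduce the directed graph $\widetilde{Q}$ with vertex set $Q_1$ and an edge from $\alpha$ to $\beta$ whenever $(\alpha,\beta)\in R$. The sequences $(\alpha_1,\ldots,\alpha_n)$ indexing the summands of $M_n$ are precisely the paths of length $n-1$ in $\widetilde Q$. The hypothesis of (1) says that $\widetilde Q$ has no directed cycles. Since $Q_1$ is finite (the quivers of interest arise from full formal arc systems on compact graded marked surfaces, which cut $S$ into finitely many polygons), an acyclic finite graph has paths of bounded length, so $M_n = 0$ for $n$ sufficiently large. The resolution thus has finite length with each term finitely generated projective, giving homological smoothness.

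For part (2), observe that a $\mathbb K$-basis for $A$ is given by the idempotents $e_i$ together with the nonzero paths $\alpha_1\cdots\alpha_n$, i.e., those with $(\alpha_i,\alpha_{i+1})\notin R$ for all $i$. These nonzero paths of length $\ge 1$ correspond bijectively with directed paths in the graph $\widehat Q$ on vertex set $Q_1$ with an edge $\alpha \to \beta$ whenever $\alpha\beta$ is composable and $(\alpha,\beta) \notin R$. Acyclicity of $\widehat Q$, which is the hypothesis of (2), together with finiteness of $Q_1$, forces such paths to be bounded in length and hence finite in number, so $A$ is finite-dimensional.

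The two arguments are parallel and both reduce to the pigeonhole principle for acyclic finite graphs; the crux is simply recognizing that the bimodule resolution packages precisely the combinatorial data constrained by hypothesis (1), while hypothesis (2) directly controls the basis of nonzero paths. I do not anticipate a substantive obstacle, though some care is required in part (2) to count the vertex idempotents $e_i$ separately from the length-$\ge 1$ paths when asserting finite-dimensionality.
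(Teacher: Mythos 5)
Your proposal is correct and follows essentially the same route as the paper's own (brief) proof: both rely on the bimodule resolution from the preceding proposition, deducing $M_n = 0$ for $n\gg 0$ under hypothesis (1) and finite-dimensionality under hypothesis (2). Your explicit auxiliary graphs $\widetilde Q$ and $\widehat Q$ on vertex set $Q_1$ are a tidy way to articulate the pigeonhole step that the paper leaves tacit, and your observation that finiteness of $Q_1$ is an implicit standing assumption is a fair point; the only small slip is the parenthetical describing $A$ as "concentrated in degree zero" — $A$ is a graded algebra with grading inherited from $Q$, though this does not affect the conclusion that properness reduces to finite $\mathbb K$-dimension.
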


\begin{proof}
1. Under the stated condition on cyclic paths, we see that $M_n=0$ for $n\gg 0$, hence $A$ is perfect as an $A^{\mathrm{op}}\otimes A$-module.

2. The condition implies that only finitely many paths are non-zero in $A$, so $A$ has finite rank over $\mathbb K$.
\end{proof}

We apply these results to graded linear models of $\mathcal{F}(S)$.
In this case, the first condition of Proposition~\ref{gqqr_reg} is always satisfied, while the second is satisfied if $S$ has no boundary components without corners.

\begin{cor} \label{FS_dualizable}
Let $S$ be a compact graded marked surface without boundary components diffeomorphic to $S^1$, then $\mathcal{F}(S)$ is homologically smooth and proper.
\end{cor}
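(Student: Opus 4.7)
The plan is to reduce to a direct application of Proposition~\ref{gqqr_reg} via a suitably chosen formal generator. By the hypothesis on $S$, every boundary component of $S$ has corners and hence contains at least one boundary arc. Applied to each connected component, the lemma preceding the definition of type F1 quivers produces a full formal system of graded arcs on every component, and their disjoint union gives a full formal arc system $A$ on $S$. This yields a quasi-equivalence $\mathcal F(S) \simeq \mathrm{Tw}(\mathbb K Q)$, where $Q$ is a type F1 graded quiver with quadratic monomial relations. Since homological smoothness and properness are Morita-invariant, it suffices to prove both for the path algebra $\mathbb K Q$.

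I would then verify the two hypotheses of Proposition~\ref{gqqr_reg} in turn. Condition (1)---the absence of a cyclic sequence $\alpha_1,\ldots,\alpha_n$ of arrows with $(\alpha_i,\alpha_{i+1})\in R$ for every $i\in\mathbb{Z}/n$---is precisely the first defining property of type F1 quivers listed in the text, so it holds automatically. Condition (2) requires that there be no cyclic composable path in $\mathbb K Q$, i.e., no cyclic sequence of arrows in which every consecutive pair avoids the relations. Translating back to the geometry, such a cycle would correspond to a closed path in $M$ following $\partial S$ with the surface on its right that returns to its starting arc endpoint without encountering a corner of $\partial S$; by the structure of a marked surface, this forces the path to trace out an entire component of $\partial S$ lying inside $M$, i.e., a boundary component diffeomorphic to $S^1$. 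The hypothesis excludes such components, so condition (2) holds as well.

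Proposition~\ref{gqqr_reg} then gives that $\mathbb K Q$ is homologically smooth and proper, and the same properties transfer to $\mathcal F(S)$ by Morita invariance. The argument is essentially an assembly of ingredients already developed earlier in the paper, and there is no real obstacle. The one step that calls for a small amount of geometric care is the translation of a cyclic composable path in $\mathbb K Q$ into a loop along a boundary circle of $(S,M)$, but this is immediate from the identification of the arrows of $Q$ with boundary-path segments between arc endpoints that cross no intermediate endpoints.
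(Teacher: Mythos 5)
Your proposal is correct and follows essentially the same route as the paper: the paper's proof is exactly the remark preceding the corollary, namely that for the type F1 quiver $Q$ arising from a full formal arc system, condition (1) of Proposition~\ref{gqqr_reg} is automatic and condition (2) fails only when some boundary component lies entirely in $M$. Your explicit geometric translation of a cyclic composable path into a loop tracing out a boundary circle in $M$ is a useful unpacking of a step the paper leaves implicit, but it is the same argument.
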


\subsection{Localization}
\label{subsec_localization}

We begin by recalling a version Drinfeld's construction for strictly unital $A_\infty$-categories, studied in detail in \cite{lo06}.
Let $\mathcal A$ be a strictly unital $A_\infty$-category over a field $\mathbb K$ and $E\in\mathrm{Ob}(\mathcal A)$. 
It will be convenient to use the notation $\mathcal A(X,Y)$ for $\mathrm{Hom}(X,Y)$ in this subsection.
We will define the quotient category $\mathcal A/E=\mathcal B$ of $\mathcal A$ by $E$, which will again be a strictly unital $A_\infty$-category.
Informally, $\mathcal B$ is obtained by freely adjoining a morphism $\varepsilon\in\mathcal B^{-1}(E,E)$ with $\mu^1(\varepsilon)=1_E$.
Set
\begin{align}
\mathcal B(X,Y)=&\mathcal A(X,Y) \oplus\nonumber \\
&\oplus\left(\mathcal A(E,Y)\otimes\mathbb K[1]\otimes\left(\bigoplus_{n\ge 0}\left(\mathcal A(E,E)\otimes\mathbb K[1]\right)^{\otimes n}\right)\otimes\mathcal A(X,E)\right)
\end{align}
as a graded vector space.
Write generators of the summand 
\[
\mathcal A(E,Y)\otimes\mathbb K[1]\otimes\left(\mathcal A(E,E)\otimes\mathbb K[1]\right)^{\otimes n-2}\otimes\mathcal A(X,E)
\]
as 
\[
a_n\cdot\ldots\cdot a_1
\]
with $a_n\in\mathcal A(E,Y)$, $a_1\in\mathcal A(X,E)$, and $a_i\in\mathcal A(E,E)$ for $2\leq i\leq n-1$, then
\begin{align}
|a_n\cdots\ldots\cdot a_1|&=|a_n|+\ldots+|a_1|-n+1 \\
\Vert a_n\cdots\ldots\cdot a_1\Vert&=\Vert a_n\Vert+\ldots+\Vert a_1\Vert.
\end{align}
Structure maps are given by
\begin{align}
&\mu^r(a_{n_r}\cdot\ldots\cdot a_{n_{r-1}+1},\ldots,a_{n_1}\cdot\ldots\cdot a_1):= \nonumber \\
&\sum_{\substack{i+j+k=n_r \\ j+k>n_{r-1} \\ k<n_1}}(-1)^{\Vert a_k\Vert+\ldots+\Vert a_1\Vert}a_{n_r}\cdot\ldots\cdot\mu^j(a_{j+k},\ldots,a_{k+1})\cdot a_k\cdot\ldots\cdot a_1
\end{align}
where $0=n_0<n_1<\ldots<n_r$, $r\ge 1$.
All this generalizes in a straightforward manner to quotients by full subcategories $\mathcal E\subset\mathcal A$.

We return to the setting of surfaces with marked boundary where we consider the following modification.
If $S$ is a marked surface and $E$ a boundary arc, we get a new marked surface $S'$ by adding $E$ to the marked points and smoothing the two corners on which $E$ ends.
We do not quite get a morphism $S\to S'$, since $E$ would need to map to an arc which isotopic to a path in $M'$.
For this reason we allow such arcs, which we call \textbf{null}, in this subsection.
The definition of the $A_\infty$-category of a system of arcs works as before with the following small change: The category is no longer minimal, and the definition of $\mu^1$ is modeled on the one for the higher $\mu^i$. 
This means that the boundary path which is isotopic to a null arc $E$ is a disk-sequence of length 1 and has differential the identity morphisms of that arc. 

\begin{prop} \label{prop_loc}
Let $A$ be an arc system on a graded marked surface $S$ and $E\in A$ a boundary arc.
By the modification as above we get $S'$ with arc system $A'$ containing a null arc.
Then there is a natural equivalence of $A_\infty$-categories
\begin{equation}
\mathcal F_A(S)/E\cong\mathcal F_{A'}(S')
\end{equation}
under $\mathcal F_A(S)$.
\end{prop}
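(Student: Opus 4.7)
The plan is to construct an explicit strict $A_\infty$-functor $F : \mathcal F_A(S)/E \to \mathcal F_{A'}(S')$ which is the identity on objects and a natural bijection on basis morphisms, and verify that it is an isomorphism which is natural under the canonical $A_\infty$-functor $\mathcal F_A(S) \hookrightarrow \mathcal F_A(S)/E$. Let $p, q$ be the endpoints of $E$. In $S'$ these are interior points of $M'$, and the null arc $E \in A'$ is isotopic to the subarc of $M'$ joining them. By the modification of the definition discussed just before the proposition, the boundary path $e : E \to E$ isotopic to $E$ is a disk sequence of length $1$, hence satisfies $\mu^1(e) = 1_E$; this mirrors the adjoined morphism $\varepsilon$ in the Drinfeld quotient $\mathcal F_A(S)/E$, for which $\mu^1(\varepsilon) = 1_E$. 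The whole strategy amounts to showing that these two ways of ``killing $E$'' agree.

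For the bijection on hom spaces I would classify each boundary path $\gamma$ in $M'$ between arcs of $A'$ by the number $k \ge 0$ of times its interior crosses $\{p, q\}$. The case $k = 0$ gives exactly boundary paths in $M$, hence basis morphisms of $\mathcal F_A(S)$. For $k \ge 1$, each crossing at $p$ (resp.\ $q$) must traverse the $E$-segment of $M'$ entirely to the other endpoint, so $\gamma$ has a unique factorization $\gamma = b_{k+1} \cdot e \cdot b_k \cdot e \cdots e \cdot b_1$ with each $b_i$ a basis morphism in $\mathcal F_A(S)$ of a prescribed endpoint type at $\{p,q\}$. Summing over $k$ identifies $\mathcal F_{A'}(S')(X, Y)$ with the Drinfeld sum defining $(\mathcal F_A(S)/E)(X, Y)$, and so defines $F$ on morphisms; the cases $X = E$ or $Y = E$ are handled by the same decomposition.

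To verify that $F$ is an $A_\infty$-functor I would match the structure maps polygon-by-polygon. Every immersed polygon contributing to $\mu^r$ in $\mathcal F_{A'}(S')$ either (a) has image disjoint from the $E$-segment of $M'$ in its interior, in which case it is literally a polygon in $(S, M)$ contributing the same term to $\mu^r$ of $\mathcal F_A(S)$, or (b) has one or more boundary intervals lying along the $E$-segment. In the latter case, cutting each such interval recovers an immersed polygon in $(S, M)$ with $E$ appearing as one or more boundary arcs, and the contribution corresponds exactly to one term in
\[
\mu^r(a_{n_r} \cdot \ldots \cdot a_{n_{r-1}+1}, \ldots, a_{n_1} \cdot \ldots \cdot a_1) = \sum_{\substack{i+j+k = n_r \\ j+k > n_{r-1},\, k < n_1}} (-1)^{\Vert a_k \Vert + \ldots + \Vert a_1 \Vert} a_{n_r} \cdot \ldots \cdot \mu^j(a_{j+k}, \ldots, a_{k+1}) \cdot a_k \cdot \ldots \cdot a_1
\]
once $e$ is identified with $\varepsilon$. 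Naturality under $\mathcal F_A(S) \hookrightarrow \mathcal F_A(S)/E$ is automatic, since on the subcategory spanned by arcs other than $E$ the functor $F$ reduces to the tautological inclusion $\mathcal F_A(S) \hookrightarrow \mathcal F_{A'}(S')$ coming from $M \subset M'$.

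The main obstacle will be the combinatorial bookkeeping in the previous step: showing that every immersed polygon in $(S', M')$ whose boundary touches the $E$-segment arises uniquely by cutting the $E$-boundaries of some polygon in $(S, M)$, and that this correspondence assigns the polygon to the correct summand of the Drinfeld formula with matching sign. A careful analysis separates polygons whose $E$-contact occurs at an extreme boundary arc (yielding the terms with $k = 0$ or $j + k = n_r$, where the $\mu^j$-insertion sits at the end of the formal product) from those whose $E$-contact is strictly interior (giving the middle-slot insertions). Sign compatibility then follows from the identity $\sum \Vert a_i \Vert = -2$ already used to verify the $A_\infty$-relations in Subsection~\ref{subsec_arc_category}.
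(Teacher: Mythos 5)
There is a genuine gap. Your central claim, that the factorization of boundary paths gives a \emph{bijection} on basis morphisms and hence an isomorphism of $A_\infty$-categories, is false. In the Drinfeld quotient $(\mathcal F_A(S)/E)(X,Y)$, the basis elements $a_m\cdot\ldots\cdot a_1$ allow the interior factors $a_i\in\mathcal A(E,E)$ ($2\le i\le m-1$) to be the identity $1_E$, since the formula uses the full $\mathcal A(E,E)$, not its augmentation ideal. An element such as $a\cdot 1_E\cdot b$ with $a\in\mathcal A(E,Y)$, $b\in\mathcal A(X,E)$ nontrivial boundary paths has no geometric counterpart in $M'$: a boundary path cannot traverse the $E$-segment twice in a row, since it follows the reverse orientation and cannot double back at the intermediate endpoint. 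Concretely, take $S$ a disk with $n$ marked intervals and $E=E_n$; then $\mathcal F_{A'}(S')(E_{n-1},E_1)$ is one-dimensional (there is a unique boundary path through the $E$-segment), while $(\mathcal F_A(S)/E)(E_{n-1},E_1)$ is infinite-dimensional, spanned by $a_n\cdot 1_{E_n}^{\,\cdot k}\cdot a_{n-1}$ for $k\ge 0$. So there is no bijection on bases, and your functor $F$ is not defined on these extra elements at all — making the polygon-by-polygon verification moot.

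The correct statement is a quasi-equivalence, not a strict isomorphism, and this changes both the direction and the structure of the argument. The paper constructs the comparison functor in the opposite direction, $G:\mathcal F_{A'}(S')\to\mathcal F_A(S)/E$, which is well-defined precisely because every boundary path in $M'$ factors uniquely; it is the identity on objects and an injection (not a surjection) on morphism complexes. The crucial additional input is that $E$ becomes a zero object in $H^*$ of both sides: for the Drinfeld quotient this is automatic, and for $\mathcal F_{A'}(S')$ it follows from $\mu^1(e)=1_E$, where $e$ is the boundary path along the $E$-segment. The surplus elements in the Drinfeld quotient die in cohomology — one checks, for example, that $\mu^1(a\cdot 1_E\cdot b)=0$ and $\mu^1(a\cdot 1_E\cdot 1_E\cdot b)=\pm\,a\cdot 1_E\cdot b$, so $a\cdot 1_E\cdot b$ is exact. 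To repair your argument you would need to abandon the isomorphism claim, replace $F$ by the well-defined $G$, and prove $G$ is a quasi-isomorphism on each morphism complex rather than an isomorphism.
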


\begin{proof}
Comparing definitions, one notices an evident strict $A_\infty$-functor
\begin{equation}
G:\mathcal F_{A'}(S')\to \mathcal F_A(S)/E
\end{equation}
compatible with the functors from $\mathcal F_A(S)$.
It is obtained by factoring a boundary path in $M'$ into its pieces which are alternately contained in $M$ and $E$.
The functor $G$ is not an isomorphism of $A_\infty$-categories, but becomes one once we drop $E$ from both the target and the source category.
By general properties, $E\in H^*(\mathcal F_A(S)/E)$ is a zero-object, and it is easily seen that the same is true in $H^*(\mathcal F_{A'}(S'))$, implying that $G$ is a quasi-equivalence.
\end{proof}

\subsection{Cosheaf of categories}
\label{subsec_cosheaf}

Let $S$ be a graded marked surface with full system of graded arcs $A$.
Dual to $A$ is a ribbon graph $G$ on $S$ which is a deformation retract of the pair $(S,\partial S\setminus M)$.
A constructible cosheaf $\mathcal E$ of $A_\infty$-categories on $G$ is given, concretely, by assigning an $A_\infty$-category $\mathcal C_v$ (resp. $\mathcal C_e$) to each internal vertex $v$ (resp. edge $e$) of $G$, together with functors $\mathcal C_e\to\mathcal C_v$ for every half-edge.
We understand this as a sheaf with values in the higher category of $A_\infty$-categories with isomorphisms the Morita-equivalences, so when taking global sections one needs to compute a homotopy colimit.

There is a canonical constructible cosheaf on $G$, defined as follows.
For an edge $e$, $\mathcal C_e$ is the category with a single object corresponding to the arc in $A$ dual to $e$ and endomorphism algebra $\mathbb K$.
For a vertex $v$, let $D$ be the dual polygon cut out by $A$, then $\mathcal C_v$ is the category $\mathcal F(D)$ where we take the arc system consisting just of boundary arcs.
Thus, $\mathcal C_v$ is Morita-equivalent to the path-algebra of an $A_n$ quiver, where $n+1$ is the valency of $v$.
The functors $\mathcal C_e\to \mathcal C_v$ are fully-faithful inclusions of boundary arcs of $D$.
By construction, we also have functors $\mathcal C_e\to\mathcal F_A(S)$ and $\mathcal C_v\to\mathcal F_A(S)$, and all these are compatible.
Hence, by universality there is a functor from global sections of $\mathcal E$, $\Gamma(G,\mathcal E)\to\mathcal F_A(S)$.
The main result of this subsections is that this is a Morita-equivalence.

\begin{thm} \label{CosheafGlobalSections}
Let $S$ be a graded marked surface with ribbon graph $G$ dual to a full graded system of arcs.
Then $\mathcal F(S)$ represents global sections of the constructible cosheaf $\mathcal E$ on $G$ defined above.
\end{thm}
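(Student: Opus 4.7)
The plan is to show that the canonical comparison functor $\Phi\colon \Gamma(G,\mathcal E) \to \mathcal F_A(S)$ is a Morita equivalence. This functor exists by the universal property of the homotopy colimit: each polygon $D_v$ cut out by $A$ admits a canonical map of marked surfaces to $S$, so by the functoriality of $\mathcal F_A(-)$ under morphisms of marked surfaces established in Subsection~\ref{subsec_arc_category} we obtain strict $A_\infty$-functors $\mathcal C_v \to \mathcal F_A(S)$ compatible with the edge inclusions $\mathcal C_e \to \mathcal C_v$, which assemble into $\Phi$.

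Essential surjectivity of $\Phi$ on generators is immediate, since the objects of $\mathcal F_A(S)$ are exactly the arcs of $A$ and each such arc is already in the image of some $\mathcal C_e$. For fully faithfulness the key geometric observation is that a basis morphism of $\mathcal F_A(S)$ is a boundary path $\gamma$, which naturally decomposes into segments: each segment of $\gamma$ lies along the boundary of a unique polygon $D_v$, and consecutive segments meet at endpoints of arcs of $A$, i.e.\ at the data attached to edges $e$ of $G$ incident to $v$. This decomposition identifies $\gamma$ with a composable chain of morphisms in the various $\mathcal C_v$ glued along the $\mathcal C_e$ -- exactly a morphism in a bar-style model of $\Gamma(G,\mathcal E)$. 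Similarly, every higher operation $\mu^n$ of $\mathcal F_A(S)$ comes from an immersed disk $f\colon D \to S$; the preimage $f^{-1}(A)$ subdivides $D$ into sub-disks, each lying in a single polygon, and the resulting local $\mu^k$ operations concatenate to $\mu^n$ through the gluing functors.

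To make this precise I would induct on the number of internal edges of $G$. The base case, where $G$ has no internal edges, reduces to a single polygon for which $\mathcal F_A(S) = \mathcal C_v$ tautologically. For the inductive step, removing an internal edge $e$ corresponds geometrically to cutting $S$ along the dual arc, producing either two smaller marked surfaces (if $e$ separates) or one smaller marked surface (otherwise); in both cases the induced arc systems are full and the inductive hypothesis applies to each piece. On the cosheaf side, the chosen edge splits off as an elementary pushout of the diagram along $\mathcal C_e$, which matches the fact that cutting along an arc describes $\mathcal F_A(S)$ as a pushout of $\mathcal F_{A'}$'s of the cut pieces along the endomorphism object of that arc -- a cut-and-paste principle parallel to the localization statement of Proposition~\ref{prop_loc}.

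The main obstacle is the bookkeeping of higher $\mu^n$-operations together with signs. One must verify that immersed disks of arbitrarily large perimeter, which may cross many polygons, really are reproduced from iterated compositions of the elementary disk operations in the various $\mathcal C_v$ by the explicit bar model for the homotopy colimit of $A_\infty$-categories (in the style of \cite{lo06} and already implicit in Subsection~\ref{subsec_localization}). Once this combinatorial matching is done for the local model, the induction collapses, and the two categories coincide on objects, morphism spaces, and all $A_\infty$-operations, giving the desired equivalence.
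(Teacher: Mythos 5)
The high-level geometric picture you describe (morphisms of $\mathcal F_A(S)$ decompose into polygon-by-polygon pieces, and higher $\mu^n$-operations break along subdivisions of immersed disks) is correct, but the proof has two genuine gaps, and a third omitted case.

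First, the real content of the theorem is that the \emph{naive} gluing of the $\mathcal C_v$ along the $\mathcal C_e$ computes the \emph{homotopy} colimit, and your proposal defers exactly this point as "bookkeeping." A priori the homotopy colimit of $A_\infty$-categories has morphism complexes built from a bar resolution, with potential contributions from higher degrees; showing these collapse to the expected boundary-path picture is the core difficulty. The paper resolves it by passing to explicit quiver models, replacing the path category of each $A_n$-quiver by its cobar-bar resolution $\Omega B\mathcal A$, and then writing down an explicit contracting homotopy $H$ proving that the strict coequalizer of the quiver diagram \eqref{GradedQuiverCoequalizer} is already a homotopy coequalizer. Your proposal contains no substitute for this computation; it asserts that "the combinatorial matching" can be done without showing why there are no higher corrections.

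Second, your inductive step presupposes a cut-and-paste principle: that cutting $S$ along an internal arc exhibits $\mathcal F_A(S)$ as a homotopy pushout of the Fukaya categories of the cut pieces over $\mathcal C_e$. But this gluing statement for $\mathcal F_A(S)$ is not established anywhere prior to Theorem~\ref{CosheafGlobalSections} — it is essentially equivalent to the theorem itself (and Proposition~\ref{prop_loc} is a different operation, quotienting by a boundary arc, not cutting). So as stated the induction is circular: the step uses exactly what the theorem is supposed to provide. The paper avoids this by reducing the whole diagram at once to a single quiver coequalizer rather than peeling off edges one at a time.

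Third, when $S$ has no boundary arcs (e.g. all boundary components are marked circles) there is no formal arc system and the explicit quiver model does not apply directly; the paper deduces this case from the one with boundary arcs by localization, using that quotients are homotopy pushouts and hence commute with colimits. Your proposal does not address this case at all.
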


The category of global sections $\Gamma(G,\mathcal E)$ is by definition the homotopy colimit of the diagram formed by the $\mathcal C_e$, $\mathcal C_v$, and the functors between them.
Thus we need to show that $\mathcal F(S)$ represents this homotopy colimit.
The strategy of the proof will be to first treat the case with boundary arcs where we have a formal generator, then deduce the general case by localization.

Let $Q_n$ be the quiver with $n\ge 1$ vertices, arrows $\alpha_{i,i+1}$ from the $i$-th to the $(i+1)$-st vertex of degrees $|\alpha_{i,i+1}|\in\mathbb Z$, and quadratic relations $\alpha_{i,i+1}\alpha_{i-1,i}=0$ for $1<i<n$.
Suppose that $Q=Q_{n_1}\sqcup\ldots\sqcup Q_{n_k}$ is a finite disjoint union quivers of this type, and that $P$ is a finite set with maps $f_1,f_2:P\to Q_0$ such that $f_1\sqcup f_2:P\sqcup P\to Q$ is injective.
Construct a new quiver with relations $R$ from $Q$ by identifying the vertices $f_1(p)$ and $f_2(p)$ for each $p\in P$, and keeping the same arrows and relations.
Passing to the associated path-categories $\mathcal P,\mathcal Q,\mathcal R$, where $P$ is thought of as a quiver without arrows, we get a coequalizer diagram
\begin{equation} \label{GradedQuiverCoequalizer}
\begin{tikzcd}
\mathcal P \arrow[bend left=15]{r}{f_1} \arrow[bend right=15]{r}[swap]{f_2} & \mathcal Q \arrow{r} & \mathcal R
\end{tikzcd}
\end{equation}
in the naive (1-categorical) sense.
\begin{prop}
The diagram \eqref{GradedQuiverCoequalizer} is a homotopy coequalizer in the category of dg-categories.
\end{prop}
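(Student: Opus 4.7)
The plan is to construct a cofibrant representative of the homotopy coequalizer in the Morita model structure on dg-categories and to show it is quasi-equivalent to $\mathcal R$. The key input is the bimodule resolution $M_\bullet \to A$ of the diagonal constructed in the previous subsection, which plays the role of a Koszul resolution for quadratic monomial path algebras.

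First, I would replace $\mathcal Q$ by a cofibrant dg-category $\tilde{\mathcal Q}$ obtained by freely adjoining, for each quadratic relation $(\alpha_i,\alpha_{i+1})\in R$, a degree $-1$ generator $h$ with $dh = \alpha_{i+1}\alpha_i$, instead of imposing the strict relation $\alpha_{i+1}\alpha_i = 0$. The natural dg-functor $\tilde{\mathcal Q} \to \mathcal Q$ sending $h \mapsto 0$ is a quasi-equivalence: the resolution $M_\bullet$ of the diagonal is precisely what certifies this, as its exactness and the local structure of its terms control the higher morphisms of $\tilde{\mathcal Q}$. Since $\mathcal P$ is discrete (hence cofibrant) and $\mathcal P \sqcup \mathcal P \to \tilde{\mathcal Q}$ is an object-injective inclusion into a freely generated dg-category, this map is a cofibration. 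The naive pushout $\tilde{\mathcal R} := \tilde{\mathcal Q} \sqcup_{\mathcal P \sqcup \mathcal P} \mathcal P$ is therefore a homotopy pushout representing the homotopy coequalizer of $f_1,f_2$.

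It remains to identify $\tilde{\mathcal R}$ with $\mathcal R$ up to quasi-equivalence. There is a canonical dg-functor $\tilde{\mathcal R} \to \mathcal R$ sending each bounding generator $h$ to zero. The decisive observation is that identifying the vertices $f_1(p)\sim f_2(p)$ creates no new relations in the glued quiver $\bar Q$: every relation in $R$ is internal to a single component $Q_{n_j}$, and any newly composable pair of arrows in $\bar Q$ that crosses a former boundary between components is automatically absent from $R$. Hence the diagonal of $\mathbb K\bar Q / R$ admits a bimodule resolution of exactly the same combinatorial shape as for $\mathbb K Q / R$, and the very same argument that made $\tilde{\mathcal Q}\to\mathcal Q$ a quasi-equivalence now makes $\tilde{\mathcal R}\to\mathcal R$ a quasi-equivalence. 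The main technical obstacle is this last comparison: strict identifications of objects can in general obstruct a naive coequalizer from being a homotopy coequalizer, and it is the combinatorial fact about relations in $\bar Q$ — together with acyclicity and monomiality of $R$ coming from the structure of the $Q_{n_j}$ — that rules out hidden higher Tor contributions and allows the argument to conclude.
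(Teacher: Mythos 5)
Your proposed cofibrant replacement $\widetilde{\mathcal Q}$ — adjoin, for each relation, a single generator $h$ with $dh=\alpha_{i+1}\alpha_i$, in degree one less than the product (not degree $-1$ unless the arrows are in degree $0$) — is not in general quasi-equivalent to $\mathcal Q$, and the resolution $M_\bullet$ of the diagonal is precisely what tells you this rather than what certifies it. Take $Q_4$: vertices $1,2,3,4$, arrows $\alpha_{12},\alpha_{23},\alpha_{34}$, and relations $\alpha_{23}\alpha_{12}=\alpha_{34}\alpha_{23}=0$. Your $\widetilde{\mathcal Q}$ has, besides the arrows, exactly two extra generators $h_{13}$ (with $dh_{13}=\alpha_{23}\alpha_{12}$) and $h_{24}$ (with $dh_{24}=\alpha_{34}\alpha_{23}$). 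Then $\mathrm{Hom}_{\widetilde{\mathcal Q}}(1,4)$ is spanned by $\alpha_{34}\alpha_{23}\alpha_{12}$, $h_{24}\alpha_{12}$, and $\alpha_{34}h_{13}$, and the combination $h_{24}\alpha_{12}\mp\alpha_{34}h_{13}$ is a closed element which cannot be a boundary (there is nothing in $\mathrm{Hom}_{\widetilde{\mathcal Q}}(1,4)$ of lower degree), so it gives a nonzero cohomology class, while $\mathrm{Hom}_{\mathcal Q}(1,4)=0$. Killing it requires a further generator $\beta_{14}$, and for longer $Q_n$ one needs generators in every intermediate homological degree. These missing generators are exactly the summands $M_n$ with $n\ge 3$ of your diagonal resolution: that complex has length $n-1$ for $Q_n$, so a two-term thickening cannot be a resolution. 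The paper therefore uses the full cobar construction $\Omega B\mathcal Q$, with generators $\beta_{ij}$ for \emph{all} $1\le i<j\le n$ and $d\beta_{ij}=\sum_{i<k<j}(-1)^{|\beta_{kj}|}\beta_{kj}\beta_{ik}$, together with an explicit contracting homotopy $H$.

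The second half of your argument is also left to a gesture at the genuinely delicate step. Your observation that gluing vertices creates no new relations in $\bar Q$ is correct and is indeed used, but it does not by itself show that $\widetilde{\mathcal R}\to\mathcal R$ is a quasi-isomorphism: the pushout introduces newly composable paths across the glued vertices, and one must extend the homotopy $H$ to such paths and re-verify $dH+Hd=1-IP$ there. That verification — not the no-new-relations remark — is the actual content of the paper's second step.
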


\begin{proof}
To show this, we will make a cofibrant replacement $\widetilde{\mathcal Q}$ of $\mathcal Q$ making the diagram 
\begin{equation} \label{CofibrantCoqualizer}
\begin{tikzcd}
\mathcal P \arrow[bend left=15]{r}{f_1} \arrow[bend right=15]{r}[swap]{f_2} & \widetilde{\mathcal Q}
\end{tikzcd}
\end{equation}
cofibrant by the condition on $f_1\sqcup f_2$, and verify and the coequalizer of \eqref{CofibrantCoqualizer} is quasi-isomorphic to $\mathcal R$.

The graded linear path-category $\mathcal A$ of $Q_n$ has a cofibrant replacement given by the bar-cobar resolution $\mathcal B=\Omega B\mathcal A$.
The dg-category $\mathcal B$ is described explicitly by a dg-quiver with $n$ vertices, arrows $\beta_{ij}$ from the $i$-th to the $j$-th vertex, $1\le i<j\le n$, with degrees
\begin{equation}
|\beta_{ij}|=1+\sum_{i\le k<j}\left(|\alpha_{k,k+1}|-1\right)
\end{equation}
and differential
\begin{equation}
d\alpha_{ij}=\sum_{i<k<j}(-1)^{|\alpha_{kj}|}\alpha_{kj}\alpha_{ik}
\end{equation}
extended to paths by the graded Leibniz rule.
The functor $P:\mathcal B\to\mathcal A$ sends $\beta_{i,i+1}$ to $\alpha_{i,i+1}$ and all other arrows to zero.
The right inverse $I$ sending $\alpha_{i,i+1}$ to $\beta_{i,i+1}$ is not multiplicative, but an inverse of $P$ on the level of cohomology, as the homotopy $H$ defined by
\begin{equation} \label{BarCobarH}
H(\beta_{i_{k-1},i_k}\cdots\beta_{i_1,i_2})=\begin{cases} 
(-1)^{|\beta_{i_{k-1},i_k}|}\beta_{i_{k-2},i_k}\cdots\beta_{i_1,i_2} & \text{if }i_k=i_{k-1}+1,k\ge 3 \\
0 & \text{else}
\end{cases}
\end{equation}
satisfying $dH+Hd=1-IP$, shows.

We have a cofibrant replacement $\widetilde{\mathcal Q}=\Omega B\mathcal Q$ which is described by a disjoint union of the dg-quivers above.
The coequalizer, $\widetilde{\mathcal R}$ of the diagram \eqref{CofibrantCoqualizer} is associated with the dg-quiver we get by identifying pairs of vertices.
There is a dg-functor $\widetilde{\mathcal R}\to\mathcal R$, which we claim is a quasi-isomorphism.
To define the homotopy, let $\beta_1,\ldots,\beta_n$ be non-identity paths in the quiver of $\widetilde{\mathcal Q}$ which are not composable, but become so in $\widetilde{\mathcal R}$, so that $b:=\beta_n\cdots\beta_1$ is a morphism in $\widetilde{\mathcal R}$.
Note that any non-identity path in the quiver of $\widetilde{\mathcal R}$ canonically breaks up in such a way.
Also define the length of a path, $l(\beta)$, so that $\l(\beta_{ij})=j-i$ and $l(\beta_2\beta_1)=l(\beta_2)+l(\beta_1)$.
We define $H$ extending $\eqref{BarCobarH}$ in the following way.
If $l(\beta_k)=1$ for $1\le k \le n$, then $H(\beta_n\cdots\beta_1)=0$.
Otherwise, there exists a largest $k$ with $l(\beta_k)>1$ and 
\begin{equation}
H(\beta_n\cdots\beta_1):=(-1)^{|\beta_n|+\ldots+|\beta_{k+1}|}\beta_n\cdots H(\beta_k)\cdots\beta_1.
\end{equation}
Let us check the identity $dH+Hd=1-IP$ in $\widetilde{\mathcal R}$.
If $l(\beta_k)=1$ for all $k$, then evidently both sides vanish.
Otherwise, we have a largest $k$ with $l(\beta_k)>1$ and the right-hand side vanishes.
On the other hand,
\begin{equation}
(dH)(b)=(dH)(\beta_k)+\sum_{j<k}(-1)^{|H(\beta_k)|+\ldots+|\beta_{j-1}|}\beta_n\cdots H(\beta_k)\cdots d\beta_j\cdots\beta_1
\end{equation}
and
\begin{equation}
(Hd)(b)=(Hd)(\beta_k)+\sum_{j<k}(-1)^{|\beta_k|+\ldots+|\beta_{j-1}|}\beta_n\cdots H(\beta_k)\cdots d\beta_j\cdots\beta_1
\end{equation}
so that the left-hand side also vanishes.

We conclude that the original coequalizer diagram \eqref{GradedQuiverCoequalizer} is quasi-isomorphic to a cofibrant one, and hence also a homotopy coequalizer.
\end{proof}

\begin{proof}[Proof of Theorem \ref{CosheafGlobalSections}]
Let $S$ be a graded marked surface, which we may as well assume to be connected.
Suppose first that $S$ has at least one boundary arc.
Then $S$ has a full formal system of arcs $A$ with dual ribbon graph $G$.
Recall that $\Gamma(G,\mathcal E)$ is the homotopy colimit of a diagram with the various categories $\mathcal C_e,\mathcal C_v$ and functors between them.
Removing the categories $C_e$ for $e$ an edge ending in a boundary arc from the diagram does not change the colimit, since such $C_e$ are being included as subcategories into some $\mathcal C_v$ only once.
In the reduced diagram there is now, by formality of the arc system, at least one object in each $\mathcal C_v$ which is not in the image of any functor $\mathcal C_e\to\mathcal C_v$.
We remove such an object from each $\mathcal C_v$, producing a Morita-equivalent diagram.
Note that the resulting diagram is now a diagram of graded linear categories.
It is essentially a diagram of the form \eqref{GradedQuiverCoequalizer} where $\mathcal P$ is the coproduct of the $\mathcal C_e$ (with $P$ identified with the set of arcs in $A$) and $\mathcal Q$ is the coproduct of the $\mathcal C_v$.
The coequalizer $\mathcal R$ is just the graded linear model for $\mathcal F(S)$, and we know it is a homotopy coequalizer by the previous proposition.

The previous arguments also show that $\Gamma(G,\mathcal E)$ is independent of $G$, and hence that the theorem holds in fact for \textit{any} full system of arcs on a surface with boundary arc.
Namely, it suffices to check that global sections do not change when an edge of $G$ is contracted.
This can be checked in a neighborhood of that edge, so we can assume that $G$ has only one internal edge and is dual to a full arcs system $A$ on the disk $D$ with single internal arc.
Let $G'$ be the graph obtained by contracting the internal edge in $G$, and $\mathcal E'$ its cosheaf of categories.
As it is already established that both $\Gamma(G,\mathcal E)$ and $\Gamma(G',\mathcal E)$ are represented by $\mathcal F(D)$, they must be equivalent.

What remains is the case when $S$ has no boundary arcs.
As discussed in Subsection~\ref{subsec_localization}, the category $\mathcal F(S)$ is a localization of a category $\mathcal F(S')$ where $S'$ is obtained from $S$ by inserting a boundary arc on some boundary component.
So to complete the proof of the theorem it suffices to check that the corresponding categories of global sections have the same relation.

Choose some ribbon graph $G'$ on $S'$ dual to a full system of graded arcs.
To get a ribbon graph $G$ for $S$ we just need to remove the edge ending on the unique boundary arc.
This means that in the diagram computing global sections we change some $\mathcal C_v$ from type $A_n$ to type $A_{n-1}$, which is indeed just localization by the boundary arc.
Finally, taking the quotient by some object is a special case of a homotopy push out, thus commutes with colimits, so that $\Gamma(G,\mathcal E)$ is the quotient of $\Gamma(G',\mathcal E')$ by the boundary arc.
\end{proof}


\section{Tameness and geometricity}
\label{sec_tameness}

In this section we deal with the problem of classifying objects in $\mathcal F(S)$.
First, in Subsection~\ref{subsec_complexes_from_curves}, we assign objects in $\mathcal F(S)$ to certain immersed curves in $S$ with local system of vector spaces.
The purpose of the rest of the section is to prove that we get all objects in this way.
In Subsection~\ref{subsec_nets} we introduce \textit{nets} and study their representations.
Subsection~\ref{subsec_minimal_complexes} discusses a minimality condition on twisted complexes which ensures uniqueness up to isomorphism.
The proof of the classification is completed in Subsection~\ref{subsec_geometricity}.

\subsection{Twisted complexes from curves}
\label{subsec_complexes_from_curves}

Fix a graded marked surface $S$ and a ground field $\mathbb K$.
An immersed curve $c$ in $S$ is \textbf{unobstructed} if it does not bound an immersed teardrop, which is a map from the closed disk $D$ to the surface which takes $\partial D$ to $c$ and which is a smooth immersion at every point except one point of $\partial D$ (see Figure \ref{fig_teardrop}).
\begin{figure}[h]
\centering
\includegraphics[scale=1]{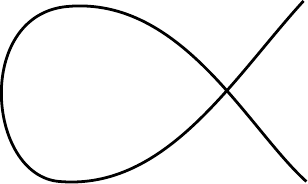}
\caption{Curve bounding a teardrop.}
\label{fig_teardrop}
\end{figure}
An \textbf{admissible curve} is an unobstructed graded curve $c$ such that one of the following holds:
\begin{enumerate}
\item The domain of $c$ is $S^1$, the image of $c$ is disjoint from $\partial S$, and $c$ represents a primitive class in $\pi_1(S)$.
\item The domain of $c$ is $[0,1]$, $c$ intersects $\partial S$ transversely in $M$ and only in the endpoints, and $c$ is not homotopic relative endpoints to a path in $M$.
\end{enumerate}
Suppose that $S$ is compact so that we have a category $\mathcal F(S)$, well defined up to canonical equivalence.
The purpose of this subsection is to show that an admissible curve together with a local system of finite dimensional $\mathbb K$-vector spaces (on its domain) gives an equivalence class of objects in $\mathcal F(S)$.

First, in the case when $S$ is topologically a disk any admissible curve $c$ is a graded arc. 
Thus we can find a full arc system $A$ which includes $c$, so that $c$ is an object of $\mathcal F_A(S)$ by definition.
The isomorphism class of that object is clearly well-defined in $\mathcal F(S)$ independently of the arc system.

Returning to the case of general $S$, we will first deal with admissible curves $c$ which have domain $[0,1]$.
For any such $c$ we can find a graded marked surface $S'$ which is of disk-type and with a map $f$ to $S$ so that $c$ is the image of an admissible curve $\tilde{c}$ under $f$.
To see this, consider the universal cover $\widetilde{S}$ of $S$ and lift a full arc system $A$ on $S$ to $\widetilde{A}$ on $\widetilde{S}$.
Lift $c$ to $\tilde{c}$ on $\widetilde{S}$ and take as $S'$ a closed disk which is cut out by arcs in $\widetilde{A}$ and which contains $\tilde{c}$.
Now, $\tilde{c}$ gives an equivalence class of objects in $\mathcal F(S')$, and the image under the functor $\mathcal F(S')\to\mathcal F(S)$ is independent of the choice of $(S')$.
This follows from the fact that if we have $S'$, $S''$ as above, then the maps $S'\to S$, $S''\to S$ both factor through a third $S'''\to S$ as can be seen by looking at the universal cover again.

Suppose now instead that $c$ is an admissible curve with domain $S^1$ and local system $V$ of finite-dimensional vector spaces on it.
We will follow the same strategy as before and assume first that $(S,M)$ is of annular type, i.e. topologically a compact annulus with corners on each boundary component.
Choose a cyclic sequence of disjoint non-isotopic arcs $X_i$, $i\in\mathbb Z/n$ so that at least one connects the two components of $\partial S$ and such that every component of $M$ contains either exactly two endpoints of the arcs, belonging $X_i,X_{i+1}$ for some $i$, or none of the endpoints (see Figure \ref{fig_annulus_arcs}).
\begin{figure}[h]
\centering
\includegraphics[scale=1]{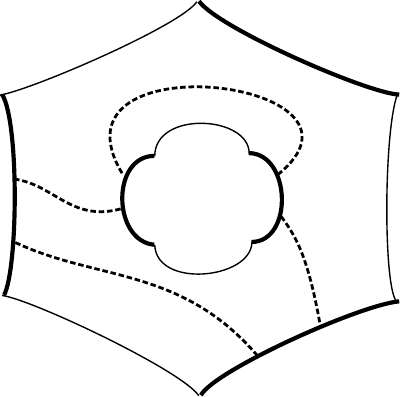}
\caption{Example of allowed sequence of arcs (dashed).}
\label{fig_annulus_arcs}
\end{figure}
Thus we get a sequence $a_i$, $i\in\mathbb Z/n$ of distinct boundary paths so that $a_i$ connects endpoints of $X_i,X_{i+1}$.
If we follow $X_0,a_0,X_1,a_1,\ldots$ we get a path which after suitable smoothing near the intersection points becomes a simple closed loop isotopic to $c$.
It is possible to choose grading and a local systems on $X_i$, $a_i$, so that the smoothed path is isotopic to $c$ as a graded curve with local system.
As a result, each $a_i$ will be morphism of degree $1$ either from $X_i$ to $X_{i+1}$ or in the other direction.
Further we get a vector space $V_i$ of sections of over $X_i$ and parallel transport $T_i:V_i\to V_{i+1}$ along $a_i$.
Consider the twisted complex
\begin{equation}
\left(\bigoplus V_i\otimes X_i,\sum T_i^{\pm 1}\otimes a_i\right)
\end{equation}
where the signs are determined by the direction of the morphisms $a_i$.
This is the object of $\mathcal F(S,M,\Omega)$ we assign to $c$.

\begin{lem}
The equivalence class of the twisted complex constructed depends only on the isotopy class of the graded curve $c$ with local system.
\end{lem}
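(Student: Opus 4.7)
The plan is to show invariance under each independent choice made in the construction: the cyclic sequence of arcs $\{X_i\}$, the boundary paths $\{a_i\}$, the gradings on each $X_i$ and $a_i$, the local systems on each $X_i$, and the isotopy representative of $c$. I would first enlarge, if necessary, $\{X_i\}$ to a full arc system $A$, so the twisted complex can be viewed as an object of $\mathrm{Tw}\,\mathcal F_A(S)$; by Morita invariance (Subsection~\ref{subsec_morita_invariance}) it suffices to exhibit isomorphisms in any one such enlargement.

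The choice of grading on an arc $X_i$ shifts the degree of the adjacent $a_{i-1},a_i$, and is absorbed by applying $[n]$ to the summand $V_i\otimes X_i$. A change of local system representative on $X_i$ (resp. on $a_i$) is absorbed by conjugating the transport maps $T_{i-1},T_i$ by the isomorphism of local systems on the overlap; these operations manifestly produce an isomorphic twisted complex. Isotopy of $c$ that preserves the combinatorial pattern of the zigzag $X_0,a_0,X_1,a_1,\ldots$ leaves everything unchanged up to reparameterization of the $V_i$. Thus only the choice of zigzag presentation needs serious attention.

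The strategy for the latter is to connect any two presentations by elementary moves and check invariance under each move. The first move is \emph{refinement}: inserting an extra arc $X'$ between consecutive $X_i,X_{i+1}$, splitting $a_i$ into $a_i'\cdot a_i''$. In the resulting complex the summand $V_{i+1}\otimes X'$ pairs with $V_{i+1}\otimes X_{i+1}$ along an identity-type morphism, forming an acyclic direct summand that can be contracted, recovering the original complex up to isomorphism. The second move is a \emph{flip}: replacing $X_i$ by the opposite diagonal $X_i'$ in the quadrilateral cut out by $X_{i-1},X_i,X_{i+1}$ and the relevant boundary paths. Here the key computation is exactly the one in the ``simplest example'' paragraph of Subsection~\ref{subsec_arc_category}: in a disk bounded by $n$ arcs, any one arc is quasi-isomorphic to the twisted complex formed by the remaining ones, glued by the boundary paths. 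Applying this locally replaces $V_i\otimes X_i$ by a two-step complex involving $X_i'$ and one of the $X_{i\pm 1}$, which after rearrangement of the total differential recovers the twisted complex built from the flipped presentation.

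The main obstacle is showing that any two zigzag presentations of a given graded curve $c$ can be connected by a finite sequence of these refinement and flip moves. For this I would pass to the universal cover $\widetilde S$ (as in the handling of curves with domain $[0,1]$), where both presentations lift to sequences of arcs in a strip-neighborhood of a lift $\tilde c$. In such a neighborhood the combinatorial classification of triangulations that are compatible with a fixed curve is controlled by a contractible flip complex, analogous to the contractibility of the classifying space of arc systems used in Subsection~\ref{subsec_morita_invariance}. Descending back to $S$, any two presentations of $c$ are therefore related by a finite zigzag of elementary moves, each inducing an isomorphism of twisted complexes, which yields the claimed invariance.
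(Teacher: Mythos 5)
Your approach is genuinely different from the paper's, and it has an unproven step at its core. You reduce the problem to showing that any two zigzag presentations of $c$ are connected by a finite sequence of refinement and flip moves, and justify this by appealing to a ``contractible flip complex'' for arc systems compatible with a fixed curve. That assertion is not a consequence of the Harer contractibility result cited in Subsection~\ref{subsec_morita_invariance}, which concerns the classifying space of \emph{all} full arc systems, not the subcomplex of those compatible with $c$; moreover you need the relevant connectivity statement to be $\pi_1$-equivariant (so it descends from the universal cover to the annular surface), an additional constraint you do not address. Without a proof or reference for this connectivity claim, the argument has a genuine gap precisely at the point where the difficulty of the lemma is concentrated.

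The paper avoids this combinatorial problem entirely. It first cuts the annular surface along a single arc $X_0$, after which the remaining arcs $X_1,\dots,X_{n-1}$ live in a disk-type surface and concatenate to a single graded arc $Y$; by the Morita-invariance arguments already established, the twisted complex built from $X_0,\dots,X_{n-1}$ is isomorphic to the one built from the pair $X_0,Y$. This reduces the entire problem to the two-arc case. Any two admissible pairs of arcs are related by a power of the Dehn twist around the core of the annulus, so it remains only to check invariance of the assigned twisted complex under that single autoequivalence, which the paper does via the identification $H^0(\mathrm{Tw}(\mathcal F_{X_0,Y}(S)))\cong D^b(\mathbb P^1)$, under which the Dehn twist becomes $\otimes\,\mathcal O(1)$ and the complex attached to $c$ becomes a torsion sheaf. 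In short, the paper trades your open-ended ``all presentations connected by moves'' statement for the much more controlled ``all pairs related by a Dehn twist'', which is immediate. If you wish to keep your elementary-moves strategy, you would need to either supply the connectivity argument for compatible arc systems or independently reduce to the two-arc case as the paper does.
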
 

\begin{proof}
We claim first that we can replace $X_1,\ldots,X_{n-1}$ be a single arc $Y$ with the pair $X_0,Y$ giving an isomorphic twisted complex.
Indeed, cutting $S$ along $X_0$ we get a surface of disk type $S'$ in which the sequence of arcs $X_1,\ldots,X_{n-1}$ concatenates to a single graded arc $Y$.
The corresponding isomorphism of twisted complexes formed from $X_1,\ldots,X_{n-1}$ and $Y$ respectively was established in Subsection~\ref{subsec_morita_invariance}, and the claimed isomorphism of twisted complexes formed from $X_0,\ldots,X_{n-1}$ and $X_0,Y$ follows.

We have reduced the problem to the case of two arcs.
Any two pairs of arcs satisfying our requirements are related by Dehn twists (automorphisms) of $S$.
It is clear that the isotopy class of $c$ is invariant under Dehn twists.
To finish the proof we need to show that the twisted complex associated with $c$ is invariant under the induced autoequivalence, up to isomorphism.
This can be checked by direct computation, or by using the equivalence
\begin{equation}
H^0(\mathrm{Tw}(\mathcal F_{X_0,Y}(S)))=D^b(\mathbb P^1)
\end{equation}
under which $X_0,Y$ correspond to $O,O(1)[-1]$, the Dehn-twist to $\otimes O(1)$, and the twisted complex assigned to $c$ to a torsion sheaf.
\end{proof}

We have excluded above the case when one or both boundary components are entirely contained in $M$.
These cases can be handled fairly easily directly, or alternatively by the localization construction of the previous section.
The case of general $S$ is handled by finding maps from surfaces of annular type.
Here the argument uses the annular covering associated with $c$ instead of the universal covering.

\subsection{Representations of nets}
\label{subsec_nets}

In this subsection we classify representations of certain combinatorial structures, called \textit{nets}, based on ideas in the work of Nazarova--Roiter~\cite{nazarova_roiter} on a class of tame linear algebra problems.
We will later use this result to derive the classification of objects in Fukaya categories of surfaces with boundary.

A \textbf{net} is a quadruple $\mathcal X=(A,\alpha,B,\beta)$ where
\begin{itemize}
\item $A$ is a finite set,
\item $\alpha$ is a fixed-point-free involution on $A$,
\item $B$ is a finite set with partition $B=\bigsqcup_{i\in A}B_i$ into totally ordered sets,
\item $\beta$ is a fixed-point-free involution on $\mathrm{Dom}(\beta)\subset B$.
\end{itemize}
A morphism of nets $f:(A,\alpha,B,\beta)\to (A',\alpha',B',\beta')$ is given by maps $f_1:A\to A'$, $f_2:B\to B'$ such that 
\begin{align}
f_1\circ\alpha&=\alpha'\circ f_1 \\
f_2(\mathrm{Dom}\beta)&\subset\mathrm{Dom}\beta' \\
f_2(B\setminus\mathrm{Dom}\beta)&\subset B'\setminus\mathrm{Dom}\beta' \\
f_2|_{\mathrm{Dom}\beta}\circ\beta&=\beta'\circ f_2|_{\mathrm{Dom}\beta} \\
f_2(B_i)\subset {B'}_{f_1(i)}\text{ and } &f_2|_{B_i} \text{ is increasing }
\end{align}
The \textbf{height} of a net $\mathcal X=(A,\alpha,B,\beta)$ is defined as
\begin{equation}
h(\mathcal X)=\max_{i\in A}|B_i|.
\end{equation}
Nets of height 1 are disjoint unions of the following types of nets:
\begin{equation}
\label{string_type_net}
\begin{tikzcd}
\bullet\arrow[leftrightarrow]{r}{\alpha} & \bullet\arrow[leftrightarrow]{r}{\beta} & \bullet\arrow[leftrightarrow]{r}{\alpha} & \bullet\arrow[leftrightarrow]{r}{\beta} & \cdots\arrow[leftrightarrow]{r}{\alpha} & \bullet
\end{tikzcd}
\end{equation}
and
\begin{equation}
\label{band_type_net}
\begin{tikzcd}
\bullet\arrow[leftrightarrow]{r}{\alpha}\arrow[leftrightarrow,bend right=20]{rrrrr}{\beta} & \bullet\arrow[leftrightarrow]{r}{\beta} & \bullet\arrow[leftrightarrow]{r}{\alpha} & \bullet\arrow[leftrightarrow]{r}{\beta} & \cdots\arrow[leftrightarrow]{r}{\alpha} & \bullet
\end{tikzcd}
\end{equation}
where we have indicated elements of $A=B$ by dots and the action of $\alpha,\beta$ by arrows.
In both cases $|A|\ge 2$.

A \textbf{representation of a net} $(A,\alpha,B,\beta)$ is given by 
\begin{itemize}
\item a finite-dimensional vector space $V$ with a direct sum decomposition 
\[
V=\bigoplus_{i\in A/\alpha}V_i,
\]
\item for each $i\in A$ an increasing exhaustive filtration $\{F_j\}_{j\in B_i}$ on $V_i$ (i.e. $j\leq k\implies F_j\subset F_k$, $F_{\max B_i}=V_i$),
\item isomorphisms 
\[
\phi_i:\mathrm{gr}_iV\to\mathrm{gr}_{\beta(i)}V,\qquad i\in\mathrm{Dom}(\beta)
\]
with $\phi_{\beta(i)}=\phi_i^{-1}$.
Here $\mathrm{gr}_iV=F_i/F_{<i}$ is the associated graded.
\end{itemize} 
Representations of a net form a linear category with morphisms $(V,F_i,\phi_i)\to(W,G_i,\psi_i)$ being linear maps $f:V\to W$ such that $f(F_i)\subset G_i$ and 
\[
\begin{tikzcd}
\mathrm{gr}_iV \arrow{r}{\mathrm{gr}_if} \arrow{d}{\phi_i} & \mathrm{gr}_iW \arrow{d}{\psi_i} \\
\mathrm{gr}_{\beta(i)} \arrow{r}{\mathrm{gr}_{\beta(i)}f} V & \mathrm{gr}_{\beta(i)}W 
\end{tikzcd}
\]
commutes for all $i\in\mathrm{Dom}(\beta)$.
We note that the category of representations of a net as in \eqref{string_type_net} is the category of finite dimensional vector spaces, while for \eqref{band_type_net} we get the category of finite dimensional representations of $\mathbb Z$ (up to equivalence).

Given a morphism of nets $f=(f_1,f_2):\mathcal X=(A,\alpha,B,\beta)\to \mathcal X'=(A',\alpha',B',\beta')$ with $f_2$ injective on each $B_i$, there is an induced \textbf{pushforward} functor $f_*$ on the corresponding categories of representations.
If $V=(V,F_i,\phi_i)$ is a representation of $\mathcal X$, then we produce a representation $f_*V=W=(W,G_i,\psi_i)$ of $\mathcal X'$ with $W=V$ as vector spaces and
\begin{equation}
W_j=\bigoplus_{i\in A/\alpha,f_1(i)=j}V_i,\qquad j\in A'/\alpha'
\end{equation}
\begin{equation}
G_l=\bigoplus_{i\in A,l\in B'_{f_1(i)}}F_k,\qquad \text{where }k=\max\{r\in B_i\mid f_2(r)\leq l\}
\end{equation}
for which we have
\begin{equation}
\mathrm{gr}_jW=\bigoplus_{i\in B,f_2(i)=j}\mathrm{gr}_iV
\end{equation}
so that we can define
\begin{equation}
\psi_j=\bigoplus_{i\in B,f_2(i)=j}\phi_i.
\end{equation}
If $f_2$ fails to be injective on some $B_i$'s, then one can still define $f_*V$ given additional choices on $V$ (i.e. in a non-functorial way).
Namely, for each $i\in A$ and $j\in B'_{f_1(i)}\cap\mathrm{Dom}(\beta')$ with 
\begin{equation}
\left(f_2|_{B_i}\right)^{-1}(j)\cong \{1,\ldots,n\},\qquad n\ge 2
\end{equation}
choose a splitting of the inclusions $F_1V\subset\ldots\subset F_nV$.
This gives us an isomorphism 
\begin{equation}
\mathrm{gr}_1V\oplus\ldots\oplus\mathrm{gr}_nV\cong F_nV/F_{<1}V=\mathrm{gr}_jW
\end{equation}
allowing us to define $\psi_j$ as direct sums of $\psi_1\oplus\ldots\oplus\psi_n$ for various $i$.
Different choices of splittings give isomorphic $f_*V$.

\begin{thm} \label{nets_thm_exist}
Let $\mathcal X$ be a net, $V$ a representation of $\mathcal X$.
Then there is a net $\mathcal X'$ of height 1, a morphism $f:\mathcal X'\to\mathcal X$, and representation $V'$ of $\mathcal X'$ such that $f_*V'=V$.
\end{thm}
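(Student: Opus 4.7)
My approach is induction on $\dim V := \sum_{[i]\in A/\alpha}\dim V_{[i]}$, where at each step I peel off a nonzero direct summand $W \subseteq V$ (as a representation of $\mathcal X$) that is manifestly a pushforward from a height-1 net, and then apply the induction hypothesis to the quotient $V/W$. The base case $V=0$ is trivial: take $\mathcal X'$ to be the empty net.

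To extract $W$, I trace out a \emph{walk} in $\mathcal X$ starting from an arbitrary nonzero element $v \in \mathrm{gr}_{j_0} V_{[i_0]}$. The walk alternates two types of moves: a $\beta$-step, which uses $\phi_j$ to pass from the current class in $\mathrm{gr}_j V$ to the corresponding class in $\mathrm{gr}_{\beta(j)} V$, available whenever $j \in \mathrm{Dom}(\beta)$; and an $\alpha$-step, which reinterprets the current subspace of $V_{[i]}$ under the filtration coming from $B_{\alpha(i)}$ and records the new filtration level. Finiteness forces the walk either to terminate at both ends (giving a string net of type \eqref{string_type_net}) or to close up (giving a band net of type \eqref{band_type_net}). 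In the latter case, going once around the band induces an automorphism $T$ of the starting graded line (or of a finite-dimensional space if several parallel walks are grouped); its Jordan decomposition yields pieces to peel off one block at a time, each contributing a band net of height~1 whose morphism to $\mathcal X$ wraps a number of times equal to the block size.

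To upgrade the combinatorial walk into an actual direct summand of $V$, I must lift each graded line visited along the walk to a one-dimensional subspace of the corresponding $V_{[i]}$, compatibly with both filtrations and with all the $\phi_j$. \emph{This compatibility is the main obstacle}: the walk may revisit the same $V_{[i]}$ many times and also enter it from both the $B_i$- and $B_{\alpha(i)}$-side. The key input is a Zassenhaus-type common refinement of the two filtrations $\{F_j\}_{j\in B_i}$ and $\{F_{j'}\}_{j'\in B_{\alpha(i)}}$ on each $V_{[i]}$, which reduces the choice of lift at each site to splitting a short exact sequence of vector spaces (always possible). Propagating choices along the walk from an endpoint (string case) or from a generalized eigenvector of $T$ (band case) produces globally consistent lifts.

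Once $W$ is constructed as a direct summand, the induced filtrations and identifications on the complement turn $V/W$ into a representation of $\mathcal X$ of strictly smaller total dimension, and the inductive hypothesis yields a height-1 net $\mathcal X''$ with morphism $f'' : \mathcal X''\to\mathcal X$ and representation $V''$ such that $f''_* V'' = V/W$. The desired $\mathcal X'$ is the disjoint union of the height-1 net associated to $W$ with $\mathcal X''$, and the required morphism and representation are the evident disjoint unions.
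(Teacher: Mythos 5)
Your plan is genuinely different from the paper's: the paper argues by height reduction (induction on $\sum_{|B_i|\ge 2}\dim V_i$, at each step modifying the net $\mathcal X$ by splitting off two new height-one vertices tied to the top level $n=\max B_r$ of a peak vertex $r$), whereas you propose to peel off string/band direct summands one at a time, inducting on total dimension. Your picture is the familiar one from the theory of string and band modules, and it is a legitimate strategy; it is not what the paper does.

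However, as written there is a real gap, and it sits exactly where the entire difficulty of the theorem lives. You acknowledge that the hard part is to ``lift each graded line visited along the walk to a one-dimensional subspace of the corresponding $V_{[i]}$, compatibly with both filtrations and with all the $\phi_j$,'' but the resolution you offer --- ``a Zassenhaus-type common refinement \ldots reduces the choice of lift at each site to splitting a short exact sequence \ldots Propagating choices along the walk \ldots produces globally consistent lifts'' --- asserts precisely the conclusion that needs proof. Two concrete obstructions: (i) when the walk passes through the same $V_{[i]}$ several times, the lines $L_1, L_2, \ldots$ chosen at the successive visits must be simultaneously compatible with \emph{both} filtrations $\{F_j\}_{j\in B_i}$ and $\{F_{j'}\}_{j'\in B_{\alpha(i)}}$ as a direct sum, which is a joint constraint, not a sequence of independent short exact sequence splittings, and greedy propagation can violate it; (ii) in the band case the choices must close up after going once around, and it is not enough to invoke ``a generalized eigenvector of $T$'' without first showing that the monodromy $T$ is a well-defined linear automorphism of a finite-dimensional space determined by the representation alone (note that the $\alpha$-step is only semicontinuous in $v$: the filtration level read off in $B_{\alpha(i)}$ depends on the chosen lift of the coset, so different lines in the same graded piece can have different $\alpha$-degrees). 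Relatedly, a deterministic walk on a finite set is eventually periodic but need not be purely periodic from the start, so the raw walk has a $\rho$-shape rather than cleanly a string or a band, and your proposal does not address how to discard or absorb the tail.

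In short, the approach is plausible and different from the paper's, but the step ``the walk spans a direct summand of $V$'' is exactly the content of the theorem and cannot be dispatched by citing Zassenhaus splitting plus propagation; this is where the paper expends its effort (its two cases $\beta(k)\neq n$ and $\beta(k)=n$, the auxiliary shearing of complements, and the $M_{i,j}$ analysis of parallel surjections), and your outline would need a comparable amount of explicit bookkeeping to be a proof.
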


As a preliminary to the proof of the theorem, we discuss diagrams
\[
\begin{tikzcd}
X \arrow[bend left=15, two heads]{r}{A} \arrow[bend right=15, two heads]{r}[swap]{B} & Y
\end{tikzcd}
\]
of parallel surjective linear maps between finite dimensional vector spaces.
First, we have two increasing filtrations on $X$:
\begin{align}
A^{-1}0&\subset A^{-1}BA^{-1}0\subset A^{-1}BA^{-1}BA^{-1}0\subset\ldots \subset\underbrace{A^{-1}(BA^{-1})^{k-1}0}_{=:F_k} \subset \ldots \\
B^{-1}0&\subset B^{-1}AB^{-1}0\subset B^{-1}AB^{-1}AB^{-1}0\subset\ldots \subset\underbrace{B^{-1}(AB^{-1})^{k-1}0}_{=:G_k} \subset \ldots 
\end{align}
They necessarily stabilize with $F_\infty=F_N$, $G_\infty=G_N$ for $N$ big.
In general, these filtrations are not exhaustive, but
\begin{equation}
F_\infty=G_\infty.
\end{equation}
Furthermore, there are inverse isomorphisms
\[
\begin{tikzcd}
M_{i,j} \arrow[bend left=5]{r}{B^{-1}A} & M_{i-1,j+1} \arrow[bend left=5]{l}{A^{-1}B}
\end{tikzcd}
\]
for $i\ge 0$, $j\ge 1$, where
\begin{equation}
M_{i,j}=\frac{F_i\cap G_j}{(F_{i-1}\cap G_j)+(F_i\cap G_{j-1})}
\end{equation}
for $i,j\ge 1$.
In particular,
\begin{itemize}
\item $\mathrm{Ker}(B)$ has an exhaustive filtration $F\cap\mathrm{Ker}(B)$ with associated graded pieces $M_{i,1}$,
\item $\mathrm{Ker}(A)$ has an exhaustive filtration $G\cap\mathrm{Ker}(A)$ with associated graded pieces $M_{1,j}$,
\item there are isomorphisms $M_{i,1}\longrightarrow M_{1,i}$.
\end{itemize}

We proceed with the proof of the theorem.

\begin{proof}
Let $(V,\{F_i\},\phi_i)$ be a representation of a net $\mathcal X=(A,\alpha,B,\beta)$.
The proof is by induction over
\begin{equation}
\sum_{|B_i|\ge 2}\dim V_i
\end{equation}
for all nets simultaneously.
If $h(\mathcal X)=1$ we are done, so let us assume that $h(\mathcal X)\ge 2$.
We can also assume, by passing to subsets of $A$ and the $B_i$, that all associated graded $\mathrm{gr}_iV$ are non-zero and all $B_i$ are non-empty.

Let $r\in A$ with $|B_r|=h(\mathcal X)$.
Let $n=\max B_r$ and find the unique $k\in B_{\alpha(r)}$ such that
\begin{equation}
F_{<k}+F_{<n}\subsetneqq V_r,\qquad F_k+F_{<n}=V_r.
\end{equation}
Set $X_1=F_{<k}+F_{<n}$, $X_2=F_k\cap (F_{<k}+F_{<n})$ and consider the refinements of the two filtrations of $V_r$:
\begin{equation}
\ldots\subset F_{<n}\subset X_1\subset F_n=V_r,\qquad\ldots\subset F_{<k}\subset X_2\subset F_k\subset\ldots
\end{equation}
Note that the identity map induces an isomorphism of associated graded
\begin{equation}
F_n/X_1\longrightarrow F_k/X_2
\end{equation}
which follows from the general isomorphism theorem $S/(S\cap T)\cong (S+T)/T$.

\underline{Case $\beta(k)\neq n$.}
\begin{itemize}
\item If $n\in\mathrm{Dom}(\beta)$, $\beta(n)\in B_p$, can refine the filtration on $V_p$ to
\[
\ldots\subset F_{<\beta(n)}\subset X_3 \subset F_{\beta(n)} \subset \ldots
\]
where $X_3$ is the preimage of $\phi_n(X_1/F_{<n})$ under the quotient map $F_{\beta(n)}\twoheadrightarrow\mathrm{gr}_{\beta(n)}X$.
\item Similarly, if $k\in\mathrm{Dom}(\beta)$, $\beta(k)\in B_p$, can refine the filtration on $V_p$ to
\[
\ldots\subset F_{<\beta(k)}\subset X_4 \subset F_{\beta(k)} \subset \ldots
\]
where $X_4$ is the preimage of $\phi_k(X_2/F_{<k})$ under the quotient map $F_{\beta(k)}\twoheadrightarrow\mathrm{gr}_{\beta(k)}X$.
\end{itemize}
We construct a modified net $\mathcal X'=(A',\alpha',B',\beta')$ with
\begin{equation} \label{mod_net_A}
A'=A\sqcup\{1,2\},\qquad \alpha'(1)=2,\qquad\alpha'|_A=\alpha
\end{equation}
and $B'_1=\{1\}$, $B'_2=\{2\}$.
Moreover, if $n\in\mathrm{Dom}\beta$ we insert an element ``3'' into $B$ just before $\beta(n)$ with $\beta'(1)=3$, and similarly if $k\in\mathrm{Dom}\beta$ we insert an element ``4'' before $\beta(k)$ with $\beta'(2)=4$.
Note that the new elements of $B$ correspond to the additional pieces of the filtrations described above.
There is a morphism of nets $f:\mathcal X'\to\mathcal X$ with $1\mapsto n$, $2\mapsto k$, $3\mapsto\beta(n)$, $4\mapsto\beta(k)$.

$V$ determines a representation $V'=(V',F_i,\psi_i)$ of $\mathcal X'$ with
\begin{equation} \label{lifted_rep}
V_1'=V_2'=F_n/X_1\cong F_k/X_2,\qquad V_r'=V_{\alpha(r)}'=X_1
\end{equation}
where we suppress the canonical isomorphism $F_n/X_1\cong F_k/X_2$ from now on.
The filtrations on $V'_r$ are obtained from those on $V_r$ by restriction, and the filtrations containing $F_3,F_4$ by refinement.
The isomorphisms $\psi_n,\psi_k,\psi_1,\psi_2$ and their inverses are induced by $\phi_n$, $\phi_k$ and their inverses respectively.
We claim that $f_*V'$ is isomorphic to $V$.
Choose complements
\begin{align}
X_1/F_{<n}\oplus Y_1&=\mathrm{gr}_nV \\
X_2/F_{<k}\oplus Y_2&=\mathrm{gr}_kV.
\end{align}
If $n\in\mathrm{Dom}\beta$, then the choice of $Y_1$ is equivalent to a choice of complement of $X_3/F_{<\beta(n)}\subset \mathrm{gr}_{\beta(n)}V$, and similarly for $Y_2$. 
We use these complements in the definition of $f_*V'$.

We claim that we can find a $U\subset F_k$ such that
\begin{align}
F_n &\twoheadrightarrow \mathrm{gr}_n V\text{ induces an isomorphism } U\to Y_1 \label{splitLift2} \\
F_k &\twoheadrightarrow \mathrm{gr}_k V\text{ induces an isomorphism } U\to Y_2 \label{splitLift1} 
\end{align}
Namely, choose a $U\subset F_k$ with
\begin{equation}
X_2\oplus U=F_k
\end{equation}
such that \eqref{splitLift1} holds. 
We also have a direct sum $X_1\oplus U=V_r$, but \eqref{splitLift2} may fail.
However, we can ensure \eqref{splitLift2} after shearing $U$ in the directions of $F_{<k}$, which does not affect \eqref{splitLift1}.

The choice of $U$ gives an identification
\begin{equation}
V_r=X_1\oplus U=X_1\oplus (F_n/X_1)=V_r'\oplus V_1'
\end{equation}
which we use to define the isomorphism $V\to f_*V'$.
Compatibility with filtrations follows from $U\subset F_k$, and compatibility with $\phi_n$, $\phi_k$ since they are block-diagonal with respect to the chosen splittings of associated graded.

\underline{Case $\beta(k)=n$.}
We have a diagram of parallel surjections:
\[
\begin{tikzcd}
\mathrm{gr}_nV \arrow[two heads]{r}{p_1} \arrow{d}{\phi_n}[swap]{\cong} & F_n/X_1 \arrow{d}{\cong} \\
\mathrm{gr}_kV \arrow[two heads]{r}{p_2} & F_k/X_2
\end{tikzcd}
\]
By the discussion preceding the proof we get first of all a pair of non-exhaustive filtrations 
\begin{equation} 
0=G_0\subset G_1\subset \ldots \subset G_m, \qquad 0=H_0\subset H_1\subset \ldots \subset H_m=G_m 
\end{equation}
of $\mathrm{gr}_nV$, where $G_1=\mathrm{Ker}(p_1)$ and $H_1=\phi_n^{-1}(\mathrm{Ker}(p_2))$.
For the associated double graded $M_{ij}$, $1\leq i,j$, $i+j\leq m+1$, we have isomorphisms $M_{ij}\to M_{i+1,j-1}$ induced by $\phi_n$.
Further, we have restricted filtrations on $\mathrm{Ker}(p_1)$, $\mathrm{Ker}(p_2)$ lifting to refinements of the filtrations on $V_r$:
\begin{align} \label{refined_filt_2}
\ldots\subset F_{<n}&=X_{1,0}\subset X_{1,1}\subset\ldots\subset X_{1,m}=X_1\subset F_n  \\
\ldots\subset F_{<k}&=X_{2,0}\subset X_{2,1}\subset\ldots\subset X_{2,m}=X_2\subset F_k \subset\ldots 
\end{align}
with
\begin{equation}
X_{1,i}/X_{1,i-1}\cong M_{1,i},\qquad X_{2,i}/X_{2,i-1}\cong M_{i,1}
\end{equation}

We construct a modified net $\mathcal X'=(A',\alpha',B',\beta')$ with 
\begin{gather}
A'=A\sqcup\{1,2\}\sqcup\left\{(l,i,j)\mid l\in \{1,2\},i\ge 2,j\ge 2,i+j\leq m+1\right\} \\
\alpha'(1)=2,\qquad \alpha'(1,i,j)=(2,i,j),\qquad \alpha'|_A=\alpha \\
B'_1=\{1\},\qquad B'_2=\{2\},\qquad B'_{l,i,j}=\{l,i,j\} \\
B'_r=(B_r\setminus\{n\})\sqcup\{(1,1,1)<(1,1,2)<\ldots<(1,1,m)\} \\
B'_{\alpha(r)}=(B_{\alpha(r)}\setminus\{k\})\sqcup\{(2,1,1)<(2,2,1)<\ldots<(2,m,1)\} \\
\beta'(1)=2,\qquad\beta'(1,i,j+1)=(2,i+1,j),\quad i\ge 1,j\ge 1,i+j\leq m
\end{gather}
where in $B'_r$ (resp. $B'_{\alpha(r)}$) the new elements replace $n$ (resp. $k$) in the total order.
There is a morphism of nets $f:\mathcal X'\to\mathcal X$ with $1,(1,i,j)\mapsto n$, and $2,(2,i,j)\mapsto k$.

Let $G$ be the preimage of $G_m=H_m$ under the projection $F_n\to\mathrm{gr}_nV$.
$V$ determines a representation $V'=(V',F_i,\psi_i)$ of $\mathcal X'$ with
\begin{equation}
V'_r=X_1,\qquad V'_1=F_n/G,\qquad V'_{1,i,j}=M_{i,j},
\end{equation}
the restrictions of the filtrations~\eqref{refined_filt_2} to $X_1$, and $\psi_1,\psi_{1,i,j}$ induced by $\phi_n$.
We claim that $f_*V'$ is isomorphic to $V$.
Choose a complement $Y$ to $G\subset F_n$ with $Y\subset F_k$.
Further, let $Y_{1,1},\ldots,Y_{1,m}\subset\mathrm{gr}_nV$ with
\begin{equation}
X_{1,j}/F_{<n}=Y_{1,1}\oplus\ldots\oplus Y_{1,j}
\end{equation}
and define $Y_{i+1,j}=\phi_n(Y_{i,j+1})$.
We get $Y_{ij}\cong M_{ij}$ and
\begin{equation}
G/X_1=\bigoplus_{\substack{i,j\ge 2 \\ i+j<m+1}}Y_{ij}.
\end{equation}
Also we can use the $X_{1,j}$ and $X_{i,1}$ in the definition of $f_*V'$.
Choose a complement $Z$ to $X_1\subset G$ with $Z\subset F_k$, allowing us to lift $Y_{i,j}$ to $Z_{i,j}\subset F_k$.
Combining the various splittings we obtain an isomorphism
\begin{equation}
X_1\oplus(F_n/G)\oplus\bigoplus_{\substack{i,j\ge 2 \\ i+j<m+1}}M_{ij}\to V_r
\end{equation}
which we use to identify $f_*V'$ with $V$.
\end{proof}

Call a net as in \eqref{band_type_net} a \textit{cycle}. 
Note that for any cycle and any $n\ge 2$ there is a morphism from another cycle which is $n:1$, i.e. an $n$-fold ``covering''. 
These, and isomorphisms, are the only morphisms between connected nets of height 1.
We use this to formulate a strengthening of the previous theorem.

\begin{thm} \label{nets_thm_unique}
Let $\mathcal X$ be a net, $V$ an indecomposable representation of $\mathcal X$.
Then there exists a connected net $\mathcal X'$ of height 1, a morphism $f:\mathcal X'\to\mathcal X$ which does not factor through a covering, and an indecomposable representation $V'$ of $\mathcal X'$ such that $f_*V'=V$.
Moreover, for any triple $\mathcal X''$, $f''$, $V''$ with these properties there is an isomorphism of nets $g:\mathcal X'\to\mathcal X''$ over $\mathcal X$ such that $g_*V'\cong V''$.
\end{thm}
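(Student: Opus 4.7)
The plan is to bootstrap from Theorem~\ref{nets_thm_exist}, using indecomposability to reduce from arbitrary height-1 nets to connected ones, and then using the minimality condition ``$f$ does not factor through a covering'' to pin down a unique model.

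For existence, I would first apply Theorem~\ref{nets_thm_exist} to obtain a morphism $h\colon \mathcal Y \to \mathcal X$ from a height-1 net $\mathcal Y$ and a representation $W$ of $\mathcal Y$ with $h_*W = V$. A key preliminary step is to verify that the pushforward functor respects direct sums, which is essentially immediate from its definition (the splittings used to define $h_*$ can be chosen compatibly with any direct sum decomposition of the source). Combined with the Krull--Schmidt property of finite-dimensional representations, indecomposability of $V$ then forces $W$ to be indecomposable, and in particular supported on a single connected component $\mathcal Y_0$ of $\mathcal Y$. Replacing $\mathcal Y$ with $\mathcal Y_0$, we may assume the height-1 net is connected, hence either a string \eqref{string_type_net} or a cycle \eqref{band_type_net}. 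If the restricted map $\mathcal Y_0 \to \mathcal X$ still factors through a covering $\mathcal Y_0 \to \mathcal X_1 \to \mathcal X$ of degree $\ge 2$, I would push $W$ forward to $\mathcal X_1$, select an indecomposable summand whose further pushforward is $V$ (such a summand exists by the same Krull--Schmidt argument), and iterate. Since each such step strictly decreases the number of vertices of the net, the process terminates, establishing existence.

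For uniqueness, I would invoke the explicit classification of indecomposables of connected height-1 nets: for a string, the category of representations is equivalent to finite-dimensional vector spaces and the unique indecomposable is one-dimensional; for a cycle, indecomposables are parametrized (over an algebraically closed $\mathbb K$, with the obvious modification otherwise) by Jordan blocks $(\lambda,n) \in \mathbb K^\times \times \mathbb Z_{>0}$. The ``does not factor through a covering'' hypothesis is then precisely the primitivity of the band data. I would recover the triple $(\mathcal X', f, V')$ intrinsically from $V$: the underlying combinatorial type of $\mathcal X'$ and the map $f$ are encoded in the pattern of filtration jumps and the glueings they receive in $V$, while the scalar $\lambda$ and block size $n$ are read off from an appropriate endomorphism algebra. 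An isomorphism $g\colon \mathcal X' \to \mathcal X''$ over $\mathcal X$ between two such models is then determined by matching these intrinsic invariants.

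The main obstacle will be the cycle (band) case of uniqueness, where one has to rule out the possibility that a pushforward from a nontrivial cover looks indistinguishable from a primitive pushforward. The cleanest route is probably to analyze $\mathrm{End}(V)$ in the category of representations of $\mathcal X$: for a primitive band representation associated to $(\mathcal X', \lambda, n)$ one expects this endomorphism algebra to be isomorphic to $\mathbb K[t]/((t-\lambda)^n)$, whereas a representation coming from an $n$-fold cover would have a strictly larger endomorphism algebra detecting the extra periodicity. This lets one recover the period $|\mathcal X'|$ and the Jordan data intrinsically, and combined with the bookkeeping of filtration jumps to recover the cyclic sequence of arrows of $\mathcal X'$, it produces the desired isomorphism $g$ over $\mathcal X$.
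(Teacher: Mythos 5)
Your existence argument matches the paper's: apply Theorem~\ref{nets_thm_exist}, note that pushforward respects direct sums (via compatible choices of splittings), use Krull--Schmidt to force the lift indecomposable and supported on one component, and iterate to remove any covering factorization.

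Your uniqueness argument, however, takes a genuinely different route from the paper, and this is where the gaps lie. The paper's uniqueness proof is algorithmic: it observes that the inductive reduction procedure in the proof of Theorem~\ref{nets_thm_exist}, once a total order on $A$ is fixed, produces an output $(\mathcal X',f,V')$ that depends only on the isomorphism class of $V$, and then checks that feeding $f_*V'$ into the procedure returns the original triple whenever $f$ is primitive. You instead try to read off the triple from intrinsic invariants of $V$, and here two steps are left unjustified. First, the assertion that $\mathrm{End}(V)\cong\mathbb K[t]/((t-\lambda)^n)$ for a primitive band pushforward amounts to showing that $f_*$ is fully faithful (or at least endomorphism-preserving) precisely when $f$ does not factor through a covering --- but that is essentially the statement to be proven, not a fact one can invoke; moreover $f_*$ is a functor here only because $\mathcal X'$ has height $1$, and it is far from obvious that it preserves Hom when $f_2$ identifies several elements of $B'$ inside a single filtration slot of $\mathcal X$. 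Second, the claim that ``the pattern of filtration jumps and the glueings they receive'' recovers the combinatorial type of $\mathcal X'$ and of $f$ is asserted without any mechanism: when $f_2$ sends several $j\in B'$ to a common image, the filtrations and isomorphisms $\phi$ get merged in $f_*V'$, and disentangling them to reconstruct the cyclic order on $\mathcal X'$ is exactly the delicate point the paper's algorithmic argument is designed to handle. You explicitly flag the band case as the main obstacle, and the proposed endomorphism-algebra fix does not yet close the gap. Also note a small infelicity: a pushforward along an $m$-fold cover of a cycle typically \emph{decomposes} (it is an induced $\mathbb Z$-representation), so the dichotomy ``primitive $\Rightarrow\mathbb K[t]/(t-\lambda)^n$, imprimitive $\Rightarrow$ bigger endomorphism algebra'' needs to be reformulated; the correct comparison is with indecomposable summands of such inductions, which is more subtle than stated.
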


\begin{proof}
Existence follows from Theorem~\ref{nets_thm_exist}, which gives us $\mathcal X'$, $f$, $V'$, with $f_*V'=V$.
Since $V$ is indecomposable, $V'$ must be as well, and can only be supported on a single component of $\mathcal X'$ so we can take $\mathcal X'$ to be connected.
If $f$ factors through some covering, we just push $V'$ forward along it.

For uniqueness, consider the inductive procedure in the proof of Theorem~\ref{nets_thm_exist}.
After choosing a total order on $A$, the output $\mathcal X',f,V'$ depends, up to isomorphism, only on the isomorphism class of $V$.
Moreover, if $V$ is of the form $f_*V'$ for some connected net of height 1, $f:\mathcal X'\to X$ not factoring through a covering, and $V'$ indecomposable, then the output of the procedure applied to $V$ is isomorphic to the same $(\mathcal X',f,V')$.
\end{proof}

In applications of this theorem below it will be natural to consider nets which are not finite (only individual $B_i$ are).
Still, all our results extend to this case, as finite dimensional representations are supported on a finite subnet.

\subsection{Minimal twisted complexes}
\label{subsec_minimal_complexes}

Let $\mathcal A$ be a graded linear category which is \textbf{augmented} in the sense that there are splittings
\begin{equation}
\mathcal A(X,Y)=\mathcal A_e(X,Y)\oplus\mathcal A_r(X,Y)
\end{equation}
such that $A_e(X,Y)$ is $\mathbb{K}1_X$ for $X=Y$ and zero otherwise, and $\mathcal A_r(X,Y)$ are closed under composition.
We view $\mathcal A_r$ as a non-unital category with the same objects as $\mathcal A$.
Assume for the rest of the subsection that $\mathcal A_r$ is nilpotent in the sense that there exists an integer $N>0$ such that any composition of $N$ morphisms vanishes.
Recall that a functor $F$ is said to \textit{reflect isomorphisms} if $f$ is invertible whenever $F(f)$ is.

\begin{lem}
The functor $T:\mathrm{add}\mathbb Z(\mathcal A)\to\mathrm{add}\mathbb Z(\mathcal A_e)$ induced by the augmentation reflects isomorphisms.
\end{lem}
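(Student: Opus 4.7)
The plan is to lift an inverse of $T(f)$ back to $\mathrm{add}\mathbb Z(\mathcal A)$ along the augmentation inclusion, and then correct it by a finite geometric series using nilpotency of $\mathcal A_r$.

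First I would note that the two categories $\mathrm{add}\mathbb Z(\mathcal A)$ and $\mathrm{add}\mathbb Z(\mathcal A_e)$ have the same objects, and that any morphism $f$ in $\mathrm{add}\mathbb Z(\mathcal A)$ splits entry-wise as $f=f_e+f_r$ according to the splitting $\mathcal A=\mathcal A_e\oplus\mathcal A_r$, with $T(f)=f_e$. The inclusion $\mathcal A_e\hookrightarrow\mathcal A$ induces a faithful functor $\mathrm{add}\mathbb Z(\mathcal A_e)\to\mathrm{add}\mathbb Z(\mathcal A)$ splitting $T$ on morphisms, and composition in $\mathcal A_e$ agrees with composition in $\mathcal A$, while the Koszul signs in $\mathrm{add}\mathbb Z(-)$ depend only on degrees. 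Hence if $T(f)=f_e$ admits a two-sided inverse $g$ in $\mathrm{add}\mathbb Z(\mathcal A_e)$, then $g$ is also a two-sided inverse of $f_e$ in $\mathrm{add}\mathbb Z(\mathcal A)$.

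Next I would check that $\mathcal A_r$ is a two-sided ideal of $\mathcal A$, not merely a subcategory closed under composition. Given $a\in\mathcal A_r(X,Y)$ and $b=\lambda\cdot 1_Y+b_r\in\mathcal A(Y,Z)$ with $b_r\in\mathcal A_r$, one has $b\circ a=\lambda a+b_r\circ a\in\mathcal A_r$, and similarly on the other side. Entry-wise this shows that the morphisms of $\mathrm{add}\mathbb Z(\mathcal A)$ whose blocks all lie in $\mathcal A_r$ form an ideal. Moreover, by nilpotency of $\mathcal A_r$, any $N$-fold composition of such morphisms has each matrix entry a finite sum of $N$-fold compositions in $\mathcal A_r$, hence vanishes.

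With these pieces in place, I would factor $f=f_e(1+f_e^{-1}f_r)$; the morphism $f_e^{-1}f_r$ has all entries in $\mathcal A_r$, so the finite sum $\sum_{k=0}^{N-1}(-f_e^{-1}f_r)^k$ is a genuine two-sided inverse of $1+f_e^{-1}f_r$. Composing with the already-constructed inverse of $f_e$ yields a two-sided inverse of $f$, completing the argument. I do not anticipate a serious obstacle; the only bookkeeping worth double-checking is that $\mathcal A_r$ is a two-sided ideal (so that its nilpotency propagates to $\mathrm{add}\mathbb Z$), and that the inverse built in $\mathrm{add}\mathbb Z(\mathcal A_e)$ remains an inverse when transported along the inclusion, both of which are essentially formal.
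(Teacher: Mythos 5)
Your proof is correct and follows essentially the same route as the paper's: split $\phi$ into its $\mathcal A_e$-part $\overline{\phi}$ (which is the invertible $T(\phi)$) and its $\mathcal A_r$-part, compose with $\overline{\phi}^{-1}$ to reduce to $1-\epsilon$ with $\epsilon$ having all entries in $\mathcal A_r$, and invert by a finite geometric series using nilpotency. Your extra checks (that $\mathcal A_r$ is a two-sided ideal and that the inverse built in $\mathrm{add}\mathbb Z(\mathcal A_e)$ transports along the inclusion) are precisely the implicit bookkeeping the paper elides when it says "composing $\phi$ with the morphism $\overline{\phi}^{-1}$."
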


\begin{proof}
Let $M,N\in\mathrm{add}\mathbb Z(\mathcal A)$ and $\phi:M\to N$ such that $T(\phi)=\overline{\phi}$ is an isomorphism.
Thus we have graded vector spaces $M(X),N(X)$ for each $X\in\mathrm{Ob}(\mathcal A)$, and components of $\phi$
\begin{align*}
\phi_{X}\in &\mathrm{Hom}(M(X),N(X))\oplus\left(\mathrm{Hom}(M(X),N(X))\otimes\mathcal A_r(X,X)\right) \\
&\phi_{X,Y}\in\mathrm{Hom}(M(X),N(Y))\otimes\mathcal A_r(X,Y),\qquad X\neq Y
\end{align*}
Let $\overline{\phi}_X$ be the component of $\phi_X$ in $\mathrm{Hom}(M(X),N(X))$.
By assumption, the $\overline{\phi}_X$ are isomorphisms.
Composing $\phi$ with the morphism $\overline{\phi}^{-1}$ with components $\overline{\phi}_X^{-1}$, we may assume that $M(X)=N(X)$ and all $\overline{\phi}_X$ are identity morphisms.
Thus,
\begin{equation}
\phi=1-\epsilon\in \mathrm{End}^0(M),\qquad \epsilon^N=0
\end{equation}
which clearly has an inverse.
\end{proof}

A two-sided twisted complex over $\mathcal A$ is given by a pair $(M,\delta)$ with $M\in\mathrm{Ob}(\mathrm{add}\mathbb{Z}\mathcal A)$ and $\delta\in\mathrm{End}^1(M)$ with $\delta^2=0$.
We say that $(M,\delta)$ is \textbf{minimal} if the image $\overline{\delta}$ of $\delta$ under the functor $\mathrm{add}\mathbb Z\mathcal A\to\mathrm{add}\mathbb Z\mathcal A_e$ vanishes, i.e. $\delta$ has components in (the degree $1$ part of)
\begin{equation}
\mathrm{Hom}(M(X),M(Y))\otimes\mathcal A_r(X,Y).
\end{equation} 

\begin{prop} \label{prop_min_complexes}
\begin{enumerate}
\item Every twisted complex $A\in\mathrm{Tw}\mathcal A$ is isomorphic to a direct sum $A=A_m\oplus A_c$ with $A_m$ minimal and $A_c$ contractible.
\item Any homotopy equivalence between minimal twisted complexes is an isomorphism.
\end{enumerate}
\end{prop}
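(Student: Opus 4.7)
The plan is to extract the contractible summand via homological perturbation, which will concentrate the work in part (1); part (2) will then drop out cleanly from the reflecting-isomorphism lemma established just above.

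For (1), I would first pass to the semisimple shadow $\overline{\delta}=T(\delta)\in\mathrm{End}^1(T(M))$, which satisfies $\overline{\delta}^2=0$. Because $\mathcal A_e$ is concentrated on identities, each graded component $M(X)$ of $T(M)$ is an ordinary chain complex of graded vector spaces, so a choice of splitting produces
\begin{equation}
M(X)=H(X)\oplus U(X)\oplus \overline{\delta}\bigl(U(X)\bigr)
\end{equation}
on which $\overline{\delta}$ vanishes on $H(X)\oplus\overline{\delta}(U(X))$ and restricts to an isomorphism $U(X)\xrightarrow{\sim}\overline{\delta}(U(X))$. This produces a decomposition $T(M)=M_m\oplus M_c$ in $\mathrm{add}\mathbb Z(\mathcal A_e)$, which lifts tautologically to a decomposition $M=M_m\oplus M_c$ in $\mathrm{add}\mathbb Z(\mathcal A)$. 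Crucially, on this shell level $M_c$ admits a contracting homotopy $h\in\mathrm{End}^{-1}(M_c)$ with $\overline{\delta}h+h\overline{\delta}=1_{M_c}$.

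The central step is to find $\phi\in\mathrm{Aut}(M)$ with $T(\phi)=1$ such that $\phi\delta\phi^{-1}$ preserves $M_m\oplus M_c$ and restricts to $\overline{\delta}|_{M_c}$ on the contractible factor. I would construct $\phi$ as a finite product $(1+\varepsilon_N)\cdots(1+\varepsilon_1)$ where each $\varepsilon_k$ lies in the $k$-th power of the radical ideal (defined from $\mathcal A_r$) and is chosen, using the shell contraction $h$ as the linear-algebraic inverse, to eliminate both the off-block components of $\delta$ and the non-shell corrections to $\delta|_{M_c}$ modulo the next filtration step. The iteration terminates by nilpotency of $\mathcal A_r$, and the resulting $\phi$ is automatically an automorphism by the reflecting-isomorphism lemma applied to $T(\phi)=1$. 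After conjugation, $A$ decomposes as $(M_m,\delta_m)\oplus(M_c,\overline{\delta}|_{M_c})$, with the first summand minimal by construction and the second evidently contractible via the transported homotopy $h$.

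For (2), let $f\colon(M_1,\delta_1)\to(M_2,\delta_2)$ be a homotopy equivalence between minimal complexes, with homotopy inverse $g$ and homotopies $h_1,h_2$ satisfying
\begin{equation}
1-gf=\delta_1 h_2+h_2\delta_1,\qquad 1-fg=\delta_2 h_1+h_1\delta_2.
\end{equation}
By minimality, $\delta_1$ and $\delta_2$ take values in $\mathcal A_r$, so every term on each right-hand side vanishes after applying $T$. Thus $T(gf)=1$ and $T(fg)=1$, so $T(f)$ is an isomorphism, and the reflecting-isomorphism lemma promotes this to an isomorphism in $\mathrm{add}\mathbb Z(\mathcal A)$, hence in $\mathrm{Tw}\,\mathcal A$. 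The main obstacle is the iterative construction of $\phi$ in (1): one must set up a careful filtration by powers of the radical, verify at each step that the perturbation $\varepsilon_k$ genuinely pushes the error into a strictly higher filtration level, and confirm termination by nilpotency. Everything else is either standard linear algebra or a direct invocation of the lemma.
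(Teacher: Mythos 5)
Your proof is correct and takes essentially the same approach as the paper: split the semisimple shadow, conjugate $\delta$ by a unipotent automorphism (terminating by nilpotency of $\mathcal A_r$) to block-diagonalize, and apply the reflecting-isomorphism lemma for part (2). The paper packages the conjugation as elementary row and column operations over the finer three-block decomposition $K\oplus K[-1]\oplus L$ and lets the constraint $\delta^2=0$ kill the remaining cross terms, but this is the same Gaussian-elimination mechanism underlying your perturbative construction of $\phi$.
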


\begin{proof}
1. Let $(M,\delta)$ be a twisted complex over $\mathcal A$, $(M,\overline{\delta})$ its image in $\mathrm{Tw}(\mathcal A_e)$.
By semisimplicity of $\mathcal A_e$, $(M,\overline{\delta})$ is isomorphic to a direct sum $B_c\oplus B_m$ where $B_c$ is contractible and $B_m$ has trivial differential.
Therefore, $(M,\delta)$ is isomorphic to a twisted complex of the form
\[
\left(K\oplus K[-1]\oplus L,\begin{pmatrix} \delta_{11} & \delta_{12} & \delta_{13} \\ 1_K+\delta_{21} & \delta_{22} & \delta_{23} \\ \delta_{31} & \delta_{32} & \delta_{33} \end{pmatrix} \right)
\]
with $\overline{\delta}_{ij}=0$.
Using elementary row and column operations (automorphisms of $K\oplus K[-1]\oplus L$) one reduces the matrix to the form
\[
\begin{pmatrix}
0 & 0 & 0 \\ 1_K & 0 & 0 \\ 0 & 0 & \delta_L
\end{pmatrix}
\]
with $\overline{\delta}_L=0$, thus providing the direct sum decomposition.

2. Suppose $(M,\delta^M)$ and $(N,\delta^N)$ are minimal twisted complexes, $f:M\to N$ and $g:N\to M$ closed morphisms of degree $0$, $G:M\to M$ and $H:N\to N$ morphisms of degree $-1$ such that
\begin{equation}
1_M-gf=\delta^MG+G\delta^M,\qquad 1_N-fg=\delta^NH+H\delta^N.
\end{equation}
Using minimality, $\overline{gf}=1_M$ and $\overline{fg}=1_N$, i.e. $\overline{f},\overline{g}$ are inverse isomorphisms.
Hence, the claim follows from the previous lemma.
\end{proof}

\subsection{Classification of objects}
\label{subsec_geometricity}

We are now ready to state and prove the main result of this section.

\begin{thm} \label{thm_classification}
Let $S$ be a compact graded marked surface, then the construction of Subsection~\ref{subsec_complexes_from_curves} sets up a bijection between isomorphism classes of indecomposable objects in $H^0(\mathcal F(S))$ and isotopy classes of admissible curves with indecomposable local system.
\end{thm}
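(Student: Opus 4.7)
The plan is to reduce the classification to the net-representation theorem of Subsection~\ref{subsec_nets} via the formal-generator description of $\mathcal F(S)$. First, using Proposition~\ref{prop_loc}, I enlarge $S$ to a graded marked surface $S'$ having a boundary arc in each component, realising $\mathcal F(S)$ as the Drinfeld quotient of $\mathcal F(S')$ by the finitely many added boundary arcs; it will therefore suffice to classify objects of $\mathcal F(S')$ and then descend. Choose a full formal system of graded arcs $A$ on $S'$; then $\mathrm{Tw}\mathcal F_A(S')\simeq\mathcal F(S')$ and the underlying graded linear category is $\mathbb KQ/R$ for a quiver $Q$ of type F1. Proposition~\ref{prop_min_complexes} replaces each object of $H^0(\mathcal F(S'))$ by a minimal twisted complex $(M,\delta)$, unique up to isomorphism, in which every component of $\delta$ factors through a non-identity path of $Q$.

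The central step is to repackage minimal twisted complexes as representations of a net $\mathcal X=\mathcal X(A)$ built from $A$. I take the underlying set of $\mathcal X$ to consist of the ends of arcs in $A$, with $\alpha$ swapping the two ends of each arc so that the graded pieces $V_i=M(i)$ live on arcs; I order $B_i$ by the sequence of arrows leaving the corresponding end of arc $i$, and define $\beta$ by pairing the two sides of each arrow of $Q$. Condition~(3) of F1 in Subsection~\ref{subsec_quivers} forces at most two arrows at each end, and the identity $\mu^2(\delta,\delta)=0$ unfolds, vertex by vertex, into the data of a pair of increasing filtrations on each $V_i$ with subquotients identified through $\beta$ by the linear components $\delta_a$ of the differential: this is exactly a representation of $\mathcal X$. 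Morphisms, direct sums and indecomposability all correspond under this dictionary. Applying Theorem~\ref{nets_thm_unique}, every indecomposable representation of $\mathcal X$ has the form $f_{\ast}V'$ for a unique pair consisting of an indecomposable $V'$ on a connected height-$1$ net $\mathcal X'$ and a non-covering morphism $f\colon\mathcal X'\to\mathcal X$.

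Finally I translate the combinatorial output back to geometry. A string~\eqref{string_type_net} encodes an alternating sequence of arcs of $A$ and boundary paths between them which, after the standard smoothing procedure of Subsection~\ref{subsec_grading}, becomes an arc-type admissible curve in $S'$; a cycle~\eqref{band_type_net} similarly produces a loop-type admissible curve, with the non-covering condition corresponding precisely to primitivity of its class in $\pi_1(S')$. The indecomposable local system on the curve is read off from $V'$: a single vector space in the string case, a single Jordan block in the cycle case. A direct check, transverse to $A$, shows that the twisted complex attached to the resulting curve with local system in Subsection~\ref{subsec_complexes_from_curves} is automatically minimal and coincides with the one reconstructed via $f_{\ast}V'$, so the two assignments are mutually inverse; descending back to $S$ via the quotient $\mathcal F(S')\to\mathcal F(S)$ removes precisely the curves isotopic into the added boundary arcs. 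I expect the main obstacle to be the construction and categorical equivalence in the second step: every sign and every configuration allowed by the F1 structure must be matched with the definition of a net representation, and the unobstructedness (no-teardrop) hypothesis has to be traced through the minimality condition in order to explain why non-admissible curves do not arise on the geometric side.
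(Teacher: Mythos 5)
Your overall strategy is identical to the paper's: pass to a surface $S'$ with boundary arcs so that $\mathcal F(S')$ has a formal generator of F1 type, use Proposition~\ref{prop_min_complexes} to reduce to minimal twisted complexes, repackage these as representations of a net, invoke Theorem~\ref{nets_thm_unique}, translate strings and cycles back into arc-type and loop-type admissible curves, and descend along the localization. That matches the paper's proof step for step. (The paper organizes the localization step at the end, after carrying out the argument on $S'$, but that is a cosmetic difference.)

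The gap is in the central step: your net construction, taken as stated, does not encode the data of a minimal twisted complex. You take the base set to be arc-ends with $\alpha$ swapping the two ends of an arc, propose filtrations on each individual $V_i=M(i)$ coming from the arrows at its ends, and claim that $\delta^2=0$ ``unfolds, vertex by vertex'' into this data. But a minimal twisted complex over an $A_n$-chain carries maps $\delta_{ij}:V_j\to V_i$ for \emph{all} $i<j$, not only $j=i+1$, and the constraint $\delta^2=0$ is a genuine matrix equation coupling them; it cannot be read off vertex by vertex. Concretely, take $V_1=V_2=V_3=\mathbb K$ with $\delta_{12}=\delta_{23}=0$ and $\delta_{13}$ an isomorphism: this is minimal (the component $\delta_{13}$ runs along the length-two path, so $\overline\delta=0$) and not contractible, yet a ``filtration on each $V_i$ from the adjacent arrows'' sees nothing of $\delta_{13}$. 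To capture this the paper instead takes, for each maximal non-zero path $d\in D$ of the F1 quiver, the \emph{total} space $V=\bigoplus_{i\text{ on }d}V_i$ equipped with two filtrations: the arc filtration $V_1\subset V_1\oplus V_2\subset\cdots$ and the $\delta$-induced three-step filtration $\mathrm{Im}\,\delta\subset\mathrm{Ker}\,\delta\subset V$ with the canonical identification $V/\mathrm{Ker}\,\delta\cong\mathrm{Im}\,\delta$; the net has base set indexed by $D$ (doubled and crossed with $\mathbb Z$ for the grading), $\alpha$ exchanges the two filtrations, $B$ carries both the arc-ordering and the three-step stratum, and $\beta$ combines the $\mathrm{Im}/\mathrm{coker}$ identification with the involution $\tau$ matching a vertex shared by two maximal paths. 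You flagged this step as the expected obstacle, and indeed it is exactly here that your sketch does not go through: the data must be organized along maximal paths, not along arcs, precisely because $\delta$ has non-local components and the square-zero constraint is global.
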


\begin{proof}
Let $Q$ be a graded quiver with quadratic monomial relations of type F1, i.e. associated with some systems of arcs $A$ on a compact graded marked surface $(S,M,\Omega)$. 
The graded linear category $\mathbb{K}Q$ has a natural augmentation with $(\mathbb{K}Q)_r$ generated by non-identity paths.
Assume that sufficiently long paths are zero in $\mathbb{K}Q$, equivalently that no components of $M$ are diffeomorphic to $S^1$, so that $(\mathbb{K}Q)_r$ is nilpotent and we can apply the results of the previous subsection.

Consider first the case when $Q$ is of the form
\begin{equation}
\bullet\longleftarrow\bullet\longleftarrow\cdots\longleftarrow\bullet
\end{equation}
without relations and arbitrary grading.
Let us number the vertices from left to right by $\{1,\ldots,n\}$ and let $\alpha_i$ denote the arrow from $i+1$ to $i$.
A minimal twisted complex over $\mathbb{K}Q$ is then given by finite-dimensional graded vector spaces $V_1,\ldots,V_n$, and for $1\leq i<j\leq n$ a linear map
\begin{equation}
\delta_{ij}:V_j\to V_i,\qquad \deg(\delta_{ij})=\deg(\alpha_i)+\ldots+\deg(\alpha_{j-1})-1
\end{equation}
such that the matrix $\delta=(\delta_{ij})$ has square zero.

Consider the direct sum
\begin{equation}
V=\bigoplus_{i=1}^nV_i.
\end{equation}
which has the filtration
\begin{equation}\label{first_filt}
V_1\subset V_1\oplus V_2\subset\ldots\subset V_1\oplus\ldots\oplus V_n.
\end{equation}
The endomorphism $\delta$ gives a three-step filtration
\begin{equation}
\mathrm{Im}\delta\subset\mathrm{Ker}\delta\subset V
\end{equation}
and an isomorphism of associated graded $V/\mathrm{Ker}\delta\cong\mathrm{Im}\delta$.
We find that we have a representation of the net
\begin{equation}
\left(\{1,2\}\times\mathbb{Z},(12)\times\mathrm{id},(\{1,\ldots,n\}\sqcup\{1,2,3\})\times\mathbb{Z},(13)\times\mathrm{id}\right)
\end{equation}
with total order so that $(i,k+d(\alpha_i)-1)<(i+1,k)$ on $\{1,\ldots,n\}\times\mathbb Z$ and $(1,n)<(2,n)<(3,n)$ on $\{1,2,3\}\times\mathbb Z$.
In fact, we get an embedding of the category of minimal twisted complexes into the category of representations of the above net as a full subcategory.
We do not get an equivalence of categories, since we only consider $\delta$ which decrease the filtration \eqref{first_filt}. 

Turn now to the case of general $Q$.
Let $D$ be the set of maximal non-zero paths in $\mathbb{K}Q$. 
Then for each element of $D$ there is a sub-quiver of $Q$ which is of the simple form above, and $Q$ is obtained from their disjoint union by identifying some pairs of vertices.
The corresponding net is now given by
\begin{equation}
A=D\times\{1,2\}\times\mathbb Z \\
\end{equation}
\begin{equation}
\alpha=\mathrm{id}\times(12)\times\mathrm{id} \\
\end{equation}
\begin{equation}
B=\left(\left\{(v,d)\in Q_0\times D\mid v\text{ on }d\right\}\sqcup \left(D\times\{1,2,3\}\right)\right)\times\mathbb Z=:B_g\sqcup B_a \\
\end{equation}
with partial order defined so that for an arrow $\alpha$ in $d\in D$ we have $(\partial_1(\alpha),n+d(\alpha)-1)<(\partial_0(\alpha),n)$ and $(d,1,n)<(d,2,n)<(d,3,n)$.
Further,
\begin{equation}
\beta=(\tau\sqcup(\mathrm{id}\times(13))\times\mathrm{id})
\end{equation} 
where $\tau$ sends a pair $(v,d_1)\in Q_0\times D$ with $v$ on $d_1,d_2\in D$, $d_1\neq d_2$ to $(v,d_2)$.
There is a fully-faithful functor from the category of minimal twisted complexes over $Q$ to the category of representations of $(A,\alpha,B,\beta)$.
Theorem~\ref{nets_thm_unique} gives us a classification of indecomposable minimal twisted complexes over $\mathbb KQ$, equivalently isomorphism classes in $H^0(\mathcal F(S))$ by Proposition~\ref{prop_min_complexes}.
Following the definitions, one identifies isotopy classes of admissible curves with local system with morphisms from nets of height 1 with indecomposable representation.
Of course, nets of the type \eqref{string_type_net} (resp. \eqref{band_type_net}) correspond to admissible curves with domain $[0,1]$ (resp. $S^1$).

It remains to deal with the case when $M$ has components diffeomorphic to $S^1$.
Let $S'$ be the surface obtained by adding two corners  and a boundary arc on each such component of $M$.
Then $H^0(\mathcal F(S))$ is a localization of the triangulated category $H^0(\mathcal F(S'))$, and the localization functor is essentially surjective.
Moreover, if the images of $E,F\in H^0(\mathcal F(S))$ corresponding to admissible curves become isomorphic under localization, then $E$ and $F$ differ by extensions by boundary arcs, so map to isotopic curves in $S$.
\end{proof}


\section{Comparison of Moduli spaces}
\label{sec_stability}

This section contains our main result, identifying $\mathcal M(S)$ with an open and closed subset of $\mathrm{Stab}(\mathcal F(S))$.
We begin by reviewing Bridgeland's axioms in Subsection~\ref{subsec_bridgeland}.
In Subsection~\ref{subsec_k0} we compute $K_0(\mathcal F(S))$ in terms of singular homology.
The main theorems are stated in Subsection~\ref{subsec_main} and proven in the remaining subsections.
Subsection~\ref{subsec_support} and \ref{subsec_hn} deal with the support property and Harder--Narasimham filtrations, respectively.
In the final Subsection~\ref{subsec_modulimap} we prove that the map on moduli spaces is complex bianalytic onto an open and closed subset.

\subsection{Stability structures on categories}
\label{subsec_bridgeland}

Fix a triangulated category $\mathcal C$ and a homomorphism
$\mathrm{cl}:K_0(\mathcal C)\to\Gamma$ to a finitely generated
abelian group $\Gamma$.
A \textbf{stability structure} (c.f. \cite{bridgeland07}, \cite{ks}, also called a \textit{stability condition}) on $\mathcal C$ is given by
\begin{itemize}
  \item for each $\phi\in\mathbb R$ a full additive subcategory $\mathcal C^{\phi}\subset\mathcal C$ of \textbf{semistable objects of phase $\phi$}, and
  \item an additive map $Z:\Gamma\to\mathbb C$, the \textbf{central charge}.
\end{itemize}
This data has to satisfy the following axioms.
\begin{enumerate}
  \item $\mathcal C^{\phi}[1]=\mathcal C^{\phi+1}$
  \item If $E\in\mathcal C^{\phi_1}$, $F\in\mathcal C^{\phi_2}$, $\phi_1>\phi_2$, then $\mathrm{Hom}(E,F)=0$.
  \item Every $E\in\mathcal C$ has a \textbf{Harder--Narasimhan} filtration:
  A tower of triangles
  \begin{equation*} \begin{tikzcd}
  0=E_0 \arrow{r} & E_1 \arrow{d}          & \cdots & E_{n-1} \arrow{r}  & E_n=E
  \arrow{d} \\
                  & A_1 \arrow[dotted]{ul} &        &                    & A_n
  \arrow[dotted]{ul}
  \end{tikzcd} \end{equation*}
  with $0\neq A_i\in\mathcal C^{\phi_i}$ and
  $\phi_1>\phi_2>\ldots>\phi_n$.
  The $A_i$ are called the \textbf{semistable components} of $E$.
  \item If $0\neq E\in\mathcal C^{\phi}$ then $Z(E):=Z(\mathrm{cl}([E]))\in\mathbb{R}_{>0}e^{\pi i\phi}$.
  \item The \textbf{support property}: For some norm $\|.\|$ on
  $\Gamma\otimes\mathbb R$ and $C>0$ we have an estimate
  \begin{equation}
  \|\mathrm{cl}(E)\|\leq C|Z(E)|
  \end{equation}
  for $E\in\mathcal C^{\phi}$.
\end{enumerate}
The set $\mathrm{Stab}(\mathcal C,\Gamma)$ of all stability structures has a natural topology which is induced by the metric
\begin{equation}
d(\sigma_1,\sigma_2)=\sup_{0\neq E\in\mathcal C}\left\{|\phi^-_{\sigma_2}(E)-\phi^-_{\sigma_1}(E)|,|\phi^+_{\sigma_2}(E)-\phi^+_{\sigma_1}(E)|,\left|\log\frac{m_{\sigma_2}(E)}{m_{\sigma_1}(E)}\right|\right\}
\end{equation}
c.f. \cite{bridgeland07}.
Here, for an object $E\in\mathcal C$ with semistable components $A_1,\ldots,A_n$ with phases $\phi_1>\ldots>\phi_n$ one defines $m(E)=\sum|Z(A_i)|$, $\phi^-(E)=\phi_n$, and $\phi^+(E)=\phi_1$.
The main result of \cite{bridgeland07} (see also \cite{ks}) is that the projection 
\begin{equation*}
\mathrm{Stab}(\mathcal C,\Gamma)\to\mathrm{Hom}(\Gamma,\mathbb C)
\end{equation*}
is a local homeomorphism, hence $\mathrm{Stab}(\mathcal C,\Gamma)$ has the structure of a complex manifold of dimension $\mathrm{rk}(\Gamma)$, if it is non-empty.

\subsection{Charge lattice}
\label{subsec_k0}

Recall that a graded surface has a double cover $\tau$ given by the orientations of the foliation lines, and $\mathbb Z_\tau=\mathbb Z\otimes_{\mathbb Z/2}\tau$ is the associated local system of abelian groups.
The following theorem describes $K_0$ of the Fukaya category in terms of homology with coefficients in $\mathbb Z_\tau$.
\begin{thm} \label{thm_k0}
Let $S=(S,M,\eta)$ be a compact graded marked surface, then there is a natural isomorphism of abelian groups
\[
K_0(\mathcal{F}(S)) \cong H_1(S,M;\mathbb{Z}_\tau).
\]
\end{thm}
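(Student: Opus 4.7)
The plan is to construct a natural map $\Phi \colon K_0(\mathcal{F}(S)) \to H_1(S, M; \mathbb{Z}_\tau)$ by sending the class of a graded arc to its geometric homology class, and then verify that it is an isomorphism by exhibiting matching bases from a full formal arc system. A graded arc $\alpha$ gives such a class as follows: choose any orientation of $\alpha$; the grading datum is a homotopy class of paths in $\mathbb{P}(T\alpha)$ from $\eta$ to $\dot{\alpha}$, which, together with the chosen orientation of $\dot{\alpha}$, singles out a compatible orientation of $\eta|_\alpha$, i.e., a section of $\tau|_\alpha$. This produces a relative 1-chain with $\mathbb{Z}_\tau$-coefficients whose boundary lies in $M$, and the class is independent of the orientation of $\alpha$ because reversing flips both the chain sign and the section. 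The shift $[1]$ acts as $-1$ on homology classes, since the $\pi$-rotation of the grading lifts to the nontrivial deck transformation of $\tau$. Admissible closed curves with a local system of rank $n$ contribute $n$ times the analogous class.

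To see $\Phi$ descends to $K_0$ it is enough to verify additivity on the disk-sequence triangles of Subsection~\ref{subsec_arc_category}, since every twisted complex is assembled from such. For a polygon $D$ cut out by the arc system, with boundary arcs $E_1, \ldots, E_n$ in counterclockwise order, the iterated-cone relation expressing one boundary arc as a twisted complex over the others yields in $K_0$ an identity $\sum_k (-1)^{s_k}[E_k] = 0$, where $s_k$ is the grading shift accumulated along $\partial D$ up to the $k$-th edge. Since $D$ is contractible the local system $\tau$ trivializes on it, so $D$ carries a 2-chain $[D]$ with $\mathbb{Z}_\tau$-coefficients, and $\partial[D] = 0$ in $H_1(S, M; \mathbb{Z}_\tau)$ expresses a signed sum of the same $[E_k]$; the signs agree because the accumulated grading shift along $\partial D$ records exactly the $\mathbb{Z}_\tau$-monodromy of the foliation relative to the outward tangent from one edge to the next.

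Now assume every connected component of $S$ contains at least one boundary arc. Then a full formal arc system $A$ exists (Subsection~\ref{subsec_quivers}) and $\mathcal{F}(S)$ is Morita equivalent to the perfect derived category of the associated type F1 algebra $\mathbb{K}Q$ on vertex set $A$; by Corollary~\ref{FS_dualizable} this algebra is basic, proper, and homologically smooth, so $K_0(\mathcal{F}(S))$ is free abelian on the classes of the arcs. Topologically, each polygon cut out by $A$ has exactly one boundary edge in $\partial S \setminus (M \cup A)$, and deformation retracting each polygon off this free edge (fixing $M$ and the arcs of $A$) produces a retract $(K, M)$ of $(S, M)$, where $K := (\bigcup A) \cup M$. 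The relative cellular complex of $(K, M)$ has $C_0 = 0$, since every 0-cell of $K$ is an arc endpoint and hence lies in $M$, and $C_1(K, M; \mathbb{Z}_\tau) \cong \mathbb{Z}^A$ after trivializing each $\mathbb{Z}_\tau|_\alpha$ by the grading of $\alpha$. Thus $H_1(S, M; \mathbb{Z}_\tau) \cong \mathbb{Z}^A$, and $\Phi$ sends each arc to its own basis element, so it is an isomorphism.

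For components whose boundaries lie entirely in $M$, I reduce to the previous case via the localization of Subsection~\ref{subsec_localization}. Inserting two corners on such a boundary circle produces a boundary arc $E$ and a modified marked surface $(S, M')$ with $M' \subsetneq M$ and $M \setminus M'$ the interior of $E$. By Proposition~\ref{prop_loc} one has $\mathcal{F}(S, M) \cong \mathcal{F}(S, M')/E$, hence $K_0(\mathcal{F}(S, M)) \cong K_0(\mathcal{F}(S, M'))/\langle [E] \rangle$. Topologically, the long exact sequence of the triple $M' \subset M \subset S$, combined with the excision isomorphism $H_\ast(M, M'; \mathbb{Z}_\tau) \cong H_\ast(E, \partial E; \mathbb{Z}_\tau)$ (concentrated in degree $1$), yields $H_1(S, M; \mathbb{Z}_\tau) \cong H_1(S, M'; \mathbb{Z}_\tau)/\langle [E] \rangle$ compatibly with $\Phi$. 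Iterating once for each such component reduces the general claim to the formal case. The main obstacle throughout is the sign matching in the second paragraph, which amounts to identifying grading shifts with $\mathbb{Z}_\tau$-monodromy; once that identification is made, the rest of the argument is a transparent comparison of free abelian groups.
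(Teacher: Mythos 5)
Your proposal follows the same overall strategy as the paper — reduce to a full formal arc system, use the resolution of the diagonal (via Corollary~\ref{FS_dualizable}) to see that $K_0$ is free on the arcs, compute the cellular homology of the retract onto the arcs, and handle boundary circles by localization — but it runs the comparison map in the opposite direction, and that flip introduces a genuine gap in your second paragraph.

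The paper constructs a map $H_1(S,M;\mathbb{Z}_\tau)\to K_0(\mathcal F(S))$. This direction is the cheap one: the left side has an explicit cellular presentation (free on the arcs of a full system $A$, modulo one relation $\pm X_1\pm\ldots\pm X_n=0$ per polygon cut out by $A$), so one defines the map on generators and checks only those relations hold in $K_0$ — which is exactly the iterated-cone observation from Subsection~\ref{subsec_arc_category}. That yields a well-defined surjection, and the content is to prove injectivity, which your third paragraph handles correctly.

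You instead try to build $\Phi\colon K_0(\mathcal F(S))\to H_1(S,M;\mathbb{Z}_\tau)$ by assigning homology classes to objects, and to show $\Phi$ descends to $K_0$ you assert that ``it is enough to verify additivity on the disk-sequence triangles\ldots since every twisted complex is assembled from such.'' That justification does not work. $K_0$ of a triangulated category imposes a relation for \emph{every} distinguished triangle — equivalently, for $\mathrm{Tw}\mathcal A$, the classes of arcs must respect relations coming from arbitrary homotopy equivalences of twisted complexes, not merely the particular isomorphisms produced by disk sequences. Knowing that every twisted complex is built by iterated cones tells you the arcs generate $K_0$, not that disk-sequence relations generate all relations; the latter is precisely what the freeness computation is supposed to establish, so invoking it to define $\Phi$ is circular. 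Your own paragraph~3 closes the gap a posteriori in the formal case (both sides turn out to be free on $A$, so there are no relations to check), but the logical order is backwards, and the localization argument of paragraph~4 is meant to propagate a map whose existence you have not yet secured. The clean fix is simply to run the map in the paper's direction: define $H_1(S,M;\mathbb{Z}_\tau)\to K_0$ from the cellular presentation, verify the disk relations in $K_0$ (which your paragraph~2 essentially does, and where your sign analysis is the right content), and then your paragraphs~3 and~4 prove it is an isomorphism.
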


\begin{proof}
Choose a full system $A$ of graded arcs on $S$.
Each arc defines a class in $H=H_1(S,M;\mathbb{Z}\sqrt{\Omega})$.
From cellular homology it follows that $H$ is the group generated by these arcs and with relations of the form $\pm X_1\pm\ldots\pm X_n=0$ where $X_1,\ldots,X_n$ are the arcs bounding a disk cut out by $A$.
Correspondingly, it is also clear that the arcs in $A$ generate 
\begin{equation}
K:=K_0(\mathcal F(S))=K_0(\mathrm{Tw}(\mathcal F_A(S)))
\end{equation}
and satisfy the same relations, which follows from the observation in Subsection~\ref{subsec_morita_invariance}.
By construction we get a surjective homomorphism $H\to K$ which is independent of the choice of arcs and natural with respect to maps of graded marked surfaces.
It remains to show that no additional relations are needed to present $K$.

Suppose first that $M$ has no components diffeomorphic to $S^1$.
Choosing a formal collection $A$ of arcs, we can present $\mathcal F(S)$ as the category $\mathrm{Tw}(\mathbb KQ)$ of twisted complex over the path category of a graded quiver $Q$ with quadratic monomial relations.
Note that $H$ is freely generated by $A$, so we need to verify that $K$ is freely generated by the vertices $Q_0$ of $Q$.
One way to see this is by using the explicit resolution of the diagonal of $\mathbb KQ$, showing that, as a bimodule, $\mathbb KQ$ is a repeated cone of Yoneda bimodules.
As a consequence, $\mathrm{Tw}(\mathbb KQ)\cong\mathrm{Perf}(\mathbb KQ)$, and an inverse of the map 
\begin{equation}
\mathbb Z^{Q_0}\to K_0(\mathrm{Perf}(\mathbb KQ))
\end{equation}
sending a vertex to the corresponding simple module, is induced by the dimension vector of a module.

For a general graded marked surface $S$ there is a $S'$ obtained by adding two corners to each component of $\partial S$ diffeomorphic to $S^1$.
Let $N$ denote set of boundary arcs of $S'$ created in this process, and $\mathcal N$ the corresponding full triangulated subcategory of $H^0(\mathcal F(S'))$ generated by $N$.
Each arc in $N$ has endomorphism algebra of the form $\mathbb K[x]/x^2$ with $|x|\in\mathbb Z$, so $\mathcal N$ is a product of categories of the form $H^0(\mathrm{Tw}(\mathbb K[x]/x^2))$.
We get a commutative diagram
\begin{equation}
\begin{tikzcd}
\mathbb Z^N \arrow{r}\arrow{d} & H_1(S',M',\mathbb Z\sqrt{\Omega'})\arrow{r}\arrow{d} & H_1(S,M,\mathbb Z\sqrt{\Omega})\arrow{r}\arrow{d} & 0 \\
K_0(\mathcal N)\arrow{r} & K_0(\mathcal F(S'))\arrow{r} & K_0(\mathcal F(S)) \arrow{r} & 0 \\
\end{tikzcd}
\end{equation}
where we use Proposition~\ref{prop_loc} to get the bottom row.
We claim that the rows are exact.
For the top row this is just the exact sequence of a triple. 
For the bottom row this uses the fact that $\mathcal N$ is idempotent complete, thus a thick subcategory so that Proposition~3.1 in \cite{sga5_8} can be applied.
Since the left and middle horizontal maps are isomorphisms, so is the right one.
\end{proof}

When a flat surface $S$ comes from a quadratic differential with higher order poles, the category $\mathcal F(S)$ includes some objects with have infinite length. 
We need to pass to the subcategory of $\mathcal F(S)$ which does not include these objects.
Formally, we want to allow $S=(S,M)$ to have boundary circles which are unmarked, i.e. not belonging to $M$.
Thus, define $\mathcal F(S,M)$ as the full subcategory of $\mathcal F(S,M')$ of objects corresponding to immersed curves avoiding $M'\setminus M$, where $M'$ is the union of $M$ and all boundary circles.

\begin{lem} \label{lem_boundaryfunctors}
The full subcategory $\mathcal F(S,M)\subset\mathcal F(S,M')$ is triangulated, i.e. closed under shifts and cones.
\end{lem}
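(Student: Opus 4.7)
Shift closure is immediate, because shifting an object does not alter the underlying immersed curve of any indecomposable summand, only its grading. The real content is closure under cones.

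The strategy is to realize $\mathcal F(S,M)$ as the twisted completion of an explicit full $A_\infty$-subcategory of $\mathcal F_A(S,M')$ for a suitably chosen full arc system $A$ on $(S,M')$. First pick a collection $A_0$ of pairwise disjoint graded arcs on $S$, each with both endpoints on $M$, which cuts $S$ into polygonal regions together with annular regions, each annulus having one boundary circle in $M'\setminus M$; complete $A_0$ to a full arc system $A=A_0\sqcup A_1$ for $(S,M')$ by inserting, in each annular region, a single arc joining the unmarked circle to the $A_0$-boundary. Because $M'$ is the disjoint union of the components of $M$ and the individual unmarked circles, every boundary path between endpoints of $A_0$-arcs is confined to $M$, and every disk sequence whose boundary arcs all lie in $A_0$ has all of its boundary paths in $M$. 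Consequently the subcategory $\mathcal F_{A_0}\subset\mathcal F_A(S,M')$ spanned by the arcs of $A_0$ is a strictly full $A_\infty$-subcategory, and its twisted completion $\mathrm{Tw}(\mathcal F_{A_0})$ is a full triangulated subcategory of $\mathcal F(S,M')$, automatically closed under shifts and cones.

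It then remains to identify $\mathrm{Tw}(\mathcal F_{A_0})$, up to isomorphism of objects, with $\mathcal F(S,M)$. Given an admissible curve $c$ avoiding $M'\setminus M$, one can isotope $c$ off the annular regions of the $A_0$-decomposition (since $c$ is disjoint from the unmarked circles in the first place), and then apply the construction of Subsection~\ref{subsec_complexes_from_curves} using only $A_0$-arcs, representing $c$ as an object of $\mathrm{Tw}(\mathcal F_{A_0})$. Conversely, Theorem~\ref{thm_classification} attaches to each indecomposable object of $\mathrm{Tw}(\mathcal F_{A_0})$ a unique admissible curve, reconstructed by concatenating its constituent $A_0$-arcs with boundary paths in $M$; the resulting curve has all endpoints on $M$ (or is a loop disjoint from $\partial S$), and so avoids $M'\setminus M$.

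The main obstacle is checking this converse step carefully: one must verify that when the classification algorithm of Section~\ref{sec_tameness} is applied to a twisted complex whose underlying graded object lies in $\mathrm{add}\,\mathbb Z(A_0)$, the curves it produces are built only from $A_0$-arcs and $M$-boundary paths. This reduces to inspecting the net associated to such a twisted complex and observing that it records only incidences internal to the pair $(A_0,M)$, so that the indecomposable representations of this net, via the geometric recipe in Subsection~\ref{subsec_complexes_from_curves}, yield curves lying in the $A_0$-subdivision of $S$ and in particular never meeting the unmarked circles.
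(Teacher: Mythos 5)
Your reduction to $\mathrm{Tw}(\mathcal F_{A_0})$ fails, and the failure is not a technicality. The key false step is the assertion that an admissible closed curve $c$ avoiding $M'\setminus M$ can be isotoped off the annular regions of the $A_0$-decomposition. Disjointness from the unmarked circles does not allow this: a curve parallel to an unmarked boundary circle $B$ must wind once around the annular region containing $B$, and no isotopy within $S$ removes it from that region. Such curves are admissible, avoid $M'\setminus M$, and hence lie in $\mathcal F(S,M)$, but the twisted complex representing them (from Subsection~\ref{subsec_complexes_from_curves}) is necessarily built from arcs meeting $B$ and so is not in $\mathrm{Tw}(\mathcal F_{A_0})$. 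The extreme case makes the gap unmistakable: take $S$ an annulus with one marked circle $M$ and one unmarked circle. Every arc with both endpoints on $M$ is null (it cuts off a disk), so $A_0=\emptyset$ and $\mathrm{Tw}(\mathcal F_{A_0})=0$; yet $\mathcal F(S,M)$ contains all objects corresponding to the core curve of the annulus with local system (the torsion sheaves on $\mathbb G_m$ under the identification $\mathcal F(S,M')\simeq D^b(\mathbb P^1)$). So $\mathrm{Tw}(\mathcal F_{A_0})\subsetneq\mathcal F(S,M)$ in general, and showing the former is triangulated says nothing about the latter.

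The paper proves the lemma by an entirely different mechanism that sidesteps this. For each boundary component $B$ it constructs an exact $A_\infty$-functor $I_B:\mathcal F(S,M')\to D(\mathrm{Mod}\,\mathbb K)$ (roughly, the ``restriction to $B$'' detecting how many times a curve hits $B$ and in which degrees, using the $\mathbb Z/N$-grading on $B$ determined by its Maslov index $N$), defined on generating arcs by prescribing $I_B(A)$ as a shifted sum of copies of $\mathbb K$ at the intersection indices. Then $\mathcal F(S,M)$ is characterized as $\bigcap_{B\subset M'\setminus M}\ker I_B$, and the kernel of an exact functor is automatically a triangulated subcategory. If you want a generators-based argument in the spirit of yours, you would need to keep arcs meeting the unmarked circles in your generating set and instead characterize the curves avoiding $M'\setminus M$ homologically inside $\mathrm{Tw}$ of the full arc system, which essentially reinvents the functors $I_B$.
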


\begin{proof}
For each component $B$ of the boundary we will introduce a functor $I_B$ from $\mathcal F(S,M')$ to the category of complexes of vector spaces (possibly unbounded), such that $E\in\mathcal F(S,M)$ if and only if $I_B(E)=0$ for all components $B$ of $M'$ which are not in $M$, which implies the claim.

Fix a boundary component $B\subset\partial S$. 
The grading and vectors tangent to the boundary define sections $s_1,s_2$ of $\mathbb P(TS|_B)$. 
If $B\cong S^1$, then since $S$ is oriented, $B$ and $\mathbb P(TS|_B)$ carry natural orientations, and so the intersection number, $m$, between $s_1$ and $s_2$ is well defined.
If $B\cong \mathbb R$ we take $m=0$.
Let $N=|m|$. 
If $N=0$, then $B$ has a grading in the sense of Subsection~\ref{subsec_grading}, and if $N>0$ then $B$ still has a $\mathbb Z/N$-grading: A lift of $s_2$ to the fiberwise $N$-fold covering of $\mathbb P(TS|_B)$. 
In either case, choose an appropriate grading on $B$.
Then for every graded curve $A$ ending on $B$ transversely, we get an intersection number $i(B,A)\in\mathbb Z/N$.

The cohomology of $I_B(E)$ will be bounded if $N=0$ and $N$-periodic if $N>0$.
It suffices to define $I_B$ on generators of $\mathcal F(S,M')$.
Let $A$ be an arc in $S$.
If both endpoints of $A$ do not lie on $B$, then $I_B(A)=0$.
If one endpoint of $A$ lies on $B$, then 
\begin{equation}
I_B(A)=\bigoplus_{\substack{j\in\mathbb Z \\ j\equiv i(B,A)}}\mathbb K[-j].
\end{equation}
Finally, if both endpoints of $A$ lie on $B$, then $I_B(A)$ is a sum of two graded vector spaces as above.
The map on morphism is defined in the obvious way so that $I_B(a)=0$ if $a$ is a boundary path which is not in $B$.
One checks that this gives a well defined $A_\infty$-functor.
\end{proof}

\begin{prop} \label{prop_mapfromk0}
Let $S$ be a compact graded marked surface which is allowed to have unmarked boundary circles.
Then there is a natural map
\begin{equation}
K_0(\mathcal F(S))\longrightarrow H_1(S,M;\mathbb Z_\tau).
\end{equation}
\end{prop}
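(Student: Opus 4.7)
The plan is to define the map on indecomposable generators using Theorem~\ref{thm_classification} and to verify well-definedness by comparing with Theorem~\ref{thm_k0} applied to an enlarged marked surface.

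Let $M' := M \cup \{B \subset \partial S : B \text{ is a smooth boundary circle not in } M\}$, so $(S, M')$ is a marked surface in the strict sense of Subsection~\ref{subsec_marked}, and Theorem~\ref{thm_k0} yields a natural isomorphism
\[
\psi \colon K_0(\mathcal{F}(S, M')) \xrightarrow{\sim} H_1(S, M'; \mathbb{Z}_\tau).
\]
The full triangulated inclusion $\mathcal{F}(S, M) \hookrightarrow \mathcal{F}(S, M')$ (Lemma~\ref{lem_boundaryfunctors}) induces a map on $K_0$; composing with $\psi$ gives
\[
\bar\phi \colon K_0(\mathcal{F}(S, M)) \to H_1(S, M'; \mathbb{Z}_\tau).
\]

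To lift $\bar\phi$ to $H_1(S, M; \mathbb{Z}_\tau)$, I would invoke Theorem~\ref{thm_classification}: every indecomposable $E$ of $\mathcal{F}(S, M)$ is represented by an admissible curve $c_E$ avoiding $M' \setminus M$, equipped with an indecomposable local system $V_E$ of rank $n_E$. Since $\partial c_E \subset M$, and the grading on $c_E$ determines a compatible $\tau$-orientation, the pair $(c_E, V_E)$ determines a canonical class $n_E [c_E] \in H_1(S, M; \mathbb{Z}_\tau)$. Define $\phi([E]) := n_E [c_E]$ on indecomposables and extend linearly. By tracking the construction of $\psi$ through a full formal arc system (which expresses $E$ as a twisted complex whose underlying 1-chain equals $n_E c_E$ up to boundaries), one sees that $i_* \circ \phi = \bar\phi$ on indecomposables, where $i_* \colon H_1(S, M; \mathbb{Z}_\tau) \to H_1(S, M'; \mathbb{Z}_\tau)$ is induced by the inclusion of pairs.

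The \textbf{main obstacle} is to verify that $\phi$ descends to a well-defined map on $K_0$. Since $\bar\phi$ is automatically well defined, $\phi$ is well defined modulo $\ker i_*$, which by the long exact sequence of the triple $(S, M', M)$ is generated by the fundamental classes of the unmarked boundary circles on which $\tau|_B$ is trivial. To pin down well-definedness on the nose, I would reduce the distinguished-triangle relations in $K_0(\mathcal{F}(S, M))$ to three local identities using Theorem~\ref{thm_classification}: (i) smoothing of a transverse intersection of admissible curves at a point with intersection index $1$ (cf. Figure~\ref{fig_smoothing}), for which the three participating 1-chains cobound a small disk in the interior of $S$, yielding the identity $[c_3] = [c_1] + [c_2]$ in $H_1(S, M; \mathbb{Z}_\tau)$; (ii) short exact sequences of local systems on a fixed admissible curve, giving additivity of rank; (iii) the grading shift $[E[1]] = -[E]$, which matches $[c[1]] = -[c]$ in $\mathbb{Z}_\tau$-coefficients since a shift by $1$ reverses the $\tau$-orientation of $c$. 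Each identity is a local statement taking place in $S \setminus (M' \setminus M)$, hence holds already in $H_1(S, M; \mathbb{Z}_\tau)$. Naturality of $\phi$ in $(S, M)$ follows from naturality of all the ingredients.
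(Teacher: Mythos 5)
Your approach diverges from the paper's at a crucial point, and the divergence creates a genuine gap.

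The paper does not work with your $M'$ (which marks the entire boundary circles). Instead it defines $S'$ by replacing each unmarked boundary circle of $S$ with \emph{one marked arc and one unmarked arc}, so that $M'_{\text{paper}}$ is obtained from $M$ by adjoining only contractible pieces. Because of this, the induced map $\iota : H_1(S,M;\mathbb Z_\tau) \to H_1(S',M'_{\text{paper}};\mathbb Z_\tau)$ is \emph{injective} (compare the two long exact sequences of pairs: $H_1(M)\to H_1(M'_{\text{paper}})$ and $H_1(S)\to H_1(S')$ are isomorphisms, and $H_0(M)\to H_0(M'_{\text{paper}})$ is injective, so the five-lemma-type diagram chase goes through). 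One then only needs the observation that the image of $K_0(\mathcal F(S,M))$ in $K_0(\mathcal F(S'))\cong H_1(S',M'_{\text{paper}};\mathbb Z_\tau)$ lands inside $\operatorname{im}\iota$, which is clear since the relevant curves avoid the new marked arcs; the lift is then automatic and unique, and well-definedness comes for free.

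With your $M'$ (the whole circles marked), the map $i_* : H_1(S,M;\mathbb Z_\tau)\to H_1(S,M';\mathbb Z_\tau)$ is \emph{not} injective — as you note, its kernel picks up the fundamental classes of the unmarked circles on which $\tau$ is trivial, because $H_1(M')$ genuinely exceeds $H_1(M)$. So $\bar\phi$ does not determine $\phi$, and you must prove well-definedness of $\phi$ by hand. You acknowledge this as the ``main obstacle,'' but the proposed resolution — that the distinguished-triangle relations in $K_0(\mathcal F(S,M))$ are generated by the three local identities (smoothing of an index-1 intersection, short exact sequences of local systems on a fixed curve, and shift) — is asserted, not proved, and is not a consequence of Theorem~\ref{thm_classification} as cited. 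Theorem~\ref{thm_classification} classifies \emph{objects}, not distinguished triangles, and nothing in the paper reduces the relation ideal of $K_0(\mathcal F(S,M))$ to these three local types. That reduction would itself be a substantive structural theorem. Without it, your $\phi$ is only defined modulo $\ker i_*$, which is exactly the ambiguity you need to eliminate. The fix is simply to replace your $(S,M')$ by the paper's $S'$; then the entire lifting/well-definedness argument you attempt becomes unnecessary.
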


The group $K_0(\mathcal F(S))$ is often infinitely generated, and so the above map will fail to be injective.
It may also fail to be surjective, e.g. for the cylinder with non-standard grading, where $\mathcal F(S)$ is trivial if both boundary circles are unmarked.

\begin{proof}
Let $S'$ be the graded marked surface obtained from $S$ by replacing each unmarked boundary circle with one marked and one unmarked boundary arc.
Then $\mathcal F(S)$ is also a subcategory of $\mathcal F(S')$, as follows from localization.
Note that 
\begin{equation}
\iota:H_1(S,M;\mathbb Z_\tau)\longrightarrow H_1(S',M';\mathbb Z_\tau)\cong K_0(\mathcal F(S'))
\end{equation}
is an inclusion and the image of $\iota$ contains the image of $K_0(\mathcal F(S))\longrightarrow K_0(\mathcal F(S'))$.
Hence we have the desired natural map.
\end{proof}

\subsection{Statement of main theorem}
\label{subsec_main}

Let $\widehat{X}$ be the real blow-up of a flat surface $X$ of finite type.
It contains a compact surface with marked boundary $S\subset \widehat{X}$, canonical up to isotopy, as a deformation retract.
The complement $\widehat{X}\setminus S$ is a disjoint union of puncture open disks, one for each higher order pole of the quadratic differential, and rectangles $\mathbb R_{>0}\times [0,1]$, one for each infinite-angle singularity (more precisely, non-cyclic boundary walk).
The marked boundary of $S$ is the part of $\partial S$ in $\partial\widehat{X}$.
Each removed punctured open disk is bounded by an unmarked boundary circle, while each removed rectangle is bounded on one end by an unmarked part of $\partial S$.
Thus, the Fukaya category $\mathcal F(X)=\mathcal F(S)$ is defined.
We also note that the inclusion of pairs $(S,M)\subset (\widehat{X},\partial\widehat{X})$ is a homotopy equivalence, so both give isomorphic relative homologies.

\begin{thm} \label{thm_stability}
Let $X$ be a flat surface of finite type.
Define subcategories $\mathcal C^{\phi}\subset H^0(\mathcal F(X))$ so that isomorphism classes of indecomposable objects correspond to unbroken graded geodesics of phase $\phi$ with indecomposable local system, and let 
\begin{equation}
Z:K_0(\mathcal F(S))\longrightarrow H_1(X,\partial X;\mathbb Z_\tau)=:\Gamma\to\mathbb C
\end{equation}
be the period map.
Then this data satisfies the axioms of a stability structure.
In particular, for a graded surface of finite type $X$ there is a continuous map
\begin{equation}
\mathcal M(X)\longrightarrow\mathrm{Stab}(\mathcal F(X))
\end{equation}
over $\mathrm{Hom}(\Gamma,\mathbb C)$.
\end{thm}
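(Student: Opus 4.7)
The strategy is to leverage the classification of indecomposable objects of $\mathcal F(X)$ by admissible graded curves with indecomposable local systems (Theorem~\ref{thm_classification}) together with the flat geometry of geodesics and saddle connections. Semistability is defined geometrically: an indecomposable object $E$ lies in $\mathcal C^\phi$ precisely when its associated admissible curve admits an unbroken graded geodesic representative with slope $e^{i\pi\phi}$, and its central charge equals $r\cdot\int_\gamma\sqrt\varphi$, where $r$ is the rank of the local system and $\gamma$ the geodesic representative. Axiom (4) is then built into the definition, since $\int_\gamma\sqrt\varphi$ is a positive real multiple of $e^{i\pi\phi}$. Axiom (1) is immediate, since the shift $[1]$ on graded curves shifts $\phi$ by $1$.

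For the Harder--Narasimhan axiom (3), I would first show that every admissible curve $c$ on $X$ has a broken geodesic representative $\gamma_1\cdot\gamma_2\cdots\gamma_n$ given by a concatenation of saddle connections (or a single closed geodesic), with all interior breakpoints having exterior angle $\ge\pi$, so that no break can be removed by geodesic straightening. The pieces $\gamma_i$ are unbroken geodesics, each semistable, and the cumulative smoothings at the breakpoints realize $E$ as an iterated mapping cone over the associated semistable objects (cf.\ Figure~\ref{fig_smoothing}). Grouping pieces of equal phase and re-ordering by strictly decreasing phase — which in general requires rotating some distinguished triangles using the degree-one smoothing correspondence — one obtains a bona fide HN filtration. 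The higher-rank local system case reduces to taking direct sums of rank-one contributions of the same phase.

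Axiom (2) follows from a direct local computation. Morphisms of degree zero between two unbroken graded geodesics $\gamma_1,\gamma_2$ of phases $\phi_1>\phi_2$ come only from transverse intersections and from boundary paths between endpoints. At a transverse intersection $p$ the index is $\lceil\phi_1(p)-\phi_2(p)\rceil\ge 1$ by the formula from Subsection~\ref{subsec_flatsurf}, and at a common boundary endpoint the formula $|a|=i_p(X,a)-i_q(Y,a)$ combined with the same ceiling identity forces $|a|\ge 1$. Linearity in the local systems and the vanishing of $\mu^1$ on generators then gives $\mathrm{Hom}^0(\mathcal C^{\phi_1},\mathcal C^{\phi_2})=0$.

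For the support property (axiom 5) and continuity, fix a maximal geodesic arc system $\mathcal A$ on $X$ (Subsection~\ref{subsec_saddle} in the finite-area case, horizontal strips in the infinite-area case) and use that the number of intersections of any geodesic $\gamma$ with a fixed $\alpha\in\mathcal A$ is bounded by $C_\alpha\cdot\ell(\gamma)$; summing over $\mathcal A$ gives $\|\mathrm{cl}(E)\|\le C|Z(E)|$ uniformly in $E$. Continuity of $\mathcal M(X)\to\mathrm{Stab}(\mathcal F(X))$ then reduces, via the metric definition of the Bridgeland topology, to continuous dependence of $Z$ on the flat structure — which is the period map of Theorem~\ref{THM_period_map} — and to continuous dependence of the phases and masses of HN components, which in turn follows from the continuity of broken-geodesic representatives on strata where the combinatorial type of the representative is constant. \textbf{The main obstacle} will be step (3): showing that every admissible curve has a broken geodesic representative unique up to the expected reordering, and that the resulting tower of mapping cones can always be arranged with strictly decreasing phases. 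This rests on a careful analysis of the local flat geometry at conical singularities (including infinite-angle ones) and on systematic use of the smoothing-cone correspondence of Figure~\ref{fig_smoothing}.
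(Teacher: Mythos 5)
Your broad strategy (geodesic representatives, breakpoints enforcing phase relations, twisted complexes as HN towers, intersection indices for Hom vanishing, forms or intersection counting for the support bound) matches the paper's in spirit, but the proposal omits the technique that makes the ``direct local computation'' actually go through: reduction to universal and annular covers. The paper first treats the disk case, where the surface is contractible and \emph{all} singularities have infinite cone angle, and the annulus case; then saddle-connection arguments on a general $X$ are pulled back to the universal cover of $\widehat{X}$, and band objects are pulled back to the annular cover. Without this, your verification of axiom (2) at ``common boundary endpoints'' via the ceiling formula is not correct as stated: at a \emph{finite}-angle conical singularity, the relevant degree is the index around a boundary path of fixed orientation, not the universal formula $i_p(A,B)=\phi(A)-\phi(B)+\phi/\pi$, which the paper applies only in the infinite-angle (universal-cover) setting before summing over deck transformations. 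Similarly, your claim that degree-zero morphisms between two stable objects ``come only from transverse intersections and boundary paths'' and that ``the vanishing of $\mu^1$ on generators'' finishes the argument fails once at least one of the objects is a band (closed geodesic with local system): these are twisted complexes over arcs and $\mu^1$ picks up the twisting differential, so the degree accounting must be done at the level of the twisted complex. The paper's explicit handling of the mixed and closed-closed cases in Subsection~\ref{subsec_hn} (e.g., decomposing a closed geodesic crossing the cylinder $Z$ into pieces $S_i$, $E_i$ with $\phi(S_i)<\phi(B)$) is doing real work that your proposal skips.

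Two smaller points. First, your statement that the higher-rank local system case ``reduces to taking direct sums of rank-one contributions of the same phase'' is wrong for indecomposable local systems (Jordan blocks do not split), though this is mostly a distraction: arcs carry only rank-one indecomposable local systems, and bands that are isotopic to closed geodesics are already semistable regardless of rank, so the content is only in how the monodromy is distributed over the twisted-complex pieces for bands that are \emph{not} geodesics, which the paper handles via the twisted-complex construction of Subsection~\ref{subsec_complexes_from_curves}. Second, for axiom (3) you suggest that re-ordering the pieces ``in general requires rotating some distinguished triangles,'' but the paper shows that no such re-ordering is necessary: the geodesic condition at a breakpoint (exterior angle $\ge\pi$, strictly $>\pi$ when the phase jumps) forces the degree-one morphism in the smoothing to go from the lower-phase piece to the higher-phase piece, which is exactly what the twisted-complex filtration needs. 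Your support-property argument via intersection counts against a fixed arc system is plausible and differs mildly from the paper's use of a fixed basis of $H^1_{\mathrm{dR}}(Y,\partial Y;\mathbb R_\tau)$ and the $L^\infty$-bound on the core; both hinge on bounding a cohomological pairing by the flat length, but the form-based version avoids having to justify the linear intersection-count bound in the infinite-area case.
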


\begin{rem}
Let us emphasize that we put no restrictions on the ground field $\mathbb K$ of $\mathcal F(S)$.
In fact this category is defined over any ring, but already $\mathrm{Perf}(\mathbb Z)$ does not admit stability structures.
\end{rem}

\begin{thm}[Main Theorem] \label{ModuliMapThm}
Let $X$ be a graded surface of finite type, then the map $\mathcal M(X)\to\mathrm{Stab}(\mathcal F(X))$ is injective and its image open and closed, thus a union of components of $\mathrm{Stab}(\mathcal F(X))$.
\end{thm}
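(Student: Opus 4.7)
The plan is to exploit the commuting triangle
\begin{equation*}
\begin{tikzcd}
\mathcal M(X) \arrow{rr}{\Phi} \arrow{dr}[swap]{\Pi} && \mathrm{Stab}(\mathcal F(X)) \arrow{dl}{Z} \\
& \mathrm{Hom}(\Gamma,\mathbb C) &
\end{tikzcd}
\end{equation*}
with $\Gamma = H_1(X,\partial X;\mathbb Z_\tau)$ and $\Phi$ the map of Theorem~\ref{thm_stability}. By Theorem~\ref{THM_period_map} the left leg $\Pi$ is a local homeomorphism, and by Bridgeland's theorem so is the right leg $Z$. Since $\Phi$ lifts $\Pi$ to $Z$ by construction, a diagram chase shows $\Phi$ is itself a local homeomorphism; in particular its image is automatically open. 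It then remains to show injectivity and closedness of the image.

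For injectivity, suppose $\Phi(X_1) = \Phi(X_2) = \sigma$. The central charges agree, so the periods of $X_1$ and $X_2$ coincide, and for every $\phi \in \mathbb R$ the semistable subcategories of phase $\phi$ are the same. By Theorem~\ref{thm_classification} the indecomposable objects are classified by isotopy classes of admissible graded curves equipped with indecomposable local systems, and an object is semistable for $\Phi(X_i)$ precisely when its curve admits an unbroken graded geodesic representative (a saddle connection or closed geodesic) of the correct phase. Reading this off from $\sigma$ yields the full list of isotopy classes of saddle connections and closed geodesics on $X_i$, together with their complex periods. In the finite-area case this determines a geodesic triangulation with prescribed edge vectors, exactly as in the finite-area case of Theorem~\ref{THM_period_map}, so $X_1 = X_2$. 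In the infinite-area case it determines, after a suitable rotation of the horizontal direction, the horizontal strip decomposition of Subsection~\ref{subsec_hsd} together with the widths and heights of all strips, again forcing $X_1 = X_2$.

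For closedness of the image, suppose $\sigma_n = \Phi(X_n)$ converges to some $\sigma \in \mathrm{Stab}(\mathcal F(X))$; I must produce $X_\infty \in \mathcal M(X)$ with $\Phi(X_\infty) = \sigma$. The central charges $Z_{\sigma_n}$ converge to $Z_\sigma$, so by Theorem~\ref{THM_period_map} it suffices to show that, after passing to a subsequence, the $X_n$ stay in a compact region of $\mathcal M(X)$ and converge there. This will be done via the Voronoi--Delaunay partitions of Subsection~\ref{subsec_voronoi}. The support property of $\sigma$, being stable under small perturbation, can be transferred uniformly to the $\sigma_n$: every semistable class $\gamma$ of $\sigma_n$ then satisfies $\|\gamma\| \le C|Z_{\sigma_n}(\gamma)|$ with a uniform $C$. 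Applied to the boundary saddle connections of $\mathrm{Core}(X_n)$ and combined with Lemma~\ref{lem_DelaunayBound}, this provides a uniform upper bound on Delaunay edge lengths; a lower bound is automatic, since an edge collapsing would force $|Z_{\sigma_n}(\gamma)| \to 0$ for some $\gamma$, contradicting convergence in $\mathrm{Stab}$. Extracting a subsequence with stable Delaunay combinatorics and convergent edge vectors then gives a limit $X_\infty \in \mathcal M(X)$, and continuity of $\Phi$ yields $\Phi(X_\infty) = \sigma$.

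The main obstacle is this closedness step, particularly in the infinite-area case where exponential-type singularities may occur. Two things need careful handling: the transfer of the support property from $\sigma$ uniformly to the approximating $\sigma_n$, and the bookkeeping of Delaunay edges around infinite-angle singularities (where only finitely many strips have finite height, but there can be infinitely many of infinite height to keep track of). Once these estimates are in place, the purely topological argument that only finitely many Delaunay combinatorial types are compatible with the given topology and the uniform length bounds reduces convergence of $X_n$ to an ordinary convergence of a finite packet of complex numbers, which is where the argument concludes.
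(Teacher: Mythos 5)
Your overall plan --- injectivity, openness via the commuting triangle, closedness via Voronoi--Delaunay estimates --- tracks the paper's strategy closely, and the diagram-chase showing that $\Phi$ is a local homeomorphism (hence has open image) is a valid and arguably cleaner way to dispose of openness. Your injectivity argument is genuinely different from the paper's: you propose to read off the full set of isotopy classes of saddle connections and closed geodesics with their periods and reconstruct the flat structure from that, whereas the paper reconstructs each Voronoi cell directly by introducing boundary functors $I_B$ and the associated closed sets $C_{I_B,\sigma}\subset\mathbb R^2$, which recover $\mathrm{Dom}(\exp_s)$ and hence $V_s$ from $\sigma$ alone. Your route is plausible, but it elides the step of recovering the combinatorics (which triples of saddle connections bound common triangles, which saddle connections are the simple ones determining the strip decomposition) from the stability data; the paper's $C_{I_B,\sigma}$ construction sidesteps that bookkeeping entirely.

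The genuine gap is in the closedness step. You invoke the support property to bound Delaunay edge lengths from above, but the inequality points the wrong way: $\|\gamma\|\leq C|Z_{\sigma_n}(\gamma)|$ is a \emph{lower} bound on $|Z_{\sigma_n}(\gamma)|$, i.e. it prevents semistable central charges from collapsing, and gives no upper bound at all. To feed Lemma~\ref{lem_DelaunayBound} you need upper bounds on $\beta(F_i)$ (maximal length of a boundary saddle connection of $\mathrm{Core}(F_i)$) and on $\rho(F_i)$ (Hausdorff distance of $X_\mathrm{sg}$ to $\mathrm{Core}(F_i)$). The paper gets the $\beta$ bound by fixing the object $E$ built from the boundary edges of $\mathrm{Core}(F_1)$ and using that the masses $m_{\sigma_i}(E)$ converge (so are bounded above), not the support property. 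The $\rho$ bound is the hard part: the paper introduces the closed sets $C_{F,\sigma}\subset\mathbb R^2$, proves a Lipschitz estimate $d_H(C_{F,\sigma_1},C_{F,\sigma_2})\leq d(\sigma_1,\sigma_2)$ for the Hausdorff distance, and then runs a contradiction argument on the functions $q_i$ measuring how far a fixed singularity is from the edge of its Voronoi cell inside the core. None of this is captured by ``transfer the support property uniformly''; indeed your proposal never addresses $\rho(F_i)$ at all. You also assert that ``a lower bound is automatic, since an edge collapsing would force $|Z_{\sigma_n}(\gamma)|\to 0$'', but the paper explicitly allows Delaunay polygons to degenerate in the limit and handles this by discarding them and regluing; edge collapse is not the only mode of degeneration and is not the one that needs ruling out. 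So the proposal correctly identifies the closedness step as the crux, but the estimate it proposes to use cannot do the job, and the central technical content of the paper's proof (the $C_{F,\sigma}$ sets and the $\rho$-bound) is missing.
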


\begin{rem}
When $C$ is compact, then necessarily $g(C)=1$.
In this case one can use homological mirror symmetry for elliptic curves \cite{polishchuk_zaslow} and Atiyah's classification \cite{atiyah57} to derive a version of Theorem~\ref{thm_stability} and Theorem~\ref{thm_stability}.
The computation of $\mathrm{Stab}(D^b(E))$ in \cite{bridgeland07} shows that $\mathcal M(X)=\mathrm{Stab}(X)$.
\end{rem}
The proofs of Theorem~\ref{thm_stability} and Theorem~\ref{ModuliMapThm} will occupy the rest of this section.

\subsection{Support property}
\label{subsec_support}

Let $X$ be a flat surface of finite type and $Y=\widehat{X}$ its real blow-up.
We have as before
\begin{equation}
\Gamma=H_1(Y,\partial Y;\mathbb Z_\tau),\qquad\Gamma_{\mathbb R}=\Gamma\otimes_\mathbb{Z}\mathbb R=H_1(Y,\partial Y;\mathbb R_\tau)
\end{equation}

\begin{prop}
There is a norm $\|.\|$ on $\Gamma_{\mathbb R}$, $C>0$ such that for $\gamma\in\Gamma$ the class of a finite geodesic on $X$ there is an estimate
\begin{equation}
\|\gamma\|\leq C|Z(\gamma)|
\end{equation}
where $Z:\Gamma\to\mathbb C$ is the period map.
\end{prop}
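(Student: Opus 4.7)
The plan is to construct an $\ell^1$-norm on $\Gamma_{\mathbb R}$ from a fixed geometric decomposition of $X$ and bound the coordinates of any saddle connection's homology class linearly by its length.

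I would fix the flat structure and take the Delaunay polygonal subdivision of $\mathrm{Core}(X)$ from Subsection~\ref{subsec_voronoi}, refining to a triangulation by a small perturbation if necessary. By Lemma~\ref{lem_DelaunayBound}, all Delaunay edges have length at most some $L_{\max}$, and together with the simple saddle connections bounding infinite-height horizontal strips (Subsection~\ref{subsec_hsd}) they generate $\Gamma$. I would extract a basis $e_1,\ldots,e_n$ among them and equip $\Gamma_{\mathbb R}$ with the corresponding $\ell^1$-norm.

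Given a finite geodesic $\gamma$ of length $L=|Z(\gamma)|$, note that $\gamma\subset\mathrm{Core}(X)$ by convexity, and its interior avoids the Delaunay vertices (which are precisely $X_{\mathrm{sg}}$) because $\gamma$ is a saddle connection. Homotoping $\gamma$ rel endpoints onto the 1-skeleton yields a word in Delaunay edges representing $[\gamma]$. The key quantitative step is to show that the number of edges in this word is at most $C_1(1+L)$, where $C_1$ depends only on the fixed structure: $\gamma$ enters each Delaunay cell through one edge and exits through another, and bounded geometry of the Delaunay subdivision on a compact set forces the crossing count to grow linearly with length. Each edge in the word contributes a bounded amount in the fixed basis, so $\|\gamma\|\leq C_2(1+L)$. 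The additive constant is absorbed using discreteness of the period set: the shortest saddle connection on $X$ has some strictly positive length $\ell_{\min}$, so $1+L\leq(1+\ell_{\min}^{-1})L$, giving $\|\gamma\|\leq C|Z(\gamma)|$ as required.

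The main technical obstacle I anticipate is the cell-counting estimate, since a straight segment can skirt close to a Delaunay vertex and cross many edges in a short interval. A robust approach is to pass to the universal cover of $X\setminus X_{\mathrm{sg}}$, where the developing map straightens $\gamma$ into an actual line segment of length $L$ in $\mathbb C$ and the lifted Delaunay edges restricted to a bounded neighborhood of that segment form a locally finite structure of bounded density, yielding the desired $O(1+L)$ crossing count. In the infinite-area case an additional argument is needed for saddle connections whose Delaunay description interacts with the simple saddle connections bounding horizontal strips of infinite height, but any such excursion contributes only a controlled, linear-in-$L$ number of simple saddle connections to the basis expansion of $[\gamma]$.
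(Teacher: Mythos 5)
The paper's proof takes a completely different, much shorter route: it realizes $\Gamma_{\mathbb R}^{*}$ as $H^1_{\mathrm{dR}}(Y,\partial Y;\mathbb R_\tau)$, picks smooth representative forms $\omega_1,\ldots,\omega_n$ of a basis, defines the norm by $\|\gamma\|=\sum_i |\langle[\omega_i],\gamma\rangle|$, and sets $C=\sum_i\|\omega_i|_{\widehat{K}}\|_\infty$ where $\widehat{K}$ is the (compact) preimage of $\mathrm{Core}(X)$ in the real blow-up. Since every finite geodesic $\alpha$ lies in the core and has length $l(\alpha)=|Z(\gamma)|$, the estimate $|\int_\alpha\omega_i|\le l(\alpha)\,\|\omega_i|_{\widehat K}\|_\infty$ gives the bound in one line, with no additive constant to absorb, no case split between finite and infinite area, and no discreteness input about the shortest saddle connection.

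Your Delaunay-based, $\ell^1$-norm-plus-cell-counting approach is a genuinely different strategy, and I think it can probably be pushed through, but as written the crux of your argument remains a sketch. You correctly flag the cell-counting estimate $\#\{\text{edges crossed}\}\le C_1(1+L)$ as the main obstacle and propose to handle it via the developing map, but the ``bounded density'' claim needs real work: the lift of the Delaunay skeleton under the developing map is not a periodic structure in $\mathbb C$, and near a conical point of large (or infinite) cone angle the local picture is a branched/universal covering of a punctured disk, not a Euclidean neighborhood. You would need to argue, for instance, that the triangles have angles uniformly bounded below at every vertex (a property of the fixed flat structure, but not automatic from being Delaunay) so that a straight segment of length $L$ crossing near a vertex only meets $O(1)$ cells there, and then combine this with a separate estimate away from the vertices. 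Additionally, near infinite-angle singularities and near the boundary of the core you would need to check that the homotopy onto the $1$-skeleton stays controlled. None of this is a fatal error, but it is a substantive technical deficit compared to the paper's argument, which sidesteps the entire combinatorial bookkeeping by simply integrating $C^\infty$ forms along the geodesic: compactness of $\widehat{K}$ does all the work, and the estimate $|\int_\alpha\omega|\le l(\alpha)\cdot\sup_{\widehat K}|\omega|$ is immediate. The moral is that representing cohomology by smooth forms rather than by a cellular cochain complex converts a delicate quantitative counting problem into a triviality.
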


\begin{proof}
The locally constant sheaf $\mathbb R_\tau$ is given by flat sections of a flat line bundle with metric for which we use the same notation.
We have
\begin{align}
\Gamma_{\mathbb R}^*&= H_1(Y,\partial Y;\mathbb R_\tau)^* \\
&= H^1(Y,\partial Y;\mathbb R_\tau) \\
&= H^1_{dR}(Y,\partial Y;\mathbb R_\tau)
\end{align}
Choose forms $\omega_1,\ldots,\omega_n$ representing a basis of $H^1_{dR}(Y,\partial Y;\mathbb R_\tau)$.
Let $\widehat{K}$ be the closure of the preimage of $K=\mathrm{Core}(X)$ under the inclusion $Y\setminus \partial Y\to X$.
Define a norm on $\Gamma_{\mathbb R}$ by
\begin{equation}
\|\gamma\|=\sum_i|[\omega_i](\gamma)|
\end{equation}
and set
\begin{equation} \label{supp_prop_const}
C=\sum_i\|\omega_i{|}_{\widehat{K}}\|_\infty
\end{equation}
where the norm is taken with respect to the flat metric.
Let $\alpha$ be a finite geodesic on $X$ with class $\gamma\in\Gamma$, then
\begin{equation}
l(\alpha)=|Z(\gamma)|
\end{equation}
where $l(\alpha)$ is the length.
Thus
\begin{align}
\|\gamma\| &= \sum_i|[\omega_i](\gamma)| \\
           &= \sum_i\left|\int_\alpha\omega_i\right| \\
           &\leq \sum_i l(\alpha)\|\omega_i|_{\widehat{K}}\|_{\infty} \\
           &= C|Z(\gamma)|.
\end{align}
\end{proof}

\subsection{Harder--Narasimhan filtrations}
\label{subsec_hn}

We continue with the proof of Theorem~\ref{thm_stability}.
The strategy is to first deal with the special case of a disk and annulus, then deduce the general case by passing to the universal and annular coverings.

Let $X$ be a flat surface with $X_\mathrm{sm}$ diffeomorphic to a disk and all conical singularities with infinite angle.
The corresponding graded marked surface $S$ is homeomorphic to a closed disk with $n+1=|X_\mathrm{sg}|$ marked and unmarked boundary arcs.
Consider the data of a stability structure on $\mathcal F(X)$ as in the statement of theorem.
We first check that $\mathrm{Hom}(A,B)=0$ for semistable objects $A,B\neq 0$ with $\phi(A)>\phi(B)$. 
Note that, by the long exact sequence for $\mathrm{Hom}$, it suffices to do this for $A,B$ stable, so that they correspond to graded arcs.
We consider the various cases.
\begin{enumerate}
\item If $A$ and $B$ differ only by a grading shift, the claim follows from $\mathrm{Ext}^{<0}(A,A)=0$.
\item If $A$ and $B$ are disjoint, then $\mathrm{Ext}(A,B)=0$.
\item If $A$ and $B$ intersect in a smooth point $p$ of the surface, then $\mathrm{Ext}(A,B)$ is concentrated in degree 
\begin{equation}
i_p(A,B)=\lceil\phi(A)-\phi(B)\rceil
\end{equation}
so $\phi(A)>\phi(B)$ implies $\mathrm{Ext}^0(A,B)=0$.
\item If $A$ and $B$ meet in an $\infty$-angle singularity $p$ in an angle $\phi>0$, then
\begin{equation}
i_p(A,B)=\phi(A)-\phi(B)+\frac{\phi}{\pi}
\end{equation}
and $\mathrm{Ext}(A,B)$ is either concentrated in degree $i_p(A,B)$ or zero, depending on the order in which they meet $p$.
\end{enumerate}

Next we check the HN-property. 
It suffices to do this for indecomposable objects $E$.
By the classification, $E$ corresponds to a graded arc $\alpha$ connecting two $\infty$-angle singularities.
A geodesic representative of $\alpha$ is a concatenation of graded saddle connections $\alpha_1,\ldots,\alpha_k$ corresponding to objects $A_1,\ldots,A_k$ in $\mathcal F(X)$.
Let $\phi_1>\ldots>\phi_l$ be the distinct phases of the $\alpha_i$ (generically, $l=k$).
For $1\leq i\leq l$ there is a semistable $B_i$ given by a twisted complex
\begin{equation}
B_i=\left(\bigoplus_{\phi(A_j)=\phi_i} A_j,\delta_i\right)
\end{equation}
where $\delta_i$ has non-zero coefficients corresponding to the singular points where the geodesic representative of $\alpha$ passes through without changing phase, which is equal to $\phi_i$.
Also,
\begin{equation}
E=\left(\bigoplus_j A_j, \delta\right)
\end{equation}
where $\delta$ has non-zero coefficients corresponding to all the singular points the geodesic representative passes through (i.e. where $\alpha_i$ meets $\alpha_{i+1}$).
Note that this really gives morphisms of degree 1 since the concatenation of the saddle connections smooths to a graded curve.
To show that the twisted complex representation of $E$ in terms of the $B_i$ is a HN-tower, we need to check that components of $\delta$ increase phase.
For this we use that fact that the concatenation of the $\alpha_i$ is a geodesic, and so $\alpha_i,\alpha_{i+1}$ necessarily meet at a singular point in an angle $\phi\ge\pi$, thus $\phi>\pi$ if the phase jumps.
The corresponding morphism, without loss of generality from $A_i$ to $A_{i+1}$, has degree 1, and so
\begin{equation}
1=\phi(A_i)-\phi(A_{i+1})+\frac{\phi}{\pi}
\end{equation}
which implies $\phi(A_i)<\phi(A_{i+1})$.

Suppose now that $X$ is a general flat surface of finite type.
The first claim is that $\mathrm{Hom}(A,B)=0$ for a pair of graded saddle connections $A,B$ with $\phi(A)>\phi(B)$.
To see this, let $Y$ be the flat surface so that its real blow-up $\widehat{Y}$ is the universal cover of $\widehat{X}$.
Note that all singularities of $Y$ have infinite cone angle.
Lift $A,B$ to graded curves $\widetilde{A},\widetilde{B}$ on $Y$ with the same phases.
We have
\begin{equation}
\mathrm{Hom}(A,B)=\bigoplus_{g\in\pi_1(\widehat{X})}\mathrm{Hom}(\widetilde{A},g\widetilde{B})
\end{equation}
and hence it suffices to prove the claim for $Y$.
But any two saddle connections in $Y$ are objects in some subcategory of $\mathcal F(Y)$ which is the Fukaya category of some surface of disk-type as above, and so our previous arguments can be applied.

Second, we claim that the HN-property holds for admissible curves $c$ with domain $[0,1]$.
To see this, lift $c$ to a graded curve $\tilde{c}$ in the universal cover $Y$.
We work in the category generated by all the saddle connections in a geodesic representative of $\tilde{c}$.
It corresponds to some surface of disk-type in the universal cover.
We get a HN-tower for $\tilde{c}$ in this category and push it forward to $\mathcal F(X)$ to get a tower for $c$ in that category.

We turn to the case when $X$ is of annular type, i.e. $X_\mathrm{sm}$ is diffeomorphic to an annulus with grading such that a simple closed loop around it is gradable, and all singularities have infinite cone angle.
Let $S\subset\widehat{X}$ be the corresponding compact graded marked surface.
The category $\mathcal F(X)$ is the bounded derived category of finite-dimensional representations of a quiver which is a cyclic chain or arrows, all in degree zero.
The number of arrows oriented in one or the other way is equal to the number of marked boundary arcs on the two components of $\partial S$, respectively.
In arguments below we omit straightforward computations of $\mathrm{Ext}^*(A,B)$ in $\mathcal F(X)$.

First we check that $\mathrm{Hom}(A,B)=0$ for stable $A,B$ with $\phi(A)>\phi(B)$.
It remains to consider the cases when either $A$ or $B$, or both, are closed geodesics with simple local system.
\begin{enumerate}
\item If $A$ and $B$ both correspond to loops with simple local system, then they either differ by a shift or else are orthogonal, i.e. $\mathrm{Ext}^*(A,B)=\mathrm{Ext}^*(B,A)=0$. 
In the first case it suffices to note that $\mathrm{Ext}^{<0}(A,A)=0$ since $A$ lies in an abelian category. 
\item If one of $A$ or $B$ is a geodesic loop and the other a saddle connection tending in both directions to the same component of $\partial S$, then $A$ and $B$ are orthogonal.
\item If one of $A$ or $B$ is a geodesic loop and the other a saddle connection from one component of $\partial S$ to the other, then $A$ and $B$ intersect in a unique point $p$ and $\mathrm{Ext}^*(A,B)$ is concentrated in degree $i_p(A,B)=\lceil \phi(A)-\phi(B)\rceil>0$.
\end{enumerate}
By passing to the annular covering (of $\widehat X$) we see, as in the case of two saddle connections, that the statement is true for general $X$ when at least one of $A$ or $B$ is a saddle connection.

For annular $X$ there is, up to isotopy and grading, a unique admissible curve $c$ in $X$ with domain $S^1$.
We need to check that HN-filtrations exist for the corresponding indecomposable objects $X$.
If $X$ has a geodesic loop (necessarily isotopic to $c$), then $X$ is semistable, so we are done.
If not, then $c$ has a geodesic representative which is a concatenation of saddle connections.
The corresponding collection of arcs satisfies the properties we required when assigning a twisted complex to $c$ in Subsection~\ref{subsec_twisted_complexes}.
Note that in this case $X$ cannot contain a semi-infinite flat cylinder, so $S$ has no unmarked boundary circles.
The same argument as in the disk case shows that this twisted complex gives a HN-filtration for $X$.
Again, passing to the annular covering, we find that HN-filtration exist for objects corresponding to admissible curves with local system and domain $S^1$ on general $X$.
Since all isomorphism classes of indecomposable objects in $H^0(\mathcal F(X))$ come from admissible curves, this completes the proof of the HN-property.

Let $X$ now be a general flat surface of finite type. 
It remains to show that for two objects $A,B\in\mathcal F(X)$ corresponding to closed geodesics $\alpha,\beta$ with local systems and $\phi(A)>\phi(B)$ we have $\mathrm{Hom}(A,B)=0$.
We may also assume that $\alpha$ and $\beta$ are non-isotopic as simple closed curves.
Let $Z$ denote the flat cylinder foliated by closed geodesics isotopic to $\alpha$.
Since $\Omega$ is assumed to not have any second-order poles, $Z$ has finite height with singular points on each boundary component, see Figure~\ref{fig_closed_geodesics}.
\begin{figure}[h]
\centering
\includegraphics[scale=1]{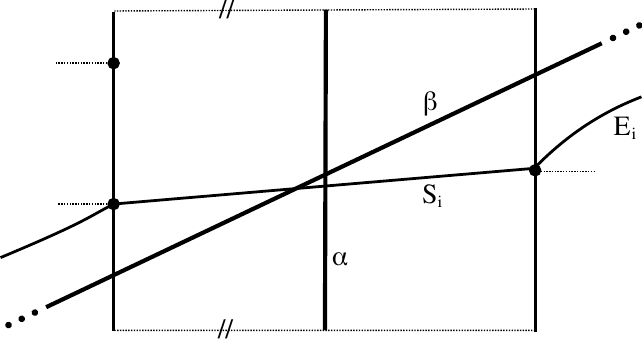}
\caption{The cylinder $Z$, cut along a dotted line, closed geodesics $\alpha,\beta$, saddle connections $S_i$, arcs $E_i$.}
\label{fig_closed_geodesics}
\end{figure}
The other loop $\beta$ can cross $Z$ any number of times, but does so always with the same slope.
The we can represent $B$ as an extension of graded saddle connections with local systems $S_i$, which are contained in $Z$ and connect the two components of its boundary, and arcs $E_i$ which are disjoint from the interior of $Z$.
Moreover, these saddle connections can be chosen so that $\phi(S_i)<\phi(B)$.
Namely, there is an isotopy moving $\beta$ to the concatenation of the $S_i$ and $E_i$, and it should move any point of intersection with $\partial Z$ so that $Z$ lies to the right.
From what we have shown in the annular case, it follows that $\mathrm{Hom}(A,S_i)=0$ for all $i$.
Further, $A$ is orthogonal to any $E_i$, so $\mathrm{Hom}(A,B)=0$ follows.
This completes the proof of Theorem~\ref{thm_stability}.

\subsection{The map $\mathcal M(X)\to \mathrm{Stab}(\mathcal F(X))$}
\label{subsec_modulimap}

We consider a construction for general categories with stability structure.
Let $\mathcal C$ be a triangulated category with stability structure $\sigma$, and $F:\mathcal C\to D(\mathrm{Mod}(\mathbb K))$ an exact functor to the derived category of chain complexes over a field $\mathbb K$.
We combine the two to define
\begin{equation}
C_{F,\sigma}=\bigcup_{\substack{E\in\mathcal C^{\mathrm{ss}},\,k\in\mathbb Z \\ H^k(F(E))\neq 0}}[\log|Z(E)|,+\infty)\times\{\pi(\phi(E)+k)\}\subset\mathbb{R}^2
\end{equation}
which is a union of horizontal rays starting at points of the form $\log(Z(E))$, $E$ semistable.
The support property ensures that the central charges $Z(E)\in\mathbb C$, $E$ semistable, form a discrete subset, hence $C_{F,\sigma}\subset\mathbb{R}^2$ is closed.

\begin{lem}
For $\sigma_1,\sigma_2\in\mathrm{Stab}(\mathcal C)$, $\mathcal C$ and $F$ as above, we get
\begin{equation}
d_H(C_{F,\sigma_1},C_{F,\sigma_2})\leq d(\sigma_1,\sigma_2)
\end{equation}
where $d_H$ is the Hausdorff distance on closed subsets induced by the max-metric on $\mathbb R^2$.
\end{lem}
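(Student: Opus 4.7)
The plan is as follows. By symmetry of $d_H$, it suffices to show that every point of $C_{F,\sigma_1}$ lies within distance $d(\sigma_1,\sigma_2)$ of $C_{F,\sigma_2}$, so fix a point $p=(x,\pi(\phi_{\sigma_1}(E)+k))\in C_{F,\sigma_1}$ arising from a $\sigma_1$-semistable $E$ with $H^k(F(E))\neq 0$ and some $x\ge\log|Z(E)|$. I want to exhibit a semistable factor $A$ of $E$ with respect to $\sigma_2$ whose associated ray passes close to $p$.

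First I would consider the $\sigma_2$-Harder--Narasimhan filtration of $E$, with semistable factors $A_1,\ldots,A_n$ of strictly decreasing $\sigma_2$-phases. Since $F$ is exact, $F(E)$ is obtained as an iterated cone of the $F(A_i)$; running through the long exact sequences (equivalently, the spectral sequence of this filtration), the non-vanishing of $H^k(F(E))$ forces $H^k(F(A_i))\neq 0$ for at least one index $i$. Fix such an $i$; then the whole ray $[\log|Z(A_i)|,+\infty)\times\{\pi(\phi_{\sigma_2}(A_i)+k)\}$ is contained in $C_{F,\sigma_2}$, and I pick as comparison point $q=(y,\pi(\phi_{\sigma_2}(A_i)+k))$ with $y:=\max(x,\log|Z(A_i)|)$.

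Next I bound the two coordinate distances separately. For the vertical coordinate, I use that $E$ is $\sigma_1$-semistable, so $\phi_{\sigma_1}^+(E)=\phi_{\sigma_1}^-(E)=\phi_{\sigma_1}(E)$, while $\phi_{\sigma_2}(A_1)=\phi_{\sigma_2}^+(E)$ and $\phi_{\sigma_2}(A_n)=\phi_{\sigma_2}^-(E)$ sandwich $\phi_{\sigma_2}(A_i)$; hence
\begin{equation*}
|\phi_{\sigma_2}(A_i)-\phi_{\sigma_1}(E)|\le\max\bigl(|\phi_{\sigma_2}^+(E)-\phi_{\sigma_1}^+(E)|,|\phi_{\sigma_2}^-(E)-\phi_{\sigma_1}^-(E)|\bigr)\le d(\sigma_1,\sigma_2).
\end{equation*}
For the horizontal coordinate, the distance $|x-y|$ is either $0$ or $\log|Z(A_i)|-x\le\log|Z(A_i)|-\log|Z(E)|$; since $|Z(A_i)|\le\sum_j|Z(A_j)|=m_{\sigma_2}(E)$ and $|Z(E)|=m_{\sigma_1}(E)$, this last quantity is bounded by $\log(m_{\sigma_2}(E)/m_{\sigma_1}(E))\le d(\sigma_1,\sigma_2)$.

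The one nontrivial step is the cohomological pigeonhole: that $H^k(F(E))\neq 0$ descends to some $H^k(F(A_i))\neq 0$. This follows by induction on the length of the HN filtration, applied to the triangles $E_{j-1}\to E_j\to A_j$ and the induced long exact sequences in cohomology of $F$-images, but it is the only point where exactness of $F$ enters in an essential way, so it deserves explicit mention in the write-up. Everything else reduces to the two elementary estimates above and the definition of the metric on $\mathrm{Stab}(\mathcal C)$.
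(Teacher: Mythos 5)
Your proof is correct and follows the same strategy as the paper's: by symmetry, reduce to one inclusion, replace the semistable object $E$ (with respect to one structure) by a Harder--Narasimhan factor $A_i$ (with respect to the other) whose $F$-image still has nonzero cohomology in degree $k$ (exactness pigeonhole), and then bound the vertical offset via the $\phi^\pm$-terms of the metric and the horizontal offset via the mass term. The paper's proof is terser: after stating what needs to be found (an $E'=A_m$ with $H^k(F(E'))\neq 0$), it declares the two estimates on phase and $\log|Z|$ to follow ``by definition'' of the metric and of HN factors, whereas you write them out. Your sandwich argument $\phi^-_{\sigma_2}(E)\le\phi_{\sigma_2}(A_i)\le\phi^+_{\sigma_2}(E)$ and the inequality $|Z(A_i)|\le m_{\sigma_2}(E)=\sum_j|Z(A_j)|$ together with $|Z(E)|=m_{\sigma_1}(E)$ for semistable $E$ are exactly the facts the paper leaves implicit; so your version is the same argument, simply unwound. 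One point to tidy in a final write-up: the vertical coordinate in $C_{F,\sigma}$ carries a factor of $\pi$, so the Euclidean vertical distance is $\pi|\phi_{\sigma_2}(A_i)-\phi_{\sigma_1}(E)|$; both you and the paper bound only the phase difference itself. This is consistent with the paper's intent (and harmless for the way the lemma is used), but worth flagging if you want the inequality to hold verbatim in the stated max-metric on $\mathbb{R}^2$.
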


\begin{proof}
Let $\epsilon=d(\sigma_1,\sigma_2)$, then by definition
\begin{gather}
\left|\phi_{\sigma_1}^-(E)-\phi_{\sigma_2}^-(E)\right|\leq\epsilon,\qquad
\left|\phi_{\sigma_1}^+(E)-\phi_{\sigma_2}^+(E)\right|\leq\epsilon \\
\left|\log(m_{\sigma_1}(E))-\log(m_{\sigma_2}(E))\right|\leq\epsilon
\end{gather}
for all $0\neq E\in\mathcal C$.
For a subset $A$ of a metric space let $B_{\epsilon}(A)$ be the set of points with distance $\leq\epsilon$ to $A$.
To show that $C_{F,\sigma_2}\subseteq B_{\epsilon}(C_{F,\sigma_1})$ it suffices to find, for every $\sigma_2$-semistable object $E$ with $H^k(F(E))\neq 0$, a $\sigma_1$-semistable object $E'$ with $H^k(F(E'))\neq 0$ and
\begin{equation}
|\phi_{\sigma_2}(E)-\phi_{\sigma_1}(E')|\leq\epsilon,\qquad
\log|Z_{\sigma_1}(E')|\leq\log|Z_{\sigma_2}(E)|+\epsilon.
\end{equation}
Let $A_1,\ldots,A_n$ be the $\sigma_1$-semistable components of $E$, $n\geq 1$, then by exactness of $F$ there is an $m$ with $H^k(F(A_m))\neq 0$, and we can take $E'=A_m$. 

By symmetry we also have $C_{F,\sigma_1}\subseteq B_{\epsilon}(C_{F,\sigma_2})$, thus $d_H(C_{F,\sigma_1},C_{F,\sigma_2})\leq\epsilon$.
\end{proof}

\begin{proof}[Proof of Theorem~\ref{ModuliMapThm}]
1. 
Let $B\subset\partial X$ be a connected component of the boundary.
As in the proof of Lemma~\ref{lem_boundaryfunctors} we choose a grading on $B$ to get a functor $I_B:\mathcal F(X)\to D(\mathrm{Mod}(\mathbb K))$.
Recall that if $B$ is gradable in the usual sense, then $I_B(E)$ is perfect for every $E\in\mathcal F(S)$.
On the other hand, if $B$ is only $\mathbb Z/n$-gradable, $n$ maximal, then $I_B(E)$ is $n$-periodic for every $E\in\mathcal F(S)$.

Suppose $X$ has a flat structure with corresponding stability structure $\sigma\in\mathrm{Stab}(\mathcal F(X))$.
If $B$ is a boundary component corresponding to a conical point $s$, then geometrically $C_{I_B,\sigma}$ is just the complement of the domain of the exponential map at $s$, $\mathrm{Dom}(\exp_s)$, in logarithmic coordinates. 
The Voronoi cell, $V_s$, around $s$ is contained in $\mathrm{Dom}(\exp_s)$, and in fact $V_s$ is determined by $\mathrm{Dom}(\exp_s)$ alone, since by definition $x\in V_s$ is and only if the disk centered at $x$ and with $s$ on the boundary is contained in the domain.
This shows that the flat structure is recovered from $\sigma$, proving injectivity.

2. Let $(F_i)$ be a sequence of flat structures on $X$ so that the corresponding sequence of stability structure $(\sigma_i)$ converges to $\sigma\in\mathrm{Stab}(\mathcal F(X))$.
We want to show that $F_i$ converge in $\mathcal M(X)$.

Let $\beta(F_i)$ be the maximal length of a boundary edge of $\mathrm{Core}(F_i)$.
The first step is to show that $\beta(F_i)\leq\beta$ for some $\beta>0$.
Let $E\in\mathcal F(X)$ be the direct sum of objects corresponding to all the boundary edges of $\mathrm{Core}(F_1)$ with arbitrary fixed grading.
An edge should appear twice in the sum if it does not bound the interior of $\mathrm{Core}(F_1)$.
The mass $m_{\sigma_1}(E)$ is then just the circumference of $\mathrm{Core}(F_1)$.
Since $\sigma_i\to\sigma$, we have an upper bound on $m_{\sigma_i}(E)\ge\beta(F_i)$, and the claim follows.

The second step is to show that $\rho(F_i)$ is bounded.
Fix a conical point $s$ of $S$. 
We will show that $d(s,x)$ is bounded for $x\in V_{s,i}\cap\mathrm{Core}(F_i)$, where $V_{s,i}$ is the Voronoi cell around $s$ in $F_i$, and since
\begin{equation}
\rho(F_i)=\max_{s\in (F_i)_{\mathrm{sg}}}d_H(\{s\}, V_{s,i}\cap\mathrm{Core}(F_i))
\end{equation}
this will imply the claim.
Consider the functor $I:\mathcal F(X)\to D(\mathrm{Mod}(\mathbb K))$ associated with $s$ as before.
We have closed subsets $C_i=C_{I,\sigma_i}\subset\mathbb R^2$ converging to $C=C_{I,\sigma}$ in the Hausdorff topology.
Let $\mathscr D$ denote the set of directions (of geodesics) from $s$.
The choice of grading of marked boundary as above gives an identification $\mathscr D\cong\mathbb R/n\pi$ when $s$ has cone angle $n\pi$ and $\mathscr D\cong\mathbb R$ when $s$ has infinite cone angle.
By the \textit{support} of $C_i$, $\mathrm{Supp}(C_i)$, we mean the closure of its projection to $\mathcal D$.
It follows from compactness of $\mathrm{Core}(F_i)$ that each $C_i$ is compactly supported.
By convergence, there is compact $J\subset\mathscr D$ containing all $\mathrm{Supp}(C_i)$ and $\mathrm{Supp}(C)$.
Let $r_i(\phi)\in\mathbb R_{\ge 0}\cup\{+\infty\}$ be the length of the maximal geodesic starting at $s$ in direction $\phi\in\mathscr D$ and which is entirely contained in $V_{s,i}$.
Note that the functions $r_i$ are determined by $C_i$, thus $\sigma_i$ alone, are continuous, and converge pointwise to $r:\mathscr D\to \mathbb R_{\ge 0}\cup\{+\infty\}$ determined by $\sigma$.
Define $q_i(\phi)\in\mathbb R_{\ge 0}$ similarly as the length of the maximal geodesic starting at $s$ in direction $\phi\in\mathscr D$ and which is entirely contained in $V_{s,i}\cap\mathrm{Core}(F_i)$.
We need to shows that $q_i$ are uniformly bounded. By definition, $q_i(\phi)\le r_i(\phi)$, and
\begin{equation} \label{qsupp}
(\phi-\pi,\phi+\pi)\cap J=\emptyset\quad\implies\quad q_i(\phi)=0.
\end{equation}
Suppose, for contradiction, that there is a sequence $\phi_i\in\mathscr D$ with $q_i(\phi_i)\to\infty$.
By \eqref{qsupp} the $\phi_i$ are contained in a compact subset of $\mathscr D$ and we may assume $\phi_i\to\phi$ after passing to a subsequence.
Now since $q_i(\phi_i)\to\infty$ we must have $r(\phi)=\infty$, which means that the interval $(\phi-\pi,\phi+\pi)$ is disjoint from $\mathrm{Supp}(C)$.
Thus also
\begin{equation}
\left[\phi-\frac{\pi}{2},\phi+\frac{\pi}{2}\right]\cap \mathrm{Supp}(C_i)=\emptyset
\end{equation}
for sufficiently large $i$.
The sector bounded by directions $\phi-\pi/2$ and $\phi+\pi/2$ in $\mathrm{Dom}(\exp_s)$ cannot map entirely to $\mathrm{Core}(F_i)$, as this contradicts $\rho(F_i)<\infty$.
Hence the part of the sector which maps to $\mathrm{Core}(F_i)$ is a triangle $\Delta$ or just the point $s$.
The length of the side of the triangle opposite the vertex $s$ is bounded above by $\beta$.
Thus also $q_i|(\phi-\pi/2,\phi+\pi/2)$ has an upper bound in terms of $\beta$, which is independent of $i$, a contradiction.

We have establishes that $\rho(F_i)$ and $\beta(F_i)$ have upper bounds independent of $i$.
By Lemma~\ref{lem_DelaunayBound} the length the Delaunay edges on all the $F_i$ is bounded above.
Thus, there are only a finite number of polygonal subdivisions up to isotopy arising as Delaunay partitions in the sequence $F_i$.
Passing to a subsequence, we may hence assume that the Delaunay subdivisions for the $F_i$ are all isotopic, i.e. combinatorially equivalent.
Since $\sigma_i\to\sigma$, lengths of edges converge to positive numbers as $i\to\infty$.
It may still happen that some polygons become degenerate in the limit, i.e. their area goes to zero.
If we glue back together the set of non-degenerate limiting polygons, we get a flat structure $F$ which is the limit of the $F_i$ and corresponds to $\sigma$.
\end{proof}


\section{Cluster-like structures}
\label{sec_cluster}

\subsection{S-graphs}
\label{subsec_sgraphs}

In this section we specialize to the case of flat surfaces with infinite area.
It turns out that for generic choice of horizontal direction the heart of the t-structure of the corresponding stability structure is Artinian with a finite number of simple objects and described by a certain type of finite graph.
Tilting of the t-structure leads to mutation of the graph.

Suppose $X$ is a flat surface of finite type with infinite area.
Recall from Subsection~\ref{subsec_hsd} that, for generic choice of horizontal direction, the leaves converging to the conical singularities cut the surface into horizontal strips of finite or infinite height.
The combinatorial structure of this horizontal strip decomposition can be encoded in terms of a finite graph, $G$, with set of vertices $X_{\mathrm{sg}}$ and set of edges the horizontal strips of finite height.
An edge is attached to the singularities which lie on the boundary of the corresponding horizontal strip.
We also need to record two additional pieces of data:
\begin{enumerate}
\item For each $v\in X_{\mathrm{sg}}$ with finite (resp. infinite) cone angle a cyclic (resp. total) order on the set $H_v$ of half-edges meeting that vertex given by the counter-clockwise order of horizontal strips around $s$.
\item For each pair of successive half-edges $a<b\in H_v$ a positive integer, $d(a,b)$, so that $d(a,b)-1$ is the number of horizontal strips of infinite height between $a$ and $b$ as we go around $v$.
\end{enumerate}
We call a graph as above, i.e. a finite graph with orders on $H_v$ and integers $d(a,b)$, an \textbf{S-graph}. 
In order to record the entire flat structure we attach a number $Z(e)$ in the upper half-plane, $\mathbb H$, to each edge $e$, given by the vector between the singularities on the boundary of the corresponding horizontal strip.
We summarize the construction in the following proposition.

\begin{prop} \label{prop_Sgraph}
There is a bijective correspondence between (isomorphism classes of)
\begin{enumerate}
\item Flat surfaces of finite type with infinite area with the property that there are no horizontal leaves of finite length, and
\item S-graphs with choice of number $Z(e)\in\mathbb H$ for each edge $e$.
\end{enumerate}
\end{prop}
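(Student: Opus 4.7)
The forward direction is already essentially described in the passage preceding the statement: starting from a flat surface $X$ of finite type with infinite area and no horizontal saddle connections, Proposition of Subsection~\ref{subsec_hsd} gives the horizontal strip decomposition, from which one reads off the graph $G$ with vertex set $X_{\mathrm{sg}}$ and edges indexed by strips of finite height. At a singularity of finite cone angle the counter-clockwise cyclic order of strips around it gives a cyclic order on $H_v$, while at an infinite-angle singularity the same construction gives a total order (bijective with a torsor over $\mathbb Z$). The integers $d(a,b)$ count strips of infinite height between successive finite-height strips, and $Z(e) \in \mathbb H$ is the integral of the chosen square root of $\varphi$ along the saddle connection in the strip $e$ (this is why $Z(e)$ lies in the open upper half-plane after choice of grading). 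The upshot is that the forward assignment is well-defined and only uses data already present in the horizontal strip decomposition.

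For the reverse direction I would construct a flat surface from $(G, Z)$ by an explicit cut-and-paste. For each edge $e$ take the finite horizontal strip $F_e = \{w : 0 \le \mathrm{Im}(w) \le \mathrm{Im}(Z(e))\}$ with marked points $\{0, Z(e)\}$; for each pair of successive half-edges $a<b$ in $H_v$ insert $d(a,b)-1$ copies of the closed upper half-plane $\{w : \mathrm{Im}(w) \ge 0\}$, each with $0$ as its marked point. Each such strip has two boundary rays emanating from each marked point, and the orders on the $H_v$ prescribe exactly how to paste them together: at a vertex of finite cone angle the pasting is cyclic, yielding a model of $C_n$ with $n$ the total number of half-edges (counting the inserted ones), whereas at a vertex with total order the pasting is linear and the local model is $C_\infty$. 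This gives a surface equipped with a covariantly constant horizontal foliation, hence a flat structure on its smooth part, and a distinguished set of singular points.

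Verification that the two constructions are mutual inverses is then a bookkeeping exercise: the horizontal strips of the glued surface are, by construction, the pieces $F_e$ and the inserted infinite-height pieces, so decomposing the glued surface recovers $(G,Z)$, and conversely cutting $X$ along the critical leaves and re-gluing yields the original $X$ up to canonical isomorphism. The main obstacle is checking that the output is genuinely a flat surface of finite type in the sense of Subsection~\ref{subsec_flatsurf}: one must confirm that the metric completion adds exactly the vertices of $G$ as conical or infinite-angle singularities and no others, that the real blow-up at a vertex with total order on $H_v$ has boundary component homeomorphic to $\mathbb R$ and models $\widehat{C_\infty}$ locally, and that finiteness of $G$ implies $H_1(X,\partial X;\mathbb Z_\tau)$ is finitely generated (which follows because the simple saddle connections dual to the edges of $G$ give a finite generating set, as shown at the end of Subsection~\ref{subsec_hsd}). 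The most delicate point is tracking the alternating sequence of finite and infinite strips around an infinite-angle vertex to see that the gluing indeed yields a Hausdorff surface whose completion picks up a single $\infty$-angle point, rather than several; this uses that the order on $H_v$ is total and unbounded in both directions exactly when the vertex is infinite-angle.
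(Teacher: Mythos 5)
The paper itself does not give a separate proof of this proposition; it is stated immediately after the sentence ``We summarize the construction in the following proposition,'' so the forward assignment you describe via the horizontal strip decomposition is precisely the paper's intended content and is fine.

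Your reverse direction, however, has a genuine gap. The cut-and-paste you describe assembles only finitely many pieces: the finite-height strips $F_e$ together with $\sum_{a<b}(d(a,b)-1)$ inserted half-planes. But, as the paper states explicitly in Subsection~\ref{subsec_hsd}, a conical point with \emph{infinite} cone angle lies on \emph{infinitely many} horizontal strips, all but finitely many of which have infinite height. Concretely, $C_\infty$ is the linear gluing of $\mathbb{Z}$-many upper half-planes; a finite linear gluing of half-planes is a sector with two unglued boundary rays, which is already metrically complete, has finite cone angle $k\pi$, and is not a flat surface in the sense of Subsection~\ref{subsec_flatsurf}. So at every vertex $v$ whose $H_v$ carries a total order you must also adjoin a one-sided infinite sequence of half-planes before the minimal element and after the maximal element of $H_v$, glued linearly, before the local model at $v$ becomes $C_\infty$ and the real blow-up becomes $\widehat{C_\infty}$. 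Your construction as written omits these and would not produce a flat surface.

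Relatedly, your remark that ``the order on $H_v$ is total and unbounded in both directions'' is false and is probably the source of the omission: $H_v$ is the set of half-edges of the \emph{finite} graph $G$ at $v$ and is therefore finite. The object that is a $\mathbb{Z}$-torsor at an infinite-angle vertex is the set of half-edges of the generalized ribbon graph $L$ (the Hausdorff leaf space) from Subsection~\ref{subsec_hsd}, which records every strip, including the infinitely many infinite-height ones; the S-graph $G$ is the finite subgraph of $L$ recording only the finite-height strips, and the data $d(a,b)$ suffice to recover $L$ between consecutive finite-height strips but say nothing about the infinite tails. Once the missing half-planes are adjoined, the rest of your verification --- mutual inversion of the two constructions and finite generation of $H_1(X,\partial X;\mathbb{Z}_\tau)$ via the simple saddle connections --- goes through.
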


The data of an S-graph $G$ may be encoded in terms of a graded quiver with relations $Q$.
Vertices of $Q$ are edges of $G$.
Arrows of $Q$ are given by pairs $(h_1,h_2)$ of successive half edges $h_1<h_2\in H_v$ in degree $d(h_1,h_2)$.
Relations are quadratic: $(h_1,h_2)\cdot(h_3,h_4)=0$ when $h_2\neq h_3$ belong to the same edge.
Note that in the graded linear path category $\mathcal A=\mathbb{K}Q$ we have 
\begin{equation}
\mathrm{Hom}^{<0}(A,B)=0,\qquad \mathrm{Hom}^0(A,B)=\begin{cases}\mathbb K & \text{ if }A=B \\ 0 & \text{ else }\end{cases}
\end{equation}
These properties ensure that the extension closure of the objects of $\mathcal A$ in $\mathrm{Tw}\mathcal A=\mathrm{Perf}\mathcal A$ is the heart of an Artinian bounded t-structure, and objects of $\mathcal A$ are precisely the simple ones. 

\begin{prop}
Let $X$ be a flat surface with infinite area such that no horizontal leaves have finite length, and $\mathcal A$ be the heart of the t-structure for the stability structure on $\mathcal F(X)$ determined by the flat metric, then $\mathcal A$ is Artinian with a finite number of simple objects.
In terms of the S-graph, $\mathcal A$ is described by the graded quiver with relations considered above.
\end{prop}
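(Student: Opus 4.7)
The plan is to identify the simple objects of $\mathcal A$ with the simple saddle connections (the saddle connections $\alpha_1,\ldots,\alpha_m$ cutting out the horizontal strips of finite height, introduced in Subsection~\ref{subsec_hsd}), and then match the morphism structure with the graded quiver associated to the S-graph. Since there are no horizontal saddle connections, each $\alpha_i$ may be graded so that its phase $\phi(\alpha_i)\in(0,1)$, making the corresponding object $A_i\in\mathcal F(X)$ a semistable object whose central charge lies in the upper half-plane; in particular each $A_i$ is of the form required for the heart. The goal is then to verify the two $\mathrm{Hom}$-vanishings displayed just before the proposition for the full subcategory $\mathcal A_0\subset\mathcal F(X)$ spanned by $A_1,\ldots,A_m$, and to check that $\mathcal A_0$ generates $\mathcal F(X)$ as a triangulated category.

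The generation statement is immediate: as remarked in Subsection~\ref{subsec_hsd}, the classes $[\alpha_1],\ldots,[\alpha_m]$ form a basis of $H_1(X,\partial X;\mathbb Z_\tau)$, and by the computation of $K_0$ in Theorem~\ref{thm_k0} and Proposition~\ref{prop_mapfromk0} this is essentially the Grothendieck group, so the $A_i$ split-generate $\mathcal F(X)$. The substantive step is the morphism computation. Each simple saddle connection has endpoints on a (finite-angle or infinite-angle) conical singularity $v$, which corresponds to a vertex of the S-graph with its cyclic/total order on the half-edges $H_v$. A basis of $\mathrm{Hom}^*(A_i,A_j)$ is given by boundary paths in $\widehat{X}$ between endpoints of $\alpha_i$ and $\alpha_j$ on a common singularity; I will organize these paths by the successor relation on $H_v$. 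Two successive half-edges $h_1<h_2\in H_v$ with $d(h_1,h_2)-1$ infinite-height strips between them yield a single ``shortest'' boundary path $h_1\to h_2$, and the intersection-index formula in Subsection~\ref{subsec_grading} (or the concrete formula $|a|=\lceil\phi_1-\phi_2\rceil$ from Subsection~\ref{subsec_flatsurf}) produces degree exactly $d(h_1,h_2)$ once the gradings of the $\alpha_i$ have been normalized to $(0,1)$, because crossing each infinite-height strip contributes one to the index. Longer boundary paths factor as compositions of these, except precisely when two successive half-edges at a vertex belong to the same edge of the S-graph, which gives the quadratic monomial relations $(h_1,h_2)\cdot(h_3,h_4)=0$ when $h_2\neq h_3$ lie on the same edge.

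Having identified $\mathcal A_0$ with the graded linear path-category $\mathbb K Q$ of the S-graph quiver, the vanishings $\mathrm{Hom}^{<0}(A_i,A_j)=0$ and $\mathrm{Hom}^0(A_i,A_j)=\mathbb K\cdot\delta_{ij}$ are visible from the construction (all arrows have strictly positive degree $d(h_1,h_2)\ge 1$, and there are no length-zero non-identity paths). The abstract fact invoked in the paragraph preceding the proposition then guarantees that the extension-closure of $\{A_1,\ldots,A_m\}$ inside $\mathrm{Tw}\,\mathcal A_0=\mathrm{Perf}\,\mathcal A_0$ is an Artinian heart with the $A_i$ as its simple objects. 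To conclude that this heart equals $\mathcal A$, I observe that by construction each $A_i$ is $\sigma$-semistable of phase in $(0,1)$, hence lies in the heart of $\sigma$; since $\mathcal A_0$ generates $\mathcal F(X)$, the Harder--Narasimhan property (proved in Subsection~\ref{subsec_hn}) forces every object of the $\sigma$-heart to be an iterated extension of the $A_i$, so the two hearts coincide.

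The main obstacle I anticipate is bookkeeping of gradings and signs in the morphism computation, in particular making sure that the boundary paths between successive half-edges at an infinite-angle vertex land in exactly the degree $d(h_1,h_2)$ prescribed by the S-graph, and checking that no other basis morphisms survive modulo the quadratic relations. Once the dictionary between half-edge successors and arrows is set up correctly using the formulas of Subsection~\ref{subsec_grading}, the rest of the argument is a direct application of previously established structural results.
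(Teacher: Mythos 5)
Your proposed route — identify $\mathcal A$ with the extension closure of the simple saddle connections by matching morphisms against the S-graph quiver — is different from the paper's argument, which deforms the flat structure to a configuration $X_1$ where the simple saddle connections are all vertical and the core is purely one-dimensional, observes that no wall of the second kind is crossed (so the heart stays constant), and reads off the simple objects directly at $X_1$. Your route can in principle work, but as written it has two genuine gaps.

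First, the justification you give for generation is a non sequitur: the fact that $[\alpha_1],\ldots,[\alpha_m]$ form a basis of $K_0(\mathcal F(X))$ does \emph{not} imply that the corresponding objects split-generate $\mathcal F(X)$; basis-of-$K_0$ and generation are logically independent. What is actually true — and what the paper invokes — is that the simple saddle connections constitute a \emph{full formal system of arcs} on the associated compact graded marked surface, so that $\bigoplus_i A_i$ is a formal generator with $\mathrm{End}(\bigoplus_i A_i)\cong\mathbb{K}Q$ in the sense of Subsection~\ref{subsec_quivers}. This gives you both the generation statement and the identification of $\mathcal A_0$ with the path category $\mathbb{K}Q$ at once, without the degree-by-degree boundary-path computation whose bookkeeping you flag as a worry. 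Second, your concluding step is not correct as stated: the Harder--Narasimhan property does \emph{not} force objects of the $\sigma$-heart to be iterated extensions of the $A_i$. An arbitrary object of $\mathcal A$ has an HN filtration with semistable subquotients of phases in $(0,1]$, but nothing in the HN axioms says those subquotients are built from the $A_i$ — they could a priori be closed geodesics with local system, or other saddle connections of intermediate phase. What closes this gap is either (i) the paper's deformation argument, which shows that at $X_1$ the \emph{only} stable objects are the $A_i$, all of phase $\tfrac12$, and that this is unchanged along the deformation; or (ii) the standard fact that if $\mathcal B\subseteq\mathcal A$ are both hearts of bounded t-structures on the same triangulated category then $\mathcal B=\mathcal A$, applied with $\mathcal B$ the extension closure of the $A_i$ (which is a bounded heart by the abstract criterion you cite, once generation is established correctly). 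You should replace the HN appeal with one of these.
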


\begin{proof}
Choose a family $X_t$, $t\in [0,1]$ of flat structures so that $X_0=X$, none of the $X_t$ have finite horizontal leaves, and the simple saddle connections of $X_1$ are vertical.
All the $X_t$ have horizontal strip decompositions described by the same S-graph, only the parameters $Z(e)$ change, and are in $i\mathbb R_{>0}$ for $X_1$.
The corresponding stability structures $\sigma_t$ all have the same heart of the t-structure, as none of them have semistable objects with real central charge (by the condition on the horizontal leaves), i.e. there is no wall-crossing of the second kind.

The flat surface $X_1$ has a purely one-dimensional core, which is just the union $K$ of the simple saddle connections.
This is clear, since $K$ contains all singularities, and exterior angles are $\ge\pi$.
We conclude that $X_1$ has no closed geodesics and all saddle connections are simple.
Thus, stable objects of $\sigma_1$ in the heart of the t-structure are just the simple geodesics.
Also, the central charge of $\sigma_1$ takes values in $i\mathbb R$, so simple objects in the heart of the t-structure are the same as stable ones.

The quiver determined by the S-graph is a special case of the graded quiver with relations describing $\mathrm{End}(G)$ of a formal generator of $\mathcal F(X)$ (Subsection~\ref{subsec_quivers}).
\end{proof}

\subsubsection*{Mutation}

We have assumed above that there are no horizontal saddle connections.
Points in the moduli space where this does happen, i.e. some stable object has real central charge, are by definition on walls of the second kind.
Let us consider a generic path $\gamma:(-\epsilon,\epsilon)\to\mathcal M(X)$ which crosses one of these walls at time $t=0$.
There are two possibilities, either the foliation of $\gamma(0)$ has a single horizontal saddle connection, $e$, or some of the leaves are closed loops foliating a cylinder.
The latter is a more complicated kind of wall-crossing, and we will restrict attention to the former case.

Making $\epsilon$ smaller, we may assume that the horizontal foliations of $\gamma(t)$, $t\neq 0$, have no finite-length leaves.
Let $G_-,G_+$ be the S-graphs for $\gamma(t)$ for $t<0$, $t>0$ respectively.
The saddle connection $e$ becomes simple for small non-zero values of $t$, and so can be identified with an edge of both $G_-$ and $G_+$.
The graphs $G_-,G_+$ are related by mutation, which we want to describe explicitly.
There are two cases: \textit{Left mutation} if the phase of $e$ is moving clockwise, \textit{right mutation} if it is moving counterclockwise.
Let us assume the former.
Looking at the horizontal strip decompositions one sees that $S_+$ is obtained from $S_-$ as follows.
For a half-edge $a$ denote by $S(a)$ and $S^{-1}(a)$ its successor and predecessor respectively.
\begin{enumerate}
\item Let $a,b$ be the half-edges of $e$. 
Decrement $d(a,S(a))$ and $d(b,S(b))$ by one.
\item If $d(a,S(a))=0$, then slide the end of $S(a)$ along $e$ so that it becomes the new predecessor, $c$, of $b$ with $d(c,b)=1$. Otherwise, increment $d(S^{-1}(b),b)$ by one.
\item Do the previous step with $a$ and $b$ switched.
\end{enumerate}
The process is illustrated in Figure~\ref{fig_mutation}.
\begin{figure}[h]
\centering
\includegraphics[scale=1]{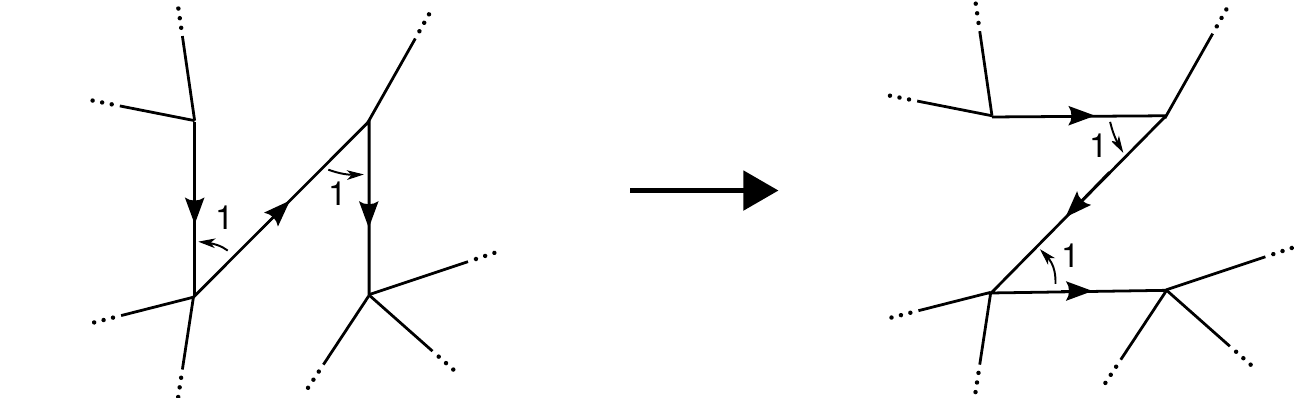}
\caption{Left mutation of the S-graph.}
\label{fig_mutation}
\end{figure}
Right mutation is then simply the inverse operation.

\subsection{Example: Dynkin type $A_n$}

Let $S$ be a graded surface with interior diffeomorphic to a disk and with $n+1\ge 2$ marked boundary components.
Such a graded surface is unique up to isomorphism.
The Fukaya category $\mathcal F(S)$ is equivalent to the bounded derived category of any $A_n$-type quiver.

\begin{thm} \label{thm_stabdynkin}
For $S$ as above of $A_n$-type,
\begin{equation}
\mathcal M(S)=\mathrm{Stab}(\mathcal F(S))\cong\mathbb C^n.
\end{equation}
\end{thm}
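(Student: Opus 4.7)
The plan is to combine Theorem~\ref{ModuliMapThm} with the classical computation of $\mathrm{Stab}(D^b(\mathbb{K}A_n))$.

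First, observe that $\Gamma := H_1(S, M; \mathbb{Z}_\tau) \cong \mathbb{Z}^n$. Since $S$ is a disk, $H^1(S, \mathbb{Z}/2) = 0$, so the grading double cover is trivial and $\mathbb{Z}_\tau \cong \mathbb{Z}$; and since $M$ has $n+1$ contractible components, the long exact sequence of the pair $(S, M)$ yields $H_1(S, M; \mathbb{Z}) = \ker(H_0(M) \to H_0(S)) \cong \mathbb{Z}^n$. Hence $\mathrm{Hom}(\Gamma, \mathbb{C}) \cong \mathbb{C}^n$. Next, by Subsection~\ref{subsec_quivers} together with the discussion of the simplest example, $\mathcal{F}(S)$ is equivalent to the derived category of a graded $A_n$-quiver: taking the $n+1$ boundary arcs as generators and then dropping one of them to eliminate the $\mu^{n+1}$-relation leaves a linear $A_n$-quiver with arrows of some integer degrees, which can be absorbed into shifts of the remaining objects. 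Thus $\mathcal{F}(S) \simeq D^b(\mathbb{K}A_n)$.

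Second, invoke the classical computation $\mathrm{Stab}(D^b(\mathbb{K}A_n)) \cong \mathbb{C}^n$ via the central-charge map on simples (see e.g.\ \cite{MR2549952}); in particular $\mathrm{Stab}(\mathcal{F}(S))$ is connected. Alternatively, one may give a self-contained argument by enumerating the bounded-heart chambers of $\mathrm{Stab}(D^b(\mathbb{K}A_n))$ (indexed by triangulations of an $(n+3)$-gon, hence by a Tamari-like lattice of tilting classes), verifying that they are linked by simple tilts at simple modules, and checking that the projection to $\mathrm{Hom}(\Gamma, \mathbb{C}) = \mathbb{C}^n$ is an open covering with surjective image.

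Third, Theorem~\ref{ModuliMapThm} guarantees that the map $\mathcal{M}(S) \hookrightarrow \mathrm{Stab}(\mathcal{F}(S))$ is bianalytic onto a union of connected components. A single flat structure on $S$ is easily exhibited, for instance by gluing $n$ horizontal strips of prescribed finite heights and widths along critical leaves (Subsection~\ref{subsec_hsd}), so the image is nonempty. Since $\mathrm{Stab}(\mathcal{F}(S)) \cong \mathbb{C}^n$ is connected by the previous step, a nonempty open-and-closed subset must be the whole space, yielding $\mathcal{M}(S) = \mathrm{Stab}(\mathcal{F}(S)) \cong \mathbb{C}^n$.

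The main obstacle is the second step, namely establishing connectedness (and the precise identification with $\mathbb{C}^n$) of $\mathrm{Stab}(D^b(\mathbb{K}A_n))$. The statement is classical, but giving a proof internal to this paper would amount to a finite, explicit analysis of the $A_n$ mutation graph and the simple tilts connecting its chambers — tractable but combinatorially the only nontrivial work in the argument.
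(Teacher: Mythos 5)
Your proposal takes a genuinely different route from the paper, but it has a serious gap. The paper's proof does two things directly: (1) it establishes $\mathcal M(S)\cong\mathbb C^n$ by exhibiting an explicit analytic bijection $(a_0,\ldots,a_{n-1})\mapsto\exp(z^{n+1}+a_{n-1}z^{n-1}+\cdots+a_0)\,dz^2$ and invoking the classification of flat surfaces of finite type (Theorem~\ref{thm_flatcx}), and (2) it proves surjectivity of $\mathcal M(S)\to\mathrm{Stab}(\mathcal F(S))$ directly, without knowing connectedness of the target: an arbitrary heart is Artinian with $n$ simples since $\mathcal F(S)$ has finitely many indecomposables, those simples are graded arcs (by Theorem~\ref{thm_classification}), vanishing of $\mathrm{Ext}^{\le 1}(E_i,E_j)$ for $i\ne j$ lets one realize them disjointly, yielding a full formal arc system, hence an S-graph, and Proposition~\ref{prop_Sgraph} reconstructs the flat surface after a small rotation. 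No connectedness assumption on $\mathrm{Stab}(\mathcal F(S))$ is used anywhere.

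Your proposal instead wants to quote the identification $\mathrm{Stab}(D^b(\mathbb K A_n))\cong\mathbb C^n$ as a classical fact, deduce connectedness from it, and then use Theorem~\ref{ModuliMapThm} to conclude that the nonempty open-and-closed image $\mathcal M(S)$ is everything. The logical skeleton is fine, and your computation of $\Gamma\cong\mathbb Z^n$ and the identification $\mathcal F(S)\simeq D^b(\mathbb K A_n)$ are both correct. However, the statement $\mathrm{Stab}(D^b(\mathbb K A_n))\cong\mathbb C^n$ (or even just connectedness of $\mathrm{Stab}$) was \emph{not} classical at the time of writing; it is in fact part of what this theorem establishes, via the geometric model. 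The paper's own remark that the $n=2$ case ``is contained in \cite{bqs}'' — a then-recent preprint — underlines that the general case was open. Your fallback ``self-contained'' sketch (enumerate chambers by tilting classes, check surjectivity of the period map) is precisely where all the work lies, and you acknowledge but do not execute it; in particular surjectivity of the projection to $\mathbb C^n$ and simple-connectedness of the glued space are not just finite combinatorics of the tilting lattice, they require a genuine geometric or deformation-theoretic argument. So as written, your proof is circular: it relies on the conclusion (or an equivalent statement of comparable difficulty) to establish the second step.
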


The case $n=2$ of this theorem is contained in \cite{bqs}.

\begin{proof}
The proof consists of two parts, first to show that $\mathcal M(S)\cong\mathbb C^n$ as complex manifolds, and then to show that the map $\mathcal M(S)\to\mathrm{Stab}(\mathcal F(S))$ is surjective.

1. We may assume that the interior of $S$ is identified with $\mathbb C$, and the grading given by the standard horizontal foliation on $\mathbb C$.
Define an analytic map $u:\mathbb C^n\to\mathcal M(S)$ by
\begin{equation}
(a_0,\ldots,a_{n-1})\mapsto (\exp\left(z^{n+1}+a_{n-1}z^{n-1}+\ldots+a_0\right)dz^2,a_0).
\end{equation}
This requires some explanation.
By the results of Subsection~\ref{subsec_complexanalytic}, the quadratic differentials above define a flat structure on $\mathbb C$, the completion of which is, as a topological space, independent of the  parameters $a_i$.
We assume that $S$ is the real blow-up of this completion, so that we get a flat structure on $S$.
To determine a point in $\mathcal M(S)$ we also need a homotopy class of paths from the horizontal foliation of the quadratic differential to the standard horizontal foliation on $z$, which is what $a_0$ is used for.
Since $S$ is contractible, such a homotopy class of paths is determined by its restriction to a single point, which we take to be the origin, where $a_0$ defines a path $\exp((1-t)a_0)dz^2$ in $(T^*_0\mathbb C)^{\otimes 2}$, $t\in [0,1]$.

An analytic automorphism of $\mathbb C$ acting on the set of polynomials of the form $z^{n+1}+a_{n-1}z^{n-1}+\ldots$ must be of the form $z\mapsto\zeta z$, $\zeta^{n+1}=1$.
All these extend to automorphisms of $S$, but only for $\zeta=1$ is it isotopic to the identity.
This implies injectivity of $u$.
For surjectivity, let $S'$ be a contractible flat surface with $n+1$ conical points, all with infinite cone angle.
By the results of Subsection~\ref{subsec_complexanalytic}, $S'$ comes from a quadratic differential $\varphi$ with exponential singularities on a compact Riemann surface $C$.
Since $S'$ is contractible, $C$ must be $\mathbb CP^1$ and $\varphi$ must have a single exponential singularity.
After applying a suitable M\"obius transformation, the exponential singularity is at $\infty$ and $\varphi$ is of the form $\exp\left(z^{n+1}+a_{n-1}z^{n-1}+\ldots+a_0\right)dz^2$.

2. Let $\sigma\in\mathrm{Stab}(\mathcal F(S))$ be a stability structure, $\mathcal A$ the heart of its t-structure.
As $\mathcal F(S)$ has only a finite number of indecomposable objects up to shift, $\mathcal A$ must be Artinian with $n=\mathrm{rk}K_0(\mathcal F(S))$ simple objects $E_1,\ldots,E_n$, up to isomorphism.
Each $E_i$ corresponds to an isotopy class of graded arcs in $S$, which we denote by the same symbol.
Since $\mathrm{Ext}^{\leq 1}(E_i,E_j)=0$ for $i\neq j$, we can choose representatives of the $E_i$ which intersect only at the endpoints. 
Because the $E_i$ give a basis of $H_1(S,\partial S;\mathbb Z_\tau)$, they necessarily form a full formal system of graded arcs.
Its structure is given by an S-graph which is a tree with $n$ edges.
Rotating $\sigma$ slightly by an angle $\epsilon$, we may assume that no central charge lies in $\mathbb R$.
Using Proposition~\ref{prop_Sgraph} we get a flat surface $X$ corresponding to $\sigma$, after rotating the horizontal direction back by $-\epsilon$.
We already showed in the first part that $X$ belongs to $\mathcal M(S)$.
\end{proof}

\begin{prop}
Let $S$ be the graded marked surface of $A_n$-type, $n>1$, then the canonical map $\mathrm{MCG}(S)\to\mathrm{Aut}(\mathcal F(S))$ from the mapping class group of graded automorphisms to the group of autoequivalences up to natural equivalence, is an isomorphism.
The group is abelian, isomorphic to $\mathbb Z^2/\mathbb Z(2,n+1)$.
\end{prop}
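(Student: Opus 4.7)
The proof has two parts: computing $\mathrm{MCG}(S)$ as an abstract group, and identifying it with $\mathrm{Aut}(\mathcal F(S))$. For $\mathrm{MCG}(S)$, since $S$ is a disk with $H^1(S;\mathbb Z)=0$, the grading is unique up to isomorphism and the ungraded mapping class group is $\mathbb Z/(n+1)$, generated by the rotation $\rho_0$ cyclically permuting the marked intervals. Graded lifts of a fixed ungraded diffeomorphism form a torsor over $\pi_1(\Gamma(S,\mathbb P(TS)),\eta)\cong\mathbb Z$, generated by the shift $[1]$, yielding a central extension
\begin{equation}
0\to\mathbb Z\langle[1]\rangle\to\mathrm{MCG}(S)\to\mathbb Z/(n+1)\to 0.
\end{equation}
Fix the graded lift $\rho$ of $\rho_0$ whose homotopy rotates $\rho^{*}\eta$ counterclockwise back to $\eta$ through angle $2\pi/(n+1)$ at every point. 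A direct computation using the composition law from Subsection~\ref{subsec_grading} shows that $\rho^{n+1}$, which covers the identity diffeomorphism, accumulates a pointwise loop in $\mathbb P(TS)$ sweeping out $2\pi$ counterclockwise; by definition of $[1]$ this equals $[2]$. Hence $\rho^{n+1}=[2]$, giving the presentation $\mathrm{MCG}(S)\cong\mathbb Z^{2}/\mathbb Z(2,n+1)$, which is manifestly abelian.

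Injectivity of $\mathrm{MCG}(S)\to\mathrm{Aut}(\mathcal F(S))$ follows from Theorem~\ref{thm_classification}: an element in the kernel fixes the isomorphism class of every indecomposable object, hence fixes the isotopy class of every graded arc, and a full graded arc system determines a diffeomorphism of $S$ up to isotopy together with its grading homotopy. For surjectivity, fix as formal generator the ``fan'' $G=E_1\oplus\cdots\oplus E_n$ of graded arcs emanating from one chosen marked interval to each of the others, so that $\mathrm{End}(G)=\mathbb K Q$ is the path algebra of a graded linearly oriented $A_n$ quiver. Given $\Phi\in\mathrm{Aut}(\mathcal F(S))$, the image $\Phi(G)$ corresponds by Theorem~\ref{thm_classification} to another full formal system of graded arcs. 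Any two such systems are related by an element of $\mathrm{MCG}(S)$: this reduces via contractibility of the arc complex of the disk with $n+1$ marked intervals (Harer) to the ungraded statement, with gradings propagating uniquely on the simply connected surface once fixed on a single arc, the residual ambiguity absorbed by $[1]$. After pre-composing $\Phi$ with the appropriate MCG element, we may assume $\Phi(G)\cong G$. The remaining autoequivalences fixing $G$ are induced by graded outer algebra automorphisms of $\mathbb K Q$; for linearly oriented $A_n$ these are exhausted by overall shifts $[k]$, already in the image of $\mathrm{MCG}(S)$.

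The main obstacle is the surjectivity step, specifically the claim that any two full formal graded arc systems on the disk are related by a graded mapping class. The topological input from Harer's contractibility must be enhanced with grading compatibility; fortunately, simple connectedness of $S$ means a grading on a single arc propagates uniquely via transport of intersection indices, reducing graded equivalence to ungraded equivalence modulo a single global shift. A secondary technical point is computing graded outer automorphisms of $\mathbb K Q$: one must observe that the Dynkin flip of $A_n$ gives only an isomorphism with the opposite algebra, not an automorphism, so (being orientation-reversing on $S$) it is not available in $\mathrm{Aut}(\mathcal F(S))$ and does not expand the image beyond shifts.
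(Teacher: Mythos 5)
Your surjectivity argument has a genuine gap. You assert that ``any two full formal graded arc systems are related by an element of $\mathrm{MCG}(S)$'' and justify this by contractibility of the arc complex plus propagation of gradings. But contractibility of the arc complex is irrelevant to transitivity of the $\mathrm{MCG}$-action, and the claim itself is false. Already for $n=3$ (disk with $4$ marked intervals $I_0,\dots,I_3$), the system consisting of the three boundary arcs $[I_0,I_1],[I_1,I_2],[I_2,I_3]$ is a full formal system, as is the ``zigzag'' $[I_0,I_1]$, arc $I_0I_2$, $[I_2,I_3]$; neither is taken to a fan by any rotation, since the mapping class group of the marked disk is cyclic while the number of full formal systems exceeds $n+1$. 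So ``$\Phi(G)$ is a full formal system'' and ``all full formal systems lie in one $\mathrm{MCG}$-orbit'' together do not reduce the problem.

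What rescues the argument (and what you actually need to say) is that $\Phi$ preserves the graded endomorphism algebra, so $\mathrm{End}(\Phi(G))\cong\mathrm{End}(G)=\mathbb{K}Q$ with $Q$ the linearly oriented $A_n$ quiver \emph{with no relations}. A full formal system whose quiver has no relations must have every consecutive pair of boundary paths $a_i,a_{i+1}$ composable, which forces all the shared endpoint intervals $J_i$ to coincide — i.e.\ the system is a fan. (The two examples above are excluded because their quivers carry zero-relations and the path algebra has strictly smaller dimension than $\mathbb{K}Q$.) Fans are indeed a single $\mathrm{MCG}$-orbit. You also omit the step showing the $\Phi(E_i)$ are pairwise disjoint arcs (so that they form an arc system at all); this follows from $\dim\mathrm{Hom}^*(\Phi(E_i),\Phi(E_j))+\dim\mathrm{Hom}^*(\Phi(E_j),\Phi(E_i))=1$, which rules out interior intersections.

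For the record, the paper's route is different and avoids all of this: it characterizes the graded \emph{boundary} arcs intrinsically (indecomposables $E$ with $\mathcal{F}(S)/E\simeq D^b(A_{n-1})$), observes that the set $\mathcal{B}$ of these carries a canonical total order isomorphic to $\mathbb{Z}$ detected by $\dim\mathrm{Ext}$, and deduces that any autoequivalence acts on $\mathcal{B}$ by a translation, which is exactly the image of the $\mathrm{MCG}$. That sidesteps the need to analyze images of internal arcs. Your group-theoretic computation $\mathrm{MCG}(S)\cong\mathbb{Z}^2/\mathbb{Z}(2,n+1)$ via $\rho^{n+1}=[2]$ is correct and is not made explicit in the paper, and your injectivity argument (via the classification theorem) is a fine alternative to the paper's shorter one.
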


In the case $n=1$ this map is surjective with kernel $\mathbb Z/2$.

\begin{proof}
The assumption $n>1$ guarantees that distinct graded boundary arcs of $S$ correspond to distinct isomorphism classes of objects in $\mathcal F(S)$.
Since any element of $\mathrm{MCG}(S)$, which are all represented by rotations, is determined by its action on graded boundary arcs, this implies injectivity.

Note that (isomorphism classes of) objects $E\in\mathcal F(S)$ corresponding to graded boundary arcs are intrinsically characterized, e.g. by being indecomposables with $\mathcal F(S)/E$ equivalent to the bounded derived category of an $A_{n-1}$ quiver.
Thus an automorphism $\Phi$ of $\mathcal F(S)$ induces a map on the set $\mathcal B$ of graded boundary arcs.
$\mathcal B$ has a natural total order, isomorphic to the total order on $\mathbb Z$, so that $\dim\mathrm{Ext}(E_i,E_j)=1$ if $j=i+1$, and $0$ otherwise.
This implies that the induced map on $\mathcal B$ is a shift, hence comes from the action of an element of $\mathrm{MCG}(S)$.
Finally, any autoequivalence of $\mathcal F(S)$ which fixes all objects is seen to be naturally isomorphic to the identity functor.  
\end{proof}

\begin{rem}
Fix a grading on each boundary arc of $S$ and let $B_1,\ldots,B_{n+1}$ be the Yoneda-duals of the corresponding objects in $\mathcal F(S)$.
Their sum is a functor $B:\mathcal F(S)\to\mathcal C\cong\mathrm{Perf}(\mathbb K^{n+1})$.
In order to make the statement of the above proposition work for $n=1$ one needs to replace $\mathrm{Aut}(\mathcal F(S))$ by the group of pairs of autoequivalences, one of $\mathcal F(S)$, one of $\mathcal C$, which are compatible with $B$, up to natural equivalence.
Something like this also seems to be needed in order to generalize the proposition to other marked surfaces.
In the context of Fukaya--Seidel categories, $B$ is the restriction to the distinguished fiber.
\end{rem}

\begin{cor}
Let $S$ be the graded marked surface of $A_n$-type, $n>1$, then
\begin{align}
\mathrm{Stab}(\mathcal F(S))/\mathrm{Aut}(\mathcal F(S))&=\mathcal M(S)/\mathrm{MCG}(S) \\
&=\left\{e^{P(z)}dz^2\mid \deg P=n+1 \right\}/\mathrm{Aut}(\mathbb C) \\
 &=\left\{ e^{z^{n+1}+a_{n-1}z^{n-1}+\ldots+a_0}dz^2\right\}/\mathbb Z/(n+1)\mathbb Z
\end{align}
as complex orbifolds.
\end{cor}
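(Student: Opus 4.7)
The plan is to assemble the three equalities from Theorem~\ref{thm_stabdynkin}, the preceding proposition, and an elementary normalization of polynomial quadratic differentials. Theorem~\ref{thm_stabdynkin} gives $\mathcal M(S)=\mathrm{Stab}(\mathcal F(S))\cong\mathbb C^n$ via the explicit map $u\colon(a_0,\ldots,a_{n-1})\mapsto e^{z^{n+1}+a_{n-1}z^{n-1}+\ldots+a_0}dz^2$, and the proposition identifies $\mathrm{MCG}(S)$ with $\mathrm{Aut}(\mathcal F(S))$. The first equality then follows from naturality: the stability structure associated to a flat surface depends functorially on the marked flat surface, so the bianalytic map in Theorem~\ref{thm_stabdynkin} is equivariant for the two group actions, and the quotients agree as complex orbifolds.

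Next, for the identification with quadratic differentials modulo $\mathrm{Aut}(\mathbb C)$, I would extract the following from the proof of Theorem~\ref{thm_stabdynkin}: every flat surface in $\mathcal M(S)$ is the real blow-up of a quadratic differential of the form $e^{P(z)}dz^2$ on $\mathbb{CP}^1$ with a single exponential singularity at $\infty$ (invoking Theorem~\ref{thm_flatcx} and the fact that $S$ is of disk type, which forces the underlying compact Riemann surface to be $\mathbb{CP}^1$ with a unique higher-order exponential pole). So we get a surjection $\{e^{P(z)}dz^2\mid\deg P=n+1\}\to\mathcal M(S)$. Two differentials give the same marked flat surface precisely when they differ by a M\"obius transformation fixing $\infty$, that is, by an element of $\mathrm{Aut}(\mathbb C)=\{z\mapsto az+b\}$; since $\mathrm{Aut}(\mathbb C)$ is connected, the induced diffeomorphism of $S$ is isotopic to the identity and carries the grading to itself, so it preserves the marking.

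Finally, the third equality is a direct normalization: given $P(z)=c_{n+1}z^{n+1}+c_n z^n+\ldots+c_0$ with $c_{n+1}\neq 0$, the translation $z\mapsto z-c_n/((n+1)c_{n+1})$ kills the $z^n$ term, and the rescaling $z\mapsto c_{n+1}^{-1/(n+1)}z$ makes $P$ monic, with the factor $c_{n+1}^{-2/(n+1)}$ from $(dz)^2$ absorbed into the constant term. This procedure is canonical up to the choice of $(n+1)$-th root of unity in $c_{n+1}^{-1/(n+1)}$, which leaves a residual $\mathbb Z/(n+1)\mathbb Z$ acting on the coefficients by $(a_0,a_1,\ldots,a_{n-1})\mapsto(a_0+2\log\zeta,\zeta a_1,\zeta^2 a_2,\ldots,\zeta^{n-1}a_{n-1})$ for $\zeta^{n+1}=1$, which is precisely the stated quotient.

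The main obstacle is the bookkeeping in the second paragraph: I need to verify that pullback by $\mathrm{Aut}(\mathbb C)$ on polynomial differentials matches the $\mathrm{MCG}(S)$-action on $\mathcal M(S)$ under $u$, and in particular that the residual $\mathbb Z/(n+1)\mathbb Z$-symmetry after normalization corresponds, via Theorem~\ref{thm_stabdynkin} and the preceding proposition, to the appropriate quotient of $\mathrm{MCG}(S)\cong\mathbb Z^2/\mathbb Z(2,n+1)$. Tracking the grading data is the trickiest point, since one must record how rotations of $\mathbb C$ by $(n+1)$-th roots of unity interact with the shift autoequivalence $[1]$ (which corresponds to rotating the horizontal foliation by $\pi$); the shift of $a_0$ by $2\log\zeta$ captures this grading ambiguity and makes the residual action free on $\mathbb C^n$, so the quotient is in fact a complex manifold except possibly at isolated points where the orbifold structure appears.
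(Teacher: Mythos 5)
The overall plan—combining Theorem~\ref{thm_stabdynkin}, the proposition on $\mathrm{MCG}(S)\cong\mathrm{Aut}(\mathcal F(S))$, and a normalization of polynomials—is the right one, and your first paragraph is fine. However, there is a genuine error in the second paragraph that also creates an internal inconsistency with your fourth.

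You claim that two differentials $e^{P_1}dz^2$ and $e^{P_2}dz^2$ give ``the same \emph{marked} flat surface precisely when they differ by'' an element of $\mathrm{Aut}(\mathbb C)$, justified by connectedness of $\mathrm{Aut}(\mathbb C)$. This would say $\{e^Pdz^2\}/\mathrm{Aut}(\mathbb C)\cong\mathcal M(S)$, which contradicts the corollary (and Theorem~\ref{thm_stabdynkin}): the left side must be $\mathcal M(S)/\mathrm{MCG}(S)$, not $\mathcal M(S)$. The correct statement is that $\mathrm{Aut}(\mathbb C)$-equivalent differentials give isomorphic \emph{unmarked} flat surfaces, hence $\mathrm{MCG}(S)$-equivalent points of $\mathcal M(S)$; indeed a path $\phi_t$ in $\mathrm{Aut}(\mathbb C)$ from $\mathrm{id}$ to $\phi$ gives a path in $\mathcal M(S)$ between the two markings, and since $\mathcal M(S)$ is a manifold this path need not be constant. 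Concretely, for $P=z^{n+1}$ the rotation $z\mapsto\zeta z$ with $\zeta^{n+1}=1$ acts on $\mathcal M(S)$ as the nontrivial rotation generator of $\mathrm{MCG}(S)$. Your own fourth paragraph implicitly acknowledges this by proposing to match the $\mathrm{Aut}(\mathbb C)$-action with the $\mathrm{MCG}(S)$-action on $\mathcal M(S)$—that matching is exactly the content of the second equality, and it is nontrivial, not automatic from connectedness.

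A smaller point: your claim that the residual $\mathbb Z/(n+1)$-action is free on $\mathbb C^n$ so the quotient ``is a complex manifold except possibly at isolated points'' is incoherent as stated, and in the honest model (where $a_0$ is taken modulo $2\pi i$, so the formula $a_0\mapsto a_0+2\log\zeta$ defines a genuine $\mathbb Z/(n+1)$-action), the fixed locus of $\zeta=-1$ when $n+1$ is even is the positive-dimensional set $\{a_j=0\text{ for }j\text{ odd}\}$, not isolated. The word ``orbifold'' in the corollary is there precisely because of this.
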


In order to gain a better understanding of the wall and chamber structure on $\mathcal M(S)=\mathrm{Stab}(\mathcal F(S))$ one needs to study possible types of cores and S-graphs, giving chambers between walls of the first and second kind respectively.
We already know that S-graphs which occur for $S$ are those with underlying graph a tree with $n$ edges and total order on each set of half-edges.
Possible types of cores, even generic ones, are more difficult to enumerate except for small values of $n$.
Figure~\ref{fig_n3_convhull} gives some examples for $n=3$.
\begin{figure}[h]
\centering
\includegraphics[scale=1]{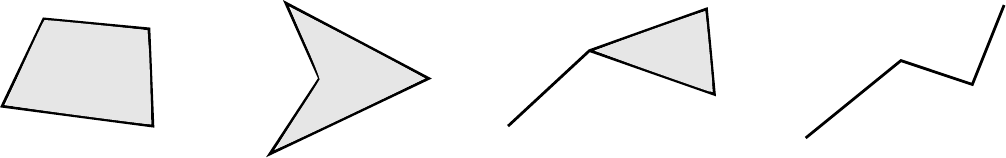}
\caption{Possible topologies of $K$ when $n=3$ for a generic stability structure. Note that the number of stable objects up to shift is 6, 5, 4, 3 respectively.}
\label{fig_n3_convhull}
\end{figure}

\subsection{Example: Affine type $\widetilde{A_n}$}

Fix positive integers $p,q$ and let $n+1=p+q$. 
Let $S$ be a graded surface with interior an annulus and $p$ (resp. $q$) marked boundary components, each diffeomorphic to an open interval, on the left (resp. right) end, and grading so that a simple closed loop around the annulus has vanishing Maslov index, i.e. is gradable.
Such $S$ is uniquely determined up to isomorphism.
For concreteness we take $S$ to be the real blow-up of the completion of $\mathbb C^*$ with respect to the metric $|\varphi|$, where $\varphi=\exp(z^p+z^{-q})dz^2$, and grading given by the horizontal foliation of $\varphi$.
The Fukaya category $\mathcal F(S)$ is equivalent to the bounded derived category of any quiver with underlying graph the extended Dynkin diagram $\widetilde{A_n}$ and $p$ (resp. $q$) arrows directed counterclockwise (resp. clockwise).

\begin{thm} \label{thm_stabaffine}
For $S$ as above of $\widetilde{A_n}$-type,
\begin{equation}
\mathcal M(S)=\mathrm{Stab}(\mathcal F(S))\cong\mathbb C^{n+1}.
\end{equation}
\end{thm}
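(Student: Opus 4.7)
I will mirror the two-step proof of Theorem~\ref{thm_stabdynkin}: first identify $\mathcal M(S)$ explicitly with $\mathbb C^{n+1}$ via a holomorphic parametrization, and then conclude $\mathcal M(S) = \mathrm{Stab}(\mathcal F(S))$ using Theorem~\ref{ModuliMapThm} together with connectedness of the stability space.

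For the first step, I will define $u : \mathbb C^{n+1} \to \mathcal M(S)$ by
\begin{equation*}
u(a_0, \ldots, a_{p-1}, b_0, b_1, \ldots, b_{q-1}) = \exp\!\Big( z^p + \sum_{k=1}^{p-1} a_k z^k + a_0 + \sum_{k=1}^{q-1} b_k z^{-k} + e^{b_0} z^{-q} \Big)\, dz^2
\end{equation*}
on $\mathbb C^*$, with grading data encoded by the complex parameters $a_0$ near $\infty$ and $b_0$ near $0$ exactly as $a_0$ does in the $A_n$-case; writing the leading coefficient at $0$ as $e^{b_0}$ rather than a point of $\mathbb C^*$ lifts the $\mathbb Z$-cover coming from grading shifts. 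Proposition~\ref{prop_expflatgeom} guarantees that each such $\varphi$ defines a flat surface whose real blow-up is canonically identified with $S$. Injectivity of $u$ follows from the fact that the holomorphic self-automorphisms of $\mathbb C^*$ preserving our normalized asymptotics form a finite cyclic group whose non-trivial elements act non-trivially on isotopy classes of curves in $S$. For surjectivity, Theorem~\ref{thm_flatcx} presents any flat surface of annular type as a compact Riemann surface with exponential-type differential; topology forces this to be $\mathbb{CP}^1$ with exactly two exponential singularities of orders $p$ and $q$, which after a M\"obius transformation and normalization of the leading term at $\infty$ matches our family.

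For the second step, Theorem~\ref{ModuliMapThm} yields an open and closed injection $\mathcal M(S) \hookrightarrow \mathrm{Stab}(\mathcal F(S))$; since $\mathcal M(S) \cong \mathbb C^{n+1}$ is connected, it suffices to prove $\mathrm{Stab}(\mathcal F(S))$ is connected. Given $\sigma \in \mathrm{Stab}$, rotate by a suitable element of $\widetilde{GL^+(2,\mathbb R)}$ to a generic $\sigma'$ whose central charge avoids $\mathbb R$ on every semistable object, and argue that the heart $\mathcal A$ of the associated t-structure is Artinian with exactly $n+1$ simple objects, all corresponding to graded arcs. These arcs form a full formal system, and the associated S-graph together with the central charges of the simples gives, by Proposition~\ref{prop_Sgraph}, a flat surface in $\mathcal M(S)$ realizing $\sigma'$; rotating back produces a flat surface realizing $\sigma$.

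The main obstacle is establishing that $\mathcal A$ has exactly $n+1$ simples, all arcs. Unlike in the $A_n$-case, $\mathcal F(S)$ has infinitely many indecomposables: in particular the simple closed loops around the annulus form a $\mathbb C^*$-family of regular (tubular) representations all carrying a single class in $K_0$ (up to sign), with a common central charge $Z_0$ and phase $\phi_0$. The key point is that after generic rotation these closed-loop objects, though they lie in $\mathcal A$, are never simple: by Theorem~\ref{thm_classification} and the tube structure, each such loop admits a short exact sequence in $\mathcal A$ whose outer terms are graded arcs that cross the annulus, i.e.\ the preprojective/preinjective simples. This forces the simples of $\mathcal A$ to be exactly the $n+1$ arcs whose classes span $K_0 \cong \mathbb Z^{n+1}$, and a comparison with their $\mathrm{Ext}$-structure matches the combinatorics of an S-graph.
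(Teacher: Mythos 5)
Your Step~1 is essentially the same as the paper's: the parametrization by $\exp(z^p+a_{p-1}z^{p-1}+\cdots+a_0+\cdots+b_{q-1}z^{-q+1}+e^{b_0}z^{-q})dz^2$ on $\mathbb C^*$ is just a relabeling of the paper's $\exp(z^p+a_{p-1}z^{p-1}+\cdots+\exp(a_{-q})z^{-q})dz^2$, and your explanation of how $a_0$ and $b_0$ resolve the $\mathrm{MCG}(S)$-ambiguity is a reasonable unpacking of the paper's terse appeal to contractibility of $\mathbb C^{n+1}$. Step~2's broad outline (rotate to generic position, identify the heart with the module category of the S-graph quiver, invoke Proposition~\ref{prop_Sgraph}) also matches, and you are right that the affine case needs an explicit argument ruling out closed loops as simples, a point the paper leaves implicit.

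However, there is a genuine gap in your Step~2: you do not actually establish that the heart of $\sigma'$ is Artinian, and the condition you impose via rotation -- that $Z$ avoids $\mathbb R$ on semistable objects -- is strictly weaker than what is needed. One needs $0$ (equivalently $1$) to lie \emph{outside the closure} of the set of phases of semistable objects; merely avoiding the integers does not preclude phases accumulating at $0$ (e.g.\ phases $1/2,1/3,1/4,\ldots$), in which case the heart fails the descending chain condition. The paper supplies the missing structural input: let $N\subset\Gamma=K_0(\mathcal F(S))$ be the subgroup generated by the class of a simple loop around the annulus (the radical of the Euler form); the set $E$ of classes of indecomposables satisfies $E/N$ finite by the classification theorem, so $Z(E)$ lies in finitely many $Z(N)$-translation orbits and phases of stable objects can accumulate only towards $\mathrm{Arg}(Z(N))$. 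A rotation making $Z(N)\not\subset\mathbb R$ -- a condition on the central charge itself, not on which objects happen to be semistable -- then bounds phases away from $0$, and \emph{that} is what forces the heart to be length. Without this K-theoretic accumulation argument your ``generic rotation'' does not do the required work. A secondary imprecision: your claim that a closed loop in $\mathcal A$ ``admits a short exact sequence in $\mathcal A$ whose outer terms are graded arcs'' presupposes that the relevant arcs lie in $\mathcal A$, which is not automatic for an arbitrary t-structure, and it also implicitly treats the loops as semistable before $\sigma$ has been identified with a flat structure (a circularity). A cleaner route, once the heart is known to be length: the classes of the finitely many simples form a $\mathbb Z$-basis of $K_0$, so if some simple $E_1$ were a loop $L_\lambda$, then any $L_\mu$ (same $K_0$-class) would have Jordan--H\"older decomposition with a single factor $E_1$, forcing $L_\mu\cong L_\lambda$ -- contradicting that the $L_\mu$ are pairwise non-isomorphic.
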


When $n=1$ the category $\mathcal F(S)$ is equivalent to the bounded derived category of coherent sheaves on $\mathbb P^1$ and it was proven by Okada~\cite{okada} that $\mathrm{Stab}(D^b(\mathbb P^1))\cong\mathbb C^2$.
For $n=2$ it was shown by different methods in \cite{dk14} that $\mathrm{Stab}(\mathcal F(S))$ is contractible.

\begin{proof}
As in the Dynkin case the proof consists of two parts, first to show that $\mathcal M(S)=\mathbb C^{n+1}$ as complex manifolds, and then to show that the map $\mathcal M(S)\to\mathrm{Stab}(\mathcal F(S))$ is surjective.

1. To a point $(a_{p-1},\ldots,a_{-q})\in\mathbb C^{n+1}$ we assign the flat surface given by the quadratic differential
\begin{equation}
\exp(z^p+a_{p-1}z^{p-1}+\ldots+a_{-q+1}z^{-q+1}+\exp(a_{-q})z^{-q})dz^2
\end{equation}
on $\mathbb C^*$.
Since $\mathbb C^{n+1}$ is contractible and the dependence on the parameters continuous, we can resolve the $\mathrm{MCG}(S)$-ambiguity and get a map $u:\mathbb C^{n+1}\to\mathcal M(S)$.
Arguments as in the proof of Theorem~\ref{thm_stabdynkin} show that $u$ is an isomorphism.

2. Let $\sigma\in\mathrm{Stab}(\mathcal F(S))$.
Unlike the Dynkin case, the heart of the t-structure of $\sigma$ may not be Artinian.
We will show however that this happens non-generically and the t-structure becomes Artinian after possibly rotating $\sigma$ slightly.
Let $\Gamma=K_0(\mathcal F(S))\cong\mathbb Z^{n+1}$, and let $N\subset\Gamma$ be the subgroup generated by a simple curve around the annulus.
Equivalently, $N$ is the kernel of the Euler form on $\Gamma$.
Let $E\subset\Gamma$ be the set of classes of indecomposable objects.
It follows from the classification of indecomposable objects that $E/N\subset\Gamma/N$ is finite.
Hence, the image of $E$ under $Z:\Gamma\to\mathbb C$ has finitely many orbits under translation by the vectors $Z(N)$, and thus phases of stable objects can only accumulate towards $\mathrm{Arg}(Z(N))$, or not at all if $Z(N)=\{0\}$.
In particular, after possibly rotating $\sigma$, $0$ is not in the closure of the set of phases of stable objects, and thus the heart of the t-structure Artinian.
From here we can proceed as in the proof of Theorem~\ref{thm_stabdynkin}.
\end{proof}

\section{Open problems and further directions}

\subsection*{Moduli of objects}

For a partially wrapped Fukaya category $\mathcal F$ of a surface $S$ our classification describes $\mathrm{Ob}(\mathcal F)$ as a set in terms of curves on $S$.
However, $\mathrm{Ob}(\mathcal F)$ has a much richer structure of a derived stack.
\begin{problem}
Describe the geometry of $\mathrm{Ob}(\mathcal F)$.
\end{problem}
Also, the case of Fukaya categories of closed surfaces remains.
This would be an A-side generalization of Atiyah's classification for the elliptic curve.
However, for $g>1$ the category is only $\mathbb Z/2$-graded and there are no t-structures or stability structures.
\begin{problem}
Classify objects of $\mathcal F(S)$ when $S$ is a closed surface with $g(S)>1$, assuming this is a tame problem.
\end{problem}

\subsection*{Higher dimensions}

There is at present no complete conjectural picture extending the results of this paper to higher dimensional symplectic manifolds.
Investigations into this direction were started by Thomas~\cite{thomas01} and Thomas--Yau~\cite{thomas_yau02} with the aim of finding an algebraic notion of stability describing special Lagrangian submanifolds and their mean curvature flow.
More recently, their proposal has been updated by Joyce~\cite{joyce_conj}, taking into account recent developments on Fukaya categories, mean curvature flow, and Bridgeland's axiomatics.
The conjecture is that given a compact Calabi--Yau $M$ one has a stability structure on its Fukaya category, $\mathcal F(M)$, such that singular special Lagrangians in $M$, with additional data to make them objects in $\mathcal F(M)$, are (all?) stable objects.
The central charge is given by integration of the holomorphic volume form.
Mean curvature flow applied to an object in $\mathcal F(S)$, at least with sufficiently small phase variation, should converge to the components of the Harder--Narasimhan filtration.
It is however still an open problem to describe (a component of) $\mathrm{Stab}(\mathcal F(M))$ in general, even conjecturally.
\begin{problem}
Find a geometric description of $\mathrm{Stab}(\mathcal F(M))$ for higher dimensional $M$.
\end{problem}

\bibliographystyle{plain}
\bibliography{stab1}

\def\cprime{$'$}
\begin{thebibliography}{10}

\bibitem{abouzaid08}
M.~Abouzaid.
\newblock On the {F}ukaya categories of higher genus surfaces.
\newblock {\em Adv. in Math.}, 217(3):1192--1235, 2008.

\bibitem{abouzaid_seidel}
M.~Abouzaid and P.~Seidel.
\newblock An open string analogue of {V}iterbo functoriality.
\newblock {\em Geom. Topol.}, 14(2):627--718, 2010.

\bibitem{assem_skowronski}
I.~Assem and A.~Skowro\'nski.
\newblock Iterated tilted algebras of type $\tilde{A}_n$.
\newblock {\em Math. Z.}, 195:269--290, 1987.

\bibitem{atiyah57}
M.~F. Atiyah.
\newblock Vector bundles over an elliptic curve.
\newblock {\em Proc. London Math. Soc. (3)}, 7:414--452, 1957.

\bibitem{auroux10}
D.~Auroux.
\newblock {F}ukaya categories and bordered {H}eegaard--{F}loer homology.
\newblock In {\em Proceedings of the International Congress of Mathematicians.
  Vol. II}, pages 917--941. Hindustan Book Agency, 2010.

\bibitem{MR3121850}
A.~Bayer, E.~Macr{\`{\i}}, and Y.~Toda.
\newblock Bridgeland stability conditions on threefolds {I}:
  {B}ogomolov-{G}ieseker type inequalities.
\newblock {\em J. Algebraic Geom.}, 23(1):117--163, 2014.

\bibitem{bocklandt11}
R.~Bocklandt.
\newblock Noncommutative mirror symmetry for punctured surfaces.
\newblock {\em Trans. Amer. Math. Soc.}, 2015.

\bibitem{bowman_valdez}
J.~P. Bowman and F.~Valdez.
\newblock Wild singularities of flat surfaces.
\newblock {\em Israel J. of Math.}, 197(1):69--97, 2013.

\bibitem{bridgeland07}
T.~Bridgeland.
\newblock Stability conditions on triangulated categories.
\newblock {\em Ann. of Math.}, 166:317--345, 2007.

\bibitem{MR2376815}
T.~Bridgeland.
\newblock Stability conditions on {$K3$} surfaces.
\newblock {\em Duke Math. J.}, 141(2):241--291, 2008.

\bibitem{MR2549952}
T.~Bridgeland.
\newblock Stability conditions and {K}leinian singularities.
\newblock {\em Int. Math. Res. Not. IMRN}, (21):4142--4157, 2009.

\bibitem{bqs}
T.~Bridgeland, Y.~Qiu, and T.~Sutherland.
\newblock Stability conditions on the {$A_2$} quiver.
\newblock arXiv:1406.2566.

\bibitem{bs}
T.~Bridgeland and I.~Smith.
\newblock Quadratic differentials as stability conditions.
\newblock {\em Publ. Math. IH\'ES}, 121(1):155--278, 2015.

\bibitem{dhkk}
G.~Dimitrov, F.~Haiden, L.~Katzarkov, and M.~Kontsevich.
\newblock Dynamical systems and categories.
\newblock In {\em The Influence of Solomon Lefschetz in Geometry and Topology:
  50 Years of Mathematics at CINVESTAV}, volume 621 of {\em Con. Math.}, pages
  133--170. American Mathematical Society (AMS), Providence, RI, 2014.

\bibitem{dk14}
G.~Dimitrov and L.~Katzarkov.
\newblock Stability conditions on the acyclic triangular quiver.
\newblock arXiv:1410.0904.

\bibitem{dyckerhoff_kapranov}
T.~Dyckerhoff and M.~Kapranov.
\newblock Triangulated surfaces in triangulated categories.
\newblock arXiv:1306.2545.

\bibitem{FOOO}
K.~Fukaya, Y.-G. Oh, H.~Ohta, and K.~Ono.
\newblock {\em Lagrangian Intersection Floer Theory, Anomaly and Obstruction,
  Parts I, II}, volume 46.1 of {\em AMS/IP Studies in Adv. Math.}
\newblock 2009.

\bibitem{gmn}
D.~Gaiotto, G.~W. Moore, and A.~Neitzke.
\newblock Wall-crossing, {Hitchin Systems}, and the {WKB Approximation}.
\newblock {\em Adv. Math.}, 234:239--403, 2013.

\bibitem{sga5_8}
A.~{Grothendieck}.
\newblock {Groupes de classes des categories abeliennes et triangulees.
  Complexes parfaits. (Redige par I. Bucur).}
\newblock {Semin. Geom. algebr. Bois-Marie 1965-66, SGA 5, Lect. Notes Math.
  589, Expose No.VIII, 351-371}, 1977.

\bibitem{harer85}
J.~L. Harer.
\newblock Stability of the homology of the mapping class groups of orientable
  surfaces.
\newblock {\em Ann. of Math. (2)}, 121(2):215--249, 1985.

\bibitem{harer86}
J.~L. Harer.
\newblock The virtual cohomological dimension of the mapping class group of an
  orientable surface.
\newblock {\em Invent. Math.}, 84(1):157--176, 1986.

\bibitem{ikeda}
A.~Ikeda.
\newblock Stability conditions on {CY}$_n$ categories associated to
  ${A}_n$-quivers and period maps.
\newblock arXiv:1405.5492.

\bibitem{joyce_conj}
D.~Joyce.
\newblock Conjectures on {B}ridgeland stability for {F}ukaya categories of
  {C}alabi–{Y}au manifolds, special {L}agrangians, and {L}agrangian mean
  curvature flow.
\newblock arXiv:1401.4949.

\bibitem{MR2313537}
H.~Kajiura, K.~Saito, and A.~Takahashi.
\newblock Matrix factorization and representations of quivers. {II}. {T}ype
  {$ADE$} case.
\newblock {\em Adv. Math.}, 211(1):327--362, 2007.

\bibitem{kontsevich95}
M.~Kontsevich.
\newblock Homological algebra of mirror symmetry.
\newblock In {\em Proceedings of the {I}nternational {C}ongress of
  {M}athematicians, {V}ol. 1, 2 ({Z}\"urich, 1994)}, pages 120--139.
  Birkh\"auser, Basel, 1995.

\bibitem{ks}
M.~Kontsevich and Y.~Soibelman.
\newblock Stability structures, motivic {Donaldson-Thomas} invariants and
  cluster transformations.
\newblock arXiv:0811.2435.

\bibitem{ks09}
M.~Kontsevich and Y.~Soibelman.
\newblock Notes on {$A_\infty$}-algebras, {$A_\infty$}-categories and
  non-commutative geometry.
\newblock In {\em Homological mirror symmetry}, volume 757 of {\em Lecture
  Notes in Phys.}, pages 153--219. Springer, Berlin, 2009.

\bibitem{lo06}
V.~Lyubashenko and S.~Ovsienko.
\newblock A construction of quotient {$A_\infty$}-categories.
\newblock {\em Homology, Homotopy Appl.}, 8(2):157--203, 2006.

\bibitem{MR2335991}
E.~Macr{\`{\i}}.
\newblock Stability conditions on curves.
\newblock {\em Math. Res. Lett.}, 14(4):657--672, 2007.

\bibitem{ms}
H.~Masur and J.~Smillie.
\newblock Hausdorff dimension of sets of nonergodic measured foliations.
\newblock {\em Ann. of Math. (2)}, 134(3):455--543, 1991.

\bibitem{nadler}
D.~Nadler.
\newblock Cyclic symmetries of {$A_n$}-quiver representations.
\newblock arXiv:1306.0070.

\bibitem{nazarova_roiter}
L.~A. Nazarova and A.~V. Ro{\u\i}ter.
\newblock A certain problem of {I}. {M}. {G}el\cprime fand.
\newblock {\em Funkcional. Anal. i Prilo\v zen.}, 7(4):54--69, 1973.

\bibitem{nevanlinna}
R.~Nevanlinna.
\newblock {\"Uber Riemannsche Fl\"achen mit endlich vielen Windungspunkten}.
\newblock {\em Acta Math.}, 58:295--373, 1932.

\bibitem{okada}
So~Okada.
\newblock Stability manifold of {${\Bbb P}^1$}.
\newblock {\em J. Algebraic Geom.}, 15(3):487--505, 2006.

\bibitem{polishchuk_zaslow}
A.~Polishchuk and E.~Zaslow.
\newblock Categorical mirror symmetry: the elliptic curve.
\newblock {\em Adv. Theor. Math. Phys.}, 2(2):443--470, 1998.

\bibitem{seidel14}
P.~Seidel.
\newblock Fukaya {$A_\infty$} structures associated to lefschetz fibrations.
  {II}.
\newblock arXiv:1404:1352.

\bibitem{seidel_grading}
P.~Seidel.
\newblock Graded {L}agrangian submanifolds.
\newblock {\em Bull. Soc. Math. France}, 128(1):103--149, 2000.

\bibitem{seidel08}
P.~Seidel.
\newblock {\em Fukaya categories and {P}icard-{L}efschetz theory}.
\newblock Zurich Lectures in Advanced Mathematics. European Mathematical
  Society (EMS), Z\"urich, 2008.

\bibitem{stz}
N.~Sibilla, D.~Treumann, and E.~Zaslow.
\newblock Ribbon graphs and mirror symmetry.
\newblock {\em Sel. Math. New. Ser.}, 20:979--1002, 2014.

\bibitem{smith13}
I.~Smith.
\newblock Quiver algebras as {F}ukaya categories.
\newblock arXiv:1309.0452.

\bibitem{strebel}
K.~Strebel.
\newblock {\em Quadratic differentials}, volume~5 of {\em Ergebnisse der
  Mathematik und ihrer Grenzgebiete (3) [Results in Mathematics and Related
  Areas (3)]}.
\newblock Springer-Verlag, Berlin, 1984.

\bibitem{thomas01}
R.~P. Thomas.
\newblock Moment maps, monodromy and mirror manifolds.
\newblock In {\em Symplectic geometry and mirror symmetry ({S}eoul, 2000)},
  pages 467--498. World Sci. Publ., River Edge, NJ, 2001.

\bibitem{thomas_yau02}
R.~P. Thomas and S.-T. Yau.
\newblock Special {L}agrangians, stable bundles and mean curvature flow.
\newblock {\em Comm. Anal. Geom.}, 10(5):1075--1113, 2002.

\bibitem{veech93}
W.~A. Veech.
\newblock Flat surfaces.
\newblock {\em American J. of Math.}, 115(3):589--689, 1993.

\end{thebibliography}

\Addresses

\end{document}